\documentclass[10pt]{article}
\usepackage{amsmath,amssymb,hyperref,enumitem,amsfonts,amsthm,mathrsfs}
\usepackage{prooftree}
\usepackage[margin=3.5cm]{geometry}
\DeclareFontFamily{U}{mathb}{\hyphenchar\font45}
\DeclareFontShape{U}{mathb}{m}{n}{
      <5> <6> <7> <8> <9> <10> gen * mathb
      <10.95> mathb10 <12> <14.4> <17.28> <20.74> <24.88> mathb10
      }{}
\DeclareSymbolFont{mathb}{U}{mathb}{m}{n}
\DeclareFontSubstitution{U}{mathb}{m}{n}
\DeclareMathSymbol{\sqsupsetneq}  {0}{mathb}{"89}
\DeclareMathSymbol{\sqsubseteq} {0}{mathb}{"84}

\usepackage{bm}
\usepackage{mathtools}
\usepackage{bbm}
\usepackage{enumitem}
\usepackage{amsthm}
\usepackage{url}
\usepackage{tikz}
\usepackage{tikz-cd}
\usepackage{setspace}
\usepackage{cleveref}
\allowdisplaybreaks
\theoremstyle{plain}
\newtheorem{thm}{Theorem}[section]
\newtheorem{lemma}[thm]{Lemma}
\newtheorem{prop}[thm]{Proposition}
\newtheorem{cor}[thm]{Corollary}
\theoremstyle{definition}
\newtheorem{defin}[thm]{Definition}
\newtheorem{obs}[thm]{Observation}
\newtheorem{notation}[thm]{Notation}

\newtheorem{question}[thm]{Question}
\usepackage{hyperref}
\hypersetup{
    colorlinks=true,
    linkcolor=blue,
    citecolor=blue,
    filecolor=blue, 
    urlcolor=blue,       
    }

\def\vp{\varphi}

\def\atr{\mathbf{ATR}_0}
\def\atrset{\mathbf{ATR}^{\mathrm{set}}_0}

\newcommand{\B}{\ensuremath{\mathbf{B}}}
\newcommand{\C}{\ensuremath{\mathbf{C}}}

\newcommand\kp{\mathbf{KP}}

\newcommand{\zfc}{\ensuremath{\mathbf{ZFC}}}

\newcommand{\cin}{\overset{\circ}{\in}}

\DeclareMathOperator{\po}{\mathcal{P}}
\newcommand{\pair}[1]{\langle #1\rangle}

\newcommand{\Imp}{\Rightarrow}

\newcommand{\prf}[1]{\prooftree #1 \endprooftree}

\def\line{\justifies }

\title{Axiom Beta Implies Elementary Transfinite Recursion}
\author{Emanuele Frittaion \\
Department of Mathematics, University of Udine
\and 
Giorgio G. Genovesi\\
School of Mathematics, University of Leeds
}

\begin{document}
\maketitle
\tableofcontents

\begin{abstract}
We show that, over a weak base theory of sets $\B$, Axiom Beta implies Elementary or $\Delta_0$ Transfinite Recursion. The proof is novel as, unlike previous approaches, it does not make use of the Axiom of Countability or the Powerset Axiom and only requires predicative or $\Delta_0$ Separation. Furthermore, it is shown that Axiom Beta implies the existence of the relativized constructible hierarchy, the totality of the Veblen function and that of the primitive recursive set functions. In particular, it follows that the system $\B+\text{Axiom Beta}$ is equivalent to Simpson's system $\atrset$  without the Axiom of Countability \cite{simpson82} or, equivalently,  $\mathbf{PRS}\omega+\text{Axiom Beta}$. We also establish an upper bound for the $\Sigma_1$-definable functions of $\mathbf{B}+\text{Axiom Beta}$. Finally, by taking an appropriate submodel of the first Cohen model, we show that the Finite Powerset axiom in $\B+\text{Axiom Beta}$ is in a sense necessary and cannot be replaced by the closure under the rudimentary functions.
\end{abstract}

\section{Introduction} 
Axiom Beta is the statement that every well founded relation admits a collapsing function. The statement restricted to extensional relations is known as the Mostowski collapse lemma. Axiom Beta is a weak form of Replacement that naturally appears when interpreting sets as well founded trees or relations. Some examples of interpretations based on trees include  Kanovei's  intermediate  interpretation  of $\mathbf{ZFC}^-$ in $\mathbf{Z}^-$ \cite{Kanovei}, Mathias' interpretation\footnote{In Mathias' work Axiom Beta appears as a consequence of a principle he terms Axiom H. Mathias shows that Axiom H implies the Mostowski collapse lemma for extensional relations \cite[Lemma 3.0]{mathias97}, but the proof seems to work for non extensional relations as well.}  of  $\Delta_0$ Collection in Mac Lane set theory \cite{mathias97}, and Simpson's interpretation of the set theory  $\atrset$ in  the theory of second order arithmetic $\atr$ \cite{Simpson}. \smallskip 

Axiom Beta does not in general imply the scheme of Collection or Replacement. For example, the usual interpretation of $\mathbf{ZFC}^-$, that is $\mathbf{ZFC}$ without the Powerset Axiom, in second order arithmetic requires the construction of $L$, the constructible universe. In the above mentioned cases, one  shows that a theory where Axiom Beta holds is  strong enough to prove the existence of $L_\alpha$ for every ordinal $\alpha$. However, the arguments rely on additional principles.  Kanovei's construction uses  Separation, or at least $\Sigma_1$ Separation. Mathias' proof relies on the Powerset Axiom (see \cite[Remark 4.1]{mathias97}). Simpson's construction of $L$ in $\atrset$ \cite[Section VII.4]{Simpson} crucially relies on the Axiom of Countability: the proof consists in defining the usual ramified set hierarchy as a well founded relation on $\omega$ by Arithmetical Transfinite Recursion, the signature principle of the system $\atr$ of second order arithmetic, and then taking the collapse. It is therefore natural to ask whether Axiom Beta alone suffices to recover sufficient transfinite recursion for the construction of $L$  without assuming Countability, Power Set, or strong Separation principles. \smallskip

The main goal of this paper is to show that it does. More precisely, we prove that Axiom Beta implies the natural analogue of Arithmetical Transfinite Recursion, namely, the scheme of Elementary or $\Delta_0$ Transfinite Recursion, even in the absence of Countability, Power Set, and Separation beyond that for $\Delta_0$ formulas. In fact, the proof we lay out can be carried out in each of the previously mentioned systems with Axiom Beta. The usual way of showing that in $\atrset$ the reals satisfy Arithmetical Transfinite Recursion is by proving  comparability of well orderings, which in turn implies Arithmetical Transfinite Recursion.  This proof, however, heavily relies on the Kleene normal form for $\Pi^1_1$ predicates, which in uncountable settings is known not to necessarily hold. Furthermore, the implication from comparability of well orderings to Arithmetical Transfinite Recursion goes through $\Pi^1_1$ Reduction, which over theories like von Neumann-Bernays-G\"{o}del set theory ($\mathbf{NBG}$) is strictly stronger than  Elementary  Transfinite Recursion (see \cite[Corollary 12]{Sato2}). \smallskip

For our purposes, we introduce a weak set theory $\mathbf{B}$, which will serve as our base theory. The theory $\mathbf{B}$ is a subtheory of primitive recursive set theory with Infinity $(\mathbf{PRS}\omega)$. Moreover, $\mathbf{B}$ is mutually interpretable with the theory $\mathbf{ACA}_0$ of second order arithmetic.  Both $\mathbf{ACA}_0$ and $\mathbf{NBG}$ interpret end extensions satisfying $\mathbf{B}$ by a variation of the tree translation. In particular, $\mathbf{B}$ has the same second order arithmetic consequences as $\mathbf{ACA}_0$. Thus a wide range of results in $\mathbf{B}$ carry over to both $\mathbf{ACA}_0$ and $\mathbf{NBG}$. \smallskip

We then consider  the theory $$\C = \B + \text{Axiom Beta}.$$  The first main result is that $\C$, and hence, also $\mathbf{PRS}\omega+\text{Axiom Beta}$, proves Elementary Transfinite Recursion.  Therefore, the theory $\C$ is the natural formulation of $\atrset$ without the Axiom of Countability. In fact, we  show over $\B$ that Elementary Transfinite recursion is equivalent to the existence of bisimulations on well founded trees. Using the fact that bisimulations can be defined as winning strategies of clopen games, we show over $\B$  the equivalence between Clopen Determinacy and Elementary Transfinite Recursion. This is an extension of a result of Gitman and Hamkins \cite{hatman} from  $\mathbf{NBG}$ to our weaker setting.\smallskip

The second  main result is that the theory $\C$, despite the absence of Countability,  can indeed prove the existence of $L_\alpha$  for every ordinal  $\alpha$. The proof of this somewhat surprising fact provides an alternative approach to those mentioned above, and in particular to that of Simpson \cite{Simpson}, where Countability plays a pivotal role. \smallskip

On this basis,  we  next show that $\C$ proves  the totality of all primitive recursive  set functions. To this end, we first show in $\C$ the totality of the Veblen function, adapting an argument due to Marcone and Montalbán \cite{Martalban}. We then formalize within $\C$ an explicit version of the stability theorem of Jensen and Karp \cite{Jarp}, from which we obtain, provably in $\C$, the totality of all primitive recursive  set functions.     We also partially describe the $\Sigma_1$-definable set-theoretic functions of $\C$ by characterizing those of a slightly stronger theory, which is essentially $\C+ \Delta_0 \text{ Collection} + \Sigma_1 \text{ Foundation}$. For this, we will adapt arguments from Rathjen \cite{R92}. \smallskip

Finally, we show that the theory  $\C$ is synonymous with the system  $\mathbf{PRS}\omega+\text{Axiom Beta}$, thus providing a different axiomatization to  $\atrset$ without Countability which avoids any mention of primitive recursive set functions or the constructible universe. We conclude the paper by showing  that the Finite Powerset Axiom in $\C$ is, in a sense, necessary and cannot be replaced by the totality of the rudimentary functions even in the presence of the full Separation Scheme.

\section{Preliminaries}
\begin{notation}
    We will use both capital and lowercase letters to denote sets. We use angle brackets $\pair{x,y}$ to mean the Kuratowski pair $\{\{x\},\{x,y\}\}$. Tuples are defined recursively by $\pair{x_0,x_1,\dots,x_n}=\pair{x_0,\pair{x_1,\dots, x_n}}$. We will use round brackets $(x_0,\dots,x_{n-1})$ for finite sequences. To ease notation, we will write $f(x_0,\dots, x_n)$ instead of $f((x_0,\dots, x_n))$ when $f$ is a function on sequences and $f(x,y)$ instead of $f(\pair{x,y})$ when $f$ is a function on pairs. For a set $X$ we denote the collection of finite sequences of $X$ by $X^{<\omega}$ or $(X)^{<\omega}$. For any pair of finite sequences $\sigma$ and $\tau$, we will denote their concatenation by $\sigma^\frown \tau$.  We write $\sigma\sqsubseteq \tau$ to mean  there exists $\rho$ such that $\sigma^\frown \rho=\tau$.  Given a set of sequences $S$ we denote by $\sigma^\frown S$ the collection $\{\sigma^\frown \tau:\tau \in S\}\cup\{\rho: \rho\sqsubseteq \sigma\}$. Given a function $f$ we denote by $f\vert_E=\{\pair{x,y}:f(x)=y\wedge x\in E\}$ the restriction of $f$ to the set $E$. By ordinal we mean a transitive set of transitive sets; and $Ord$ will denote the class of all ordinals.  When $R$ is a relation, we will use the infix notation $\sigma\, R\, \tau$ instead of $\pair{\sigma,\tau}\in R$.
\end{notation}

\begin{defin}
    The base system we make use of is the system $\B$, which has as axioms:
    \begin{enumerate}
        \item Extensionality;
        \item Pair;
        \item Union;
        \item Infinity (there exists a limit ordinal);
        \item $\Delta_0$ Separation;
        \item Regularity;
        \item Transitive Closure;
        \item Finite Powerset (for every set $x$ there is a set $\mathcal{P}_\text{fin}(x)=\{y\subseteq x:|y|<\omega\}$);
    \end{enumerate}
    The choice of the Finite Powerset Axiom is heavily motivated by Mathias' work on weak predicative set theories, where it is shown that such axiom resolves many of the pathologies of such systems \cite[Section 8]{mathias06}. In fact, replacing the Finite Powerset Axiom in the system $\B$ with the existence of Cartesian products would yield a theory that is unable to prove the existence of the set of even numbers \cite[Theorem 2.3.3]{gandy74}. The theory $\B$ has the following properties:
    \begin{itemize}
        \item $\B$ proves for any pair of sets $x$ and $y$ the Cartesian product $x\times y$ and the set of finite sequences $x^{<\omega}$ exist as sets.
        \item $\B$ proves that the ordinals are well ordered under membership.
        \item $\B$ is finitely axiomatizable, by a single $\Pi_2$ formula, by the usual trick of replacing the axiom scheme of $\Delta_0$ Separation by the closure of the G\"{o}del operations (for a detailed exposition, see \cite[Theorem 13.4]{jech}).
        \item $\B$ is mutually interpretable with the system  $\mathbf{ACA}_0$ and has the same second order arithmetic consequences. In fact, any model of $\mathbf{ACA}_0$ can be extended to a model of $\B$. The proof makes use of a modification of the tree interpretation, which goes beyond the scope of this work.
    \end{itemize}
    Let $\C= \B+\text{ Axiom Beta}$, where Axiom Beta is the assertion that for every well founded relation $R$ on a set $X$ there exists a transitive set $Y$ and a function $\pi:X\rightarrow Y$ such that $$\forall u\in X\;(\pi(u)=\{\pi(v):v\in X\wedge \pair{v,u}\in R\}).$$
    The function $\pi$ is called the collapsing function of $R$ on $X$. We will omit the reference to either $R$ or $X$ if it is clear from the context. The system $\B$ proves that collapsing functions for well founded relations, when they exist, are unique.
    We do not require that the relation $R$ be extensional as in some formulation of the Mostowski collapse lemma. 
    \end{defin}
    \begin{defin}
        By a  tree we mean a set $T\subseteq X^{<\omega}$, for some set $X$, that is closed under initial segments. Given two trees $T_0$ and $T_1$, we write 
        $$Pair(T_0,T_1)=(0)^\frown T_0\cup(1)^\frown T_1,$$
        and 
        $$OPair(T_0,T_1)=Pair(Pair(T_0,T_0),Pair(T_0,T_1)).$$
        When considering the collapse of a tree $T$, we consider the relation $R$ on the tree given by immediate successor rather than initial segment. Given a tree $T$ and its collapsing function $\pi$, we will write $\pi T$ to mean $\pi(\emptyset)$ and, to avoid excessive parentheses, we will write $\pi \sigma$ to mean $\pi(\sigma)$. Since the collapsing function for a well founded tree is unique, we will not use distinct notation for the collapsing function of distinct trees. Using this convention, if $T_0$ and $T_1$ are well founded, then
        $$\pi Pair(T_0,T_1)=\{\pi T_0,\pi T_1\}\quad\text{ and }\quad\pi OPair(T_0,T_1)=\{\{\pi T_0\},\{\pi T_0,\pi T_1\}\}=\pair{\pi T_0,\pi T_1}.$$
        
    \end{defin}
    \begin{defin}\label{bisimdef}
    Given a tree $T\subseteq X^{<\omega}$, a bisimulation on $T$ is a relation $B\subseteq T\times T$ such that
    $$ \sigma\, B \,\tau \leftrightarrow (\forall\rho_0\sqsupsetneq \sigma\,\exists \rho_1\sqsupsetneq \tau\,(\rho_0\,B\,\rho_1))\wedge (\forall\rho_1\sqsupsetneq \tau\,\exists \rho_0\sqsupsetneq \sigma\,(\rho_0\,B\,\rho_1)).$$
    \end{defin}
    Every well founded tree has a unique bisimulation which will also satisfy the converse of the above implication\footnote{This is true of any maximal bisimulation.}. In general, however, there may be many bisimulations. If $\pi:T\rightarrow Y$ is the collapsing function of a well founded tree $T$, then the set
    $$\{\pair{\sigma,\tau}:\pi(\sigma)=\pi(\tau)\}$$
    is a bisimulation by the Axiom of Extensionality. 

\noindent
A natural principle to consider is the following
$$\text{Bisimulation Axiom:} \quad  \text{ Every well founded tree has a bisimulation}.$$
By the previous remark and $\Delta_0$ Separation, we have $\C\vdash \text{Bisimulation Axiom}$. In the context of reverse mathematics, the Bisimulation Axiom implies over $\mathbf{RCA}_0$ the comparability of well orders, so $\mathbf{RCA}_0+\text{ Bisimulation Axiom}$  is equivalent to  $\atr$.
We will also consider the Axiom of Countability or $V=H(\aleph_1)$, which is the statement that every set has countable transitive closure.
\begin{defin}
    The tree translation, following Simpson's definition \cite[Definition VII.3.13]{Simpson}, is that which has as domain the well founded trees, the translation of equality $T=^* S$ is the formula: $$\pair{(0),(1)}\in B\text{ where }B \text{ is a bisimulation on }Pair(T,S)$$ and the translation of membership $T\in ^* S$ is the formula:
    $$\exists \sigma \in S\;(|\sigma|=1\wedge \{\tau:\sigma^\frown \tau \in S\}=^* T).$$
    One can show that this translation supports an interpretation of $\C$ in $\B+\text{Bisimulation Axiom}$. 
\end{defin}

\section{Elementary diagrams}
We show that the theory $\C$ proves the existence of $Th(a,\in)$, namely the elementary diagram of the structure $(a,\in)$. By a similar argument, one can  show that the theory $\C$ also proves the existence of the elementary diagram $Th(M)$ of every set-sized structure $M$. For our purposes and for the constructions that follow, however,  it is sufficient to consider the paradigmatic case where $M$ is just $(a,\in)$. The main idea, which appears to be new, is to define \emph{truth values} $\top_\theta$ and $\bot_\theta$ 
that depend on the formula itself. These values turn out to be well founded trees 
which do not conflate truth; that is, $\pi\top_\theta \neq \pi\bot_\theta$. Moreover,
\[
\pair{\theta,\sigma}\in Th(a,\in)
\quad \text{if and only if} \quad 
\pi S_{\pair{\theta,\sigma}} = \pi\top_\theta,
\]
where $S_{\pair{\theta,\sigma}}$ is again a well founded tree depending also on the given 
variable assignment $\sigma$. Similarly,
\[
\pair{ \theta,\sigma }\notin Th(a,\in)\ 
\quad \text{if and only if} \quad 
\pi S_{\pair{\theta,\sigma}} = \pi\bot_\theta.
\]

We will apply this idea in the context of Elementary Transfinite Recursion and the constructible universe. A brief warning is in order. A common feature of all these proofs is the recursive construction of trees. Although our base theory $\C$ does not support full recursion, in each instance the required definitions can already be justified within $\B$. Indeed, they all conform to a general pattern, and we leave it to the reader to verify in each case that the construction fits the following scheme.

\begin{thm}[$\B$]
Let $\pair{A,\prec}$ be a well founded relation and $X$ be a set. Given a $\Delta_0$ formula  $\vartheta(x,y,z)$, we can define trees $T_s\subseteq X^{<\omega}$ for every $s\in A$ by 
\[ T_s=\bigcup\{ \sigma^\frown T_q : q\prec s  \land \vartheta(\sigma,q,s)\}.\]
Moreover, suppose that for every $s\in A$ there exists $l\in\omega$ such that $\vartheta(\sigma,q,s)$ with $q\prec s$ implies $|\sigma|< l$. Then every $T_s$ is well founded. 
\end{thm}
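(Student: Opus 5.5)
The main difficulty is that $\B$ has no recursion along $\prec$: we cannot build the family $\pair{T_s:s\in A}$ stage by stage. The plan is therefore to give one explicit $\Delta_0$ definition of a set $F\subseteq A\times X^{<\omega}$ and set $T_s=\{\tau:\pair{s,\tau}\in F\}$. Unwinding the recursion shows what $F$ must be. A sequence $\tau$ lies in $T_s$ exactly when it decomposes along a finite descending chain. That is, there are $s=s_0\succ s_1\succ\dots\succ s_k$ and sequences $\sigma_1,\dots,\sigma_k$ such that $\vartheta(\sigma_{i+1},s_{i+1},s_i)$ holds for every $i<k$, and $\tau$ is an initial segment of $\sigma_1{}^\frown\cdots{}^\frown\sigma_k$. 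The initial-segment clause comes from the convention that $\sigma^\frown S$ contains all $\rho\sqsubseteq\sigma$.

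Such a witness is a finite sequence $((s_1,\sigma_1),\dots,(s_k,\sigma_k))$ of elements of $A\times X^{<\omega}$. Now $A\times X^{<\omega}$ and $(A\times X^{<\omega})^{<\omega}$ exist in $\B$ by the listed properties. So $F$ is obtained by $\Delta_0$ Separation from $A\times X^{<\omega}$, with the existential over witnesses bounded by the set $(A\times X^{<\omega})^{<\omega}$. Concatenation of a finite sequence of sequences is $\Delta_0$ once we quantify over a witness for the graph of the iterated concatenation. Such a witness is again a finite sequence of partial concatenations, lying in $(X^{<\omega})^{<\omega}$, so the whole condition stays bounded.

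Next we check that $T_s$ satisfies the defining equation. For the inclusion $\supseteq$: if $q\prec s$, $\vartheta(\sigma,q,s)$ and $\tau\in T_q$ via a chain starting at $q$, then prepending $(q,\sigma)$ gives a chain starting at $s$. This covers $\sigma^\frown\tau$ and all initial segments of $\sigma$. For $\subseteq$: take a chain for $\tau\in T_s$. If $k=0$ then $\tau=\emptyset$, which lies in every $\sigma^\frown T_q$; we should remark that the empty-union case forces $T_s=\emptyset$, so degenerate minimal $s$ need this convention checked. Otherwise split off the first link $(s_1,\sigma_1)$. Either $\tau\sqsubseteq\sigma_1$, or $\tau=\sigma_1{}^\frown\tau'$ where $\tau'$ is witnessed in $T_{s_1}$ by the tail of the chain. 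Closure of each $T_s$ under initial segments is built into the definition. Uniqueness of such a family can be noted by $\prec$-induction on a $\Delta_0$ property, though it is not strictly needed.

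For well foundedness, fix $s$ and the bound $l$, and suppose toward a contradiction that $T_s$ has an infinite branch. We cannot form that branch as a set, so we argue by minimality instead. Let $D$ be the set of $q\in A$ such that $T_q$ is ill founded, where ill founded means there is a nonempty $Y\subseteq T_q$ with no minimal element under immediate extension. This is not $\Delta_0$, since it quantifies over subsets, and this is the main obstacle. I would work instead with the single set $F$. Call a pair $\pair{q,\tau}\in F$ bad if it lies in a nonempty subset of $F$ closed under suitable one-step extensions. One then transfers badness down the chain: a bad node of length greater than $l_q$ in $T_q$ passes through some $\sigma$ with $\vartheta(\sigma,q',q)$ into a bad node of $T_{q'}$ with $q'\prec q$. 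The relevant subset of $A$ is then definable by $\Delta_0$ Separation from the given bad set $Y$. It has no $\prec$-minimal element, contradicting well foundedness of $\prec$.

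The delicate point is that the length bound $l$ depends on $q$. Going from level $q$ to $q'$ therefore needs the bound for $q$, chosen without any choice principle or collection. I would avoid needing these bounds uniformly by only ever using them one step at a time inside the $\Delta_0$ definition of the descended set.
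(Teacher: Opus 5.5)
\textbf{There is a genuine gap, and it cannot be closed: the statement as written is false.}

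Your definability half is fine, with one correction: require chains of length $k\ge 1$. That settles the degenerate case you flagged, since for $\prec$-minimal $s$ the recursion forces $T_s=\emptyset$.

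The gap is the transfer step in the well-foundedness half. From a node of the bad set $Y$ lying past a stem $\sigma$ of $T_q$, you pass to ``a bad node of $T_{q'}$'' for a \emph{single} $q'\prec q$. But the part of $T_q$ above $\sigma$ is the union of the copies $\sigma^\frown T_{q'}$ over \emph{all} $q'\prec q$ with $\vartheta(\sigma,q',q)$. The immediate extension in $Y$ of a node in one copy may lie only in another copy. An infinite union of well founded trees need not be well founded, so badness need not localize in any one $T_{q'}$.

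Here is a counterexample to the statement. Take $A=\omega+1$ ordered by $\in$, $X=\{0\}$, and $\vartheta(\sigma,q,s):\equiv\sigma=(0)$, so that $l=2$ bounds every stem. Write $0^k$ for the sequence of $k$ zeros. Then $T_0=\emptyset$ and $T_n=\{0^k:k\le n\}$ for $n\in\omega$. Hence $T_\omega=\bigcup_{n\in\omega}(0)^\frown T_n=\{0\}^{<\omega}$. Every node $0^k$ has the immediate successor $0^{k+1}$, so $Y=T_\omega$ has no minimal element, even though each $T_n$ is finite.

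What is missing is a hypothesis that prevents copies from merging. For instance: for each $s$, distinct pairs $(\sigma,q)$ with $q\prec s$ and $\vartheta(\sigma,q,s)$ have $\sqsubseteq$-incomparable stems. The paper's constructions are designed to satisfy this, since their stems carry labels identifying the subtree they lead to. Under this hypothesis, a node of $T_s$ extending some stem determines that stem and its $q$. By induction along chains, the nodes of $T_s$ strictly above the position $\rho$ of a copy of $T_q$ are then exactly the $\rho^\frown\tau'$ with $\emptyset\neq\tau'\in T_q$, and your transfer goes through.

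With that hypothesis, your argument is just the contrapositive of the paper's. The paper forms the set (which is $\Delta_0$ in $Y$) of those $q$ whose copies $Y$ enters, takes a $\prec$-minimal such $q$, and observes that a longest node of $Y$ inside that copy is minimal in $Y$. That last observation tacitly uses the same non-merging property, and it likewise fails in the example above.
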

\begin{proof}
It is easy to see that the relation $\tau\in T_s$ is $\Delta_0$. Given  $Y\subseteq T_s$ nonempty, consider the set $B=\{q\in A : \exists \tau \in Y \, \exists \rho\, (\rho\sqsubseteq  \tau\land \tau\in \rho^\frown T_q)\}$. If $B$ is empty, then $Y\subseteq \{\sigma\colon |\sigma|<l\}$ for some $l\in\omega$, and therefore there is a minimal node.  Suppose not, and let $q$ be minimal in $B$. Fix $\rho$ such that $Y\cap \rho^\frown T_q \neq\emptyset $. As before,  let $l\in\omega$ bound the length of every stem of $T_{q}$, that is,  of every $\sigma$ such that $\vartheta(\sigma, r,q)$ for some $r\prec q$. Then $Y\cap \rho^\frown T_q\subseteq \{\tau : |\tau|< |\rho|+l\}$ and every $\tau\in Y\cap \rho^\frown T_q$  of maximal length is minimal in $Y$. 
\end{proof}

\begin{defin}
    We assume a fixed coding for formulas in the language of set theory; we will not use distinct notation for formulas and codes for formulas. Let $\{x_i:i\in \omega\}$ denote the set of variables indexed by $\omega$. By $FV(\theta)$ we mean the set of indices of free variables in the formula $\theta$. A variable assignment is a sequence $\sigma\in a^{<\omega}$. For a set $a$ we denote by $Th(a,\in)$ the unique set of pairs $\pair{\theta,\sigma}$, where $\sigma\in a^{<\omega}$ is a variable assignment such that $|\sigma|> \max FV(\theta)$, such that $Th(a,\in)$ agrees with atomic facts: 
    \begin{enumerate}
        \item  $\pair{x_i\in x_j, \sigma}\in Th(a,\in)\;\leftrightarrow\; \sigma(i)\in \sigma(j)$
        \item $\pair{x_i = x_j, \sigma}\in Th(a,\in)\;\leftrightarrow\; \sigma(i)= \sigma(j)$
    \end{enumerate}
    and satisfies the Tarski conditions:
 \begin{enumerate}
        \item  $\pair{\theta\wedge \psi, \sigma}\in Th(a,\in)\;\leftrightarrow\; (\pair{\theta, \sigma}\in Th(a,\in)\wedge \pair{\psi, \sigma}\in Th(a,\in))$
        \item  $\pair{\theta\vee \psi, \sigma}\in Th(a,\in)\;\leftrightarrow\; (\pair{\theta, \sigma}\in Th(a,\in)\vee \pair{ \psi, \sigma}\in Th(a,\in))$
        \item   $\pair{\neg \theta, \sigma}\in Th(a,\in)\;\leftrightarrow\; \pair{\theta, \sigma}\notin Th(a,\in)$
        \item  $\pair{\exists x_i\,  \theta, \sigma}\in Th(a,\in)\;\leftrightarrow \;\exists b\in a\;\pair{\theta, \sigma[b/x_i]}\in Th(a,\in)$
         \item  $\pair{\forall  x_i\,  \theta, \sigma}\in Th(a,\in)\;\leftrightarrow\; \forall b\in a\;\pair{\theta, \sigma[b/x_i]}\in Th(a,\in)$
    \end{enumerate}
    where $$\sigma[b/x_i]=\begin{cases}(\sigma\setminus\{\pair{i, \sigma(i)}\})\cup \{\pair{i,b}\}) \;\;\text{ if }\;\;i<|\sigma|\\
    \sigma\cup\bigcup_{|\sigma|\leq j\leq i}\{\pair{j,b}\} \,\quad \quad\quad\text{ if }\; \; i\geq|\sigma|
    \end{cases}$$ that is, we change the value of $\sigma$ at $i$ to $b$ if it is defined at $i$, otherwise we extend $\sigma$ so that at $i$ it has value $b$. We will sometimes write $\theta[b_0,\dots, b_n]$ to mean the pair $\pair{\theta,(b_0,\dots, b_n)}$.  We will also use other lowercase letters ($u,v,y,z,w\dots$) to denote variables and, for any variable assignment $\sigma$, we will write $\sigma(u)$ to mean $\sigma(i)$ where $i$ is the index for the variable $u$. We will also use the standard satisfaction predicate $a\models \theta[u_0,\dots,u_n]$ to mean $\pair{\theta,(u_0,\dots, u_n)}\in Th(a,\in)$. 
\end{defin}

\begin{thm}\label{thisatrue}
    $\C$ proves that for all $a$, $Th(a,\in)$ exists.
\end{thm}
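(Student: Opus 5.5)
The plan is to encode truth values as collapses of well founded trees, build all the relevant trees at once using the recursion scheme of the preceding theorem, apply Axiom Beta once, and then carve out $Th(a,\in)$ by $\Delta_0$ Separation. Concretely, I will define, by recursion on the subformula order of (codes of) formulas $\theta$, trees $\top_\theta$ and $\bot_\theta$. I will also define, for each pair $\pair{\theta,\sigma}$ with $\sigma\in a^{<\omega}$, a tree $S_{\pair{\theta,\sigma}}$. The inductive invariants are:
\[
\pi\top_\theta\neq\pi\bot_\theta,\qquad \pi S_{\pair{\theta,\sigma}}\in\{\pi\top_\theta,\pi\bot_\theta\}.
\]
Once this is in place, I will set
\[
Th(a,\in)=\{\pair{\theta,\sigma}: \pi S_{\pair{\theta,\sigma}}=\pi\top_\theta\}
\]
and verify the atomic and Tarski clauses.

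The trees are chosen as follows, with $\forall$ treated as $\neg\exists\neg$ and $\vee$ as $\neg(\neg\wedge\neg)$, or handled dually.
\begin{itemize}
\item \emph{Atomic formulas.} Let $\top_\theta=\{\emptyset,(0)\}$, which collapses to $1$, and $\bot_\theta=\{\emptyset\}$, which collapses to $0$. Put $S_{\pair{\theta,\sigma}}=\top_\theta$ if $\sigma(i)\in\sigma(j)$ (resp.\ $\sigma(i)=\sigma(j)$), and $\bot_\theta$ otherwise. This case split is a $\Delta_0$ condition.
\item \emph{Negation.} Let $S_{\pair{\neg\theta,\sigma}}=Pair(S_{\pair{\theta,\sigma}},\top_\theta)$, $\top_{\neg\theta}=Pair(\bot_\theta,\top_\theta)$ and $\bot_{\neg\theta}=Pair(\top_\theta,\top_\theta)$. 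The collapse of $S_{\pair{\neg\theta,\sigma}}$ is $\{\pi\top_\theta\}$ when $\theta$ holds and $\{\pi\bot_\theta,\pi\top_\theta\}$ when it fails. These two values are distinct precisely because $\pi\top_\theta\neq\pi\bot_\theta$.
\item \emph{Existential quantifier.} Let $S_{\pair{\exists x_i\theta,\sigma}}$ be the tree whose root has immediate subtrees $S_{\pair{\theta,\sigma[b/x_i]}}$ for $b\in a$, together with one extra copy of $\bot_\theta$. Let $\top_{\exists x_i\theta}=Pair(\top_\theta,\bot_\theta)$ and $\bot_{\exists x_i\theta}=Pair(\bot_\theta,\bot_\theta)$. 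The collapse of $S_{\pair{\exists x_i\theta,\sigma}}$ is $\{\pi\bot_\theta\}$ if no witness exists and $\{\pi\top_\theta,\pi\bot_\theta\}$ otherwise.
\item \emph{Conjunction.} Let $S_{\pair{\theta\wedge\psi,\sigma}}=Pair(Pair(S_{\pair{\theta,\sigma}},\bot_\theta),Pair(S_{\pair{\psi,\sigma}},\bot_\psi))$, with $\top$ and $\bot$ defined analogously. Because the components come from different formulas, I will first encode the value of each conjunct as $\emptyset$ or $\{\emptyset\}$ via an ``$\exists$-style'' normalization; this avoids accidental coincidences between, say, $\pi\top_\theta$ and $\pi\bot_\psi$.
\end{itemize}

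All these trees are to be obtained simultaneously from the preceding theorem. The index set will be $A=(\omega\times a^{<\omega})\times 3$, where the extra coordinate labels $S$, $\top$ or $\bot$, and the relation $\prec$ is induced by the subformula order on $\omega$. The node set is $X=a\cup\omega$ (or $\omega$ alone, if quantifier branches are indexed by assignments rather than by elements). In each clause the stems $\sigma$ have length at most $2$, so the boundedness hypothesis holds and every tree is well founded. The defining condition $\vartheta$ merely inspects the syntactic shape of $\theta$, the atomic truth of $\sigma$, and whether $\sigma'=\sigma[b/x_i]$ for some $b\in a$; all of this is $\Delta_0$ in $a$ and $a^{<\omega}$. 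I will then join all these trees below a single root, apply Axiom Beta once to get a single collapsing function $\pi$, and define $Th(a,\in)$ by $\Delta_0$ Separation with parameter $\pi$.

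The verification proceeds by induction on formulas. The set of $\theta\in\omega$ at which either invariant fails, or at which the relevant Tarski clause fails, is $\Delta_0$ definable from $\pi$. If it were nonempty, it would have a least element in $\omega$, and the clause computations above yield a contradiction. The main obstacle I expect is bookkeeping: keeping the truth values genuinely distinct at every step, most delicately in the binary connectives, where values from different formulas could coincide, and checking that the whole simultaneous construction really fits the $\Delta_0$ recursion scheme with uniformly bounded stems.
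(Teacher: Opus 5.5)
Your overall plan matches the paper's: encode truth values as collapses of well founded trees $\top_\theta,\bot_\theta,S_{\pair{\theta,\sigma}}$, build one global tree, apply Axiom Beta once, cut out $Th(a,\in)$ by $\Delta_0$ Separation, and verify the clauses by $\Delta_0$ induction on $\omega$. Your atomic, negation and existential clauses are correct. Handling $\neg$ via $Pair(S_{\pair{\theta,\sigma}},\top_\theta)$ is a fine alternative to the paper's negation normal form.

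\textbf{The gap is the conjunction clause.} With your tree,
\[
\pi S_{\pair{\theta\wedge\psi,\sigma}}=\{X,Y\},\qquad X\in\bigl\{\{\pi\top_\theta,\pi\bot_\theta\},\{\pi\bot_\theta\}\bigr\},\quad Y\in\bigl\{\{\pi\top_\psi,\pi\bot_\psi\},\{\pi\bot_\psi\}\bigr\}.
\]
Take the false case where $\theta$ holds and $\psi$ fails. Its value $\{\{\pi\top_\theta,\pi\bot_\theta\},\{\pi\bot_\psi\}\}$ has a two-element member. The false case where both fail gives a set whose members are all singletons. These two false configurations therefore collapse to different sets, and neither equals $\pi\top_{\theta\wedge\psi}$. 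So no single $\bot_{\theta\wedge\psi}$ can satisfy your invariant $\pi S_{\pair{\theta\wedge\psi,\sigma}}\in\{\pi\top_{\theta\wedge\psi},\pi\bot_{\theta\wedge\psi}\}$. Your negation and $\exists$ clauses depend on that invariant, so they then evaluate formulas such as $\neg(\theta\wedge\psi)$ or $\exists x(\theta\wedge\psi)$ incorrectly.

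The remedy you hint at cannot work. Normalizing each conjunct's value to $\emptyset$ or $\{\emptyset\}$ is impossible, because any tree containing $S_{\pair{\theta,\sigma}}$ as a proper subtree collapses to a set of strictly larger rank than $\pi S_{\pair{\theta,\sigma}}$. Also, cross-formula coincidences between $\pi\top_\theta$ and $\pi\bot_\psi$ are not the real obstacle; ordered pairs dispose of them.

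\textbf{The missing idea is absorption.} Put the actual pair of conjunct values next to constant branches for every false configuration. As in the paper, let the root of $S_{\pair{\theta\wedge\psi,\sigma}}$ carry four branches:
$OPair(S_{\pair{\theta,\sigma}},S_{\pair{\psi,\sigma}})$, $OPair(\top_\theta,\bot_\psi)$, $OPair(\bot_\theta,\top_\psi)$ and $OPair(\bot_\theta,\bot_\psi)$.
Let $\top_{\theta\wedge\psi}$ carry all four combinations of $\top/\bot$, and let $\bot_{\theta\wedge\psi}$ carry only the three false ones. Then
\[
\pi S_{\pair{\theta\wedge\psi,\sigma}}=\{\pair{\pi S_{\pair{\theta,\sigma}},\pi S_{\pair{\psi,\sigma}}},\pair{\pi\top_\theta,\pi\bot_\psi},\pair{\pi\bot_\theta,\pi\top_\psi},\pair{\pi\bot_\theta,\pi\bot_\psi}\}.
\]
In the true case this is $\pi\top_{\theta\wedge\psi}$. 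In every false case the first element coincides with one of the three constant pairs, giving exactly $\pi\bot_{\theta\wedge\psi}$. The two values differ because the four ordered pairs are distinct, which uses only $\pi\top_\theta\neq\pi\bot_\theta$ and $\pi\top_\psi\neq\pi\bot_\psi$, with no comparison across formulas. With this clause (stems of length $3$, still bounded) the rest of your argument goes through.
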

\begin{proof}
We consider all formulas to be in negation normal form, that is, negation only appears in front of atomic formulas. We define for each formula $\theta$ trees $\top_\theta$, $\bot_\theta$, and for evaluation of the variables $\sigma$ a tree $S_{\pair{\theta,\sigma}}$ such that $$(\pair{\theta,\sigma}\in Th(a,\in))\;\leftrightarrow\; (\pi S_{\pair{\theta,\sigma}}=\pi\top_\theta) \;\leftrightarrow\; (\pi S_{\pair{\theta,\sigma}}\neq\pi\bot_\theta),$$
where $\pi$ denotes the collapsing function. We describe a construction by recursion on the complexity of $\theta$. Hereinafter,  $\sigma$ is a variable assignment that includes all the free variables of the formula being considered.
\begin{enumerate}
    \item For $\theta$ an atomic or the negation of an atomic formula define $\top_\theta=\{\emptyset, (0),(0,0),(1)\}$,  $\bot_\theta=\{\emptyset,(0),(0,0)\}$, and 
    $$S_{\pair{\theta,\sigma}}=\begin{cases}
        \top_{\theta}\;\;\ \text{ if } \pair{\theta,\sigma} 
 \text{ is true} \\
 \bot_{\theta}\;\;\; \text{ otherwise}
 \end{cases}$$ We note that $\pi \top_{\theta}=\{\{\emptyset\},\emptyset\}$ and $\pi \bot_{\theta}=\{\{\emptyset\}\}$. Such choice is to ensure the two have the same rank.
    
    \item $\top_{\theta_0\wedge \theta_1}$ is the union of the following:
    \begin{enumerate}
        \item $(\pair{\theta_0\wedge\theta_1,1,1})^\frown OPair(\top_{\theta_0},\top_{\theta_1})$
        \item $(\pair{\theta_0\wedge\theta_1,1,0})^\frown OPair(\top_{\theta_0},\bot_{\theta_1})$
        \item $(\pair{\theta_0\wedge\theta_1,0,1})^\frown OPair(\bot_{\theta_0},\top_{\theta_1})$
        \item $(\pair{\theta_0\wedge\theta_1,0,0})^\frown OPair(\bot_{\theta_0},\bot_{\theta_1})$
    \end{enumerate}

In pictures, 

    \[  \top_{\theta_0\land\theta_1}\quad=\quad \raisebox{-0.5\height}{	\begin{tikzpicture}[
		level distance=12mm,
		sibling distance=30mm,
		every node/.style={font=\normalsize},
		edge from parent/.style={draw}
		]
		
		\node {$\bullet$}
		child { node {$\pair{\top_{\theta_0},\top_{\theta_1}}$} }
		child { node {$\pair{\top_{\theta_0},\bot_{\theta_1}}$} }
		child { node {$\pair{\bot_{\theta_0},\top_{\theta_1}}$} }
		child { node {$\pair{\bot_{\theta_0},\bot_{\theta_1}}$} };
\end{tikzpicture} }   \] 
    
    $\bot_{\theta_0\wedge \theta_1}$ will be the union of the following:
    \begin{enumerate}
        \item $(\pair{\theta_0\wedge\theta_1,1,0})^\frown OPair(\top_{\theta_0},\bot_{\theta_1})$
        \item $(\pair{\theta_0\wedge\theta_1,0,1})^\frown OPair(\bot_{\theta_0},\top_{\theta_1})$
        \item $(\pair{\theta_0\wedge\theta_1,0,0})^\frown OPair(\bot_{\theta_0},\bot_{\theta_1})$
    \end{enumerate}

    \[  \bot_{\theta_0\land\theta_1}\quad=\quad \raisebox{-0.5\height}{	\begin{tikzpicture}[
		level distance=12mm,
		sibling distance=30mm,
		every node/.style={font=\normalsize},
		edge from parent/.style={draw}
		]
		
		\node {$\bullet$}
		child { node {$\pair{\top_{\theta_0},\bot_{\theta_1}}$} }
		child { node {$\pair{\bot_{\theta_0},\top_{\theta_1}}$} }
		child { node {$\pair{\bot_{\theta_0},\bot_{\theta_1}}$} };
\end{tikzpicture} }   \] 
     $S_{ \pair{\theta_0\wedge \theta_1,\sigma}}$ will be the union of the following 
  \begin{enumerate}
        \item $(\pair{\theta_0\wedge\theta_1,\sigma,1,1})^\frown OPair(S_{\pair{\theta_0,\sigma}},S_{\pair{\theta_1,\sigma}})$
        \item $(\pair{\theta_0\wedge\theta_1,\sigma,1,0})^\frown OPair(\top_{\theta_0},\bot_{\theta_1})$
        \item $(\pair{\theta_0\wedge\theta_1,\sigma,0,1})^\frown OPair(\bot_{\theta_0},\top_{\theta_1})$
        \item $(\pair{\theta_0\wedge\theta_1,\sigma,0,0})^\frown OPair(\bot_{\theta_0},\bot_{\theta_1})$
    \end{enumerate}

    \[  S_{\pair{\theta_0\land\theta_1,\sigma}}\quad=\quad \raisebox{-0.5\height}{	\begin{tikzpicture}[
		level distance=12mm,
		sibling distance=30mm,
		every node/.style={font=\normalsize},
		edge from parent/.style={draw}
		]
		
		\node {$\bullet$}
		child { node {$\pair{S_{\pair{\theta_0,\sigma}},S_{\pair{\theta_1,\sigma}}}$} }
		child { node {$\pair{\top_{\theta_0},\bot_{\theta_1}}$} }
		child { node {$\pair{\bot_{\theta_0},\top_{\theta_1}}$} }
		child { node {$\pair{\bot_{\theta_0},\bot_{\theta_1}}$} };
\end{tikzpicture} }   \]

Before moving on to describe the other trees, we will provide some motivation for the construction in the case where $\theta_0$ and $\theta_1$ are standard formulas. Assume that we have built trees $\top_{\theta_i}$, $\bot_{\theta_i}$, and $S_{\pair{\theta_i,\sigma}}$, for  $i\leq 1$,  such that
$$\pair{\theta_i,\sigma}\in Th(a,\in) \leftrightarrow (\pi S_{\pair{\theta_i,\sigma}}=\pi \top_{\theta_i}) \leftrightarrow (\pi S_{\pair{\theta_i,\sigma}}\neq \pi \bot_{\theta_i}). $$
By construction of $\top_{\theta_0\wedge \theta_1}$, $\bot_{\theta_0\wedge \theta_1}$, and $S_{\pair{\theta_0\wedge \theta_1,\sigma}}$ if $\theta_0\wedge \theta_1$ evaluated at $\sigma$ is true, then by hypothesis 
$$\pi OPair( S_{\pair{\theta_0,\sigma}}, S_{\pair{\theta_1,\sigma}})=\pair{\pi S_{\pair{\theta_0,\sigma}},\pi S_{\pair{\theta_1,\sigma}}}=\pair{\pi\top_{\theta_0},\pi\top_{\theta_1}}=\pi OPair(\top_{\theta_0},\top_{\theta_1})$$
which implies $\pi S_{\pair{\theta_0\wedge \theta_1,\sigma}}=\pi \top_{\theta_0\wedge \theta_1}\neq \pi \bot_{\theta_0\wedge \theta_1}$. The other implications are  proved similarly.

    \item $\top_{\theta_0\vee \theta_1}$ will be the union of the following:
    \begin{enumerate}
        \item $(\pair{\theta_0\vee\theta_1,1,1})^\frown OPair(\top_{\theta_0},\top_{\theta_1})$
        \item $(\pair{\theta_0\vee\theta_1,1,0})^\frown OPair(\top_{\theta_0},\bot_{\theta_1})$
        \item $(\pair{\theta_0\vee\theta_1,0,1})^\frown OPair(\bot_{\theta_0},\top_{\theta_1})$  
    \end{enumerate}
    \[  \top_{\theta_0\lor\theta_1} \quad=\quad \raisebox{-0.5\height}{%
        \begin{tikzpicture}[
		level distance=12mm,
		sibling distance=30mm,
		every node/.style={font=\normalsize},
		edge from parent/.style={draw}]
		\node {$\bullet$}
		child { node {$\pair{\top_{\theta_0},\top_{\theta_1}}$} }
		child { node {$\pair{\top_{\theta_0},\bot_{\theta_1}}$} }
		child { node {$\pair{\bot_{\theta_0},\top_{\theta_1}}$} };
        \end{tikzpicture}%
        }    
\] 
    $\bot_{\theta_0\vee \theta_1}$ will be the union of the following:
    \begin{enumerate}
         \item $(\pair{\theta_0\vee\theta_1,1,1})^\frown OPair(\top_{\theta_0},\top_{\theta_1})$
        \item $(\pair{\theta_0\vee \theta_1,1,0})^\frown OPair(\top_{\theta_0},\bot_{\theta_1})$
        \item $(\pair{\theta_0\vee\theta_1,0,1})^\frown OPair(\bot_{\theta_0},\top_{\theta_1})$
        \item $(\pair{\theta_0\vee\theta_1,0,0})^\frown OPair(\bot_{\theta_0},\bot_{\theta_1})$
    \end{enumerate}

    \[  \bot_{\theta_0\lor\theta_1}\quad=\quad \raisebox{-0.5\height}{	\begin{tikzpicture}[
		level distance=12mm,
		sibling distance=30mm,
		every node/.style={font=\normalsize},
		edge from parent/.style={draw}
		]
		
		\node {$\bullet$}
	child { node {$\pair{\top_{\theta_0},\top_{\theta_1}}$} }
	child { node {$\pair{\top_{\theta_0},\bot_{\theta_1}}$} }
		child { node {$\pair{\bot_{\theta_0},\top_{\theta_1}}$} }
	child { node {$\pair{\bot_{\theta_0},\bot_{\theta_1}}$} };
\end{tikzpicture} }   
\] 

    $S_{ \pair{\theta_0\vee \theta_1,\sigma}}$  will be the union of the following
    \begin{enumerate}
        \item $(\pair{\theta_0\vee\theta_1,\sigma,1,1})^\frown OPair(\top_{\theta_0},\top_{\theta_1})$
        \item $(\pair{\theta_0\vee\theta_1,\sigma,1,0})^\frown OPair(\top_{\theta_0},\bot_{\theta_1})$
        \item $(\pair{\theta_0\vee\theta_1,\sigma,0,1})^\frown OPair(\bot_{\theta_0},\top_{\theta_1})$  
        \item $(\pair{\theta_0\vee\theta_1,\sigma,0,0})^\frown OPair(S_{\pair{\theta_0,\sigma}},S_{\pair{\theta_1,\sigma}})$
    \end{enumerate}

\[  S_{\pair{\theta_0\lor\theta_1,\sigma}}\quad=\quad \raisebox{-0.5\height}{	\begin{tikzpicture}[
		level distance=12mm,
		sibling distance=30mm,
		every node/.style={font=\normalsize},
		edge from parent/.style={draw}
		]
		
		\node {$\bullet$}
		child { node {$\pair{\top_{\theta_0},\top_{\theta_1}}$} }
		child { node {$\pair{\top_{\theta_0},\bot_{\theta_1}}$} }
		child { node {$\pair{\bot_{\theta_0},\top_{\theta_1}}$} }
		child { node {$\pair{S_{\pair{\theta_0,\sigma}},S_{\pair{\theta_1,\sigma}}}$} };
\end{tikzpicture} }   
\] 
    
    \item $\top_{\forall x\, \theta}=(\forall x\, \theta,1)^\frown \top_{\theta}$\smallskip

    $\bot_{\forall x\,\theta}= (\forall x\,\theta,1)^\frown \top_{\theta}\cup (\forall x\, \theta,0)^\frown\bot_{\theta}$

\[  \top_{\forall x\, \theta}\quad=\quad \raisebox{-0.5\height}{	\begin{tikzpicture}[
		level 1/.style={level distance =6mm},
        level 2/.style={level distance =12mm},
		sibling distance=12mm,
		every node/.style={font=\normalsize},
		edge from parent/.style={draw}
		]
		
		\node {$\bullet$}
        child { node {$\bullet$}
		child { node {$\top_\theta$} }};
\end{tikzpicture} }  \qquad\quad 
\bot_{\forall x\, \theta}\quad=\quad \raisebox{-0.5\height}{	\begin{tikzpicture}[
		level 1/.style={level distance =6mm},
        level 2/.style={level distance =12mm},
		sibling distance=12mm,
		every node/.style={font=\normalsize},
		edge from parent/.style={draw}
		]
		
		\node {$\bullet$}
        child{ node {$\bullet$}
		child { node {$\top_\theta$} }
		child { node {$\bot_\theta$} }};
\end{tikzpicture} }   \] 

     $S_{ \pair{\forall x\,\theta,\sigma}}= (\pair{\forall x\,\theta,\sigma},1)^\frown \top_{\theta}\cup \bigcup_{s\in a} (\pair{\forall x\,\theta,\sigma},\pair{1,s})^\frown S_{\pair{\theta,\sigma[s/x]}})$.
    \[  S_{\pair{\forall x\, \theta,\sigma}}\quad=\quad \raisebox{-0.5\height}{	\begin{tikzpicture}[
		level 1/.style={level distance =6mm},
        level 2/.style={level distance =12mm},
		sibling distance=30mm,
		every node/.style={font=\normalsize},
		edge from parent/.style={draw}
		]
		
		\node {$\bullet$}
        child { node {$\bullet$}
		child { node {$\top_{\theta}$} }
		child { node {$S_{\pair{\theta, \sigma[d/x]}}$} }
		child { node {$\cdots$} edge from parent[dotted] 
			node[midway, right, xshift=20pt] {\text{all} $d\in a$} }};
\end{tikzpicture} }   \]  
    \item $\top_{\exists x\,\theta}= (\exists x\,\theta,1)^\frown \top_{\theta}\cup (\exists x\, \theta,0)^\frown\bot_{\theta}$ \smallskip

    $\bot_{\exists x\,\theta}=(\exists x\, \theta,0)^\frown \bot_{\theta}$

   \[  \top_{\exists x\, \theta}\quad=\quad \raisebox{-0.5\height}{	\begin{tikzpicture}[
		level 1/.style={level distance =6mm},
        level 2/.style={level distance =12mm},
		sibling distance=12mm,
		every node/.style={font=\normalsize},
		edge from parent/.style={draw}
		]
		
		\node {$\bullet$}
        child{ node {$\bullet$}
		child { node {$\top_\theta$} }
		child { node {$\bot_\theta$} }};
\end{tikzpicture} }   \qquad\quad  
\bot_{\exists x\, \theta}\quad= \quad\raisebox{-0.5\height}{	\begin{tikzpicture}[
		level 1/.style={level distance =6mm},
        level 2/.style={level distance =12mm},
		sibling distance=12mm,
		every node/.style={font=\normalsize},
		edge from parent/.style={draw}
		]
		
		\node {$\bullet$}
        child {node {$\bullet$}
		child { node {$\bot_\theta$} }};
\end{tikzpicture} }   \] 

    $S_{\pair{ \exists x\,\theta,\sigma}}= (\pair{\exists x\,\theta,\sigma},0)^\frown \bot_{\theta}\cup\bigcup_{s\in a} (\pair{\exists x\,\theta,\sigma},\pair{1,s})^\frown S_{\pair{ \theta,\sigma[s/x]}})$

    \[  S_{\pair{\exists x\, \theta,\sigma}}\quad=\quad \raisebox{-0.5\height}{	\begin{tikzpicture}[
		,
		level 1/.style={level distance =6mm},
        level 2/.style={level distance =12mm},
		sibling distance=30mm,
		every node/.style={font=\normalsize},
		edge from parent/.style={draw}
		]
		
		\node {$\bullet$}
        child { node {$\bullet$}
		child { node {$\bot_{\theta}$} }
		child { node {$S_{\pair {\theta, \sigma[d/x]}}$} }
	child { node {$\cdots$} edge from parent[dotted] 
		node[midway, right, xshift=20pt] {\text{all} $d\in a$} }};
\end{tikzpicture} }   \] 
\end{enumerate}
 
We can justify the above recursion in $\B$. In fact, there is a large enough set $X$ such that $X^{<\omega}$  contains as distinct subtrees all the trees described above. In more detail, let $Form$ denote the set of formulas.  The set of variable assignments is by definition $a^{<\omega}$. The tree
$$T= \bigcup_{\theta\in Form,\sigma \in a^{<\omega}}(\theta,\top)^\frown\top_\theta\cup(\theta,\bot)^\frown\bot_\theta\cup  (\theta,\sigma)^\frown S_{\pair{\theta,\sigma}}$$
is a $\Delta_0$-definable subtree of $X^{<\omega}$, where $$X= a\cup \omega \cup \omega^2\cup Form\times a^{<\omega}\cup Form\times \{\top,\bot\}.$$
Indeed, by the bookkeeping used to define the tree nodes,  we may test locally whether a sequence of $X^{<\omega}$ belongs to $T$.  So the theory $\B$ is sufficient to prove the existence of the tree $T$, which is well founded since $S_{\pair{\theta,\sigma}},\top_\theta$, and $\bot_\theta$ all have finite height. Let $\pi: T\rightarrow y$ be the collapsing function on $T$. The collection 
$$H=\{\pair{\theta,\sigma}:\pi(\theta,\sigma)=\pi(\theta,\top)\}$$
is $\Delta_0$ relative to $\pi$ and $T$ so it exists by $\Delta_0$ Separation. We observe that each tree $S_{\pair{\theta,\sigma}},\top_\theta$, and $\bot_\theta$ can be identified as a subtree in $T$ so we have
 $$\pi S_{\pair{\theta,\sigma}}=\pi(\theta,\sigma)\;,\;\pi \top_{\theta}=\pi(\theta,\top)\;\text{, and }\;\pi \bot_{\theta}=\pi(\theta,\bot).$$
We therefore have
$$\pi(\theta,\sigma)=\pi(\theta,\top)\;\leftrightarrow\; \pi S_{\pair{\theta,\sigma}}=\pi \top_\theta$$ 
and similarly
$$\pi(\theta,\sigma)=\pi(\theta,\bot)\;\leftrightarrow\; \pi S_{\pair{\theta,\sigma}}=\pi \bot_\theta$$ 
We would like to show that $H=Th(a,\in)$, that is, $H$ agrees with atomic facts and satisfies the Tarski conditions. This follows from an examination of the construction, we sketch the details.
We will also show, by induction on the complexity of the formula, that
$$\pi S_{\pair{\theta,\sigma}}=\pi\top_\theta\;\leftrightarrow\; \pi S_{\pair{\theta,\sigma}}\neq \pi\bot_\theta$$ which implies in particular that $\pi \top_\theta\neq \pi\bot_\theta$.  $H$ agrees on atomic facts of $(a,\in)$ by construction of $\top_{\theta},\bot_{\theta},S_{\pair{\theta,\sigma}}$ where $\theta$ is atomic or the negation of an atomic formula.
We will show the case for disjunction; the case for conjunction is very similar and was already hinted at in the construction. Assume that for each $i\leq 1$
$$ \pair{\theta_i,\sigma}\in H\; \leftrightarrow \; (\pi S_{\pair{\theta_i,\sigma}}= \pi\top_{\theta})\;\leftrightarrow\; (\pi S_{\pair{\theta_i,\sigma}}\neq \pi\bot_{\theta})
$$
We therefore have that the sets $$\pi OPair(\top_{\theta_0},\top_{\theta_1}),\pi OPair(\top_{\theta_0},\bot_{\theta_1}),\pi OPair(\bot_{\theta_0},\top_{\theta_1}),\text{ and }\pi OPair(\bot_{\theta_0},\bot_{\theta_1})$$
will be pairwise distinct under.
By construction, we have
\begin{enumerate}
    \item $\pi\top_{\theta_0\vee \theta_1}= \{\pair{\pi\top_{\theta_0},\pi\top_{\theta_1}},\pair{\pi\top_{\theta_0},\pi\bot_{\theta_1}},\pair{\pi\bot_{\theta_0},\pi\top_{\theta_1}}\}$
\item $\pi\bot_{\theta_0\vee \theta_1}= \{\pair{\pi\top_{\theta_0},\pi\top_{\theta_1}},\pair{\pi\top_{\theta_0},\pi\bot_{\theta_1}},\pair{\pi\bot_{\theta_0},\pi\top_{\theta_1}},\pair {\pi\bot_{\theta_0},\pi \bot_{\theta_1}}\}$
\item $\pi S_{\pair{\theta_0\vee \theta_1,\sigma}}= \{\pair{\pi\top_{\theta_0},\pi\top_{\theta_1}},\pair{\pi\top_{\theta_0},\pi\bot_{\theta_1}},\pair{\pi\bot_{\theta_0},\pi\top_{\theta_1}},\pair {\pi S_{\pair{\theta_0,\sigma}},\pi S_{\pair{ \theta_1,\sigma}}}\}$
\end{enumerate}
So in particular, we have that
$$\pi S_{\pair{\theta_0\vee \theta_1,\sigma}}=\pi \top_{\theta_0\vee \theta_1}\;\;\leftrightarrow\;\; \pair {\pi S_{\pair{\theta_0,\sigma}},\pi S_{\pair{\theta_1,\sigma}}}\in \{\pair{\pi\top_{\theta_0},\pi\top_{\theta_1}},\pair{\pi\top_{\theta_0},\pi\bot_{\theta_1}},\pair{\pi\bot_{\theta_0},\pi\top_{\theta_1}}\}$$
so 
$$(\pair {\pi S_{\pair{\theta_0,\sigma}},\pi S_{\pair{ \theta_1,\sigma}}}\neq \pair{\bot_{\theta_0},\bot_{\theta_1}})\;\leftrightarrow \;(\pi S_{\pair{\theta_0\vee \theta_1,\sigma}}=\pi \bot_{\theta_0\vee \theta_1})\;\leftrightarrow\; (\pi S_{\pair{\theta_0\vee \theta_1,\sigma}}=\pi \top_{\theta_0\vee \theta_1})$$
so 
$$(\pair{\theta_0\vee\theta_1,\sigma}\in H)\;\leftrightarrow\; (\pi S_{\pair{\theta_0\vee \theta_1,\sigma}}=\pi \top_{\theta_0\vee \theta_1})\;\leftrightarrow\; (\pi S_{\pair{\theta_0,\sigma}}=\pi\top_{\theta_0}\vee\pi S_{\pair{\theta_1,\sigma}}=\pi\top_{\theta_0})$$ 
which shows that $H$ satisfies the Tarski condition for disjunction.\\
\\
We show that $H$ satisfies the Tarski condition for the existential quantifier; the case for the universal quantifier is almost the same. Assume that for all variable assignments $\sigma$, which has index for $x$ in its domain
$$(\pair{\theta,\sigma}\in H)\;\;\leftrightarrow\;\; (\pi S_{\pair{\theta,\sigma}}=\pi \top_\theta)\;\;\leftrightarrow\;\;( \pi S_{\pair {\theta,\sigma}}\neq \pi \bot_\theta)$$
notice that this assumption is $\Delta_0$ relative to $T,\pi,H$ and therefore we only require $\Delta_0$ induction on $\omega$. In particular, $\top_\theta\neq \bot_\theta$.
\begin{enumerate}
    \item $\pi\top_{\exists x\theta}= \{\{\pi\top_{\theta},\pi\bot_{\theta}\}\}$
\item $\pi\bot_{\exists x\, \theta}= \{\{\pi\bot_\theta\}\}$
\item $\pi S_{\pair{\exists \theta,\sigma}}= \{\{\pi\bot_{\theta}\}\cup\{ \pi S_{\pair{\theta,\sigma[s/x]}}:s\in a\}\}$
\end{enumerate}
By the inductive hypothesis 
$$\forall s\in a\, (\pi S_{\pair{\theta,\sigma[s/x]}}\in \{\pi \top_\theta,\pi\bot_\theta\})$$
so
$$(\pi S_{\pair{\exists \theta,\sigma}}=\pi\top_{\exists \theta})\;\;\leftrightarrow\;\; (\exists s\in a\, \pi S_{\pair{\theta,\sigma[s/x]}}=\pi \top_{\theta})\;\;\leftrightarrow\;\;( \pi S_{\pair{\exists \theta,\sigma}}\neq\pi\bot_{\exists x\, \theta})$$
Therefore, $H$ satisfies the Tarski condition for the existential quantifier. This shows that $H$ agrees with atomic facts and satisfies the Tarski conditions, so $H=Th(a,\in)$.
\end{proof}

\begin{obs}\label{importantlater}
    In the above construction, for all $\theta$ and $\psi$ we have $\pi \top_\theta\neq \pi\bot_\psi$. This is proved by $\Delta_0$ induction on the complexity of the formula. By construction, any two formulas $\theta$ and $\psi$ have the same rank if and only if the trees $\top_\theta$, $\bot_\theta$, $\top_\psi$, and $\bot_\psi$ have the same height.
\end{obs}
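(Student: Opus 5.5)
The plan is to prove both parts by $\Delta_0$ induction on formula complexity, inside $\B$, using the single tree $T$ and collapse $\pi$ from the proof of Theorem~\ref{thisatrue}. The statements ``$\pi\top_\theta\neq\pi\bot_\psi$'' and ``$\pi\top_\theta,\pi\bot_\theta$ have rank $h$'' are $\Delta_0$ in the parameters $T,\pi$, so induction on $\omega$ applies to them.

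\textbf{Rank claim.} First I compute heights. The atomic and negated-atomic trees have height $2$; the $\wedge,\vee$ cases add $3$, since $OPair$ contributes $2$ levels below the new root plus the edge to it; the quantifier cases add $2$. At each stage $\top_\theta$ and $\bot_\theta$ have the same height, since the two trees at each stage are built from the same components. Next I show by induction that $\operatorname{rank}(\pi U)=\operatorname{height}(U)$ for $U\in\{\top_\theta,\bot_\theta\}$. Whenever a node has several children, at least one child realizes the maximal height. Hence the rank of the collapse equals the height, and ``same rank iff same height'' follows immediately. This also settles $\pi\top_\theta\neq\pi\bot_\psi$ whenever the heights differ, since sets of different rank are different.

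\textbf{Distinctness, equal heights.} Assume $\operatorname{height}(\top_\theta)=\operatorname{height}(\bot_\psi)$ and do a case split on the outermost symbols of $\theta$ and $\psi$.
\begin{itemize}
\item Both (negated) atomic: $\{\{\emptyset\},\emptyset\}\neq\{\{\emptyset\}\}$.
\item $\theta,\psi$ both conjunctions or both disjunctions: compare cardinalities of the top-level sets of ordered pairs ($4$ vs.\ $3$ or $3$ vs.\ $4$). This uses the induction hypothesis $\pi\top_{\theta_i}\neq\pi\bot_{\psi_i}$ together with the fact (Theorem~\ref{thisatrue}) that $\pi\top_{\theta_i}\neq\pi\bot_{\theta_i}$, so the listed pairs are pairwise distinct.
\item $\wedge$ versus $\vee$: the top-level sets have the same size, so compare first coordinates. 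An element $\pair{\pi\top_{\theta_0},\cdot}$ of $\pi\top_\theta$ would have to equal one of the pairs in $\pi\bot_\psi$. Coordinatewise, this forces $\pi\top_{\theta_0}=\pi\top_{\psi_0}$ and $\pi\bot_{\theta_0}=\pi\bot_{\psi_0}$, and the same for the second components. Such coincidences are then ruled out using the induction hypothesis (cross equalities $\pi\top\neq\pi\bot$) and counting.
\item Quantifier versus propositional connective: heights are of different parity modulo the $2/3$ increments along the skeleton, or else the top-level sets are singletons versus sets of ordered pairs. These are distinguished by cardinality.
\end{itemize}

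\textbf{Main obstacle.} I expect the hardest case to be $\theta=\exists x\,\theta'$ against $\psi=\forall x\,\psi'$. Here $\pi\top_\theta=\{\{\pi\top_{\theta'},\pi\bot_{\theta'}\}\}$ and $\pi\bot_\psi=\{\{\pi\top_{\psi'},\pi\bot_{\psi'}\}\}$, and the induction hypothesis alone does not separate these: when $\theta'=\psi'$ they literally coincide. The first thing I would try is to check whether the intended reading restricts $\theta,\psi$, for instance to the same quantifier type or to $\theta=\psi$. If not, the repair is a mild modification of the construction: give $\top_{\forall x\,\theta}$, $\bot_{\forall x\,\theta}$, and every $S_{\pair{\forall x\,\theta,\sigma}}$ one extra fixed dummy child, say a copy of the atomic $\bot$ tree. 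This changes neither the truth-value bookkeeping of Theorem~\ref{thisatrue} nor the height. With that modification the $\exists/\forall$ collapses become distinguishable by the cardinality of their unique element, and the induction closes as in the other cases.
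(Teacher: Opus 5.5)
\paragraph{Verdict.} The proposal has a genuine gap, and the statement itself is false as written: the $\exists$/$\forall$ collision you found has an exact propositional analogue that your $\wedge$-versus-$\vee$ bullet misses.

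\paragraph{The $\wedge$-versus-$\vee$ case fails.} The trees $\top_{\chi_0\wedge\chi_1}$ and $\bot_{\chi_0\vee\chi_1}$ are unions of the same four $OPair$ branches. Hence
\[
\pi\top_{\chi_0\wedge\chi_1}=\{\langle X,Y\rangle : X\in\{\pi\top_{\chi_0},\pi\bot_{\chi_0}\},\ Y\in\{\pi\top_{\chi_1},\pi\bot_{\chi_1}\}\}=\pi\bot_{\chi_0\vee\chi_1}.
\]
More generally, $\pi\top_{\theta_0\wedge\theta_1}=\pi\bot_{\psi_0\vee\psi_1}$ as soon as $\pi\top_{\theta_i}=\pi\top_{\psi_i}$ and $\pi\bot_{\theta_i}=\pi\bot_{\psi_i}$ for $i\le 1$. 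The induction hypothesis only supplies cross inequalities $\pi\top\neq\pi\bot$, so it cannot rule out these same-type equalities. They hold whenever $\psi_i=\theta_i$. They also hold whenever $\theta_i,\psi_i$ are both (negated) atomic, since all such formulas share the same two trees. For example, $\pi\top_{x_0\in x_1\wedge x_0\in x_2}=\pi\bot_{x_0\in x_1\vee x_0\in x_2}$.

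So the first sentence of the Observation fails for the connectives as well as the quantifiers. No induction on complexity can prove it; the paper's one-line justification is exactly your route and overlooks both collisions. What the induction does give is:
\begin{itemize}
\item the rank/height part (your reading via ranks of collapses is fine; rank and height obey the same recursion, so the remark about a child realizing the maximum is unnecessary);
\item the single-formula inequality $\pi\top_\theta\neq\pi\bot_\theta$, as in Theorem~\ref{thisatrue};
\item $\pi\top_\theta\neq\pi\bot_\psi$ for all pairs except these de~Morgan-dual collisions.
\end{itemize}

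\paragraph{Your fallback is incomplete in two ways.} First, a restricted reading (e.g.\ $\theta=\psi$) does not rescue the paper. In the construction of $L_\alpha(b)$, the cross-formula version is what is used to get $\pi\top^\beta\neq\pi\bot^\beta$. There the collision $\pi\top^\gamma_{\exists x\chi}=\pi\bot^\gamma_{\forall x\chi}$ puts some $\pi\top^\gamma_\varphi$ into $B=\{\pi\bot^\delta_\psi:\psi\in Form,\ \delta<\beta\}$. Then $B\in\pi\top^\beta$, so $\pi\bot^\beta=\pi\top^\beta\cup\{B\}=\pi\top^\beta$ for $\beta\geq 1$. The construction itself has to be changed, and for both dualities.

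Second, your specific change breaks the bookkeeping you say it preserves. A copy of the atomic $\bot$ tree collapses to $\{\{\emptyset\}\}$, which is exactly $\pi\bot_\theta$ for atomic $\theta$. Putting it under the middle node of the $\forall$-trees then gives $\pi\top_{\forall x\theta}=\{\{\pi\top_\theta,\pi\bot_\theta\}\}=\pi\bot_{\forall x\theta}$. It also absorbs every false instance $\pi S_{\langle\theta,\sigma[s/x]\rangle}=\{\{\emptyset\}\}$.

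\paragraph{A repair that works.} The marker must be something no $\pi\top$, $\pi\bot$ or $\pi S$ can equal. For instance, add a single leaf (collapsing to $\emptyset$) under the middle node of the three $\forall$-trees and at the root of the three $\wedge$-trees. With that modification every case is settled by rank, by cardinality (of the set or of its unique element), or by whether $\emptyset$ is an element. The Observation then holds for all $\theta,\psi$.
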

\begin{obs}
    The above proof can be readily modified to work in $\B+\text{Bisimulation Axiom}$. Furthermore, 
    by the Axiom of  Infinity and the existence of elementary diagrams, the theory $\B+\text{Bisimulation Axiom}$ proves the consistency of $\mathbf{ACA}_0$. As mentioned before, the theory $\B$ and $\mathbf{ACA}_0$ are mutually interpretable and finitely axiomatized. This implies that $$\B+\text{Bisimulation Axiom}\vdash Con(\B)$$ and therefore $\B\not\vdash \text{Bisimulation Axiom}$. 
\end{obs}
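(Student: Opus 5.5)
The plan has three parts: transfer the proof of Theorem~\ref{thisatrue} from collapses to bisimulations, use the resulting truth sets to get $Con(\B)$, and finish with Gödel's second incompleteness theorem.

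\emph{Transferring Theorem~\ref{thisatrue}.} In $\B+\text{Bisimulation Axiom}$ I build the same tree $T\subseteq X^{<\omega}$ as in the proof of Theorem~\ref{thisatrue}. By the justification given there, this is already possible in $\B$, and $T$ is well founded. Let $B$ be a bisimulation on $T$, and replace every equation $\pi\rho_0=\pi\rho_1$ by $\rho_0\,B\,\rho_1$. I then set
$$H=\{\pair{\theta,\sigma}: (\theta,\sigma)\,B\,(\theta,\top)\},$$
which exists by $\Delta_0$ Separation.

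The proof used only two facts about collapses. First, equality of collapses is an equivalence relation. Second, two nodes have equal collapses iff their sets of immediate successors match up in both directions; this was used for $OPair$ and for the quantifier nodes. The second fact is exactly the defining biconditional of Definition~\ref{bisimdef}. So the main work is to show that $B$ behaves like an equivalence relation on the relevant nodes. Every node of $T$ has finite height, so I prove reflexivity, symmetry and transitivity of $B$ by $\Delta_0$ induction on $\omega$ over height, with $B$ and $T$ as parameters. Here one uses that $\Delta_0$ Separation plus Regularity yields $\Delta_0$ induction on $\omega$. With this in hand, the computation that $OPair(\rho_0,\rho_1)$ and $OPair(\rho_0',\rho_1')$ are $B$-related iff $\rho_i\,B\,\rho_i'$ goes through as for Kuratowski pairs. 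The Tarski-clause verifications, including the distinctness $\neg(\top_\theta\,B\,\bot_\theta)$, then carry over word for word.

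\emph{Consistency.} Using Infinity, I take the structure $(\omega,E)$, where $E$ is the Ackermann membership relation ($m\,E\,n$ iff the $m$-th binary digit of $n$ is $1$). This relation is $\Delta_0$ definable, and $(\omega,E)$ is isomorphic to the hereditarily finite sets, so it carries a $\Delta_0$-definable copy of arithmetic. By the remark that the argument extends to set-sized structures, its elementary diagram $Th$ exists. I then prove by induction on the length of derivations that every theorem of $\mathbf{PA}$ lies in $Th$ under every assignment. The inductive statement is $\Delta_0$ in the parameter $Th$, so $\Delta_0$ induction on $\omega$ suffices; the induction axioms themselves are handled by $\Delta_0$ induction applied to the set-defined truth predicate. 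Since $0=1$ is not in $Th$, this gives $Con(\mathbf{PA})$.

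Next, the implication $Con(\mathbf{PA})\to Con(\mathbf{ACA}_0)$, via the conservativity of $\mathbf{ACA}_0$ over $\mathbf{PA}$, has a finitary proof that is formalizable in weak arithmetic, available inside $\B$. Finally, the mutual interpretation of $\B$ and $\mathbf{ACA}_0$ gives $Con(\mathbf{ACA}_0)\to Con(\B)$, also provably in weak arithmetic. Because $\B$ is finitely axiomatized, this last step concerns a single translated axiom rather than a scheme. Altogether $\B+\text{Bisimulation Axiom}\vdash Con(\B)$.

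\emph{Conclusion and main obstacle.} Since $\B$ is consistent and finitely axiomatized, Gödel's second incompleteness theorem gives $\B\nvdash Con(\B)$, hence $\B\nvdash\text{Bisimulation Axiom}$. I expect the main obstacle to be the soundness induction. The issue is not its logic but making sure every induction used is genuinely $\Delta_0$ in the available parameters, and that the cited metamathematical implications really are formalized inside $\B$ rather than in a stronger metatheory. A secondary point to check carefully is that a merely arbitrary bisimulation, not necessarily the maximal one, suffices: this holds because the biconditional together with well-foundedness forces uniqueness on the finite-height subtrees involved.
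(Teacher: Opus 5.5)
Your proposal is correct and follows the same route as the paper's very brief argument. You rerun Theorem~\ref{thisatrue} with a bisimulation $B$ on $T$ in place of the collapse, use Infinity and a truth set for arithmetic to obtain consistency, pass to $Con(\B)$ via the interpretation of the finitely axiomatized $\B$ in $\mathbf{ACA}_0$, and conclude by G\"odel's second incompleteness theorem. The only deviations are that you reach $Con(\mathbf{ACA}_0)$ through $Con(\mathbf{PA})$ plus the formalized conservativity of $\mathbf{ACA}_0$ over $\mathbf{PA}$, where the paper asserts $Con(\mathbf{ACA}_0)$ directly. Also, in the transfer step, what you need beyond literal reflexivity is that distinct nodes of $T$ carrying the same subtree (the various copies of $\top_\theta$, $\bot_\theta$, $S_{\pair{\theta,\sigma}}$) are $B$-related; your height induction proves this just as easily.
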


\section{Transfinite recursion}
We will now use an iteration of the construction of the previous section to prove an analogue of arithmetical transfinite recursion. A similar statement known as  Elementary Transfinite Recursion ($\mathbf{ETR}$)  was introduced by Fujimoto over the theory $\mathbf{NBG}$ \cite[Definition 88]{fujimoto} which is equivalent to the system $\Delta^1_0\text{-}\mathbf{TR}$ studied by Sato \cite[Definition 6]{Sato2}. Since we are working in a theory of sets and not classes, the natural principle to consider will be $\Delta_0$ Transfinite Recursion, which we will show to be equivalent over $\B$ to $\text{Bisimulation Axiom}$. 
We follow the convention given in \cite{Sato2}  for transfinite recursion.
    Given a set with relation $\pair{X,\prec}$, where $\prec$ isn't necessarily transitive, and $H\subseteq a\times X$, we write 
    $$H_s= \{u\in a: \pair{u,s}\in  H\}$$
    $$H_{\prec s}= \bigcup_{t\prec s}(H_{t}\times \{t\})$$
    $$X_{\prec s}=\{t\in X:t\prec s\}.$$
Let $u$ and $v$ denote the first and second variables. For a formula $\psi$, in which the variable $v$ only appears in atomic formulas of the form $x\in v$ or $x\notin v$\footnote{This condition is tacitly being assumed when formulating such principle for second order objects in a system like $\mathbf{NBG}$ or $\mathbf{ACA}_0$.} and $n\geq \max FV(\psi)$, by $\psi\text{-}\mathbf{TR}$ we mean the statement, for any sets $a$, $p_0,\dots,p_n$, and well founded relation $\pair{X,\prec}$ there is an $H\subseteq a\times X$ such that
$$\forall s\in X\;\forall c\in a\,(c\in H_s\leftrightarrow  \psi[c,H_{\prec s},p_0,\dots, p_n]).$$
By $\Delta_0\text{-}\mathbf{TR}$ we mean the axiom scheme which consists of all instances $\psi\text{-}\mathbf{TR}$ where $\psi$ is $\Delta_0$ and of the appropriate form. 

\begin{thm}\label{C->ETR} $\C\vdash\Delta_0\text{-}\mathbf{TR}$.
\end{thm}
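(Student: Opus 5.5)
We iterate the construction of Theorem~\ref{thisatrue} along the well founded relation $\pair{X,\prec}$. Fix a $\Delta_0$ formula $\psi$ in negation normal form, sets $a,p_0,\dots,p_n$, and $\pair{X,\prec}$ well founded. We simultaneously define, for each $s\in X$, each subformula $\theta$ of $\psi$, and each assignment $\sigma$, three trees $\top_\theta$, $\bot_\theta$ and $S^s_{\pair{\theta,\sigma}}$. The variables of $\sigma$ range over the transitive closure $b$ of $\{a,X,p_0,\dots,p_n\}$; this suffices since $\psi$ is $\Delta_0$ and every bounded quantifier ranges over elements of such parameters. The truth trees $\top_\theta,\bot_\theta$ are exactly as before, except that bounded quantifiers $\exists x\in y$ and $\forall x\in y$ use the same shapes, with the branching in $S$ indexed by $d\in\sigma(y)$ rather than $d\in a$.

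The only new case is the atomic formula $x\in v$ and its negation, where $v$ is interpreted as $H_{\prec s}$. Here $x\in H_{\prec s}$ means that $\sigma(x)=\pair{c,t}$ with $t\prec s$, $c\in a$, and $\pair{\psi,(c,H_{\prec t},\vec p)}$ true at stage $t$. So we set $S^s_{\pair{x\in v,\sigma}}$ to be a copy of $S^t_{\pair{\psi,\tau}}$, where $\tau$ is the canonical assignment $(c,\cdot,\vec p)$ (the $v$ slot is a dummy). In this case we make $\top_{x\in v}:=\top_\psi$ and $\bot_{x\in v}:=\bot_\psi$. When $\sigma(x)$ is not of that form, or $t\not\prec s$, we use a fixed copy of $\bot_\psi$. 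This is consistent because $\top_\psi,\bot_\psi$ do not depend on $s$. The only subtlety is that the trees for $x\in v$ inside $\psi$ refer to trees for $\psi$ itself; but this circularity concerns only the $\top/\bot$ trees. To avoid it, I would make the atomic case $x\in v$ use a separate rank-padded pair: take $\top_{x\in v}=\{\emptyset\}^\frown$-lifted copies of the atomic $\top,\bot$. Then I would let $S^s_{\pair{x\in v,\sigma}}$ be the tree of pairs $OPair(S^t_{\pair{\psi,\tau}},\top_\psi)$ versus $OPair(\bot_\psi,\top_\psi)$, together with fixed atomic-style markers. By Observation~\ref{importantlater}, $\pi\top_\psi\ne\pi\bot_\psi$, so equality of collapses again decides truth, and $\top_{x\in v},\bot_{x\in v}$ are built from $\top_\psi,\bot_\psi$, which are defined by recursion on the finite formula $\psi$ without $v$-atoms causing loops, since $\top_\psi$ is defined first with $v$-atoms treated as atomic.

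Next comes the formalization. All trees live as subtrees of one tree $T\subseteq Y^{<\omega}$, with $Y$ built from $b$, $\omega$, the finitely many subformulas, $X$ and $b^{<\omega}$. Membership in $T$ is $\Delta_0$, and $T$ is obtained via the scheme of the general recursion theorem for trees, with $\pair{A,\prec'}$ the lexicographic combination of $\prec$ on $X$ with subformula order. Stems have bounded length, since each clause prefixes a fixed finite number of nodes, so that theorem gives that $T$ is well founded. Axiom Beta then yields $\pi$, and we put $H=\{\pair{c,s}\in a\times X:\pi(\psi,s,c)=\pi(\psi,\top)\}$ by $\Delta_0$ Separation.

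Finally, we verify that $H$ satisfies the recursion. For fixed $s$ we prove, by $\Delta_0$ induction on subformulas $\theta$ of $\psi$, that $\pi S^s_{\pair{\theta,\sigma}}\in\{\pi\top_\theta,\pi\bot_\theta\}$ and that it equals $\pi\top_\theta$ iff $\theta[\sigma]$ holds with $v:=H_{\prec s}$. The connective and bounded quantifier cases are as in Theorem~\ref{thisatrue}. The $x\in v$ case reduces to the definition of $H_t$ for $t\prec s$. No outer induction on $s$ is needed, because membership in $H_t$ is by definition the collapse comparison. I expect the main obstacle to be the atomic $x\in v$ case: arranging the $\top/\bot$ trees so they are independent of $s$ and not circular, while still guaranteeing that collapses separate truth values.
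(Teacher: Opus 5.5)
There is a genuine gap, and it is the point you flag at the end: the truth-value trees cannot be chosen independently of the stage $s$. In your scheme $S^s_{\pair{x\in v,\sigma}}$ is, or contains, a copy of $S^t_{\pair{\psi,\tau}}$ for $t\prec s$. Some such dependence is unavoidable, since the trees are defined before $H$ is known.

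Take, e.g., $\psi\equiv\exists x\in p_0\,(x\in v)$ with $p_0=a\times X$. Then $S^s_{\pair{\psi,\sigma}}$ contains the trees $S^s_{\pair{x\in v,\sigma'}}$ strictly below its root. So $\pi S^s_{\pair{\psi,\sigma}}$ has strictly larger rank than $\pi S^t_{\pair{\psi,\tau}}$ whenever $t\prec s$. Along a chain $t_0\prec t_1\prec t_2$ you get three collapses of pairwise different ranks, which cannot all lie in a fixed pair $\{\pi\top_\psi,\pi\bot_\psi\}$.

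Both variants of your fix fail for this reason.
\begin{itemize}
\item Setting $\top_{x\in v}:=\top_\psi$ makes the definition of $\top_\psi$ circular: $\top_\psi$ would have to contain a copy of itself below its root.
\item Using ``lifted atomic'' trees for $\top_{x\in v},\bot_{x\in v}$, padded or not, also fails. Since $\top_\psi$ is built on top of $\top_{x\in v}$ and $\bot_{x\in v}$, the tree $S^s_{\pair{x\in v,\sigma}}$, which contains $OPair(S^t_{\pair{\psi,\tau}},\top_\psi)$, collapses to a set of rank above that of $\pi\top_\psi$. Hence its rank is above those of $\pi\top_{x\in v}$ and $\pi\bot_{x\in v}$. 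Thus $\pi S^s_{\pair{x\in v,\sigma}}\notin\{\pi\top_{x\in v},\pi\bot_{x\in v}\}$, and the connective and quantifier cases, which all rely on this dichotomy, break down.
\end{itemize}

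The missing idea, which is the paper's, is to index the truth values by stage. The trees $\top^s_\theta,\bot^s_\theta$ are defined by recursion along $\prec$ (and then on $\theta$), so their ranks grow with $s$. Writing $B_s=\{\pi\bot^q_\psi:q\prec s\}$, the paper arranges the atomic trees so that
\[
\pi\top^s_{x\in v}=\{B_s\cup\{\pi\top^t_\psi\}:t\prec s\},\qquad \pi\bot^s_{x\in v}=\pi\top^s_{x\in v}\cup\{B_s\},
\]
and, if $\sigma(x)=\pair{c,t}\in a\times X_{\prec s}$,
\[
\pi S^s_{\pair{x\in v,\sigma}}=\pi\top^s_{x\in v}\cup\bigl\{B_s\cup\{\pi S^t_{\pair{\psi,\tau[c/u]}}\}\bigr\}.
\]
If $c\in H_t$, the new element duplicates one already present. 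Otherwise it equals $B_s$, because $\pi\bot^t_\psi\in B_s$. The union over all $t\prec s$ and the padding by $B_s$ make $\top^s_{x\in v}$ independent of which $t$ the assignment points to. This is necessary because the quantifier cases compare the trees $S^s_{\pair{\theta,\sigma[d/x]}}$, for all $d$, against a single $\top^s_\theta$.

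Finally, contrary to your last paragraph, an induction along $\prec$ is needed. The dichotomy $\pi S^s_{\pair{\theta,\sigma}}\in\{\pi\top^s_\theta,\pi\bot^s_\theta\}$ at an atom $x\in v$ rests on the same dichotomy for $\psi$ at stages $t\prec s$. The padding also needs $\pi\top^t_\psi\neq\pi\bot^q_\psi$ for $t,q\prec s$. This induction is available in $\B$, because the set of counterexamples is $\Delta_0$ in $T$ and $\pi$, and $\prec$ is well founded.
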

\begin{proof} 
Let $\psi$ be a $\Delta_0$ formula such that the variable $v$ only appears in atomic formulas of the form $x\in v$ or $x\notin v$. Let $a$ be a set and $\tau$ be a variable assignment such that for all $i\leq n\,\tau(i+2)=p_i$. Assume that $\psi$ and all other formulas in this construction are in negation normal form. For each $s\in X$ we will construct trees $\top^s_\theta$, $\bot^s_\theta$, and $S^s_{\pair{\theta,\sigma}}$ so that
$$(\pi S^s_{\pair{\psi,\tau[c/u]}}=\pi\top_\psi^s)\;\;\leftrightarrow\;\;( \pi S^s_{\pair{\psi,\tau[c/u]}}\neq\pi\bot_\psi^s)\;\;\leftrightarrow\;\;( c\in H_s).$$ 
In this case, we will only need to define such trees for $\theta$ subformula of $\psi$.  Throughout the proof,  $$\sigma\in TC(a\cup (a\times X)\cup\text{rng}(\tau))^{<\omega}$$ will be a sufficiently long variable assignment. We point out that the value $\sigma(v)$ will not matter in the construction of $S^s_{\pair{\theta,\sigma}}$.
\begin{enumerate}
    
    \item If $\theta$ is atomic or negation of an atomic formula which  does not contain the variable $v$, then $\top^s_\theta=\{\emptyset, (0),(0,0),(1)\}$, $\bot^s_\theta=\{\emptyset,(0),(0,0)\}$, and 
    $$S_{\pair{\theta,\sigma}}=\begin{cases}
        \top_{\theta}\;\;\ \text{ if } \pair{\theta,\sigma}\in Th(TC(a\cup(a\times X)),\in) \\
 \bot_{\theta}\;\;\; \text{ otherwise}
 \end{cases}$$
    \item The definition of the trees $\top^s_{\theta_0\wedge \theta_1},\bot^s_{\theta_0\wedge \theta_1},\top^s_{\theta_0\vee \theta_1},\bot^s_{\theta_0\vee \theta_1},\top^s_{\forall x\, \theta},\bot^s_{\forall x\, \theta}$,$\top^s_{\exists x\, \theta}$, $\bot^s_{\exists x\, \theta}$, \\$S^s_{\pair{\theta_0\wedge \theta_1,\sigma}}$,  $S^s_{\pair{\theta_0\vee \theta_1,\sigma}}$, $S^s_{\pair{\forall x\, \theta,\sigma}}$, and $S^s_{\pair{\exists x\, \theta,s}}$ will be the same as in \Cref{thisatrue}.
    
    \item 
    $$\top_{x\in v}^s=\bot^s_{x\notin v}=\bigcup_{t\prec s}(t)^\frown \left(\left( \bigcup_{q\prec s}(\pair{q,0})^\frown \bot^q_\psi \right)\cup (s)^\frown  \top^t_\psi \right)$$
     \[  \top^s_{x\in v}\quad=\quad \raisebox{-0.5\height}{	\begin{tikzpicture}[
		level distance=12mm,
		level 1/.style={sibling distance=40mm},
        level 2/.style={sibling distance=15mm},
		every node/.style={font=\normalsize},
		edge from parent/.style={draw}
		]
		
		\node {$\bullet$}
		child { node {$t$} 
            child { node {$\top^t_\psi$}}
            child { node {$\bot_\psi^q$} }
            child { node {$\cdots$} edge from parent[dotted] 
		node[midway, right, xshift=20pt] {\text{all} $q\prec s$} }}
        child { node {$\cdots$} edge from parent[dotted] 
		node[midway, right, xshift=20pt] {\text{all} $t\prec s$} };
\end{tikzpicture} }   \]
    which has collapse
    $$\{\{\pi \bot^q_\psi:q \prec s\}\cup \{\pi \top^t_\psi\}:t\prec s\}$$
    and
    $$\bot_{x\in v}^s=\top^s_{x\notin v}=\top^s\cup(s)^\frown  \bigcup_{t\prec s}(\pair{t,0})^\frown  \bot^t_\psi $$
     \[  \bot^s_{x\in v}\quad=\quad \raisebox{-0.5\height}{	\begin{tikzpicture}[
		level distance=12mm,
		level 1/.style={sibling distance=40mm},
        level 2/.style={sibling distance=15mm},
		every node/.style={font=\normalsize},
		edge from parent/.style={draw}
		]
		
		\node {$\bullet$}
        child { node{$s$} 
        child { node{$\bot^t_\psi$} }
        child { node {$\cdots$} edge from parent[dotted] 
		node[midway, right, xshift=10pt] {\text{all} $t\prec s$} }
        }
		child { node {$t$} 
            child { node {$\top^t_\psi$}}
            child { node {$\bot^q_\psi$} }
            child { node {$\cdots$} edge from parent[dotted] 
		node[midway, right, xshift=20pt] {\text{all} $q\prec s$} }}
        child { node {$\cdots$} edge from parent[dotted] 
		node[midway, right, xshift=20pt] {\text{all} $t\prec s$} };
\end{tikzpicture} }   \]
    which has collapse
    $$\{\{\pi \bot^q_\psi:q\prec s\}\cup \{\pi \top^t_\psi\}:t\prec s\}\cup\{ \{\pi \bot^t_\psi:t\prec s\}\}$$
\end{enumerate}
For $S^s_{\pair{x\in v,\sigma}}$ and $S^s_{\pair{x\notin v,\sigma}}$, we check if $\sigma(x)$ is a pair of the form $\pair{c,t}\in a\times X_{\prec s}$ if not, set $$S^s_{\pair{x\in v,\sigma}}=S^s_{\pair{x\notin v,\sigma}}=\bot^s_{x\in v}=\top^s_{x\notin v}$$ otherwise let 
    $$S^s_{\pair{x\in v,\sigma}}=S^s_{\pair{x\notin v,\sigma}}=\top^s_{x\in v}\cup(t)^\frown  \left(\bigcup_{q\prec s}(\pair{q,0})^\frown \bot^q_\psi \cup(s)^\frown S^t_{\pair{\psi,\tau[c/u]}}\right)$$
     \[  S^s_{\pair{x\in v,\sigma}}\quad=\quad \raisebox{-0.5\height}{	\begin{tikzpicture}[
		level distance=12mm,
		level 1/.style={sibling distance=40mm},
        level 2/.style={sibling distance=15mm},
		every node/.style={font=\normalsize},
		edge from parent/.style={draw}
		]
		
		\node {$\bullet$}
        child { node{$s$} 
        child { node{$S^t_{\pair{\psi,\tau[c/u]}}$}}
        child { node{$\bot^q_\psi$} }
        child { node {$\cdots$} edge from parent[dotted] 
		node[midway, right, xshift=10pt] {\text{all} $q\prec s$} }
        }
		child { node {$r$} 
            child { node {$\top^r_\psi$}}
            child { node {$\bot^q_\psi$} }
            child { node {$\cdots$} edge from parent[dotted] 
		node[midway, right, xshift=20pt] {\text{all} $q\prec r$} }}
        child { node {$\cdots$} edge from parent[dotted] 
		node[midway, right, xshift=20pt] {\text{all} $r\prec s$} };
\end{tikzpicture} }   \]
    which has collapse the set
    $$\pi\top^s_{x\in v}\cup\{\{\pi \bot^q_\psi:q\prec s\}\cup\{\pi S^t_{\pair{\psi,\tau[c/u]}}\}\}.$$
Define
$$T=\bigcup_{s\in X,\theta\in Form,\sigma\in (a\cup (a\times X))^{<\omega}}\left((\pair{s,\theta,1})^\frown \top^s_\theta \cup (\pair{s,\theta,0})^\frown \bot^s_\theta \cup (\pair{s,\theta,\sigma})^\frown S^s_{\pair{\theta,\sigma}}\right).$$
To verify if a sequence $\sigma$ is in $T$, it suffices to check if it satisfies the rules given by the recursive definitions of $\top^s_\theta$, $\bot^s_\theta$, and $S^s_{\pair{\theta,\sigma}}$. So $T$ is $\Delta_0$-definable relative to a sufficiently large set.
Furthermore, since $X$ is well founded, the tree $T$ will also be well founded.
As in the proof of \Cref{thisatrue} we may identify each of the trees $\top^s_\theta,\bot^s_\theta,$ and $ S^s_{\pair{\theta,\sigma}}$ with a subtree of $T$.
By $\Delta_0$ induction relative to $T$ and its collapsing function, for every formula $\theta$ and $s\in X$ we have $\pi\top^s_{\theta}\neq \pi \bot^s_{\theta}$ and $\pi S^s_{\pair{\theta,\sigma}}\in \{\pi\top^s_\theta,\pi\bot^s_\theta\}$.
Define 
$$H=\{\pair{c,s}\in a\times X: \pi S^s_{\pair{\psi,\tau[c/u]}}=\pi \top^s_{\psi}\}$$
Let $n$ be larger than any index of a variable appearing in $\psi$.
By an external induction on the complexity of $\theta$, subformula of $\psi$, we show that 
$$(\pi S^s_{\pair{\theta,\sigma}}=\pi\top^s_\theta)\;\;\leftrightarrow\;\; \theta[\sigma(0),H_{\prec s},\sigma(2),\dots, \sigma(n)]$$
All cases except for when $\theta$ is of the form $x\in v$ or $x\notin v$ are proved as in \Cref{thisatrue}. For the case $x\in v$, if $\sigma(x)=\pair{c,t}\in a\times X_{\prec s}$ then
$$(\pi S^s_{\pair{x\in v,\sigma}}=\pi\top^s_{x\in v})\;\leftrightarrow\; (\pi S^t_{\pair{\psi,\sigma[c/u]}}=\pi\top^t_\psi)\;\leftrightarrow \;( c\in H_t)\;\leftrightarrow\; (\sigma(x)\in H_{\prec s})$$
where the first equivalence is by construction and the second equivalence is by definition of $H$. Otherwise if $\sigma(x)\notin a\times  X_{\prec s}$ then by construction we have $ S^s_{\pair{x\in v,\sigma}}=\bot^s_{x\in v}$. For the case $x\notin v$, if $\sigma(x)=\pair{c,t}\in a\times X_{\prec s}$ then  $$(\pi S^s_{\pair{x\notin v,\sigma}}=\pi\top^s_{x\notin v})\;\leftrightarrow\; (\pi S^t_{\pair{\psi,\tau[c/u]}}=\pi\bot^t_\psi)\;\leftrightarrow\; ( c\notin H_t)\;\leftrightarrow\; (\sigma(x)\notin H_{\prec s})$$
otherwise if $\sigma(x)\notin a\times  X_{\prec s}$ then by construction we have $ S^s_{\pair{x\notin v,\sigma}}=\bot^s_{x\in v}=\top^s_{x\notin v}$.  So 
$$ H_s=\{c\in a: \pi S^s_{\pair{\psi,\tau[c/u]}}=\pi \top^s_{\psi}\}=\{c\in a: \psi[c,H_{\prec s},p_0,\dots,p_n]\}$$
which proves $H$ is the desired set.
\end{proof}

\begin{cor}
    $\mathbf{B}\vdash \text{Bisimulation Axiom}\leftrightarrow \Delta_0\text{-}\mathbf{TR}$
\end{cor}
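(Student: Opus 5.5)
For the direction from left to right I will rerun the proof of \Cref{C->ETR} inside $\B+\text{Bisimulation Axiom}$, with the collapse replaced by a bisimulation. That proof used the collapsing function $\pi$ only through equalities $\pi x=\pi y$ between nodes of one well founded tree $T$. Given a bisimulation $B$ on $T$, I will read $\pi\sigma=\pi\rho$ as $\sigma\,B\,\rho$.

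The first step is to check that the relation $B$ behaves like equality of collapses. Two facts are needed. First, $B$ is an equivalence relation. Second, $B$ satisfies the extensional clauses actually used, such as ``$\pi OPair(T_0,T_1)=\pi OPair(T_0',T_1')$ iff $\pi T_i=\pi T_i'$'' and the set-of-children comparisons in the quantifier and $x\in v$ cases. Reflexivity, symmetry and transitivity all follow by showing that the set of counterexamples is empty. That set is $\Delta_0$ in $B$ and $T$, so it exists by $\Delta_0$ Separation, and a minimal node of it gives a contradiction through the defining biconditional of a bisimulation. The extensional clauses then hold directly by Definition \ref{bisimdef}.

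With this in hand, the induction in \Cref{thisatrue} and \Cref{C->ETR} goes through unchanged. It gives $\neg(\top^s_\theta\,B\,\bot^s_\theta)$ and shows that each $S^s$ is $B$-related to exactly one of $\top^s_\theta,\bot^s_\theta$. The needed induction over $X$ is again $\Delta_0$ in $T,B$, so it is available by taking a minimal counterexample in $X$. The set
\[H=\{\pair{c,s}: S^s_{\pair{\psi,\tau[c/u]}}\,B\,\top^s_\psi\}\]
exists by $\Delta_0$ Separation and witnesses $\psi\text{-}\mathbf{TR}$. This is the Observation after \Cref{importantlater}, extended to the transfinite construction.

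For the direction from right to left, let $T\subseteq Y^{<\omega}$ be well founded. I will apply $\Delta_0\text{-}\mathbf{TR}$ along the well founded relation $\prec$ on $X=T\times T$ given by
\[\pair{\rho_0,\rho_1}\prec\pair{\sigma,\tau}\iff \rho_0\sqsupsetneq\sigma\ \text{ and }\ \rho_1\sqsupsetneq\tau ,\]
with $a=\{0\}$. The recursion puts $0$ into $H_{\pair{\sigma,\tau}}$ iff the back-and-forth condition holds relative to $H_{\prec\pair{\sigma,\tau}}$. In the condition I must quantify over $\rho_0\sqsupsetneq\sigma$ and $\rho_1\sqsupsetneq\tau$ independently, and for such pairs $\pair{\rho_0,\rho_1}\prec\pair{\sigma,\tau}$ holds automatically. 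The condition is therefore $\Delta_0$ with the parameter $T$, and $v$ occurs only in the form $\pair{0,\pair{\rho_0,\rho_1}}\in v$. Setting $B=\{\pair{\sigma,\tau}:0\in H_{\pair{\sigma,\tau}}\}$ gives exactly the defining biconditional of Definition \ref{bisimdef}.

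The main obstacle I expect is the transitivity of $B$ in the first direction. A counterexample there involves three nodes. So I will take minimal counterexamples with respect to the product relation on $T^3$, whose well foundedness follows from that of $T$ via $\Delta_0$ Separation.
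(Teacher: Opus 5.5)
Your first direction is the paper's own argument. The paper only says that a bisimulation on $T$ takes over the role of the collapsing function, and your proof that $B$ is an equivalence relation (via minimal counterexamples in $T$, $T^2$, $T^3$) is exactly what that adaptation needs. One thing to make explicit: $\rho\sqsupsetneq\sigma$ in Definition~\ref{bisimdef} must mean that $\rho$ is an \emph{immediate} extension of $\sigma$, as in the paper's proof. If arbitrary proper extensions were allowed, a bisimulation would only identify nodes whose subtrees have equal height. Since $\top_\theta$ and $\bot_\theta$ for atomic $\theta$ were deliberately given the same height, you would get $\top_\theta\,B\,\bot_\theta$, and your ``extensional clauses'' would fail.

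The genuine gap is in the converse direction. In the scheme $\psi\text{-}\mathbf{TR}$ as the paper formulates it, the recursion reads $c\in H_s\leftrightarrow\psi[c,H_{\prec s},p_0,\dots,p_n]$, and the index $s$ is \emph{not} an argument of $\psi$. Your back-and-forth condition mentions $\sigma$ and $\tau$, where $s=\pair{\sigma,\tau}$, so it is not an instance of the scheme. Moreover, with $a=\{0\}$ no instance can work. Take two pairs: one where $\sigma$ and $\tau$ are both terminal, and one where $\sigma$ is terminal and $\tau$ is not. In both cases $s$ has no $\prec$-predecessors, so $H_{\prec s}=\emptyset$ and $c=0$, and $\psi$ returns the same truth value. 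Yet $0$ must lie in $H_s$ in the first case and not in the second.

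The missing idea is to let the element $c$ carry the index:
\begin{itemize}
\item Take $a=T\times T$.
\item Let $\psi(u,v)$ say that $u=\pair{\sigma,\tau}\in T\times T$ and that the back-and-forth condition holds with $\pair{\pair{\rho_0,\rho_1},\pair{\rho_0,\rho_1}}\in v$ in place of $\rho_0\,B\,\rho_1$.
\item Put $B=\{\pair{\sigma,\tau}:\pair{\pair{\sigma,\tau},\pair{\sigma,\tau}}\in H\}$.
\end{itemize}
On the diagonal $c=s=\pair{\sigma,\tau}$ one has $\pair{\pair{\rho_0,\rho_1},\pair{\rho_0,\rho_1}}\in H_{\prec s}$ iff $\pair{\rho_0,\rho_1}\prec s$ and $\rho_0\,B\,\rho_1$. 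Your own remark that the first conjunct is automatic for $\rho_0,\rho_1$ immediately extending $\sigma,\tau$ then yields exactly the biconditional of Definition~\ref{bisimdef}. The off-diagonal part of $H$ is simply discarded. This diagonal trick is precisely the paper's proof of this direction.
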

\begin{proof}
    $\text{Bisimulation Axiom}\rightarrow \Delta_0\text{-}\mathbf{TR}$ can be shown by adapting the previous proof, where a bisimulation on $T$ will take the role of the collapse function.\\
    \\
    We show $\B\wedge\Delta_0\text{-}\mathbf{TR}\rightarrow \text{Bisimulation Axiom}$. Let $T\subseteq X^{<\omega}$ be a well founded tree, and consider the order $\prec$ on $T\times T$ where $(\sigma_0,\sigma_1)\prec (\tau_0,\tau_1)$ if for each $i\leq 1$, $\sigma_i$ is an immediate predecessor to $\tau_i$. Since $T$ is well founded, so will $\pair{T\times T, \prec}$. Let $\theta$ be the $\Delta_0$ formula with parameter $\pair{T\times T,\prec}$ which states that
    \begin{enumerate}
        \item $u=\pair{\sigma,\tau}\in T\times T$;
        \item $\forall \rho_0\prec \sigma\, \exists \rho_1\prec \tau\, \pair{\pair{\rho_0,\rho_1,},\pair{\rho_0,\rho_1,}}\in v$;
        \item $\forall \rho_1\prec \tau\, \exists\rho_0\prec \sigma\,\pair{\pair{\rho_0,\rho_1,},\pair{\rho_0,\rho_1,}}\in v$.
    \end{enumerate}
     Let $H$ be the set obtained by iterating $\theta$ along $\prec$. The set $\{\pair{\sigma,\tau}:\pair{\pair{\sigma,\tau},\pair{\sigma,\tau}}\in H\}$ will be a bisimulation on $T$.
\end{proof}

\section{Clopen Games}
As mentioned in the introduction, Gitman and Hamkins proved that, over $\mathbf{NBG}$, Clopen Determinacy for class-sized games is equivalent to $\Delta^1_0\text{-}\mathbf{TR}$ \cite[Theorem 9]{hatman}. Similarly, it can be shown over $\B$ that Clopen Determinacy for set-sized games is equivalent to $\mathbf{ETR}$ or $\Delta_0\text{-}\mathbf{TR}$. Gitman and Hamkins' proof can be carried out as is over the system $\B$. So the results in this section cannot be considered new. We will outline an alternative proof that makes use of Stirling's description of bisimulations as winning strategies in a game \cite[Proposition 1]{stirlin}. 
\\
\\
    Over $\B$, a clopen game will be a well founded tree $G\subseteq X^{<\omega}$. A match will be a terminal sequence $\tau\in G$ where if $|\tau|$ is odd , then we say Player $I$ wins, otherwise we say Player $II$ wins. A strategy for Player $I$ is a map which sends every non-terminal sequence in $G$ of even length to one  of its immediate successors, and a strategy for Player $II$ is a map which sends every non-terminal sequence in $G$ of odd length to one of its immediate successors. A strategy for a player is winning if applying that strategy to any sequence of moves from the other player leads to a winning match.
    
    Given a well founded tree $T\subseteq X^{<\omega}$ and a pair of nodes $\pair{\sigma,\tau}\in T^2$, the bisimulation game on $T$ for the pair $\pair{\sigma_0,\tau_0}$ is played as follows. Player $I$ seeks to disprove that $\pair{\sigma,\tau}$ is in the bisimulation, while Player $II$ tries to instead prove that the pair is in the bisimulation. At the start of the turns $2n$ and $2n+1$, there will be in play a pair of sequences $\pair{\sigma_n,\tau_n}$ which were either the sequences played in the previous round or the starting sequences if $n=0$. On turn $2n$, Player $I$ picks an immediate successor of $\sigma_n$ or $\tau_n$ and on turn $2n+1$ Player $II$ responds by playing an immediate successor of the other sequence. Let $\sigma_{n+1}$ and $\tau_{n+1}$ be the extension of $\sigma_n$ and $\tau_n$ respectively which were played on turns $2n$ and $2n+1$. The first player not to be able to make a move loses; that is, Player $I$ wins if at the start of their turn the pair $\pair{\sigma_n,\tau_n}$ contains exactly one terminal and one non-terminal node, while if both sequences are terminal then Player $II$ wins.
    The bisimulation game on $T$ is the game where on the first move Player $I$ plays a pair of nodes $\pair{\sigma_0,\tau_0}$ and Player $II$ chooses whether to play as Player $I$ or Player $II$ in the bisimulation game on $T$ for the pair $\pair{\sigma,\tau}$. Since $T$ is well founded, any match can only have finite length, so the game is clopen.  Player $I$ cannot have a winning strategy for the bisimulation game on $T$ as Player $II$ can choose whether to start first or not. One can verify that, given a strategy for Player $II$, the set 
    $$B=\{\pair{\sigma,\tau}: \text{ Player }II \text{, following the winning strategy, chooses to play second}\}$$
    is a bisimulation on $T$. Formalizing this argument, and the usual proof of Clopen Determinacy from transfinite recursion (See \cite[Theorem 9]{hatman} for the proof over $\mathbf{NBG}$) over $\B$ gives the following result:

\begin{thm}
    Over $\B$, the following are equivalent:
    \begin{enumerate}
        \item Every well founded tree $T$ has a bisimulation.
        \item Clopen determinacy.
        \item $\Delta_0\text{-}\mathbf{TR}$.
    \end{enumerate}
\end{thm}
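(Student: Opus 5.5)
The equivalence of (1) and (3) is the Corollary just proved, so it suffices to prove (3)$\Rightarrow$(2) and (2)$\Rightarrow$(1). I will argue over $\B$ throughout and use only $\Delta_0$ Separation, Cartesian products and finite sequences, all available in $\B$.

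For (3)$\Rightarrow$(2), let $G\subseteq X^{<\omega}$ be a well founded game tree. Order $G$ by reverse immediate extension, so $\rho\prec\sigma$ iff $\rho$ is an immediate successor of $\sigma$; this relation is well founded because $G$ is. Apply $\Delta_0\text{-}\mathbf{TR}$ with $a=\{0\}$ (or simply $a=G$) to the $\Delta_0$ formula $\psi$ expressing that $\sigma$ is a winning position for Player $I$. This means that either $\sigma$ is terminal with $|\sigma|$ odd, or $|\sigma|$ is even and some immediate successor lies in $v$, or $|\sigma|$ is odd, $\sigma$ is non-terminal, and every immediate successor lies in $v$. Here $v$ occurs only in the form $x\in v$, with $G$ as a parameter. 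The resulting $H$ is the set $W_I$ of nodes from which $I$ wins. Let $W_{II}=G\setminus W_I$; by the recursion clauses it satisfies the dual conditions. If $\emptyset\in W_I$, define a strategy for $I$ by sending each even non-terminal $\sigma\in W_I$ to some successor in $W_I$. This requires a choice, so instead I will use the set of all successors in $W_I$, which is a quasi-strategy and exists by $\Delta_0$ Separation.

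The main obstacle is turning that quasi-strategy into an actual strategy function without choice. I would handle it by well ordering only what is needed, or, more cleanly, by reading the definition of strategy in the paper as permitting a multivalued map, as Gitman and Hamkins do. Failing that, I would replace $G$ by a game in which the moves are labelled so that a canonical choice is $\Delta_0$ definable. A winning strategy for $I$ then follows by $\Delta_0$ induction along $G$: every play consistent with the strategy stays inside $W_I$ and ends at an odd terminal node. The case $\emptyset\in W_{II}$ is symmetric.

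For (2)$\Rightarrow$(1), given a well founded $T$, form the bisimulation game described above, which is a clopen tree since $T$ is well founded and plays decrease in both coordinates. By determinacy one player has a winning strategy. Player $I$ cannot have one: if $s$ were winning for $I$, then $II$ defeats $s$ at the first pair $\pair{\sigma_0,\tau_0}$ by choosing the role in the subgame for which $s$ does not already determine a win. More precisely, the subgames ``$II$ plays as $I$'' and ``$II$ plays as $II$'' on the same pair are complementary, so $s$ restricted to them cannot win both; I would verify this by $\Delta_0$ induction on the pair. Given a winning strategy $f$ for $II$, let $B$ be the set of pairs for which $f$ chooses to play second; $B$ exists by $\Delta_0$ Separation. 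To check the bisimulation condition of Definition~\ref{bisimdef}, take $\sigma\,B\,\tau$ and an extension $\rho_0$ of $\sigma$. The response of $f$ gives $\rho_1$, and since $f$ still wins from $\pair{\rho_0,\rho_1}$, that pair must lie in $B$; otherwise $f$ would also be winning when choosing the opposite role there, contradicting complementarity. Conversely, if the right-hand side of the definition holds but $f$ chose to play first, one obtains a contradiction in the same way. This gives (1).
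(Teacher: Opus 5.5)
Your decomposition matches the paper's sketch. (1)$\Leftrightarrow$(3) is the Corollary. (2)$\Rightarrow$(1) is Stirling's bisimulation game, with $B$ the pairs at which II's winning strategy elects to play second. (3)$\Rightarrow$(2) is the computation of winning positions by recursion that the paper cites from Gitman and Hamkins. Your (2)$\Rightarrow$(1), including the complementarity step, is sound. There is one small slip. In the paper's $\psi\text{-}\mathbf{TR}$ the index $s$ is not available to $\psi$, so with $a=\{0\}$ the formula cannot tell whether the current node is terminal or what its parity is, for instance when $H_{\prec s}=\emptyset$. You need $a=G$ and the diagonal $W_I=\{\sigma:\sigma\in H_\sigma\}$, as in the proof of the Corollary.

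The genuine gap is the one you flagged, and your first and third remedies cannot close it. Under the paper's definition a strategy selects \emph{one} immediate successor, and with that definition clopen determinacy implies the axiom of choice over $\B$. To see this, take a nonempty set $F$ of nonempty sets and let $G=\{\emptyset\}\cup\{(y):y\in F\}\cup\{(y,x):y\in F,\ x\in y\}$. Every match has length $2$, so I has no winning strategy. A winning strategy for II yields, by $\Delta_0$ Separation, a choice function on $F$. Since $\mathbf{ZF}$ proves $\B$ and $\Delta_0\text{-}\mathbf{TR}$ but not choice, no argument from (3) can produce single-valued strategies. Well ordering ``only what is needed'' is itself an appeal to choice. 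A strategy for a relabelled game transfers back to $G$ only after labels are chosen.

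The cited Gitman--Hamkins proof works in $\mathbf{GBC}$, where a global well-order picks the move, and the paper's sketch silently relies on the same thing. The equivalence is correct only under your option (b): read (2) for quasi-strategies (a nonempty set of allowed moves, with every consistent play won), or add choice to $\B$. With quasi-strategies your (3)$\Rightarrow$(2) is complete. Your (2)$\Rightarrow$(1) also goes through without choice: let $B$ be the pairs at which II's quasi-strategy allows playing second. For complementarity, note that the positions consistent with two quasi-strategies form, by $\Delta_0$ Separation, a nonempty subset of the well founded game in which every non-terminal node has a consistent successor. A minimal element of that subset is therefore a terminal play that both would have to win, which is impossible.
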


\section{The constructible universe}

\begin{defin} Given a set $a$, by $\mathrm{Def}(a,\in)$ we mean the set
$$\{c\subseteq a: \exists \theta\in Form\, \exists \sigma\in a^{<\omega}\,(s\in c\;\leftrightarrow\; \pair{\theta,\sigma[s/x]}\in Th(a,\in))\}$$
\end{defin}
We would like to show that $\C\vdash \forall b\, \forall \alpha\in Ord\, ( L_\alpha(b)\text{ exists})$ where by ``$L_\alpha(b)$ exists" we mean 
$$\exists f:\alpha\rightarrow V\,\left(f(0)=TC(b)\,\wedge\forall \beta<\alpha\; f(\beta)=\bigcup_{\gamma<\beta}\mathrm{Def}(f(\gamma),\in)\right)$$
Although we could  use $\Delta_0$ Transfinite Recursion  to define the structure $L_\alpha(b)$ as a well founded relation on a sufficiently large set of terms and then take its collapse, as in \cite{Simpson}, we find it more appropriate to tailor a construction, similar to that in \Cref{thisatrue} and \Cref{C->ETR}, which will make the appropriate membership relation on the set of terms $\Delta_0$ relative to some collapsing function. This will be done by first defining the set of terms $\mathfrak{T}_\alpha$ of the language of ramified set theory of level $<\alpha$ (cf.\ \cite[Definition 1.1]{Buchholz}).  For each $\beta\leq \alpha$, formula $\theta$, and variable assignment $\sigma\in (\mathfrak{T}_\alpha)^{<\omega}$ we define well founded trees $S^\beta_{\pair{\theta,\sigma}}$, $\top^\beta_\theta$, and $\bot^\beta_\theta$  such that  $$ L_\beta(b)\models \theta[t_0,\ldots,t_n]  \;\;\leftrightarrow \;\; \pi S^\beta_{\pair{\theta,(t_0,\ldots,t_n)}}=\pi\top^\beta_\theta \;\;\leftrightarrow \;\; \pi S^\alpha_{\pair{\theta,(t_0,\ldots,t_n)}}\neq \pi\bot^\alpha_\theta.$$ 
The membership relation will then be given by the atomic formulas. 
\begin{defin}
   We define the collection of terms $\mathfrak{T}_\alpha$. The terms of level $-1$ will represent elements of $TC(b)$ and elements of level $\alpha \geq 0$ will represent collections of the form $$\{u\in L_\alpha(b):L_\alpha(b)\vDash\theta[u,t_0,\dots,t_n]\}$$ where $t_0,\dots,t_n$ are terms of level $<\alpha$. Such terms are formally defined as finite trees of sequences.
  Let $u$ denote the variable whose index is $0$. Let $pr_0,pr_1:V^3\rightarrow V$ denote the projection of triples on the first and second coordinate, respectively. For $\alpha\in Ord$ the terms of level $\alpha$ are all finite trees $$t\subseteq (\omega\times(( \alpha \times Form)\cup (\{-1\}\times TC(b)))^{<\omega}$$ satisfying the following.
    \begin{enumerate}
        \item $t$ has a unique sequence $\sigma$ of length $1$; let $r(t)$ denote $\sigma(0)$.
        \item $r(t)$ is a triple of the form $\pair{0,\alpha,\theta}$ where $\theta$ is some formula in the language of set theory.
        \item For all $\sigma \in t$ we have
        $$\forall i<j<|\sigma|\,  (pr_1(\sigma(j))<pr_1(\sigma(i))).$$
        \item $\sigma\in t$ is terminal if $pr_0(\sigma(|\sigma|-1))=-1$.
        \item For all $\sigma\in t$ which are non terminal, $\sigma(|\sigma|-1)$ is of the form $\pair{m,\beta,\psi}$. For all $n\in FV(\psi)\setminus\{0\}$ there exists a unique $\pair{\beta,\xi}\in( \alpha\times Form)\cup (\{-1\}\times TC(b))$ such that $\sigma^\frown(\pair{n,\beta,\xi})\in t$.
    \end{enumerate}
    By the Finite Powerset Axiom and $\Delta_0$ Separation, $\B$ proves that for all $\alpha\in Ord$ the terms of level $<\alpha$ form a set, which we denote as $\mathfrak{T}_\alpha$. We will avoid using the notation given by the formal definition of $\mathfrak{T}_\alpha$. We will identify a term $t=\{\emptyset,(\pair{0,-1,c})\}$ of level $-1$ with the element $c\in TC(b)$, and a term $t\in \mathfrak{T}_\alpha$ of level $\geq 0$ will be written as
    $$\{u\in L_\beta(b):L_\beta(b)\vDash \theta[u,t_0,t_1,\dots, t_n]\}$$
    where $r(t)=\pair{0,\beta,\theta}$ and  $t=(r(t))^\frown\bigcup_{i\leq n} t_i^*$, where $t^*_i$ is obtained by replacing $r(t_i)=\pair{0,\xi_i,\beta_i}$ with $\pair{i+1,\xi_i,\beta_i}$.
    We will need a term for each $L_\beta(b)$, such as $\{\emptyset,(\pair{0,\beta,v=v})\}$, which, with an abuse of notation, we will denote simply as $ L_\beta(b)$. We consider elements of $Val_\alpha=(\mathfrak{T}_\alpha)^{<\omega}$ as assigning terms of level $<\alpha$ to variables.
    \end{defin}
\begin{thm}
   $\C\vdash \forall a\, \forall \alpha\in Ord \;(L_\alpha(b)\text{ exists })$ 
\end{thm}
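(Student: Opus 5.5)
We follow the pattern of \Cref{thisatrue} and \Cref{C->ETR}, working with the set of terms $\mathfrak{T}_{\alpha+1}$. We build a single well founded tree $T$ whose subtrees are $\top^\beta_\theta$, $\bot^\beta_\theta$ and $S^\beta_{\pair{\theta,\sigma}}$ for $\beta\leq\alpha$, $\theta\in Form$ and $\sigma\in Val_{\alpha+1}$. The target is that, for terms $t_0,\dots,t_n$ of level $<\beta$ (elements of $TC(b)$ counting as level $-1$),
$$\pi S^\beta_{\pair{\theta,(t_0,\dots,t_n)}}=\pi\top^\beta_\theta\;\leftrightarrow\;\pi S^\beta_{\pair{\theta,(t_0,\dots,t_n)}}\neq\pi\bot^\beta_\theta\;\leftrightarrow\; L_\beta(b)\models\theta[t_0,\dots,t_n].$$

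\textbf{Quantifier and connective clauses.} These are copied from \Cref{thisatrue}, with one change: the quantifier clauses for $S^\beta_{\pair{\exists x\,\theta,\sigma}}$ and $S^\beta_{\pair{\forall x\,\theta,\sigma}}$ branch over all terms $s\in\mathfrak{T}_\beta$ rather than over elements of $a$. This set is $\Delta_0$-definable from $\mathfrak{T}_{\alpha+1}$ by reading off levels.

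\textbf{Atomic clauses.} These are the new part, and we model them on the $x\in v$ clause of \Cref{C->ETR}.
\begin{enumerate}
\item For $s,t$ both of level $-1$, truth is decided directly in $TC(b)$, as in the atomic case of \Cref{thisatrue}.
\item For $t=\{u\in L_\gamma(b):L_\gamma(b)\models\psi[u,\vec t\,]\}$ and $s$ of level $<\gamma$, membership $s\in t$ reduces to $\pi S^\gamma_{\pair{\psi,(s,\vec t\,)}}=\pi\top^\gamma_\psi$.
\item For $s$ of level $\geq\gamma$, we use that $s\in t$ holds iff some term $r$ of level $<\gamma$ satisfies $r=s$ and $r\in t$.
\item Equality is handled by extensionality: $s=t$ iff every $r$ of level below $\max$ of the two levels satisfies $r\in s\leftrightarrow r\in t$.
\end{enumerate}
Cases 3 and 4 are conjunctions and disjunctions of ``smaller'' atomic facts. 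So $\top^\beta_{x\in y}$, $\bot^\beta_{x\in y}$ and $S^\beta_{\pair{x\in y,\sigma}}$ are built with the same $\exists/\forall/\wedge$ gadgets as above, where the component trees are those of atomic formulas of lower rank, or the $S^\gamma_\psi$ trees with $\gamma<\beta$.

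\textbf{Well-foundedness and $\Delta_0$-definability.} The recursion is on a well founded relation on triples (level, formula complexity, atomic rank). Here the atomic rank is measured by the pair of term levels together with the syntactic depth of the terms, which is finite. We check that the construction fits the scheme of the earlier theorem on recursively defined trees, with a finite bound on stem lengths at each node. Then $T$ exists in $\B$ as a $\Delta_0$-definable subtree of $X^{<\omega}$ for a suitable $X$ built from $\mathfrak{T}_{\alpha+1}$, $\alpha+1$, $Form$ and $\omega$. We then apply Axiom Beta to $T$ and obtain $\pi$.

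\textbf{Extracting $f$.} Using $\Delta_0$ Separation relative to $T$ and $\pi$, define the relation
$$E=\{\pair{s,t}:\pi S^{\alpha}_{\pair{x\in y,(s,t)}}=\pi\top^\alpha_{x\in y}\},$$
and similarly the equality relation $\approx$. We check that $\approx$ is a congruence for $E$ and that $E$ is well founded on terms modulo $\approx$. Then we apply Axiom Beta once more to $E$ on $\mathfrak{T}_{\alpha+1}$, obtaining a collapse $\rho$ with $\rho(c)=c$ for $c\in TC(b)$. Finally set $f(\beta)=\{\rho(t):t\in\mathfrak{T}_\beta\}$, which exists by $\Delta_0$ Separation from $\mathrm{rng}(\rho)$ and $\rho$.

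\textbf{Verification.} By $\Delta_0$ induction on $\beta$ and formula complexity, exactly as in the external and internal inductions of \Cref{C->ETR}, we show the target equivalences with $\rho$ substituted. This relies on Observation~\ref{importantlater}, which guarantees $\pi\top\neq\pi\bot$ at every stage, so that each $S$ collapses to one of the two values. It follows that $f(\beta)=\bigcup_{\gamma<\beta}\mathrm{Def}(f(\gamma),\in)$. For this step we use \Cref{thisatrue}, applied to $f(\gamma)$, to identify $\mathrm{Def}$ with the collapses of the terms of level $\gamma$.

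\textbf{Main obstacle.} The hardest step is the atomic clauses, and in particular equality and membership between terms of mixed levels. One must choose a well founded rank so that the mutual recursion between $\in$, $=$ and satisfaction at lower levels is genuinely well founded and fits the tree scheme. The ranks must also be designed so that all the $\top$ and $\bot$ trees for a given formula have matching heights, which keeps them distinct under $\pi$. My first attempt would be to rank atomic instances by the pair $(\max(\text{levels}),\ \text{sum of term depths})$, ordered lexicographically. If needed, I would insert padding nodes, as in the atomic case of \Cref{thisatrue}, to equalize the heights.
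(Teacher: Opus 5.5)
You follow the paper's overall plan: ramified terms, level-indexed trees $\top^\beta_\theta$, $\bot^\beta_\theta$, $S^\beta_{\pair{\theta,\sigma}}$ packed into one tree, a collapse, then a second collapse of term membership. However, the atomic clause, which you rightly single out, is not resolved, and as described it fails.

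The connective and quantifier gadgets only verify the Tarski clauses if, for fixed $\theta$ and $\beta$, every $S^\beta_{\pair{\theta,\sigma}}$ collapses into one and the same pair $\{\pi\top^\beta_\theta,\pi\bot^\beta_\theta\}$, independently of $\sigma$. In your design the shape of $S^\beta_{\pair{x\in y,\sigma}}$ depends on $\sigma$:
\begin{itemize}
\item case 2 lands on $\top^\gamma_\psi,\bot^\gamma_\psi$, with $\psi,\gamma$ read off $\sigma(y)$;
\item cases 3 and 4 are $\exists/\forall/\wedge$ gadgets whose truth values are assembled from components fixed by the levels of $\sigma(x)$ and $\sigma(y)$.
\end{itemize}
So no single tree can serve as $\top^\beta_{x\in y}$. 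Inside, say, $\exists x\,(x\in y\wedge\cdots)$, different $x\in\mathfrak{T}_\beta$ give atomic subtrees collapsing into different pairs, which breaks the argument of \Cref{thisatrue}. Padding to equalize heights does not help, since equal height does not make collapses equal.

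The missing idea is the paper's one-size-fits-all pair $\top^\beta,\bot^\beta$, used for every atomic formula at level $\beta$. It generalizes the $x\in v$ gadget of \Cref{C->ETR}: it branches over all $\pair{\varphi,\gamma}\in Form\times\beta$ instead of over the stages of a single $\psi$. Hence an atomic instance reducing to \emph{any} lower satisfaction fact $S^\delta_{\pair{\theta^*,\sigma^*}}$ collapses to $\pi\top^\beta$ or to $\pi\bot^\beta$. This is exactly where one needs $\pi\top^\gamma_\varphi\neq\pi\bot^\delta_\psi$ for \emph{all} pairs, as in Observation~\ref{importantlater}.

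Second, your rank does not decrease. In case 3, take $s$ of level $\eta_0\geq\eta_1$ and $t$ of level $\eta_1$. The component $r=s$ has the same maximal level $\eta_0$, while $r$ ranges over all terms of level $<\eta_1$, whose depths are not bounded by that of $t$. For example, a parameter-free $t$ of level $2$ has depth $1$, while a level-$1$ term with a level-$0$ parameter has depth $2$. Ordering by maximal level and then by case type (case-2 membership below equality below case-3 membership) would repair this.

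The paper avoids the mutual $\in$/$=$ recursion altogether. Equality is an abbreviation, and for $\eta_0\geq\eta_1\geq 0$ the instance $t_0\in t_1$ is sent to a single satisfaction fact at level $\delta=\eta_0<\beta$. That fact says some $x$ of level $<\eta_1$ satisfies the defining formula $\varphi_1$ of $t_1$ (as evaluated in $L_{\eta_1}(b)$) and $\forall s\,(s\in x\leftrightarrow\varphi_0[s,\ldots])$, where $\varphi_0$ is the defining formula of $t_0$. This is your cases 3 and 4 composed, with $t_0$ unfolded so it never occurs as a parameter at its own level. Every atomic instance at level $\beta$ thus becomes one satisfaction fact at a strictly smaller level, and the recursion is simply on level and formula complexity.

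Similarly, defining $\cin$ only on pairs where $t_0$ has smaller level than $t_1$ (or both have level $-1$) makes it trivially well founded and removes the need for your congruence argument. As written, you only argue that $E$ is well founded modulo $\approx$, whereas Axiom Beta needs $E$ itself well founded on $\mathfrak{T}_{\alpha+1}$.
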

\begin{proof}
We outline how to construct for each $\beta\leq \alpha$ and each formula $\theta$ the trees $\top^\beta_\theta$, $\bot^{\beta}_\theta$, and $S^\beta_{\pair{\theta,\sigma}}$. As in the proof of \Cref{thisatrue} and \Cref{C->ETR}, we will make use of a finite set of rules which will depend on the complexity of $\theta$. From the collapsing function of such trees, we can define a relation $\cin$ on the collection of terms $\mathfrak{T}_\alpha$ such that the collapse of $\cin|_{\mathfrak{T}_\alpha}$ will be $L_\alpha(b)$. To simplify the construction and avoid redundancies, we consider the language of set theory where equality is not part of the language but rather is the following abbreviation
$$y=z\;\equiv \;\forall x\;(x\in y\leftrightarrow x\in z).$$
For the atomic case, we use the following one-size-fits-all trees $\top^\beta$ and $\bot^\beta$ which are defined as
$$\top^\beta=\bigcup_{\varphi\in Form,\gamma<\beta} (\pair{\varphi,\gamma})^\frown\left(\left( \bigcup_{\pair{\psi,\delta}\in Form\times \beta} (\pair{\psi,\delta})^\frown \bot^\delta_\psi\right)\cup (\pair{\varphi,\top})^\frown \top^\gamma_\varphi\right)$$
and
$$\bot^\beta=\top^\beta\cup (\bot)^\frown \bigcup_{\pair{\psi,\gamma}\in Form\times \beta}(\pair{\psi,\gamma})^\frown \bot^\gamma_\psi$$
We define $\top^\beta_\theta$ and $\bot^\beta_\theta$ based on the complexity of the formula. If $\theta$ is atomic or the negation of an atomic, then $\top^\beta_\theta=\top^\beta$ and $\bot^\beta_\theta=\bot^\beta$. For the conjunction, disjunction, and quantifier cases, the construction is the same as in \Cref{thisatrue}. By induction on $\beta\leq \alpha$ and on the complexity of $\theta$ and using \Cref{importantlater} we have $\pi\top^\beta_\theta\neq \pi\bot^\gamma_\psi$ for every formula $\psi$ and $\gamma\leq \alpha$. This ensures that for all $\beta\leq \alpha$ $\pi\top^\beta\neq \pi\bot^\beta$.
We now are left to define $S^\beta_{\pair{\varphi,\sigma}}$, where $\sigma$ assigns terms of level $<\beta$ to the variables.

\begin{enumerate}
    \item In the case where $\beta=0$, if $\theta\equiv x_i\in x_j$ then $$S^0_{\pair{\theta,\sigma}}=\begin{cases}
        \{ \emptyset,(0),(0,0),(1)\}\;\; \text{ if }\;\; \sigma(i)\in \sigma(j)\\
 \{\emptyset,(0),(0,0)\}\quad\quad \text{ otherwise}
 \end{cases}$$
and  if $\theta\equiv x_i\notin x_j$ then $$S^0_{\pair{\theta,\sigma}}=\begin{cases}
        \{ \emptyset,(0),(0,0),(1)\}\;\; \text{ if }\;\; \sigma(i)\notin \sigma(j)\\
 \{\emptyset,(0),(0,0)\}\quad\quad \text{ otherwise}
 \end{cases}$$
 \item In the case where $\beta>0$, and $\theta\equiv x_{j_0}\in x_{j_1}$ or $\theta\equiv x_{j_0}\notin x_{j_1}$, for $i\leq 1$ let $$t_i=\sigma(j_i)=\{u\in L_{\eta_i}(b): L_{\eta_i}(b)\vDash\varphi_i[u,t^i_0,\dots,t^i_{m_i}]\}.$$

 \begin{enumerate}
     \item If $\eta_0<\eta_1$ then let $\delta=\eta_1$ and $\pair{\rho,\sigma^*}$ be $\varphi_1[t_0, t^1_0,\dots,t^1_{m_1}]$.
     \item If $\eta_0\geq \eta_1\geq 0$ then let $\delta=\eta_0$ and $\pair{\rho,\sigma^*}$ be
     $$\exists x\in  L_{\eta_1}(b)\,(\forall s\,(s\in x\leftrightarrow \varphi_0[s,t^0_0,\dots,t^0_{m_0}]) \wedge \varphi_1(x,t^1_0,\dots, t^1_{m_1}])$$ 
     \item If $\eta_0> \eta_1=-1$ then let $\delta=\eta_0$ and $\pair{\rho,\sigma^*}$ be
     $$\exists x\in  L_0(b)\,(\forall s\,(s\in x\;\leftrightarrow\; \varphi_0[s,t^0_0,\dots,t^0_{m_0}]) \wedge x\in t_1)$$ 
     \item If $\eta_0= \eta_1=-1$ then let $\pair{\rho,\sigma^*}$ be
     $t_0\in t_1$ and $\delta=0$
 \end{enumerate}
 If $\theta\equiv x_i\in x_j$ then let $\theta^*$ be the negation normal form of $\rho$ and if $\theta\equiv x_i \notin x_j$ let $\theta^*$ be the negation normal form of $\neg\rho$.
 If $\beta=\gamma+1$ then
 $$S^\beta_{\pair{\theta,\sigma}}=\top^\gamma \cup(\pair{\theta,\sigma,\beta})^\frown\left( \bigcup_{\pair{\psi,\gamma}\in  Form\times \beta}(\psi)^\frown \bot^\gamma_\psi\right)\cup  (\pair{\theta^*,\sigma^*})^\frown S^\delta_{\pair{\theta^*,\sigma^*}}.$$
 We note that by construction $(\pi S^\beta_{\pair{\theta,\sigma}}=\pi\top^\beta)\;\; \leftrightarrow\;\; (\pi S^\rho_{\pair{\theta^*,\sigma^*}}=\pi \top^\delta_{\theta^*})$.
 \item $S_{\pair{\theta_0\wedge \theta_1,\sigma}}^{\beta}$ is the union of the following 
  \begin{enumerate}
        \item $(\pair{\theta_0\wedge\theta_1,\beta,\sigma},\pair{1,1})^\frown OPair(S^{\beta}_{\pair{\theta_0,\sigma}},S^{\beta}_{\pair{\theta_1,\sigma}})$
        \item $(\pair{\theta_0\wedge\theta_1,\beta,\sigma},\pair{1,0})^\frown OPair(\top^\beta_{\theta_0},\bot^\beta_{\theta_1})$
        \item $(\pair{\theta_0\wedge\theta_1,\beta,\sigma},\pair{0,1})^\frown OPair(\bot^\beta_{\theta_0},\top^\beta_{\theta_1})$
        \item $(\pair{\theta_0\wedge\theta_1,\beta,\sigma},\pair{0,0})^\frown OPair(\bot^\beta_{\theta_0},\bot^\beta_{\theta_1})$
    \end{enumerate}
\item $S_{\pair{\theta_0\vee \theta_1,\sigma}}^{\beta}$ is the union of the following
    \begin{enumerate}
        \item $(\pair{\theta_0\vee\theta_1,\beta,\sigma},\pair{1,1})^\frown OPair(\top^\beta_{\theta_0},\top^\beta_{\theta_1})$
        \item $(\pair{\theta_0\vee\theta_1,\beta,\sigma},\pair{1,0})^\frown OPair(\top^\beta_{\theta_0},\bot^\beta_{\theta_1})$
        \item $(\pair{\theta_0\vee\theta_1,\beta,\sigma},\pair{0,1})^\frown OPair(\bot^\beta_{\theta_0},\top^\beta_{\theta_1})$  
        \item $(\pair{\theta_0\vee\theta_1,\beta,\sigma},\pair{0,0})^\frown OPair(S^{\beta}_{\pair{\theta_0,\sigma}},S^{\beta}_{\pair{\theta_1,\sigma}})$
    \end{enumerate}
\item $$S^{\beta}_{\pair{ \forall x\,\theta,\sigma}}= (\pair{\forall x\,\theta,\beta,\sigma},1)^\frown \top^\beta_{\theta}\cup \bigcup_{t\in \mathfrak T_\beta} (\pair{\forall x\,\theta,\beta,\sigma},t)^\frown S^{\beta}_{\pair{\theta,\sigma[t/x]}}$$
\item $$S^{\beta}_{\pair{ \exists  x\,\theta,\sigma}}= (\pair{\exists x\,\theta,\beta,\sigma},0)^\frown \bot^\beta_{\theta}\cup\bigcup_{t\in \mathfrak T_\beta} (\pair{\exists x\,\theta,\beta,\sigma},t)^\frown S^{\beta}_{\pair{\theta,\sigma[t/x]}}$$
\end{enumerate}
Take $T$ sufficiently large as to contain all $\beta<\alpha$, $S^\beta_{\pair{\theta,\sigma}},\top^\beta_\theta,$ and $\bot^\beta_\theta$ as subtrees and let $\pi$ be the collapsing function of $T$. Recall that $\mathfrak{T}_\alpha$ denote the terms of level $< \alpha$ we define $\overset{\circ}{\in}$ to be all pairs $t_0$ and $t_1$ such that $t_0$ has level strictly less than $t_1$, or both have level $-1$, and $\pi S^\alpha_{\pair{x_0\in x_1, (t_0,t_1)}}=\pi \top^\alpha_{x_0\in x_1}$. We take the collapse of $(\mathfrak{T}_\alpha,\overset{\circ}{\in})$ and use $\pi$ to denote the collapsing function on $T$ and on $(\mathfrak{T}_\alpha,\cin)$ as this will not cause ambiguity. We show that the map $\beta\mapsto \pi\mathfrak T_\beta$ witnesses that $\pi\mathfrak T_\alpha=L_\alpha(b)$. \\
\\
To verify $\pi\mathfrak{T}_0=TC(b)$, let $t_0$ and $t_1$ be terms of level $-1$, which by convention we identify with elements of $TC(b)$. We have the following equivalences 
$$t_0\cin t_1\;\leftrightarrow\;( \pi S^\alpha_{\pair{x_0\in x_1,(t_0,t_1)}}=\pi\top^\alpha_{x\in y})\;\leftrightarrow\;( \pi S^0_{\pair{x_0\in x_1,(t_0,t_1)}}=\pi \top^0_{x_0\in x_1})\;\leftrightarrow\; t_0\in t_1 $$
The first equivalence is by definition of $\cin$, the second and third by construction. This implies $\pi\mathfrak{T}_0=TC(b)$.
\\
\\
We show that for any $\beta<\alpha$ that 
$$\pi\mathfrak{T}_{\beta+1}=\mathrm{Def}(\pi\mathfrak{T}_\beta,\in)$$
We show that for all $r= \{u\in L_\beta(b):L_\beta(b)\vDash \varphi[u,t_0,\dots, t_n]\}$ we have
$$\forall t\in \mathfrak{T}_\beta\;(\varphi[\pi t,\pi t_0,\pi t_1,\dots,\pi t_n]\in Th(\pi \mathfrak{T}_\beta,\in)\;\leftrightarrow \;\pi t\in \pi r)$$
We prove by induction on $\gamma<\alpha$ and on the complexity of the formula that for all $\varphi$ such that $FV(\varphi)\subseteq n$
$$\forall \sigma\in Val_\gamma\;(|\sigma|>n\rightarrow \,(\pair{\varphi,\pi\circ\sigma}\in Th(\pi\mathfrak{T}_\gamma,\in)\;\leftrightarrow\; \pi S^\gamma_{\pair{\varphi,\sigma}}=\pi\top^\gamma_\varphi))$$
the proof is by induction on the complexity of the formula. The cases for boolean connectives and quantifiers are treated the same way as in \Cref{thisatrue}, which leaves the atomic case. Let $t_0$ and $t_1$ be terms of level $\leq \beta$, for each $i\leq 1$ let 
$$t_i=\{u\in L_{\eta_i}(b):L_{\eta_i}(b)\vDash\varphi_i[u,t^i_0,\dots, t^i_{m_i}]\}$$
We consider the case where $\eta_0\geq \eta_1\geq 0$; the other cases are treated similarly or are simpler. Define
$$\chi[r]\equiv \varphi_1[r,t^1_0,\dots, t^1_{m_1}]$$
and
$$\pair{\psi,\sigma}\equiv \exists x\in  L_{\eta_1}(b)\;(\forall s( s\in x\leftrightarrow \varphi_0[s,t^0_0,\dots, t^0_{m_0}])\wedge \varphi_1(x,t^1_0,\dots, t^1_{m_1}]). $$
The following are equivalent:
\begin{enumerate}
    \item $\pair{x_0\in x_1,(\pi t_0,\pi  t_1)}\in Th(\pi\mathfrak{T}_\beta,\in)$.
    \item $\pi t_0\in \pi t_1$.
    \item $\pi t_0\in \{\pi r: r\cin t_1\}$.
    \item $\exists r\in \mathfrak{T}_{\eta_1}\, (r\cin t_1\wedge \pi r=\pi t_0)$.
    \item $\exists r\in \mathfrak{T}_{\eta_1}\, ( \pi S_{r\in t_1}^{\alpha}=\pi\top_{x\in y}^{\alpha}\wedge \pi r=\pi t_0)$.
    \item  $\exists r\in \mathfrak{T}_{\eta_1}\, ( \pi S_{\chi[r]}^{\eta_1}=\pi\top_{\chi}^{\eta_1}\wedge \pi r=\pi t_0)$.
    \item $\exists r\in \mathfrak{T}_{\eta_1}\,  (\chi[\pi r]\in Th(\pi \mathfrak{T}_{\eta_1},\in)\wedge \pi r=\pi t_0)$.
    \item $\exists x\in \pi\mathfrak{T}_{\eta_1}\,  (\chi(x]\in Th(\pi \mathfrak{T}_{\eta_1},\in)\wedge \forall s(s\in x\leftrightarrow s\in \pi t_0))$.
    \item $\pair{\psi,\pi \circ \sigma}\in Th(\pi\mathfrak{T}_{\eta_1},\in)$.
    \item $\pi S^{\eta_1}_{\pair{\psi, \pi \circ \sigma}}=\pi \top^{\eta_1}_{\psi}$.
    \item $\pi S^\beta_{\pair{x_0\in x_1,( t_0, t_1)}}=\pi \top^\beta_{x\in y}$.
\end{enumerate}
The equivalence $8\leftrightarrow 9$ follows from Tarski conditions of $Th(\pi \mathfrak{T}_{\eta_1},\in)$
and the inductive hypothesis which implies  $\pi L_{\eta_1}(b)=\pi \mathfrak{T}_{\eta_1}$.
\end{proof}

\begin{prop}[Basic facts about $L$] $\C$ proves the following
\begin{enumerate}
    \item For every $b$ and $\alpha\in Ord$ the set $L_\alpha(b)$ is transitive.
    \item If $\alpha$ is a limit ordinal, then $L_\alpha\vDash \B$.
    \item $\forall b,c\,\forall \alpha\in Ord\;(b\subseteq c\rightarrow L_\alpha(b)\subseteq L_\alpha(c))$.
    \item $L_\beta(L_\alpha(b))=L_{\alpha+\beta}(b)$.
    \item $L$ has a definable well ordering and at each limit ordinal $\alpha$ we have $L_\alpha$ has a definable well ordering. Similarly if $a\subseteq L_{\beta}$ then $L(b)$ and $L_\alpha(b)$, for $\alpha>\beta$ limit ordinal, will have definable well ordering.
\end{enumerate}
\end{prop}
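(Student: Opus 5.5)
The plan is to work inside $\C$ with the function $f$ witnessing that $L_\alpha(b)$ exists, as provided by the preceding theorem. Each item will be proved by $\Delta_0$ induction along the ordinals, relative to $f$ and to the elementary diagrams $Th(f(\gamma),\in)$, which exist by \Cref{thisatrue}. The point that makes this feasible in the weak base theory is that every inductive statement below is $\Delta_0$ in these parameters. We then only need Regularity applied to a $\Delta_0$-separated set of counterexamples below $\alpha$, and no strong Separation or Collection.

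For (1), we show by induction on $\beta\le\alpha$ that $f(\beta)$ is transitive and that $f(\gamma)\subseteq f(\beta)$ for $\gamma<\beta$.
\begin{itemize}
\item The base case holds because $f(0)=TC(b)$.
\item At a successor step, every $x\in f(\gamma)$ equals $\{s\in f(\gamma): s\in x\}$, which lies in $\mathrm{Def}(f(\gamma),\in)$ by transitivity of $f(\gamma)$. Also every element of $\mathrm{Def}(f(\gamma),\in)$ is a subset of $f(\gamma)$.
\item Limits are unions of transitive sets.
\end{itemize}
Item (3) is a similar simultaneous induction. We prove $f_b(\beta)\subseteq f_c(\beta)$ together with the more useful fact that each element of $f_b(\beta)$ is definable over $f_c(\beta)$ from the same parameters. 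For this we compare the two term sets $\mathfrak T_\beta$ for $b$ and for $c$ (the former embeds into the latter) and use the collapse characterisation from the previous proof. Item (4) is proved by induction on $\beta$, using that $\mathrm{Def}$ is applied levelwise and $TC(L_\alpha(b))=L_\alpha(b)$ by (1). Concretely, the functions $\gamma\mapsto L_\gamma(L_\alpha(b))$ and $\gamma\mapsto L_{\alpha+\gamma}(b)$ satisfy the same recursion clauses and agree at $0$, so uniqueness, proved by $\Delta_0$ induction, gives equality.

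For (2), let $\alpha$ be a limit; we verify the axioms of $\B$ in $L_\alpha$ one by one.
\begin{itemize}
\item Extensionality and Regularity hold by transitivity.
\item Pair, Union and Transitive Closure hold because for $x,y\in L_\gamma$ the sets $\{x,y\}$, $\bigcup x$ and $TC(x)$ are definable over some $L_{\gamma+n}$ (for $TC$ we use that $L_\gamma$ is transitive, so $TC(x)\subseteq L_\gamma$, and definability follows via a $\Delta_0$ description using finite sequences).
\item $\Delta_0$ Separation follows from absoluteness of $\Delta_0$ formulas for transitive sets: $\{z\in x:\varphi\}$ is definable over $L_\gamma$.
\item Infinity holds by first checking that $\omega\in L_{\omega+1}$.
\item Finite Powerset is obtained by showing by induction on $n$ that every $n$-element subset of $x\in L_\gamma$ lies in $L_{\gamma+1}$. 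Then $\mathcal P_{\mathrm{fin}}(x)$ is definable over $L_{\gamma+1}$, since being finite is $\Delta_0$ given $\omega$ and $x^{<\omega}$, which we may need to show lie in a slightly later level.
\end{itemize}
This last item is the main technical obstacle. The subtlety is that the ``finite'' clause must be expressible over $L_{\gamma+k}$, so we first establish $x^{<\omega}\in L_{\gamma+k}$ for a fixed standard $k$ by coding sequences as sets of Kuratowski pairs.

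For (5), we define the well ordering $<_L$ by transfinite recursion. Elements of $L_{\beta+1}$ are ordered first by least level of appearance, and then lexicographically by (formula code, $<_L$-least tuple of parameters). The ordering on formula codes is fixed, and on finite sequences of parameters we use the lexicographic extension of $<_{L_\beta}$. The global ordering is obtained by applying $\Delta_0$-$\mathbf{TR}$ (\Cref{C->ETR}) along $\alpha$, with $H_\beta$ the well order of $L_\beta$. The clause at $\beta+1$ is $\Delta_0$ in $H_{\prec\beta}$, $f$ and $Th(f(\beta),\in)$, as required. Definability inside $L_\alpha$ for limit $\alpha$ follows because the same recursion can be carried out inside $L_\alpha$ using (2) and the $\Sigma_1$ definition of $\beta\mapsto L_\beta$. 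The relativised version for $L(b)$ with $b\subseteq L_\beta$ is identical, now starting from the well ordering of $L_\beta$ restricted to $TC(b)$.
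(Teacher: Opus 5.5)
The paper states this proposition without proof, so there is no argument to compare yours against. Judged on its own, your treatment of (1) and (4) is fine. The argument for (3), however, has a genuine gap, and in fact (3) is false as stated.

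Take $c=V_\omega$ and $b=\omega\cup\{\{n\}:n\in S\}$ with $S\subseteq\omega\setminus\{0\}$.
\begin{itemize}
\item Then $b\subseteq c$ and $b$ is transitive, so $b=TC(b)$ is defined over $(TC(b),\in)$ by $x=x$, whence $b\in L_1(b)$.
\item But $L_1(c)=\mathrm{Def}(V_\omega,\in)$ is countable, and $S=\{n\neq 0:\{n\}\in b\}$ can be recovered from $b$. Hence for all but countably many $S$ we get $b\notin L_1(c)$.
\item Diagonalizing against an enumeration of $\mathrm{Def}(V_\omega,\in)$, using $Th(V_\omega,\in)$, produces such an $S$ inside $\C$. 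So $\C$ actually refutes (3).
\end{itemize}
The step that fails in your proposal is the invariant that every element of $L_\beta(b)$ is definable over $L_\beta(c)$ from the same parameters. That amounts to saying that the inclusion of the $b$-terms into the $c$-terms preserves denotation, and it does not. A term $\{u\in L_\eta(b):L_\eta(b)\models\theta[u,t_0,\dots,t_n]\}$, reread in the $c$-hierarchy, has $u$ and all bound variables of $\theta$ ranging over $L_\eta(c)$ rather than $L_\eta(b)$. Already the term $L_0(b)$ is sent to $TC(c)$. Transferring definitions would require $L_\eta(b)$ to be a definable subset of $L_\eta(c)$, and this fails at $\eta=0$.

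Two further points need attention.

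In (2), your verification of Infinity through $\omega\in L_{\omega+1}$ only applies when $\alpha>\omega$. For $\alpha=\omega$ the claim itself fails, since $L_\omega=V_\omega$ contains no limit ordinal, so this restriction must be stated explicitly.

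In (5), the claim that the same recursion can be carried out inside $L_\alpha$ using (2) does not follow.
\begin{itemize}
\item $\B$ does not prove that elementary diagrams exist; otherwise, by the paper's observation, $\B\vdash Con(\B)$. So $L_\alpha\models\B$ gives no reason why the $\Sigma_1$ definition of $\beta\mapsto L_\beta$, or your recursion, is evaluated correctly in $L_\alpha$.
\item What you need is a separate lemma, proved by induction on limit $\alpha>\omega$: that $\langle (L_\gamma,Th(L_\gamma,\in)):\gamma\le\beta\rangle\in L_\alpha$ for every $\beta<\alpha$. Its proof uses two facts. First, $Th(L_\gamma,\in)$ is definable a few levels above $L_\gamma$ via partial satisfaction relations. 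Second, for $\alpha=\lambda+\omega$ the sequence below $\lambda$ is already definable over $L_\lambda$.
\item Your instance of $\Delta_0\text{-}\mathbf{TR}$ also needs one parameter coding all $Th(L_\gamma,\in)$ for $\gamma<\alpha$. The elementary-diagram theorem only provides these one at a time. A preliminary instance of $\Delta_0\text{-}\mathbf{TR}$ (truth by recursion on formulas, uniformly in $\gamma$) supplies the single parameter.
\end{itemize}
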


\section{Primitive recursive set functions} \label{sec pr}
We wish to show that $\C$ can prove the universe is closed under the primitive recursive set functions.  More precisely, we show  within $\C$ that every primitive recursive set function is stabilized by the map $\alpha \mapsto \varphi(i,\alpha)$ for some $i \in \omega$ (cf.\ Cantini \cite{cantini} and Avigad \cite{avigad}). The fact that primitive recursive branches of the Veblen function bound the primitive recursive ordinal functions, and in particular that every $\vp(\omega,\gamma)$ is closed under such functions,  was  first noticed, at least in print,   by Sch{\"u}tte \cite{schutte}.  

Our result appears to follow from  Simpson's proof sketch of \cite[Theorem 3.1]{simpson82}. Simpson claims that a theory which is equivalent to $\C$ proves the totality of the primitive recursive set functions.  It is worth noting that Simpson includes  an axiom postulating  the existence of $L_\alpha(b)$ for every transitive set $b$ and every ordinal $\alpha$, whereas the results of the previous section show that this is not necessary. The  strategy outlined   in \cite{simpson82}
is to first establish the result for the primitive recursive ordinal functions, and then to formalize the original Jensen–Karp stability theorem. The authors of this paper, however, admit that it is not fully clear how the first part of this plan is intended to be carried out. In any case,  our approach via the Veblen hierarchy provides a method to implement both steps at once.

\begin{defin} The class of primitive recursive set functions is the smallest class $Prim$ such that: 
\begin{enumerate}
    \item The projections $x_0,\dots,x_n\mapsto x_i$, where $n\in \omega$ and $i\le n$, are in $Prim$.
    \item The zero functions $x_0,\dots, x_n\mapsto 0$ are in $Prim$.
    \item Adjunction $x,y\mapsto x\cup \{y\}$ is in $Prim$.
    \item Case distinction $x,y,u,v\mapsto \begin{cases}
        x \text{ if } u\in v,  \\
        y \text{ if } u\notin v,
    \end{cases}$ is in $Prim$.
    \item Composition: if $g_0,\ldots,g_k$ are in $Prim$, then $f(\vec x)=g_0( g_1 (\vec x),\dots,g_k(\vec x))$ is in $Prim$.
    \item Primitive Recursion: if $h$ is in $Prim$, so is $f(x,\vec y)=h(\bigcup\{ f(z,\vec y):z\in x \},x,\vec y)$.
\end{enumerate}
We note that every recursive definition of a primitive recursive set function is $\Sigma$. We will identify $Prim$ functions with their recursive definition.
\end{defin}

\begin{defin}\label{rud}
An important subclass of the primitive recursive set functions is the collection of rudimentary functions, which are  obtained  from: 
\begin{enumerate}
    \item Projections $x_0,\dots,x_n\mapsto x_i$;
    \item Differences $x_0,\ldots,x_n\mapsto x_i\setminus x_j$;
    \item Pairs  $x_0,\ldots,x_n\mapsto \{ x_i, x_j\}$,
\end{enumerate}
and applications of the following rules:
\begin{enumerate}
    \item Composition;
    \item The \lq\lq Union of Image\rq\rq\  scheme:  $f(x,\vec y)=\bigcup\{ g(z,\vec y): z\in x \}$.
\end{enumerate}
By \cite[Lemma 1.8.4]{jensen}, the rudimentary functions are exactly the ones  obtained by composition from the following finite family of functions:
\begin{itemize}
    \item  $F_0(x,y)=\{x,y\}$;
    \item $F_1(x,y)=x\setminus y$;
    \item  $F_2(x,y)=x\times y$;
    \item  $F_3(x,y)=\{\pair{u,w,v}:w\in x\wedge\pair{u,v}\in y\}$;
    \item  $F_4(x,y)=\{\pair{u,v,w}: w\in x\wedge\pair{u,v}\in y\}$;
    \item $F_5(x,y)=\bigcup x$;
    \item $F_6(x,y)=\text{dom}(x)=\{u\in \bigcup^2x:\exists v\in \bigcup^2x\,\pair{u,v}\in x\}$;
    \item $F_7(x,y)=\{\pair{u,v}\in x^2:u\in v\}$;
	\item $F_8(x,y)=\{\{w:\pair{w,z}\in x\}:z\in y\}.$
\end{itemize}
\end{defin}

\begin{defin}
    The system $\mathbf{PRS}\omega$, introduced by Rathjen \cite[Section 6]{R92}, is formalized in the language with membership $\in $ and a function symbol for every recursive definition for a primitive recursive set function. As axioms it has Extensionality, Pair, Union, Regularity, Infinity,  $\Delta_0$ Separation, and for each function symbol its respective recursive definition. 
\end{defin}
\begin{thm}
    $\B$ is a subtheory of $\mathbf{PRS}\omega$.
\end{thm}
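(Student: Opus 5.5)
Every axiom of $\B$ must be derived in $\mathbf{PRS}\omega$. Extensionality, Pair, Union, Regularity, Infinity and $\Delta_0$ Separation are axioms of $\mathbf{PRS}\omega$ already, so only Transitive Closure and Finite Powerset remain. For each I will write down an explicit primitive recursive set function and check, inside $\mathbf{PRS}\omega$, that it returns the required set. The tools available are the defining equations of the function symbols, $\Delta_0$ Separation, and induction. Induction is usable because $\in$-induction for $\Delta_0$ formulas (with function symbols as parameters) follows from Regularity plus $\Delta_0$ Separation applied to a suitable transitive set.

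\textbf{Transitive Closure.} Define $TC(x)=x\cup\bigcup\{TC(z):z\in x\}$. This has the Primitive Recursion form $f(x)=h(\bigcup\{f(z):z\in x\},x)$ with $h(w,x)=x\cup w$. The function $h$ is primitive recursive because binary union is obtained from adjunction by recursion: $x\cup y = x\cup\bigcup\{\{z\}:z\in y\}$, computed via $g(y,x)=\bigcup\{g(z,x):z\in y\}\cup\{$adjoin elements$\}$, or simply by citing the standard fact that $\cup$ and the rudimentary functions are in $Prim$.

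Next I verify by $\in$-induction that $TC(x)$ is transitive and contains $x$. Transitivity is immediate from the defining equation: if $u\in TC(z)$ with $z\in x$, then $u\subseteq TC(z)$ by the induction hypothesis. Both properties are $\Delta_0$ in $TC(x)$, so the induction is justified.

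\textbf{Finite Powerset.} I first build $x^{<\omega}$, or directly the set of finite subsets. Define by recursion on $n\in\omega$ the function $P(n,x)$:
\[P(0,x)=\{\emptyset\},\qquad P(n+1,x)=P(n,x)\cup\{y\cup\{z\}:y\in P(n,x),z\in x\}.\]
The step uses the rudimentary operation $w,x\mapsto\{y\cup\{z\}:y\in w,z\in x\}$, obtained via the Union of Image scheme applied twice. To cast this as Primitive Recursion in the set-theoretic sense, I recurse on $n$ as a set. Since $\bigcup\{P(m,x):m\in n\}=P(n-1,x)$ for successors, and the sequence is increasing, I can set $h(w,n,x)=\{\emptyset\}\cup w\cup\{y\cup\{z\}:y\in w,z\in x\}$; for ordinals $n$ this yields the intended hierarchy. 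Then take
\[\mathcal P_{\rm fin}(x)=\bigcup\{P(n,x):n\in\omega\},\]
which is again a primitive recursive function applied to the parameter $\omega$ from Infinity.

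Correctness has two directions. That every element of $P(n,x)$ is a subset of $x$ of size $\le n$ follows by $\Delta_0$ induction on $n$. The converse, that every finite $y\subseteq x$ lies in some $P(n,x)$, goes by induction on $|y|$, i.e. on the $n$ with a bijection $n\to y$: remove one element and apply the hypothesis.

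\textbf{Main obstacle.} I expect the hard part to be the converse direction. It is an induction whose statement ``$\exists n\, y\in P(n,x)$'' is only $\Sigma_1$, but it can be bounded to ``$y\in\mathcal P_{\rm fin}(x)$'' once $\mathcal P_{\rm fin}(x)$ is available as a term. That makes it $\Delta_0$ in parameters, so $\Delta_0$ induction along $\omega$ suffices.

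A secondary fussy point is checking that each auxiliary operation used (binary union, the set-builder image, the recursion along ordinals rather than numerals) is genuinely in $Prim$. I would handle this by citing the standard closure of $Prim$ under the rudimentary functions and $\Delta_0$-definable case distinctions, rather than building each one from the basic clauses.
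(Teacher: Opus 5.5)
Your route is the paper's: $TC$ via $TC(x)=x\cup\bigcup\{TC(z):z\in x\}$, and $\mathcal{P}_{\mathrm{fin}}(x)$ by iterating $w\mapsto w\cup\{y\cup\{z\}:y\in w,\ z\in x\}$ along $\omega$. Your explicit seed $\{\emptyset\}$ is in fact needed: the paper's displayed $h$ never uses its seed and, as written, returns $\emptyset$. The paper only writes the functions down. The problem is in the verifications you add, and one of them is circular. If Regularity is the set-form axiom (as in $\B$, and as you read it), then $\Delta_0$ $\in$-induction at $x$ comes from separating the counterexamples out of a \emph{transitive} set containing $x$. When the claim is ``$TC(x)$ is transitive'', the only candidate such set is $TC(\{x\})$ or $TC(x)\cup\{x\}$, and its transitivity is exactly what you are proving. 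So ``the induction is justified'' does not yet follow.

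The missing step is short. Put $t=TC(x)$ and $C=\{z\in t\cup\{x\}: TC(z)\subseteq t\}$, which is a $\Delta_0$ separation in the extended language. $C$ is transitive by the defining equation alone. If $z\in C$ and $z'\in z$, then $z'\in z\subseteq TC(z)\subseteq t$ and $TC(z')\subseteq TC(z)\subseteq t$. Now apply Regularity to $\{z\in C: TC(z)\text{ is not transitive}\}$. A minimal element has all its members in $C$, and their closures are transitive. Your computation then makes its own $TC$ transitive, a contradiction. Alternatively, define $TC(x)=\bigcup_{n\in\omega}U_n$ with $U_n=x\cup\bigcup\bigcup_{m<n}U_m$; then transitivity needs no induction at all.

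The Finite Powerset part has a similar looseness, and your diagnosis of it is off. The quantifier $\exists n$ is already bounded by $\omega$; the unbounded quantifier is over the bijection witnessing finiteness. For the converse direction, fix $y$ and a bijection $f\colon k\to y$, and induct on $m\le k$ for the $\Delta_0$ statement $f[m]\in P(m,x)$. For the forward direction, bound the witnessing bijections in $P(n,\omega\times x)$. With these repairs the argument is complete.
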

\begin{proof}
    An analysis of the axioms of $\B$ and $\mathbf{PRS}\omega$ shows that it suffices to verify that $\mathbf{PRS}\omega$ proves the existence of the transitive closure and the finite powerset. It is a known fact (see for example \cite[Example 0.1]{Mathias2015}) that the transitive closure is rudimentary as it is given by
    $$TC(x)=\bigcup\{TC(y): y\in x\}\cup x.$$
    For the finite powerset,  we need $\omega$ as a constant. The function $$f(x,y)=x\cup\{ u\cup \{ v\}:u\in x\wedge v\in y\}$$ is primitive recursive. Let $h$ be given by the recursive definition 
    $$h(x,y,z)=f(\bigcup\{h(x,y,w):w\in z\},x)$$
    one verifies that $\mathcal{P}_\text{fin}(x)=h(x,\{\emptyset\},\omega)$.
\end{proof}

\subsection{Veblen Hierarchy}

\begin{defin}
    The binary Veblen function $\varphi:Ord\times Ord\rightarrow Ord$ is the function satisfying the following recursive definition
    \begin{enumerate}
        \item $\varphi(0,\alpha)=\omega^\alpha$.
        \item For $\beta>0$, the map $\alpha\mapsto \varphi(\beta,\alpha)$ is the unique map that increasingly enumerates the set
        $$\{\alpha\in Ord: \forall \gamma<\beta\,\,\varphi(\gamma,\alpha)=\alpha\}.$$
    \end{enumerate}

The ordinal $\vp(\alpha,\beta)$ is the least closed under $0$, $\vp(\alpha,\gamma)$ for $\gamma<\beta$, addition and $\gamma\mapsto \vp(\delta,\gamma)$ for $\delta<\alpha$.
    
\end{defin}
\begin{defin}
    We say $\pair{\Lambda,\leq_{\Lambda}}$ is a quasi linear order if $\leq_{\Lambda}$ is reflexive, transitive, and every two elements are comparable. For $\lambda_0,\lambda_1\in \Lambda$ we write $\lambda_0\equiv_{\Lambda}\lambda_1$ to mean $\lambda_0\leq_{\Lambda}\lambda_1\wedge \lambda_1\leq_\Lambda \lambda_0$ and  $\lambda_0<_{\Lambda}\lambda_1$ to mean $\lambda_0\leq_{\Lambda}\lambda_1\wedge \lambda_1\nleq_\Lambda \lambda_0$. We use $+\infty$ and $-\infty $ to denote new elements that satisfy $\forall \lambda\in \Lambda\,(-\infty<_\Lambda\lambda<_\Lambda+\infty)$. By $\Lambda\vert_{<\lambda}$ we mean the set $\{\mu\in \Lambda:\mu<_{\Lambda} \lambda\}$. A sequence $f:\omega \rightarrow \Lambda$ is said to be descending if $\forall i\in \omega\, (f(i+1)<_\Lambda f(i))$.
\end{defin}

\begin{defin} We define a relativized notation system for the Veblen hierarchy similar to the one defined in \cite[Definition 2.7]{Martalban} and \cite[Definition 2.2]{RathjenWeiermann}. Our system of terms will avoid having to define normal forms for its construction, but it has the disadvantage of not having an antisymmetric order. Since our goal is to eventually take the collapse, this will not cause issues. Given a quasi linear order $\Lambda$ and an ordinal $\alpha$, we define $O(\alpha,\Lambda)$ to be all terms generated by the following rules. 
\begin{enumerate}
    \item $0\in O(\alpha,\Lambda).$ 
    \item $ x \in \Lambda\rightarrow \varphi(\alpha,x)\in O(\alpha,\Lambda).$
    \item If $t_1,t_2,\dots,t_n\in O(\alpha,\Lambda)$  and each $t_i$ is not itself sum of terms, then $ t_1+\dots +t_n\in O(\alpha,\Lambda)$.
    \item $\beta< \alpha\wedge t\in O(\alpha,\Lambda)\rightarrow \varphi(\beta,t)\in O(\alpha,\Lambda).$
\end{enumerate}
\noindent
Let $h_\Lambda (t)$ denote the largest $x\in \Lambda$ such that $\varphi(\alpha,x)$ appears in $t$ and $0$ if there is no such $x$. The relation  $t\leq_{\alpha,\Lambda}s $ is given by recursion on the complexity of the terms and is defined by the following rules: 
\begin{enumerate}
    \item $t=0$
    \item $t=\varphi(\alpha,x)\wedge h_{\Lambda}(s)\neq 0\wedge x\leq_{\Lambda} h_\Lambda(s)$
    \item If $t=t_0+\dots +t_n$ and $s=s_0+\dots +s_m$ then there exists a weakly increasing function $f:n+1\rightarrow m+1$ such that $\forall i\leq n \,t_i\leq_{\alpha,\Lambda} s_{f(i)}$ and $s_{f(i)}\leq_{\alpha,\Lambda} t_{i}$ then $f(i+1)>f(i)$ ($n$ and $m$ may be $0$).
    \item If $t=\varphi(\beta,t_0)$ and $s=\varphi(\gamma,s_0)$ we require
    \begin{enumerate}
        \item $\beta<\gamma\rightarrow t_0\leq_{\alpha,\Lambda} s$.
        \item $\beta =\gamma\rightarrow t_0\leq_{\alpha,\Lambda} s_0$.
        \item $\beta> \gamma\rightarrow t\leq_{\alpha,\Lambda} s_0$
    \end{enumerate}
\end{enumerate}
Condition $3$ reflects how sums of additively indecomposable ordinals compare. 

 Formally $O(\alpha,\Lambda)$ will be a subset of $ ((\alpha+1)\times \{0\}\cup \Lambda\times \{1\}\cup5\times \{2\})^{<\omega}$,  where the elements in $5\times \{2\}$ will represent $+,\varphi$, the left and right parenthesis, and the comma. This ensures that:
    \begin{enumerate}
     \item $\B$ proves that for every quasi linear order $\Lambda$ and ordinal $\alpha$, the pair $\pair{O(\alpha,\Lambda),\leq_{\alpha,\Lambda}}$ exists. Furthermore, if $\Lambda$ is well orderable as a set, then  $O(\alpha,\Lambda)$ is also well orderable.
    \item  $\C$ proves for all $\alpha,\beta\in Ord$ $\pair{O(\alpha,\beta),<_{\alpha,\beta}}\in L_{\max\{\alpha,\beta\}+\omega}$.
    \end{enumerate}

\end{defin}
\begin{lemma}
    The system $\B$ proves the following:
    \begin{enumerate}
        \item The relation $\leq_{\alpha,\Lambda}$ on $O(\alpha,\Lambda)$ is a quasi linear order.
        \item If $s_0,s_1<_{\alpha,\Lambda} \varphi(\beta,t)$ then $s_0+s_1<_{\alpha,\Lambda}\varphi(\beta,t)$.
        \item For $\gamma<\beta\leq\alpha$ we have $\varphi(\gamma,\varphi(\beta,t))\equiv_{\alpha,\Lambda} \varphi(\beta,t)$.
        \item If $s<_{\alpha,\Lambda}t$ then $\varphi(\beta,s)<_{\alpha,\Lambda} \varphi(\beta,t)$.
        \item $O(\alpha,\Lambda)\vert_{<\varphi(\alpha,x)}= O(\alpha,\Lambda\vert_{<x} )$.
        \item $O(\alpha,\Lambda)\vert_{<\varphi(\beta,\varphi(\alpha,x))}\cong O(\beta, O(\alpha,\Lambda\vert_{<x} ))$ and such isomorphism is $\Delta_0$-definable relative the set of finite sequences of the two sets.
        
    \end{enumerate}
\end{lemma}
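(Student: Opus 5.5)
The plan is to argue entirely syntactically, by induction on the complexity (length) of terms. Since terms are finite sequences and $\leq_{\alpha,\Lambda}$ is given by a recursion on pairs of terms that only looks at finitely many subterms, every claim below is $\Delta_0$ in the parameters $O(\alpha,\Lambda)$, $\leq_{\alpha,\Lambda}$ and $\Lambda$. The inductions are therefore instances of $\Delta_0$ induction on $\omega$, which is available in $\B$ since $\Delta_0$ Separation gives least elements of $\Delta_0$-definable subsets of $\omega$. First I would fix the intended reading of the clauses. A single non-sum term is treated as a sum of length one. Clause 3 is read as a comparison in which equivalent summands must be matched injectively and strictly smaller summands may be absorbed by a later summand. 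Clause 2 is extended so that for $s=\varphi(\gamma,s_0)$ with $\gamma<\alpha$ one compares $x$ with $h_\Lambda(s)$; this is what the definition of $h_\Lambda$ is for.

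For item 1, reflexivity is a direct induction. Totality and transitivity I would prove simultaneously by induction on the sum of the lengths of the terms involved, with a case split on the head symbols: $0$, $\varphi(\alpha,x)$, $\varphi(\beta,\cdot)$ with $\beta<\alpha$, and sums. For two $\varphi$-terms, totality reduces, through the three subcases $\beta<\gamma$, $\beta=\gamma$, $\beta>\gamma$, to comparisons of strictly shorter pairs, and the order on $\alpha$ is linear. Terms headed by $\varphi(\alpha,x)$ are compared through $h_\Lambda$, where totality of $\leq_\Lambda$ is used. For sums I would use a lexicographic argument. By induction, each summand can be compared with the others, so I would first show that any sum is $\equiv_{\alpha,\Lambda}$ to its "normal" rearrangement into weakly decreasing order with absorbed summands deleted. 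Comparing two such normal sums is then lexicographic comparison, which is total and transitive.

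Items 2 through 4 are then routine. For item 2, any $s_i<\varphi(\beta,t)$ that is a $\varphi$-term or $0$ is absorbed, and clause 3 with the constant map $f$ onto the single summand gives $s_0+s_1\leq\varphi(\beta,t)$. The reverse inequality fails because a lone additively indecomposable term would have to be matched to a summand equivalent to it, contradicting $s_i<\varphi(\beta,t)$. For item 3, apply clause 4(c) in one direction and 4(a) in the other, using reflexivity. For item 4, apply clause 4(b) in both directions.

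Items 5 and 6 are where the actual work lies, and 6 is the main obstacle. For item 5 I would show by induction on terms that $t<_{\alpha,\Lambda}\varphi(\alpha,x)$ if and only if every $\varphi(\alpha,y)$ occurring in $t$ has $y<_\Lambda x$, that is, $h_\Lambda(t)<_\Lambda x$. Those terms are literally the elements of $O(\alpha,\Lambda|_{<x})$, and the orders agree because the comparison of two terms only consults $\leq_\Lambda$ on the labels occurring in them. For item 6 I would define the map $F:O(\beta,O(\alpha,\Lambda|_{<x}))\to O(\alpha,\Lambda)$ by recursion on term length. Under $F$, $0$ goes to $0$, $\varphi(\beta,u)$ goes to $\varphi(\beta,u)$ for $u\in O(\alpha,\Lambda|_{<x})$, $\varphi(\gamma,t)$ goes to $\varphi(\gamma,F(t))$ for $\gamma<\beta$, and sums are mapped summandwise. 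This is a substitution of finite sequences, so its graph is $\Delta_0$ on the sets of finite sequences, as the statement requires.

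Three things then need checking. First, the image of $F$ is contained in $O(\alpha,\Lambda)|_{<\varphi(\beta,\varphi(\alpha,x))}$. This uses item 5 together with item 4 and the comparison of $\varphi(\gamma,\cdot)$ with $\varphi(\beta,\cdot)$, inducting as in item 1. Second, $F$ preserves and reflects $\leq$, again by a simultaneous induction. Third, $F$ is surjective up to $\equiv$. This is the delicate point: an arbitrary term below $\varphi(\beta,\varphi(\alpha,x))$ may contain $\varphi(\delta,\cdot)$ with $\beta<\delta\le\alpha$, and those subterms are not in the image of $F$. I would handle this with a normalization lemma. Any term $\varphi(\delta,w)$ with $\delta\geq\beta$ that lies below $\varphi(\beta,\varphi(\alpha,x))$ is already $<\varphi(\alpha,x)$, hence by item 5 lies in $O(\alpha,\Lambda|_{<x})$ and can be treated as a single "atom" $u$. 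Using item 3, it is $\equiv$ to $\varphi(\beta,u)$ for a suitable atom, or is absorbed into such a term. Since the collapse is taken later, "isomorphism" will be read as an order isomorphism modulo $\equiv$; with that reading, surjectivity modulo $\equiv$ suffices.
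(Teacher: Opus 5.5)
The paper states this lemma without proof, so there is no argument to compare yours against. A syntactic verification by $\Delta_0$ induction on term length is what $\B$ requires, and your plan is essentially correct. Items 3--5 go through as you describe; for $\beta=\alpha$ in item 3, use clause 2 in place of 4(c). Item 2 also goes through, with one correction: the lone term $\varphi(\beta,t)$ only has to be matched to a summand $u$ that is at least as large, not an equivalent one. The contradiction $\varphi(\beta,t)\le u\le s_i<\varphi(\beta,t)$ is the same.

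Two steps in item 1 need repair.

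First, the normal form of a sum must be obtained purely by \emph{deleting} each summand that lies strictly below some later summand, with no reordering. For non-sum terms $u<v$, clause 3 gives $u+v\equiv v$, but $v<v+u$.

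Second, deriving transitivity for sums from lexicographic comparison of normal forms is circular as stated. Passing from $t\le s$ to $nf(t)\le nf(s)$ is itself an instance of transitivity, on triples such as $(nf(t),t,s)$ that need not be shorter than $(t,s,r)$. Prove transitivity first and directly:
\begin{itemize}
\item For sums, if $f$ witnesses $t\le s$ and $g$ witnesses $s\le r$, then $g\circ f$ witnesses $t\le r$. If $t_i\equiv r_{g(f(i))}$, then also $t_i\equiv s_{f(i)}$, so $f(i+1)>f(i)$ and hence $g(f(i+1))\ge g(f(i)+1)>g(f(i))$.
\item For three $\varphi$-terms, each of the nine index configurations reduces, via clauses 2 and 4, to a triple in which at least one term is replaced by its argument. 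So the sum of lengths drops.
\end{itemize}
Totality then follows from normal forms and lexicographic comparison.

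For item 6, note that by items 3 and 5 the left-hand side is exactly $O(\alpha,\Lambda\vert_{<x})$; this is what your normalization step exploits. Rather than arguing surjectivity of $F$ modulo $\equiv$, define the inverse explicitly as a $\Delta_0$ substitution $G$:
\begin{itemize}
\item send $\varphi(\beta,w)$ to the atom labelled $w$;
\item send $\varphi(\delta,w)$ with $\beta<\delta<\alpha$, and $\varphi(\alpha,y)$, to the atom labelled by the term itself;
\item act homomorphically on $0$, on sums, and on $\varphi(\gamma,\cdot)$ for $\gamma<\beta$.
\end{itemize}
Then $G\circ F$ is literally the identity, and $F\circ G\equiv\mathrm{id}$ by item 3. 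Since $F$ preserves and reflects $\le$, so does $G$. $G$ is also the direction the paper actually uses in the Veblen lemma, where a descending sequence on the left-hand side is transported to the right-hand side.
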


\begin{lemma}\label{HIRST}
    $\B$ proves that for any quasi linear order $\Lambda$, if $\pair{O(0,\Lambda),<_{0,\Lambda}}$ has a descending sequence so will $\Lambda$.
\end{lemma}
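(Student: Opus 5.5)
We prove the contrapositive form directly. Given a descending sequence $f:\omega\to O(0,\Lambda)$, we build a descending sequence $g:\omega\to\Lambda$ arithmetically (i.e.\ by a $\Delta_0$-definable recursion on $\omega$ relative to $f$ and the finite-sequence bookkeeping of $O(0,\Lambda)$, which $\B$ supports since the relevant objects live in $X^{<\omega}$ for a fixed set $X$). This is the set-theoretic analogue of Hirst's argument that $\mathbf{ACA}_0$ proves ``$\Lambda$ well ordered $\Rightarrow \omega^\Lambda$ well ordered''.

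First, by the clauses defining $\leq_{0,\Lambda}$ and Lemma part 2, every term of $O(0,\Lambda)$ is $\equiv_{0,\Lambda}$ to a sum $\varphi(0,x_0)+\dots+\varphi(0,x_k)$ with $x_0\geq_\Lambda\dots\geq_\Lambda x_k$. This is a Cantor normal form: only rule 2 and rule 3 apply when $\alpha=0$, since there is no $\beta<0$. Sums are compared lexicographically via condition 3 of the definition.

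So we may assume each $f(n)$ is such a normal-form sum, writing $f(n)=\sum_{i\le k_n}\varphi(0,x^n_i)$. The key observation is that the leading exponents $x^n_0$ form a $\leq_\Lambda$-nonincreasing sequence. We then proceed as in Hirst:

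(a) If the leading exponents are eventually constant, say from $n_0$ on, then deleting the first summand yields a descending sequence of shorter sums.

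(b) Otherwise the leading exponents strictly decrease infinitely often, and $g$ can be read off directly: let $g(j)$ be the leading exponent at the $j$-th strict drop.

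Case (a) cannot be handled by a naive induction on the length, since lengths are unbounded. The standard fix is to define, for each $n$, the ``stable prefix'' position. Concretely, define $i^*$ as the least $i$ such that the $i$-th exponents $x^n_i$ do not stabilize, and take $g$ to be the subsequence of strict drops at position $i^*$ after the prefix of length $i^*$ has stabilized. The existence of such an $i^*$ follows because a descending sequence cannot have all positions stabilize. If every position stabilized, the comparison in condition 3 would eventually give $f(n+1)\geq f(n)$ on stabilized prefixes, and extra summands only increase the term.

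The main obstacle is that ``position $i$ stabilizes'' is $\Sigma_2$ over $\omega$ relative to $f$. Choosing the least non-stabilizing $i^*$ therefore needs $\Sigma_2$ induction/least number principle on $\omega$, which $\B$ (with only $\Delta_0$ Separation) does not obviously provide. I would circumvent this as Hirst does. Define by $\Delta_0$ recursion a sequence of pairs $(n_j,i_j)$ by always following the leftmost position that changes between $f(n)$ and $f(n+1)$: at the first index where they differ, the exponent of $f(n+1)$ is strictly smaller or the summand is missing. Then argue via König-style pigeonholing over the finite tree of (position, value) changes, using the fact that within a fixed prefix the number of positions is finite. Making this bounded, so that only $\Delta_0$ induction along $\omega$ relative to $f$ is used, is the delicate step.
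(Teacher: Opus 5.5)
Your proof has a genuine gap, in the step you most need. You claim that ``a descending sequence cannot have all positions stabilize'', and the existence of $i^*$ rests on it. The claim is false. Take $\Lambda$ to be the negative integers $\{\dots,-3,-2,-1\}$ in their usual order, and
\[
f(n)=\varphi(0,-1)+\varphi(0,-2)+\dots+\varphi(0,-n)+\varphi(0,-n).
\]
Each $f(n)$ is in normal form. Also $f(n+1)<_{0,\Lambda}f(n)$, since the first disagreement is at position $n$, where $-(n+1)<_\Lambda -n$. Yet position $i$ is constantly $-(i+1)$ from $n=i+1$ on.

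Your justification fails because the first position where $f(n+1)$ and $f(n)$ differ need not lie inside the stabilized prefix; it can drift off to infinity. This is exactly the situation in which your iterated ``delete the leading summand'' never terminates. It is the second case of the paper's proof, and you are missing it entirely.

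To handle it, let $\lambda_i$ be the eventual value at position $i$. Every $\lambda_i$ lies in $\Lambda$: if position $i$ were eventually empty, the $f(n)$ would eventually be $\equiv_{0,\Lambda}$-equivalent, contradicting descent. Moreover $\lambda_0\geq_\Lambda\lambda_1\geq_\Lambda\cdots$. Each $f(n)$ is finite, so it leaves $\lambda$ at a first position $p_n$. Comparing with $f(m)$ for large $m$ gives $\lambda_{p_n}<_\Lambda x^n_{p_n}\leq_\Lambda\lambda_{p_n-1}$ when $p_n>0$. Since $p_n\to\infty$, the sequence $\lambda$ drops strictly infinitely often, and its values at the drops form a descending sequence in $\Lambda$. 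The paper reads off the limit sequence $i\mapsto h_i(f(n_i))$ directly; that sequence is in general only weakly decreasing, so one passes to its strict drops as just described.

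The obstacle you single out, by contrast, is not real. In $\B$ the set $\omega$ exists, so the quantifiers in ``position $i$ stabilizes'', namely $\exists n\in\omega\,\forall m\in\omega\,(m\geq n\rightarrow x^m_i\equiv_\Lambda x^n_i)$, are bounded. The formula is therefore $\Delta_0$ in the parameters $\omega$, $f$, $\Lambda$, $O(0,\Lambda)$, because the normal-form map is $\Delta_0$ relative to $O(0,\Lambda)^{<\omega}$ and $<_{0,\Lambda}$.

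Hence $\Delta_0$ Separation gives the set of non-stabilizing positions, and Regularity gives its least element. The stabilization points and the limit values $\lambda_i$ are obtained by $\Delta_0$ Separation in the same way. This is how the paper simply takes ``the least such $i$''.

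So you can drop the unfinished K\"onig-style detour. What you need instead is a case distinction. If some position fails to stabilize, use your strict-drop subsequence at the least such position; that part is fine and matches the paper's first case. If all positions stabilize, use the strict drops of the limit sequence.
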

\begin{proof}
Every $t\in O(0,\Lambda)$ is equivalent to a term in normal form, that is, there exists a unique finite sequence $\lambda_0\geq_\Lambda\dots\geq_\Lambda \lambda_n$ such that 
    $$t\equiv_{0,\Lambda} \varphi(0,\lambda_0)+\dots +\varphi(0,\lambda_n).$$
    Let $h_i(t)$ be $\lambda_i$ if $i\leq n$ and $-\infty$ otherwise.
    Let $f:\omega\rightarrow O(0,\Lambda)$ be $<_{0,\Lambda}$ descending. Note that each $t$ has a unique normal form and map which sends $t$ to its normal form is $\Delta_0$ relative to $O(0,\Lambda)^{<\omega}$ and $<_{0,\Lambda}$. So we may assume that for all $i\in \omega$ the term $f(i)$ is in normal form. If there exists some $i\in \omega$ such that $h_i(f(n))$ is not eventually constant, the for the least such $i$ the sequence $h_i(f(n))$ will eventually be descending in $\Lambda$. For the case where for each $i$ the sequence $h_i(f(n))$ eventually stabilizes, let $n_i$ to be the least $n$ such that
    $$\forall \,j\leq i\,( n>n_j\wedge\forall m\geq n\;(h_j(f(m))=h_j(f(n)))).$$ Since $f$ was $<_{0,\Lambda}$ descending, the sequence $i\mapsto h_i(f(n_i))$ is $<_\Lambda$ descending.
\end{proof}
\begin{obs}\label{thobs}
    Since for any limit ordinal $\delta$ we have $L_\delta(u)\vDash \B$, if $\pair{\Lambda,<_\Lambda},f\in L_\delta(u)$, where $\Lambda$ is a quasi linear order and $f$ is a descending sequence in $O(0,\Lambda)$, then there is a $<_\Lambda$ descending sequence in $L_{\delta}(u)$.
\end{obs}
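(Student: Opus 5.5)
The statement is Observation~\ref{thobs}: for limit $\delta$, if $\pair{\Lambda,<_\Lambda}$ and $f$ lie in $L_\delta(u)$, with $f$ a descending sequence in $O(0,\Lambda)$, then $L_\delta(u)$ contains a $<_\Lambda$-descending sequence. The plan is to apply Lemma~\ref{HIRST} inside the structure $L_\delta(u)$ and then transfer the resulting statement out by absoluteness.

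First, since $\delta$ is a limit, the earlier Proposition on basic facts about $L$ gives $L_\delta(u)\vDash\B$; in the relativized case I would use the analogous clause for $L_\delta(b)$. Second, Lemma~\ref{HIRST} is a theorem of $\B$, so it holds in $L_\delta(u)$. This step needs care, because the model is a set and not a class. I would invoke the lemma in one of two ways. One option is to relativize its proof to $L_\delta(u)$, since $\B$ is finitely axiomatizable. The other is to use $Th(L_\delta(u),\in)$, which exists by Theorem~\ref{thisatrue}, together with soundness of the finitely many $\B$ axioms in that structure. In either case we get that $L_\delta(u)$ satisfies: ``if $O(0,\Lambda)$ has a descending sequence, so does $\Lambda$''.

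Third comes absoluteness, which I expect to be the main obstacle. We must check that the notions involved are $\Delta_0$ and hence absolute between the transitive set $L_\delta(u)$ and $V$. These notions are being $\omega$, being a function $\omega\to X$, the object $O(0,\Lambda)$ with its order $\leq_{0,\Lambda}$, and being descending. The construction of $O(0,\Lambda)$ is carried out in $\B$ from $\Lambda$ by $\Delta_0$ separation on finite sequences. Since $L_\delta(u)$ is transitive and closed under $x\mapsto x^{<\omega}$ (it satisfies $\B$), the set $O(0,\Lambda)$ computed inside $L_\delta(u)$ equals the true one, and likewise for $\leq_{0,\Lambda}$. Hence $f\in L_\delta(u)$ is descending in the sense of $L_\delta(u)$.

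Finally, $L_\delta(u)$ provides some $g\in L_\delta(u)$ that it believes is a $<_\Lambda$-descending sequence. Being a descending sequence is $\Delta_0$ in $g,\omega,\Lambda,<_\Lambda$, so $g$ really is descending. The remaining subtlety is that the proof of Lemma~\ref{HIRST} uses only $\Delta_0$ separation and $\Delta_0$ induction on $\omega$, relative to the normal-form map and $O(0,\Lambda)^{<\omega}$. The witness $g$ is therefore defined inside the model, which is exactly why it lands in $L_\delta(u)$.
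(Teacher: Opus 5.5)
Your proposal is correct and matches the paper's argument. The observation's only justification is that $L_\delta(u)\vDash\B$, so Lemma~\ref{HIRST} holds relativized to $L_\delta(u)$, and your $\Delta_0$-absoluteness check (that $O(0,\Lambda)$, $\leq_{0,\Lambda}$ and ``descending'' mean the same inside the transitive set $L_\delta(u)$) is the bookkeeping the paper leaves implicit. One refinement: $L_\delta(u)\vDash\B$ can fail for $\delta=\omega$ (e.g.\ $L_\omega$ has no limit ordinal, so Infinity fails), but here Infinity follows from the hypothesis: $f\in L_\gamma(u)$ for some $\gamma<\delta$ gives $\omega\subseteq TC(f)\subseteq L_\gamma(u)$, hence $\omega\in L_{\gamma+1}(u)\subseteq L_\delta(u)$.
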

\begin{cor}
     $\C$ proves the function $\alpha\mapsto \omega^\alpha$ is total. 
\end{cor}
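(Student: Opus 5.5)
We need to show that for every ordinal $\alpha$ the ordinal $\omega^\alpha$ exists in $\C$. The plan is to realize $\omega^\alpha$ as the collapse of the notation system $O(0,\alpha)$, where $\Lambda=\alpha$ with its usual well order. By the remarks after the definition of $O(\alpha,\Lambda)$, $\B$ proves that the pair $\pair{O(0,\alpha),\leq_{0,\alpha}}$ exists and that $\leq_{0,\alpha}$ is a quasi linear order. The argument has three steps: show this order is well founded, take its collapse via Axiom Beta, and check that the collapse is $\omega^\alpha$.

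\textbf{Well foundedness.} This is the main obstacle, because without Countability we cannot identify well foundedness with the absence of descending $\omega$-sequences. I would argue inside a constructible level. Fix a limit ordinal $\delta>\alpha$; by the previous section $L_\delta$ exists, and $O(0,\alpha)$ together with $<_{0,\alpha}$ lies in $L_{\alpha+\omega}\subseteq L_\delta$. Suppose $Y\subseteq O(0,\alpha)$ is nonempty with no $<_{0,\alpha}$-minimal element.

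Rather than pass to $L$, I would reproduce the proof of \Cref{HIRST} directly on sets. Put each term of $Y$ into normal form $\varphi(0,\lambda_0)+\dots+\varphi(0,\lambda_n)$ with $\lambda_0\geq\dots\geq\lambda_n$; this map is $\Delta_0$ relative to $O(0,\alpha)^{<\omega}$. Now define the minimal element lexicographically. Let $\lambda_0^*$ be the least ordinal occurring as the leading exponent of some term in $Y$, which exists since ordinals are well ordered by $\in$. Restrict $Y$ to terms with that leading exponent and let $\lambda_1^*$ be the least second exponent, using $-\infty$ for terms that have ended.

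This needs only $\Delta_0$ induction on $\omega$ over finitely many stages. Each stage is $\Delta_0$ in parameters, so the sequence of partial normal forms $i\mapsto(\lambda^*_0,\dots,\lambda^*_i)$ can be obtained by $\Delta_0$-$\mathbf{TR}$ (\Cref{C->ETR}) along $\omega$. The sequence $\lambda^*_0\geq\lambda^*_1\geq\cdots$ is non-increasing in $\alpha$. It must therefore reach $-\infty$ at some finite stage: otherwise a suitable thinning of the strict drops would give an infinite descending sequence of ordinals, and the ordinals are well founded. At that stage the term $\varphi(0,\lambda^*_0)+\dots+\varphi(0,\lambda^*_k)$ lies in $Y$ and is $<_{0,\alpha}$-minimal, contradicting the choice of $Y$. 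Hence $<_{0,\alpha}$ is well founded.

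\textbf{Collapse.} By Axiom Beta, take the collapse $\pi$ of $<_{0,\alpha}$ on $O(0,\alpha)$. The image is a transitive set, and because $\leq_{0,\alpha}$ is a quasi linear order, the image is linearly ordered by $\in$; hence it is an ordinal $\gamma$. Terms that are $\equiv_{0,\alpha}$-equivalent collapse to the same ordinal.

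\textbf{Identifying $\gamma$.} We show that $\gamma=\omega^\alpha$ in the sense that $\beta\mapsto\pi\varphi(0,\beta)$ satisfies the recursion defining ordinal exponentiation. By item 5 of the lemma, $O(0,\alpha)\vert_{<\varphi(0,\beta)}=O(0,\beta)$. Using items 2 and 4, one checks by induction on $\beta\le\alpha$ that the following hold:
\begin{itemize}
\item $\pi\varphi(0,\beta)$ is additively indecomposable;
\item it is strictly increasing in $\beta$;
\item it is the supremum of the sums $\pi\varphi(0,\beta_0)+\dots$ with $\beta_i<\beta$;
\item for $\beta=\beta'+1$ it equals the supremum of $\pi\varphi(0,\beta')\cdot n$.
\end{itemize}
These are exactly the clauses of $\omega^\beta$. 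Since each clause is $\Delta_0$ in $\pi$, the function $\beta\mapsto\omega^\beta$ exists as a set up to $\alpha$.
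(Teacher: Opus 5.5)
\textbf{Comparison of routes.} Your route differs from the paper's. The paper observes that $O(0,\alpha)$ is well orderable. A nonempty subset with no minimal element can therefore be turned into a descending $\omega$-sequence by repeatedly taking least elements in the well ordering, and \Cref{HIRST} transfers that sequence down to $\alpha$. So the paper does identify well foundedness with the absence of descending sequences here, using well orderability rather than Countability.

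You instead build the minimal element of an arbitrary nonempty $Y$ directly, by iterated lexicographic minimization. That is a sound idea: it avoids the detour through sequences, and it shows well foundedness needs nothing beyond $\Delta_0$ recursion along $\omega$, with Axiom Beta entering only at the collapse. However, the step where you argue that $(\lambda^*_i)$ must reach $-\infty$ is not justified as written.

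\textbf{The gap.} A non-increasing sequence in $\alpha$ never has infinitely many strict drops. It is always eventually constant: its range has a least element $\mu=\lambda^*_k$, and from stage $k$ on the sequence equals $\mu$. So ``thinning the strict drops'' produces nothing. The case you actually have to exclude, $\lambda^*_i=\mu\neq-\infty$ for all $i\ge k$, is not touched by the well foundedness of $\alpha$ at all.

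What excludes it is the finiteness of terms. Argue as follows.
\begin{enumerate}
\item For $i>k$, every term of $Y_i$ has exponent $\lambda^*_{i-1}=\mu$ at position $i-1$, so by the normal form its exponent at position $i$ is $\le\mu$.
\item Since $\lambda^*_i=\mu\neq-\infty$, no term of $Y_i$ has ended, and by minimality all position-$i$ exponents are $\ge\mu$. Hence they all equal $\mu$, and $Y_{i+1}=Y_i$.
\item By $\Delta_0$ induction, $Y_i=Y_{k+1}$ for all $i>k$.
\item Any $t\in Y_{k+1}$ has some finite length $\ell\ge k+1$. Then $t\in Y_\ell$ has ended at stage $\ell$, which forces $\lambda^*_\ell=-\infty$, a contradiction.
\end{enumerate}

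With this argument inserted, the rest of your proof goes through: the minimality of the resulting normal form, the collapse being an ordinal, and the identification with $\omega^\alpha$. Two small points remain.
\begin{itemize}
\item $\varphi(0,\beta)$ is a term only for $\beta<\alpha$, so $\omega^\alpha$ is the collapse of all of $O(0,\alpha)$, not of $\varphi(0,\alpha)$.
\item ``Supremum'' in your clauses must mean strict supremum, i.e.\ the set of collapses of the sums itself. At $\beta=0$ the collapse $\pi\varphi(0,0)$ is $1$, not $0$.
\end{itemize}
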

\begin{proof}
    The set $O(0,\alpha)$ is well orderable, so $<_{0,\alpha}$ is well founded since $\alpha$ does not have any descending sequences. By the properties of  $\pair{O(0,\alpha),<_{0,\alpha}}$ its collapse will be $\omega^\alpha$.
\end{proof}

\begin{lemma} $\C$ proves the binary Veblen function is total.
\end{lemma}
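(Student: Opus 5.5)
The plan is to show that for every ordinal $\alpha$ the relation $<_{\alpha,\Lambda}$ on $O(\alpha,\Lambda)$ is well founded whenever $\Lambda$ is an ordinal $\beta$. Once that is known, Axiom Beta gives the collapse of $\pair{O(\alpha,\beta),<_{\alpha,\beta}}$. By the properties listed in the preceding lemma, the collapse of the initial segment below the term $\varphi(\alpha,\gamma)$ is exactly the value $\varphi(\alpha,\gamma)$ for each $\gamma<\beta$. This yields the function $\gamma\mapsto\varphi(\alpha,\gamma)$ on $\beta$ as a set, and taking $\beta$ arbitrary gives totality.

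Following Marcone and Montalbán, I would prove by transfinite induction on $\alpha$ the statement
\[
P(\alpha):\quad \text{for every well-orderable quasi linear order }\Lambda\text{ and every limit }\delta\text{ with }\Lambda\in L_\delta(\Lambda),\ \text{if } O(\alpha,\Lambda)\text{ has a descending sequence in } L_\delta(\Lambda),\ \text{then so does }\Lambda \text{ in } L_{\delta'}(\Lambda)
\]
for some explicitly bounded limit $\delta'$ (e.g.\ $\delta'=\delta+\omega\cdot\alpha$). The existence of the $L$-hierarchy from the previous section makes these levels available. The point of working inside $L$ is that $L_\delta(\Lambda)$ is well ordered and satisfies $\B$ (by the basic facts about $L$). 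Hence ``no descending sequence'' can be checked there, and well-foundedness in $V$ follows: a well-orderable $\Lambda$ that is ill founded has a minimal-element-free subset, and using the well ordering one builds a descending sequence by $\Delta_0$ recursion inside some $L_\delta$.

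The base case $\alpha=0$ is Lemma \ref{HIRST} together with Observation \ref{thobs}.

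For the successor case $\alpha=\gamma+1$, suppose $f$ is descending in $O(\gamma+1,\Lambda)$. Look at the $\varphi(\gamma+1,x)$-subterms. If the relevant heads $h_\Lambda$ do not stabilize, we extract a descending sequence in $\Lambda$ directly, as in Lemma \ref{HIRST}. Otherwise, by item 6 of the lemma, a tail of $f$ lives below some $\varphi(\gamma,\varphi(\gamma+1,x))$, and more generally inside an iterate $O(\gamma,O(\gamma,\dots O(\gamma+1,\Lambda|_{<x})))$. Applying the induction hypothesis $P(\gamma)$ finitely (in fact $\omega$) many times pushes the descent down to $\Lambda|_{<x}$, moving up a bounded number of $L$-levels each time.

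For limit $\alpha$, any descending sequence only uses indices $\beta<\alpha$ that are bounded by the finitely many indices in $f(0)$. So $f$ lives in $O(\beta',\Lambda')$ for some $\beta'<\alpha$ and a $\Lambda'$ built from $\Lambda$, and we apply $P(\beta')$.

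The main obstacle is formalizing this induction. $P(\alpha)$ quantifies over all $\Lambda$ and all descending sequences, so it is not $\Delta_0$, and $\C$ has only $\Delta_0$ Separation. This means foundation on ordinals is only available for sets. To handle it, I would fix the target $\alpha,\beta$ in advance and choose a single large level $L_\mu(\beta)$, with $\mu$ obtained from $\alpha$ by ordinal arithmetic (ordinal exponentiation being available by the corollary). I would then reformulate $P(\gamma)$ for $\gamma\le\alpha$ as a $\Delta_0$ statement relative to the set $Th(L_\mu(\beta),\in)$, which exists by Theorem \ref{thisatrue}. Induction then becomes $\Delta_0$ Separation on the set $\alpha+1$ followed by regularity.

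The successor step, which requires $\omega$-fold iteration of $P(\gamma)$ with uniform level bounds, is where I expect the real work. I would first try to make the reduction in item 6 give a descending sequence in $O(\gamma,\Lambda'')$ for a single suitably chosen $\Lambda''$, so that one application of $P(\gamma)$ suffices.
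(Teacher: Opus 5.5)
Your proposal has two genuine gaps. The main one is in the inductive step.

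\textbf{The successor and limit steps.} Suppose a tail of $f$ has constant head $x$. That tail does not lie below $\varphi(\gamma,\varphi(\gamma+1,x))$, which is $\equiv_{\gamma+1,\Lambda}\varphi(\gamma+1,x)$ by item 3. It lies below some $\varphi^m(\gamma,\varphi(\gamma+1,x)+1)$, i.e.\ in a copy of $O^m(\gamma,O(\gamma+1,\Lambda\vert_{<x})+1)$. So $m$ applications of $P(\gamma)$ return a descending sequence in $O(\gamma+1,\Lambda\vert_{<x})$: same index, smaller base, not $\Lambda\vert_{<x}$.

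The missing idea is the paper's recursion of length $\omega$:
\begin{itemize}
\item set $x_{n+1}=h_\Lambda(g_n(0))$, and read off $\xi<\gamma$ and $m$ from $g_n(0)$;
\item apply the hypothesis for $\xi$ $m$ times to get a descending sequence in $O(\gamma,\Lambda\vert_{<x_{n+1}})$;
\item take $g_{n+1}$ to be the least such sequence in the definable well ordering of $L$.
\end{itemize}
The descending sequence in $\Lambda$ is then the sequence of heads $(x_{n+1})_n$. It lies one level above the level over which the whole recursion is definable.

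Each of the $\omega$ stages costs up to $\omega^{1+\xi}\cdot m$ levels, so the bound must be of the form $\delta+\omega^{1+\gamma}$, not an additive one like your $\delta+\omega\cdot\alpha$. For the same reason your fallback, a single $\Lambda''$ with one application of $P(\gamma)$, cannot work. The $\omega$-iteration is intrinsic: additive bounds would contradict Marcone and Montalb\'an's result that $\varphi(\alpha,\cdot)$ needs $\omega^\alpha$ jumps.

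Your limit case fails in the same way. The indices occurring in $f(n)$ are not bounded by those in $f(0)$. For instance, $\varphi(\xi,\varphi(\alpha,y)+1)<_{\alpha,\Lambda}\varphi(\alpha,x)$ for every $\xi<\alpha$ whenever $y<_\Lambda x$. So each drop of the head can introduce indices cofinal in $\alpha$, and restricting below $f(0)$ only lands you in $O(\alpha,\Lambda\vert_{<x})$ again. The paper treats every $\gamma>0$, successor or limit, uniformly by the recursion above.

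\textbf{The top-level reduction.} Your $P(\alpha)$ only concerns descending sequences that already lie in $L_\delta(\Lambda)$. With $\Lambda=\beta$ it shows merely that no $L_\delta(\beta)$ contains a descending sequence of $O(\alpha,\beta)$, and this does not give well-foundedness in $V$. The sequence you extract from a subset without minimal element is definable from that subset, which need not belong to any $L_\delta(\beta)$. Nothing available in $\C$ yields a descending sequence in some $L_\delta(\beta)$: that would be an absoluteness principle of the kind being proved. For a fixed level it is false outright, since Harrison-type orders have no descending sequence in $L_{\omega_1^{\mathrm{CK}}}$.

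The paper repairs this in two steps. It first obtains a descending $f$ in $V$, using that $O(\alpha,\beta)$ is well orderable. It then works over the single base $u=L_{\max\{\alpha,\beta\}+\omega}(f)$ for every $\Lambda$ in the induction, instead of $L_\delta(\Lambda)$ with a base varying with $\Lambda$. With both repairs, your plan to make the induction $\Delta_0$ relative to one large level (here $L_{\omega^{1+\alpha}+1}(u)$) is fine.
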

\begin{proof} We follow the proof carried out by Montalbán and Marcone \cite[Theorem 3.6]{Martalban} where the set $L_{\omega^{1+\alpha}}(u)$ will take the role of the $\omega^\alpha$-iteration of the Turing jump. 
Given ordinals $\alpha$ and $\beta$ we wish to show that $\pair{O(\alpha,\beta),<_{\alpha,\beta}}$ is well founded.  By the previous lemma, we may assume $\alpha>0$. The set $O(\alpha,\beta)$ is well orderable, so  $<_{\alpha,\beta}$ is well founded if and only if it does not have  descending sequences. Seeking a contradiction, assume that $f:\omega \rightarrow O(\alpha,\beta)$ is descending. Let $u= L_{\max\{\alpha,\beta\}+\omega}(f)$, by construction $\pair{O(\alpha,\beta),<_{\alpha,\beta}}\in u$ and $f\subseteq u$, therefore $$L_{\omega^{1+\alpha}}(u)=L_{\max\{\alpha,\beta\}+\omega+\omega^{1+\alpha}}(f)$$ admits a definable well ordering. We show that there exists in $L_{\omega^{1+\alpha}+1}(u)$ an infinite descending sequence in the ordinal $\beta$ which would provide the desired contradiction.

It suffices to show by induction on $\gamma\leq \alpha$ that for every $\delta<\omega^{1+\alpha}$ and $\Lambda,g\in L_{\delta}(u)$, where $\Lambda$ is a quasi linear order and $g$ is a descending sequence in $O(\gamma,\Lambda)$, there is a descending sequence $g^*:\omega\rightarrow \Lambda$ in $L_{\delta+\omega^{1+\gamma}+1}(u)$. The case $\gamma=0$ follows from \Cref{thobs}, so we will consider $\gamma>0$. Assume that for all $\xi< \gamma$ the inductive hypothesis holds. Fix $\delta< \omega^{1+\alpha}$, a quasi linear order $\Lambda\in L_\delta(u)$, and a $<_{\gamma,\Lambda}$ descending sequence $g\in L_{\delta}(u)$. By recursion, we define a descending sequence  $(x_n)_{n\in \omega}$ in $\Lambda\cup\{+\infty\}$, let $g_0=g$ and $x_{0}=+\infty$. Assume that we have defines $x_m$ for all $m\leq n$ and $g_n$ which is descending in $O(\gamma,\Lambda\vert_{<x_{n}})$  such that for some $\eta<\omega^{1+\gamma}$ $g_n\in L_{\delta+\eta}(u)$. Let $x_{n+1}=h_{\Lambda}(g_n(0))$. There exists $\xi<\gamma$ and $m\in \omega$ such that
$$\varphi(\gamma,x_{n+1})\leq_{\gamma,\Lambda} g_n(0)<_{\gamma,\Lambda} \varphi^m(\xi,\varphi(\gamma, x_{n+1})+1) $$
By the isomorphism
$$O(\gamma,\Lambda)\vert_{< \varphi^m(\xi,\varphi(\gamma,x_{n+1})+1)}\cong O^m(\xi, O(\gamma,\Lambda\vert_{< x_{n+1}} )+1)$$
we may identify $g_n$ as being a descending sequence in $O^m(\xi,O(\gamma,\Lambda\vert_{<x_{n+1}})+1)$. By the inductive hypothesis applied $m$ times, this implies that there is a sequence 
$$h\in L_{\delta+\eta+\omega^{1+\xi}\cdot m+1}(u)\; \text{ descending in }\;O(\gamma, \Lambda\vert_{<x_{n+1}}).$$ Let $g_{n+1}$ be the least such $h$ with respect to the definable well ordering of $L_{\omega^\alpha}(u)$. By indecomposibility of $\omega^{1+\gamma}$ we have $\eta+\omega^{1+\xi}\cdot m+1<\omega^{1+\gamma}$. This shows that the sequence $(x_n)_{n\in \omega}$ is well defined and by construction,  $(x_{n+1})_{n\in \omega}$ is descending in $\Lambda$. Since it is definable in $L_{\delta+\omega^{1+\gamma}}(u)$  the sequence $(x_{n+1})_{n\in \omega}$ will be a member of $L_{\delta+\omega^{1+\gamma}+1}(u)$.

So $\C$ proves that for all $\alpha,\beta\in Ord$ the quasi linear order $\pair{O(\alpha,\beta)<_{\alpha,\beta}}$ is well founded. By the properties of $<_{\alpha,\beta}$ its collapse will be $\varphi(\alpha,\beta)$.
\end{proof}

\begin{obs}
    Note that, unlike in the case of second order arithmetic, where well orders being closed under the binary Veblen function is equivalent to Arithmetical Transfinite Recursion, the ordinals being closed under the binary Veblen function does not imply Axiom Beta nor the existence of bisimulations. For example, one may consider $L_{\omega^{\mathrm{CK}}_1}$ in which thef binary Veblen function is total but there exist a tree $T\in L_{\omega^{\mathrm{CK}}_1}$ such that any bisimulation on $T$ is not hyperarithmetic and therefore not in $L_{\omega^{\mathrm{CK}}_1}$.
\end{obs}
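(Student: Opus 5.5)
The plan is to exhibit $L_{\omega^{\mathrm{CK}}_1}$ as a model of $\B$ in which the binary Veblen function is total but the Bisimulation Axiom fails. Axiom Beta then fails as well, since $\C\vdash\text{Bisimulation Axiom}$. I take for granted the standard facts of hyperarithmetic theory: the subsets of $\omega$ in $L_{\omega^{\mathrm{CK}}_1}$ are exactly the hyperarithmetic sets, $L_{\omega^{\mathrm{CK}}_1}$ is admissible, and there is a Harrison linear order. The proof has three steps.

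\textbf{Step 1: $L_{\omega^{\mathrm{CK}}_1}\models\B$.} This holds because $\omega^{\mathrm{CK}}_1$ is a limit ordinal, by the Basic facts about $L$ applied externally to $V$ (or directly, using admissibility).

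\textbf{Step 2: the Veblen function is total in $L_{\omega^{\mathrm{CK}}_1}$.} Let $\alpha,\beta<\omega^{\mathrm{CK}}_1$. Then $\alpha$ and $\beta$ have recursive presentations, so $\pair{O(\alpha,\beta),\leq_{\alpha,\beta}}$ is a recursive (in a presentation) quasi linear order. It is truly well founded, since its collapse in $V$ is $\varphi(\alpha,\beta)$. A recursive well order has order type below $\omega^{\mathrm{CK}}_1$, so $\varphi(\alpha,\beta)<\omega^{\mathrm{CK}}_1$. Moreover, the collapse of $O(\alpha,\beta)$ is obtained by $\Sigma$-recursion, so by admissibility (using $\Sigma$-recursion in $\kp$) it lies in $L_{\omega^{\mathrm{CK}}_1}$. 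Hence the model satisfies the defining clauses of $\varphi$, and ``$\varphi$ is total'' holds there.

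\textbf{Step 3: the Bisimulation Axiom fails in $L_{\omega^{\mathrm{CK}}_1}$.} By the Corollary, over $\B$ the Bisimulation Axiom is equivalent to $\Delta_0\text{-}\mathbf{TR}$. So it suffices to refute one instance of $\Delta_0\text{-}\mathbf{TR}$ in the model.

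Let $W$ be a Harrison order: a recursive linear order on $\omega$ that is ill founded but has no hyperarithmetic descending sequence. First I check that $W$ is well founded in the sense of $L_{\omega^{\mathrm{CK}}_1}$, i.e.\ every nonempty subset lying in the model has a minimal element. Suppose $Y\in L_{\omega^{\mathrm{CK}}_1}$ is nonempty with no $W$-minimal element. From $Y$ we build a descending sequence recursively in $Y$ by repeatedly choosing the numerically least element below the current one. Since $Y$ is hyperarithmetic, this sequence is hyperarithmetic, contradicting the choice of $W$.

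Now apply $\Delta_0\text{-}\mathbf{TR}$ along $W$ with $a=\omega$ and the $\Delta_0$ formula expressing the Turing jump of $H_{\prec s}$. This is legitimate, since ``$e$ halts with oracle $H_{\prec s}$'' is $\Delta_0$ over parameters containing $\omega^{<\omega}$. The result would be a hyperarithmetic jump hierarchy along $W$. By the classical theorem that no hyperarithmetic set is a jump hierarchy along a Harrison order, no such set exists, contradiction.

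The delicate point is exactly this last citation. A jump hierarchy along $W$ would compute, uniformly, jump hierarchies along every initial segment of the well-founded part of $W$. That well-founded part has order type $\omega^{\mathrm{CK}}_1$, so such a hierarchy would compute $H_a$ for all $a\in\mathcal{O}$, which is impossible for a hyperarithmetic set.

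I expect this step to be the main obstacle. If the relevant literature citation is unavailable, the fallback is a direct argument: from a hyperarithmetic jump hierarchy along $W$, the set of $s\in W$ lying in the well-founded part becomes $\Sigma^1_1$ (hence hyperarithmetic-definable), and taking the minimal element of its complement yields a hyperarithmetic descending sequence in $W$, contradicting the choice of $W$.
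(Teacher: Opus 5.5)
Your argument is correct, but it is organized differently from the paper's remark.

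\textbf{The paper's route.} The paper simply asserts that there is a tree $T\in L_{\omega_1^{\mathrm{CK}}}$, well founded in the sense of the model, all of whose bisimulations are non-hyperarithmetic. Implicitly this rests on the reverse-mathematics fact recalled in Section 2: $\mathbf{RCA}_0$ plus the Bisimulation Axiom proves comparability of well orderings, hence $\mathbf{ATR}_0$, which fails in the $\omega$-model HYP.

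\textbf{Your route.} You instead use the paper's own Corollary ($\text{Bisimulation Axiom}\leftrightarrow\Delta_0\text{-}\mathbf{TR}$ over $\B$). You then refute one concrete instance of $\Delta_0\text{-}\mathbf{TR}$: the jump iterated along a Harrison order $W$. For this you use the classical fact that no hyperarithmetic set is a jump hierarchy on $W$. Your justification of that fact is right: Spector uniqueness, together with the well-founded part of $W$ having type $\omega_1^{\mathrm{CK}}$, makes such an $H$ compute $H_a$ for every $a\in\mathcal{O}$.

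\textbf{Comparison.} Your route reduces every step to a textbook fact of hyperarithmetic theory plus a result stated in the paper. It also gives slightly more information: $\Delta_0\text{-}\mathbf{TR}$ already fails in $L_{\omega_1^{\mathrm{CK}}}$ for the jump. You also supply pieces the paper leaves implicit:
\begin{itemize}
\item $L_{\omega_1^{\mathrm{CK}}}\models\B$;
\item totality of the Veblen function there, via recursive presentations and the fact that a truly well-founded relation in an admissible set has its collapse in that set;
\item well-foundedness of $W$ in the model's sense, via the least-element argument.
\end{itemize}
The price is that you depend on the direction $\text{Bisimulation Axiom}\Rightarrow\Delta_0\text{-}\mathbf{TR}$ of the Corollary, whose proof in the paper is only sketched. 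The paper's tree $T$ can be recovered from your argument: it is the tree built in that proof for the jump instance along $W$.

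\textbf{Fallback paragraph.} This is unnecessary, and as written it is unjustified: you give no reason why the well-founded part of $W$ should become $\Sigma^1_1$. Your primary argument already yields the contradiction directly. A hyperarithmetic $H$ satisfies $H\le_T H_a$ for some $a\in\mathcal{O}$, yet would compute $H_{2^a}=H_a'$.
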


\subsection{Stability}

\begin{lemma}
    $\C$ proves that every set has a rank function, that is, for any set $x$, there exists $\alpha\in Ord$ and a surjection $\rho:TC(x)\rightarrow \alpha$ such that
    $$\forall y\in TC(x)\; (\rho(y)=\bigcup\{\rho(z)\cup\{\rho(z)\}: z\in y\}).$$
    We will consider $\rho$ as being a definable function $V\rightarrow Ord$ and write $\rho(x)=\alpha$.
\end{lemma}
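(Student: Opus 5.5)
The plan is to obtain the rank function from the collapse of a suitable well founded relation, using Axiom Beta twice. Fix a set $x$ and let $X=TC(x)$, which exists by Transitive Closure. The relation $R=\{\pair{z,y}\in X\times X: z\in y\}$ exists by $\Delta_0$ Separation and is well founded by Regularity. However, its collapse is just the identity on $X$, since $X$ is transitive. So we must instead collapse a relation whose collapse \emph{linearizes} membership. We do this in two steps: first obtain, by $\Delta_0\text{-}\mathbf{TR}$ (\Cref{C->ETR}), a relation that compares ranks, and then collapse that relation.

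\textbf{Step 1.} Define the rank comparison relation $y\preceq z$, meaning $\rho(y)\leq\rho(z)$, by transfinite recursion on pairs. Concretely, we iterate along the well founded relation $\prec$ on $X\times X$ given by $\pair{y',z'}\prec\pair{y,z}$ iff $y'\in y$ and $z'\in z\cup\{z\}$. This relation is well founded, since the first coordinate strictly descends. The recursive clause is the $\Delta_0$ condition
\[
\pair{y,z}\in H \;\leftrightarrow\; \forall y'\in y\;\exists z'\in z\;\pair{y',z'}\in H',
\]
where $H'$ ranges over earlier stages. This matches the shape of the bisimulation construction in the corollary after \Cref{C->ETR}, with the pair $\pair{y,z}$ encoded as the parameter $c$ and reading the value at stage $\pair{y,z}$. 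Some care with indices is needed, but the situation is the same as in the proof that $\Delta_0\text{-}\mathbf{TR}$ gives bisimulations. By induction along the rank we check that $\pair{y,z}\in H$ exactly when each element of $y$ lies strictly below $z$. This is the step I expect to be most delicate. The strict relation $y\lhd z$, defined as $\exists z'\in z\,(y\preceq z')$, must be shown to be a strict well founded prewellordering: irreflexive, transitive, and total in the sense that $y\lhd z\vee z\lhd y\vee(y\preceq z\wedge z\preceq y)$. Each of these is a $\Delta_0$ statement in the parameter $H$, so it can be proved by $\in$-induction on $X$ using Regularity applied to the $\Delta_0$-separated set of counterexamples. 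No further induction principle is required.

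\textbf{Step 2.} Apply Axiom Beta to the well founded relation $\lhd$ on $X$, which is well founded because $y\lhd z$ implies that $y$ has lower rank, witnessed through descending $\in$-chains. This yields a collapsing function $\pi$ with $\pi(z)=\{\pi(y):y\lhd z\}$. Using transitivity and linearity of $\lhd$ up to $\equiv$, we show by $\Delta_0$ induction that each $\pi(z)$ is a transitive set of transitive sets, hence an ordinal. We then show that $\pi(z)=\bigcup\{\pi(y)\cup\{\pi(y)\}:y\in z\}$. For the inclusion $\supseteq$, every $w\lhd y$ with $y\in z$ satisfies $w\lhd z$, and $y\lhd z$ holds. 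For $\subseteq$, if $w\lhd z$ then $w\preceq y$ for some $y\in z$, so either $w\lhd y$ or $\pi(w)=\pi(y)$. The latter follows because $\equiv$-equivalent elements have the same $\lhd$-predecessors.

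\textbf{Conclusion.} Let $\alpha=\bigcup\{\pi(y)\cup\{\pi(y)\}:y\in X\}$; this is the union of a set, hence exists, and it is an ordinal. Then $\rho=\pi$ is a map $TC(x)\to\alpha$ satisfying the required equation. Surjectivity follows from transitivity of the range: every element of $\pi(y)$ is of the form $\pi(w)$ with $w\in X$. Uniqueness, which lets us regard $\rho$ as a definable class function, follows by $\in$-induction, since two rank functions disagreeing would have an $\in$-minimal point of disagreement.
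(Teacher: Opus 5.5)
Your argument is correct, but it takes a considerably longer route than the paper. The paper applies Axiom Beta once, directly to the transitive closure $R$ (as a relation) of $\in$ on $TC(x)$, i.e.\ $y\,R\,z$ iff $y\in TC(z)$. This relation exists by $\Delta_0$ Separation, using chains in $(TC(x))^{<\omega}$. The point your first paragraph misses is that no linearization is needed, only transitivity of the relation. The collapse of any transitive well founded relation is a transitive set of transitive sets, hence an ordinal, and $\B$ already proves that these are linearly ordered by $\in$. Since $TC(z)=z\cup\bigcup_{y\in z}TC(y)$, one gets at once $\pi(z)=\{\pi(y):y\in TC(z)\}=\bigcup\{\pi(y)\cup\{\pi(y)\}:y\in z\}$. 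Your route instead builds the rank comparison $\preceq$ by $\Delta_0\text{-}\mathbf{TR}$, which invokes the heavy \Cref{C->ETR} and so is a second, hidden use of Axiom Beta. You then verify by several $\in$-inductions that $\lhd$ is a well founded prewellordering, and only then collapse. What this buys is that $\preceq$ and its order properties come from $\Delta_0\text{-}\mathbf{TR}$ alone, so they are available in $\B+\text{Bisimulation Axiom}$, where no collapse is assumed. For the lemma as stated, though, it is pure overhead.

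Two small repairs. First, well foundedness of $\lhd$ cannot be justified by ``lower rank'', since that is circular at this point. Instead, suppose $\emptyset\neq Y\subseteq X$ had no $\lhd$-minimal element. Take an $\in$-minimal $w$ in $\{w\in X:\exists y\in Y\,(y\preceq w)\}$, pick $y\in Y$ with $y\preceq w$, and pick $y_1\in Y$ with $y_1\lhd y$. Then $y_1\lhd y\preceq w$ gives $y_1\preceq w'$ for some $w'\in w$ (by transitivity of $\preceq$), contradicting minimality. Second, take $\alpha$ to be the range of $\pi$, obtained by $\Delta_0$ Separation from the transitive target set of the collapse. Do not define it as the union of $\{\pi(y)\cup\{\pi(y)\}:y\in X\}$, since that family needs a bounding set before it exists in $\B$, which has no Replacement.
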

\begin{proof}
    Given a set $x$, define $R\subseteq TC(x)\times TC(x)$ to be the transitive closure (in the sense of relations) of the membership relation. The definition of $R$ is $\Delta_0$ relative $(TC(x))^{<\omega}$ and so it exists by $\Delta_0$ Separation. Observe that the collapse of a transitive relation will be a transitive set of transitive sets, that is, an ordinal. The collapsing function of $R$ will be the rank function for $x$.
\end{proof}
\begin{thm}
    $\C$ proves the primitive recursive set functions are total, that is, for  every recursive definition $f$ of a $Prim$ function, $\C$ proves that it defines a total function.  
\end{thm}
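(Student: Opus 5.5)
We prove the theorem via the Jensen–Karp stability theorem, made explicit and uniform in the Veblen hierarchy, as announced at the start of \Cref{sec pr}. The claim, proved by external induction on the recursive definition of $f$, is: for every $Prim$ function $f(\vec x)$ there is $i\in\omega$ such that $\C$ proves
$$\forall \vec x\,\forall b\,\Big(\vec x\in L_\alpha(b)\text{ with } \alpha=\rho(b)+1 \;\rightarrow\; f(\vec x)\in L_{\varphi(i,\alpha)}(b)\Big),$$
and moreover that the defining equations of $f$ hold in $L_{\varphi(i,\alpha)}(b)$ through a fixed $\Sigma_1$ (in fact $\Delta_0$ in parameters) definition. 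Totality then follows: given $\vec x$, take $b=TC(\{\vec x\})$ and let $\alpha=\rho(b)+1$ be given by the rank function lemma. The set $L_{\varphi(i,\alpha)}(b)$ exists because the binary Veblen function is total and the existence theorem for $L_\alpha(b)$ applies. By upward absoluteness of the $\Sigma_1$ defining formula for transitive sets, the value computed inside $L_{\varphi(i,\alpha)}(b)$ is then a value of $f$ in $V$. Uniqueness of $f(\vec x)$ is proved by $\in$-induction on the $\Delta_0$ condition within a transitive set containing both candidate computations, which needs only Regularity and $\Delta_0$ Separation.

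The base cases (projections, zero, adjunction, case distinction) are immediate with $i=0$, since $L_{\alpha+1}(b)$ is closed under pairing and union and $\varphi(0,\alpha)=\omega^\alpha>\alpha$. For composition $f=g_0(g_1,\dots,g_k)$ with indices $i_0,\dots,i_k$, we iterate: $L_{\varphi(i,\alpha)}$ is closed under $\gamma\mapsto\varphi(j,\gamma)$ for $j<i$. Using the basic fact $L_\beta(L_\gamma(b))=L_{\gamma+\beta}(b)$ together with monotonicity of $L$, we can take $i=\max_j i_j+1$.

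The main obstacle is primitive recursion, $f(x,\vec y)=h(\bigcup\{f(z,\vec y):z\in x\},x,\vec y)$, where $h$ has index $j$. We take $i=j+1$ and show by $\in$-induction on $z\in TC(\{x\})$, uniformly, that $f(z,\vec y)\in L_{\varphi(j,\rho(z)+\alpha')}(b)$ for a suitable starting $\alpha'$. The value $\delta_z=\sup_{w\in z}\varphi(j,\rho(w)+\alpha')$ is at most $\varphi(j,\rho(z)+\alpha')$. Since $\varphi(j,\cdot)$ is normal, the union $\bigcup\{f(w,\vec y):w\in z\}$ lies in $L_{\delta_z+1}$, and applying $h$ lands in $L_{\varphi(j,\delta_z+1)}\subseteq L_{\varphi(j+1,\rho(x)+\alpha)}$, because $\varphi(j+1,\cdot)$ enumerates fixed points of $\varphi(j,\cdot)$.

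The difficulty is that $\C$ lacks $\Sigma_1$ induction, so we cannot simply induct on the $\Sigma_1$ statement ``$f(z,\vec y)$ exists''. We avoid this as follows. Work inside the single set $M=L_{\varphi(j+1,\alpha)}(b)$, which exists. By \Cref{thisatrue}, $Th(M,\in)$ exists as a set. Use it together with $\Delta_0$ Separation to form the set $G=\{\pair{z,v}\in TC(\{x\})\times M : M\models$ ``$v=f(z,\vec y)$'' via the recursion$\}$. Then prove by $\Delta_0$ $\in$-induction on $TC(\{x\})$, relative to the parameters $G$, $M$ and $Th(M,\in)$, that $G$ is a total function satisfying the recursion equation. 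The inductive hypothesis for $h$ supplies the needed closure of $M$ under $h$ at each stage, via the bound above. The delicate bookkeeping is checking that the bounds $\varphi(j,\delta_z+1)<\varphi(j+1,\alpha)$ hold for all $z$; this needs $\alpha>\rho(x)$, which is where we use $\alpha=\rho(b)+1$ and the rank function.
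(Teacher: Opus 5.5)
Your global plan is the paper's: Veblen-indexed levels of $L(b)$, inductions kept $\Delta_0$ by working relative to the elementary diagram of one large level, and upward absoluteness at the end. The base and composition cases are fine. But the primitive recursion step, which carries all the content, has a genuine gap.

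The invariant $f(z,\vec y)\in L_{\varphi(j,\rho(z)+\alpha')}(b)$ is not preserved. From it you get $\delta_z\le\varphi(j,\rho(z)+\alpha')$. Under your stabilization-style hypothesis on $h$ (inputs at stage $\eta$, output by roughly stage $\varphi(j,\eta)$), one application of $h$ then only places $f(z,\vec y)$ in $L_{\varphi(j,\delta_z+1)}(b)$. That is a stage of the form $\varphi(j,\varphi(j,\cdot))$, not back below $\varphi(j,\rho(z)+\alpha')$. Moreover, for infinite $\alpha'$ one has $(\rho(w)+1)+\alpha'=\rho(w)+\alpha'$, so your bound does not even increase along the recursion.

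Weakening the conclusion to $L_{\varphi(j+1,\rho(x)+\alpha)}(b)$ does not help, because that bound cannot be fed back as the hypothesis at the next node. If the values $f(w,\vec y)$, $w\in z$, are only known to lie below $\varphi(j+1,\cdot)$, their supremum may reach it, and the next application of $h$ overshoots.

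The same problem sinks your fallback. $M=L_{\varphi(j+1,\alpha)}(b)$ is indeed closed under $h$. But to get $f(z,\vec y)\in M$ you first need $\bigcup\{f(w,\vec y):w\in z\}\in M$, and $M$ need not satisfy $\Sigma_1$ Collection, so knowing that each value lies in $M$ is not enough. A $\Delta_0$ induction on ``$G$ is total'' therefore stalls. The inductive statement must carry an explicit level bound, and the ``delicate bookkeeping'' you defer is exactly the missing piece.

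What you need is to state the hypothesis on $h$ as \emph{closure} rather than one-step stabilization, and to stratify by rank. The paper proves, for each $f$, an $i$ such that $L_{\varphi(i,\delta)}(b)\models f$ total for all $\delta$ with $\rho(b)<\delta<\varphi(\omega,\rho(b)+1)$; composition is then trivial with the same $i$.

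In the recursion step, fix $\vec y\in L_\gamma(b)$. The paper shows by $\Delta_0$ induction on $\beta<\varphi(i+1,\alpha)$, relative to $Th(L_{\varphi(\omega,\rho(b)+1)}(b),\in)$, that $L_{\varphi(i,\gamma+\beta)}(b)\models\forall x\,(\rho(x)<\beta\rightarrow\exists z\, f(x,\vec y)=z)$. At a successor, the union is definable over $L_{\varphi(i,\gamma+\beta)}(b)$. Hence it lies in $L_{\varphi(i,\gamma+\beta)+1}(b)\subseteq L_{\varphi(i,\gamma+\beta+1)}(b)$, which is closed under $h$. So each recursion step costs only a successor in the argument of $\varphi(i,\cdot)$, and $\gamma+\beta<\varphi(i+1,\alpha)$ keeps everything inside $L_{\varphi(i+1,\alpha)}(b)$.

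Your formulation can be repaired along these lines. Stabilization by $\varphi(j,\cdot)$, after your change of base via $L_\beta(L_\gamma(b))=L_{\gamma+\beta}(b)$, gives closure of every $L_{\varphi(j+1,\delta)}(b)$ with $\delta>\rho(b)$ under $h$. Running the stratified induction at index $j+1$ then gives the result for $f$ at index $j+2$ rather than $j+1$.
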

\begin{proof}
    We will show something stronger, namely, $\C$ proves for any transitive set $b$, the set $L_{\varphi(\omega,\rho(b)+1)}(b)$\footnote{The $+1$ here is necessary. Consider for example $b=L_{\varphi(\omega+1,0)}$, so $\rho(b)=\varphi(\omega+1,0)$ and therefore  $L_{\varphi(\omega,\rho(b))}(b)=L_{\varphi(\omega,\varphi(\omega+1,0))+\varphi(\omega+1,0)}=L_{2\cdot \varphi(\omega+1,0)}$ which is not closed under $Prim$ functions.} is closed under the primitive recursive set functions. We will do this by induction on the complexity of the recursive definition for $f$, which will be done internal to $\C$, and show that there exists an $i\in \omega$ such that 
    $$\forall \alpha<\varphi(\omega,\rho(b)+1)\, (\rho(b)<\alpha\rightarrow L_{\varphi(i,\alpha)}(b)\vDash f\text{ is total}).$$
    Observe that the induction needed will be $\Delta_0$ relative to $Th(L_{\varphi(\omega,\rho(b)+1)}(b),\in)$. Since each recursive definition of a $Prim$ function is  $\Sigma$, which is upwards absolute, this will imply that $\C$ proves the totality of the primitive recursive set functions.
    \begin{enumerate}
        \item For any of the initial functions we may take $i=0$.
        \item Composition: Let $f(\vec x)=g_0( g_1(\vec x),\dots,g_k(\vec x))$ and $i\in \omega$ large enough such that 
        $$\forall j\leq k\,\forall \alpha<\varphi(\omega,\rho(b)+1)\,(\rho(b)<\alpha\rightarrow L_{\varphi(i,\alpha)}(b) \vDash g_j\text{ is total}\,)$$  
        we will also have
        $$\forall \alpha<\varphi(\omega,\rho(b)+1)\,(\rho(b)<\alpha\rightarrow L_{\varphi(i,\alpha)}(b) \vDash f\text{ is total}\,)$$
        \item Primitive Recursion: Let $f(x,\vec{y})=h(\bigcup\{f(z,\vec{y}):z\in x\},x,\vec{y})$ and $i$ such that
         $$\forall \alpha<\varphi(\omega,\rho(b)+1)\,(\rho(b)<\alpha\rightarrow L_{\varphi(i,\alpha)}(b) \vDash h\text{ is total}\,)$$
        Fix $\alpha>\rho(b)$, note that $\rho(u)+\varphi(i+1,\alpha)=\varphi(i+1,\alpha)$. So it suffices to show that for all $\gamma<\varphi(i+1,\alpha)$ and $\vec{y}\in L_{\gamma}$, by $\Delta_0$ induction on $\beta<\varphi(i+1,\alpha)$ that
        $$L_{\varphi(i,\gamma+\beta)}(b)\vDash \forall x\,(\rho(x)< \beta\rightarrow \exists z\,f(x,\vec{y})=z).$$
        Note that $\beta,\gamma< \varphi(i+1,\alpha)$ implies $\varphi(i,\gamma+\beta)<\varphi(i+1,\alpha)$. 
        The case where $\beta$ is $0$ or limit  is trivial. We prove the successor case. Assume the case for $\beta$ and let $x\in L_{\varphi(i+1, \alpha)}(b)$ be such that $\rho(x)< \beta +1$. Since $\forall w\in x\,(\rho(w)< \beta)$, by inductive hypothesis
        $$L_{\varphi(i,\gamma+\beta)}(b)\vDash \forall w\in x\,\exists z\,( f(w,\vec{y})=z)$$
        and in particular
        $$\bigcup\{ f(w,\vec{y}):w\in x\}\in \mathrm{Def}(L_{\varphi(i,\gamma+\beta)})= L_{\varphi(i,\gamma +\beta)+1}(b)\subseteq L_{\varphi(i,\gamma+\beta+1)}(b).$$
        By the assumption on $h$, we have
       $$L_{\varphi(i,\gamma+\beta+1)}(b)\vDash \exists z\,f(x,\vec{y})= h(\bigcup\{ f(w,\vec{y}):w\in x\},x,\vec{y})=z$$
    \end{enumerate}
    By upwards absoluteness this shows that $L_{\varphi(i+1,\alpha)}(b)\vDash f \text{ is total}$. So for any recursive definition for a primitive recursive set function $f$ we have that $\C\vdash f \text{ is total}$.
\end{proof}

\begin{cor}
    $\C$ proves that for  every transitive set $b$, $L_{\varphi(\omega,\rho(b)+1)}(b)\vDash \mathbf{PRS}\omega$.
\end{cor}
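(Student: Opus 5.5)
Write $\delta=\varphi(\omega,\rho(b)+1)$. The plan is to check the axioms of $\mathbf{PRS}\omega$ in $L_\delta(b)$ one at a time, leaning on the preceding theorem for the function symbols. First, $L_\delta(b)$ exists by the theorem on the constructible hierarchy, and it is transitive by the basic facts about $L$. We take $b$ transitive so that $TC(b)=b\subseteq L_\delta(b)$.

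Next we note that $\delta$ is a limit ordinal, indeed additively indecomposable, since it is a value of the Veblen function and the previous subsection shows $\varphi$ is total. Hence $L_\delta(b)\vDash\B$, again by the basic facts on $L$. This immediately gives Extensionality, Pair, Union, Regularity, Infinity and $\Delta_0$ Separation. Extensionality and Regularity hold because $L_\delta(b)$ is transitive. Infinity holds because $\omega<\delta$ and $\omega\in L_{\omega+1}(b)$. The $\Delta_0$ Separation instances are obtained in the usual way: a $\Delta_0$-definable subset of $x\in L_\gamma(b)$ lies in $\mathrm{Def}(L_\gamma(b),\in)\subseteq L_{\gamma+1}(b)$, by absoluteness of $\Delta_0$ formulas between transitive sets.

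The remaining work is the function symbols. For each recursive definition $f$ of a $Prim$ function, we interpret $f$ in $L_\delta(b)$ by the $\Sigma$ formula that defines its graph. The proof of the previous theorem gives an $i\in\omega$ with $L_{\varphi(i,\alpha)}(b)\vDash f$ is total for every $\alpha$ with $\rho(b)<\alpha<\delta$. Given any $\vec x\in L_\delta(b)$, pick $\alpha<\delta$ above $\rho(b)$ with $\vec x\in L_{\varphi(i,\alpha)}(b)$. This is possible because $\delta$ is closed under $\alpha\mapsto\varphi(i,\alpha)$ for $i<\omega$ and $\varphi(i,\alpha)\geq\alpha$. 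Then $f(\vec x)$ exists in $L_{\varphi(i,\alpha)}(b)$.

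By upward absoluteness of $\Sigma$ formulas between transitive sets, the value is also a value in $L_\delta(b)$, and it is the true value of $f$. Functionality (uniqueness of the value) is proved by $\in$-induction on the first argument, carried out in $\C$ with $\Delta_0$ induction relative to $Th(L_\delta(b),\in)$. So $L_\delta(b)$ satisfies the defining equation of each function symbol, with $f$ interpreted as the true $f$ restricted to $L_\delta(b)$.

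I expect the main obstacle to be the uniformity in $f$ and the bookkeeping of the satisfaction relation. The statement is a scheme, so for each fixed $f$ we only need the external argument above. Care is needed for the defining axioms of composition and recursion that mention $\bigcup\{f(z,\vec y):z\in x\}$. The point to check is that this union, computed inside $L_\delta(b)$, agrees with the true union; this follows because $L_\delta(b)$ is closed under $f$ and satisfies Union together with $\Delta_0$ Separation. Everything is then verified via $Th(L_\delta(b),\in)$, which exists by Theorem~\ref{thisatrue}.
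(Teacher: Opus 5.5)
Your proposal is correct and takes the paper's route. The paper gives no separate proof of this corollary; it reads it off from the stronger claim in the proof of the preceding theorem, namely that $L_{\varphi(\omega,\rho(b)+1)}(b)$ is closed under every $Prim$ function via the stabilizing levels $\varphi(i,\alpha)$, combined with $L_\delta(b)\vDash\B$ for limit $\delta$ and upward absoluteness of the $\Sigma$ definitions, which is exactly your decomposition. One small remark: the paper runs that induction internally ($\Delta_0$ relative to $Th(L_\delta(b),\in)$), so the closure is uniform in the code of $f$ and you need not retreat to the schematic reading, although your argument is complete for each standard $f$.
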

\begin{cor}
    $\mathbf{PRS}\omega+ \text{\upshape Axiom Beta}$ is an extension by definitions of $\C$.
\end{cor}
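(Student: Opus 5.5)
To show that $\mathbf{PRS}\omega+\text{Axiom Beta}$ is an extension by definitions of $\C$, we have to check two inclusions. First, every axiom of $\C$ must be a theorem of $\mathbf{PRS}\omega+\text{Axiom Beta}$. Second, each function symbol of $\mathbf{PRS}\omega$ must be definable in $\C$ by a formula that $\C$ proves defines a total function, and $\C$ must prove the translated defining equations.

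The first inclusion is immediate. The theorem above shows $\B$ is a subtheory of $\mathbf{PRS}\omega$, and Axiom Beta is common to both sides. So $\C\subseteq\mathbf{PRS}\omega+\text{Axiom Beta}$.

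For the second inclusion, I fix a recursive definition $f$ of a $Prim$ function. Its definition is $\Sigma$, so it is expressed by a $\Sigma$ formula $\Phi_f(\vec x,z)$ saying there is a computation witnessing $f(\vec x)=z$. By the preceding theorem, $\C$ proves $\forall\vec x\,\exists z\,\Phi_f(\vec x,z)$; in fact a witness lies in $L_{\varphi(\omega,\rho(b)+1)}(b)$ for $b=TC(\{\vec x\})$.

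Uniqueness of $z$ is proved in $\C$ (already in $\B$) by $\in$-induction on the recursion argument. The computations are set-sized functions on $TC(x)$. Two computations agree on their common domain by foundation applied to the $\Delta_0$-definable set of points where they disagree. This uses Regularity and $\Delta_0$ Separation relative to the two computations.

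Hence $\Phi_f$ defines $f$ in $\C$. Replacing each function symbol by its definition turns the defining axiom of $f$ (its recursion equation) into a statement about computations. That statement follows from how computations are glued together: the computation for $x$ restricts to computations for each $z\in x$. Instances of $\Delta_0$ Separation in the language with function symbols translate into separation for formulas built from these defined functions. These are handled by noting that, once a bounding set is fixed, the relevant values all lie in a single $L_{\varphi(\omega,\rho(b)+1)}(b)$, where they can be evaluated via $Th(\cdot,\in)$ (Theorem \ref{thisatrue}). This reduces the instance to ordinary $\Delta_0$ Separation with parameter that theory.

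The main obstacle is this last step, showing that $\Delta_0$ Separation for formulas containing $Prim$ function symbols is provable in $\C$. The plan is to bound every term occurring in the formula by a single $L$-level containing the parameters and the separating set. Then the separation becomes $\Delta_0$ in the elementary diagram of that level, which exists by Theorem \ref{thisatrue}.

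This yields that $\C$ and $\mathbf{PRS}\omega+\text{Axiom Beta}$ prove the same formulas in the language of set theory. It also yields that the latter is a conservative extension of $\C$ obtained by adding definitions.
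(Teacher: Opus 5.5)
Your proposal is correct and follows the paper's route. One direction is $\C\subseteq\mathbf{PRS}\omega+\text{Axiom Beta}$ via $\B\subseteq\mathbf{PRS}\omega$. For the other, the totality theorem and the fact that $L_{\varphi(\omega,\rho(b)+1)}(b)$ is a transitive set closed under all $Prim$ functions (the paper's preceding corollary that it models $\mathbf{PRS}\omega$), evaluated through $Th(\cdot,\in)$, give the defining equations and $\Delta_0$ Separation for formulas containing function symbols.

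The one check you leave implicit is that $\mathbf{PRS}\omega+\text{Axiom Beta}$ proves the definitional axioms $f(\vec x)=z\leftrightarrow\Phi_f(\vec x,z)$. This is needed for a genuine extension by definitions rather than mere conservativity. It follows from $\C\subseteq\mathbf{PRS}\omega+\text{Axiom Beta}$ by the same minimal-counterexample argument you give for uniqueness. Apply it to the set of $w\in TC(\{x\})$ where $f(w,\vec y)$ differs from the value assigned by a $\C$-computation, which exists by $\Delta_0$ Separation for a formula containing the symbol $f$.
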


\section{Primitive recursion and Axiom Beta}

In this section, we  characterize the $\Sigma_1$-definable functions of  the theory 
\[ P\omega\beta \, =\,  \kp\omega^-+ \text{\upshape Axiom Beta}+ \Sigma_1\text{ \upshape Foundation}. \]

Our theory $\C$ is a proper subtheory  of $P\omega\beta$ and its $\Sigma_1$-definable functions do not exhaust those of $P\omega\beta$. In fact, the well founded part $W(a,r)$ of a relation $r$  on a given set $a$  is  $\Sigma_1$-definable in $P\omega\beta$, but not in $\C$.  A complete  characterization of the $\Sigma_1$-definable functions of $\C$ has yet to be established.

For our purposes, let  $\kp$ denote Kripke-Platek set theory without Infinity. Its axioms are Extensionality, Pair, Union, $\Delta_0$ Separation, $\Delta_0$ Collection,  Regularity, and Foundation. Foundation is the scheme
\[ \forall x\, (\forall y\in x\, \vp(y) \to \vp(x)) \to \forall x\, \vp(x). \]
Let $\kp\omega$ stand for $\kp+ \text{ Infinity}$, where Infinity asserts the existence of a limit ordinal (an ordinal is a transitive set of transitive sets). By $\kp^-$, resp.\ $\kp\omega^-$, we mean the  theory obtained by removing Foundation.

\begin{prop}
$\kp\omega^- +\Sigma_1 \text{ \upshape Foundation}  \vdash \text{ \upshape Finite Powerset Axiom}$. 
\end{prop}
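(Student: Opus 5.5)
We prove that for every set $x$ the set $\mathcal{P}_\text{fin}(x)$ exists by building it in stages indexed by $n\in\omega$, using $\Sigma_1$ Foundation (in its induction form on $\omega$) to show every stage exists and $\Delta_0$ Collection to gather the stages. Fix $x$ and let
$$\varphi(n)\;\equiv\;\exists p\,\bigl(p=\{y\subseteq x: |y|\le n\}\bigr),$$
where ``$p=\{y\subseteq x:|y|\le n\}$'' is expressed by the statement that every $y\in p$ is a subset of $x$ which is the range of some function with domain $n$, and every $y\subseteq x$ with such an enumeration lies in $p$. The second clause quantifies over all subsets of $x$ and is not $\Delta_0$. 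We therefore replace $\varphi$ by the $\Sigma_1$ formula
$$\varphi'(n)\;\equiv\;\exists p\,\exists g\,\bigl(g \text{ is a function on } n+1 \wedge g(0)=\{\emptyset\}\wedge \forall k<n\; g(k+1)=\{u\cup\{v\}: u\in g(k),\, v\in x\}\wedge p=g(n)\bigr).$$
The inner clause is $\Delta_0$ in $g,x,n$, so $\varphi'(n)$ is $\Sigma_1$.

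\emph{Step 1.} We show that the map $u\mapsto\{u\cup\{v\}:v\in x\}$ exists and, for a set $q$, that $\{u\cup\{v\}:u\in q, v\in x\}$ exists. This follows from $\Delta_0$ Collection applied to $\forall u\in q\,\forall v\in x\,\exists w\,(w=u\cup\{v\})$, which yields a bounding set $W$. We then cut $W$ down by $\Delta_0$ Separation: $\{w\in W:\exists u\in q\,\exists v\in x\, w=u\cup\{v\}\}$. Pairing and union give $u\cup\{v\}$.

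\emph{Step 2.} We prove $\forall n\in\omega\,\varphi'(n)$ by $\Sigma_1$ Foundation, taking the least counterexample on $\omega$. The case $n=0$ is immediate, with $g=\{\pair{0,\{\emptyset\}}\}$. For the successor case, extend the witness $g$ for $n$ by one value, using Step 1 with $q=g(n)$. Limit cases do not arise below $\omega$.

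\emph{Step 3.} We collect the stages. Apply $\Delta_0$ Collection (strong $\Sigma_1$ Collection follows in $\kp\omega^-$ from $\Delta_0$ Collection by the usual contraction of quantifiers) to $\forall n\in\omega\,\varphi'(n)$. This gives a set $G$ containing, for each $n$, a witnessing $g$. Witnesses are unique by $\Delta_0$ induction on $k\le n$; this is a $\Delta_0$ instance of $\Sigma_1$ Foundation. Define
$$P=\{y\in \textstyle\bigcup\{\mathrm{rng}(g):g\in G\}: y\subseteq x\},$$
which exists by Union and $\Delta_0$ Separation.

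\emph{Step 4.} We verify that $P=\mathcal{P}_\text{fin}(x)$. Every element of $P$ is finite by construction. Conversely, take $y\subseteq x$ enumerated by $e:n\to y$. We show by induction on $k\le n$ that $e[k]\in g_n(k)$. The formula involved is $\Delta_0$ with parameters $e$ and $g_n$, so $\Sigma_1$ Foundation (indeed $\Delta_0$ Foundation) suffices.

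The main obstacle is Step 2. We must check that the inductive statement really is $\Sigma_1$, which is why the stages are coded through the witnessing sequence $g$ rather than through the $\Pi_1$ ``all subsets of size $\le n$'' clause. We must also check that the induction on $\omega$ can be extracted from the $\in$-Foundation scheme for $\Sigma_1$ formulas. This works by taking the contrapositive form: if some $n$ fails, an $\in$-minimal failing $n$ exists, and it is neither $0$ nor a successor.
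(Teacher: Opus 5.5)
Your proposal is correct and follows essentially the paper's route: the paper obtains $n\mapsto[a]^n$ from Barwise's $\Sigma$ Recursion Theorem (noting that $\Sigma_1$ Foundation suffices for it) and takes $\bigcup_{n<\omega}[a]^n$, while you unfold that recursion by hand for the stages $g(k+1)=\{u\cup\{v\}:u\in g(k),\,v\in x\}$, using witnessing sequences, $\Sigma_1$ induction on $\omega$, and Collection. One detail should be made explicit: before forming $P$, use $\Delta_0$ Separation to cut $G$ down to those $g$ that actually witness $\varphi'(n)$ for some $n\in\omega$ (the strong form of Collection gives exactly this), because a bare Collection bound may contain extraneous sets whose ranges include infinite subsets of $x$, and then ``every element of $P$ is finite by construction'' would fail.
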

\begin{proof}
This is all well known; see Barwise \cite{Barwise}. Let 
\[ \po_{\text{fin}}(a)=\bigcup_{n<\omega} [a]^n, \]
where $[a]^n=\{b\subseteq a: |b|=n\}$. Note that already in $\kp^-+\Sigma_1 \text{ \upshape Foundation}$ one can define the function $n\mapsto [a]^n$ on the natural numbers in a way that conforms to the scheme of (definition by)  $\Sigma$ Recursion  \cite[Theorem I.6.4]{Barwise}. On the other hand, the full strength of Foundation is not needed for the $\Sigma$ Recursion Theorem, and  $\Sigma_1 \text{ \upshape Foundation}$ already suffices. 
\end{proof}
\begin{prop}
$\C\,  \subseteq\,  \kp\omega^-+ \text{\upshape Axiom Beta}+ \Sigma_1\text{ \upshape Foundation}$. 
\end{prop}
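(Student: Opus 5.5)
We need to check that every axiom of $\C=\B+\text{Axiom Beta}$ is a theorem of $P\omega\beta=\kp\omega^-+\text{Axiom Beta}+\Sigma_1\text{ Foundation}$. Axiom Beta is shared by both theories. Extensionality, Pair, Union, $\Delta_0$ Separation and Regularity are axioms of $\kp\omega^-$ verbatim. Infinity is the same statement in both, namely the existence of a limit ordinal. The Finite Powerset Axiom was obtained in the preceding proposition from $\kp\omega^-+\Sigma_1$ Foundation. So the only axiom left to derive is Transitive Closure.

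For Transitive Closure I would argue in $\kp\omega^-+\Sigma_1$ Foundation as in Barwise. First, $\Sigma$ Recursion on $\omega$ is available; as observed in the proof of the previous proposition, its proof needs only $\Sigma_1$ Foundation, used in the form of $\Sigma_1$ induction on natural numbers. With it we define $n\mapsto \bigcup^n x$ by
$$\textstyle\bigcup^0x=x,\qquad \bigcup^{n+1}x=\bigcup\bigcup^n x.$$
Concretely, take the $\Sigma_1$ formula $\exists g\,(g$ is a function with domain $n+1$, $g(0)=x$, $g(k+1)=\bigcup g(k)$ for $k<n)$. Then:
\begin{itemize}
\item $\Sigma_1$ induction on $n$ shows that such a $g$ exists for every $n$.
\item $\Delta_0$ induction shows that any two such $g$ agree.
\item $\Delta_0$ Collection, applied to $\forall n\in\omega\,\exists g$, gives a set of these approximations.
\item Their union (Union plus $\Delta_0$ Separation) yields a function $G$ on $\omega$.
\item $TC(x)=\bigcup \mathrm{rng}(G)$ exists by Union.
\end{itemize}
Finally, $\Delta_0$ arguments show that this set is transitive, contains $x$, and is contained in every transitive superset of $x$.

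I expect the main obstacle to be bookkeeping rather than substance. We have to make sure that no use of full Foundation sneaks in, and that $\omega$ is available: it is the least limit ordinal, obtained by $\Delta_0$ Separation from the limit ordinal given by Infinity. The needed inductions on $\omega$ are $\Sigma_1$ (existence of the approximating $g$), so $\Sigma_1$ Foundation suffices.
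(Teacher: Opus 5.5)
Your proof is correct, and its outline matches the paper's. Every axiom of $\B$ other than Finite Powerset and Transitive Closure is already an axiom of $\kp\omega^-$. Finite Powerset comes from the preceding proposition, and $\Sigma_1$ Foundation is what is needed for Transitive Closure.

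The difference lies in how you obtain $TC(x)$. The paper simply cites Barwise's proof for $\kp$, which defines the transitive closure by recursion along $\in$, $TC(a)=a\cup\bigcup\{TC(b):b\in a\}$. There one proves, by $\in$-induction on a $\Sigma_1$ formula, the existence of an attempt for each $a$: a function on a transitive domain containing $a$ that satisfies the recursion equation. The attempts for the elements of $a$ are glued together using $\Delta_0$ Collection. That argument never mentions $\omega$, which is why the paper can assert it already in $\kp^-+\Sigma_1$ Foundation, without Infinity.

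You instead use Infinity to get $\omega$ and iterate $\bigcup$ along $\omega$. Your inductions are then $\Sigma_1$ and $\Delta_0$ inductions on natural numbers, which are instances of $\Sigma_1$ Foundation: apply it to $x\notin\omega\vee\psi(x)$. Your route is more explicit and elementary, since each recursion step is a single use of Union and the uniqueness and minimality checks are $\Delta_0$ inductions on $\omega$. However, it depends on $\omega$ being available, whereas the cited route is more general. For the proposition as stated, Infinity is part of $\kp\omega^-$, so either argument suffices.
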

\begin{proof}
It is known that already in $\kp^-+\Sigma_1 \text{ \upshape Foundation}$ we can define the transitive closure of a set. The proof for $\kp$ (see \cite[Theorem I.6.1]{Barwise}) goes through in $\kp^-+\Sigma_1 \text{ \upshape Foundation}$.
\end{proof}

\begin{defin}
	A set function $f: V^n\to V$ is $\Xi$-definable if there is a formula $\vp(x_1,\ldots, x_n,y)$ in $\Xi$ such that: 
	\[  f(x_1,\ldots,x_n)= y \text{ iff } \vp(x_1,\ldots,x_n,y).\]
	
	Let $T$ be a theory. We say that $f: V^n\to V$ is $\Xi$-definable in $T$ if there is a formula $\vp(x_1,\ldots, x_n,y)$ in $\Xi$ such that 
	\begin{itemize}
		\item $V\models \vp(x_1,\ldots,x_n,f(x_1,\ldots,x_n))$, for all $x_1,\ldots,x_n\in V$;
		\item  $T\vdash \exists y, \vp(x_1,\ldots,x_n,y)$
		\item $T\vdash  \vp(x_1,\ldots,x_n,y_1) \land  \vp(x_1,\ldots,x_n,y_2)\to y_1=y_2 $.
	\end{itemize}
\end{defin}

Rathjen \cite{R92} gives the following characterization of the primitive recursive set functions.  Hereafter, let 
$g\colon V\to V$ be a $\Delta_0$-definable set function, say by the formula $\vp^g(x,y)$. Let  $$ P^g\, =\, \kp^-+\Sigma_1 \text{ \upshape Foundation} + \forall x\, \exists !y\, \vp^g(x,y).$$

\begin{thm}[Rathjen  \cite{R92}]\label{prim}
  The following are equivalent for any given set function $f:V^n\to V$:
	\begin{enumerate}
		\item $f\in Prim(g)$;
		\item $f$ is $\Sigma_1$-definable in $P^g$;
		\item  $f$ is $\Sigma_1$-definable in $P^g+\Pi_1\text{ \upshape Foundation}$.
	\end{enumerate}
\end{thm}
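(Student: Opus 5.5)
The proof goes around the cycle $1\Rightarrow 2\Rightarrow 3\Rightarrow 1$. The implication $2\Rightarrow 3$ is trivial, since adding $\Pi_1$ Foundation to $P^g$ only enlarges the theory. So the work is in $1\Rightarrow 2$ and in $3\Rightarrow 1$.

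For $1\Rightarrow 2$, I induct on the generation of $f\in Prim(g)$. For each clause I exhibit a $\Sigma_1$ formula that defines $f$ in $V$ and that $P^g$ proves total and functional.
\begin{itemize}
\item The initial functions, and $g$ itself via $\vp^g$, are $\Delta_0$-definable.
\item For composition, I take the conjunction of the witnessing $\Sigma_1$ formulas under existential quantifiers over the intermediate values.
\item Primitive recursion is the real case: $f(x,\vec y)=h(\bigcup\{f(z,\vec y):z\in x\},x,\vec y)$. I define $f(x,\vec y)=w$ by the $\Sigma_1$ formula asserting that there is a function $F$ with domain $TC(\{x\})$ which satisfies the recursion equation at every point of its domain (using the $\Sigma_1$ definition of $h$ at each point) and has $F(x)=w$.
\item Functionality is proved by $\Sigma_1$ Foundation (in fact a $\Delta_0$/$\Pi_1$-style uniqueness argument on $\in$), comparing two such attempts.
\item Totality uses $\Sigma_1$ Foundation on the $\Sigma_1$ statement ``an attempt exists at $x$''. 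If attempts exist for all $z\in x$, then $\Delta_0$ Collection (and hence $\Sigma_1$ Collection, which is derivable in $\kp^-$ in the usual way via the $\Sigma$ Reflection argument that does not use full Foundation) gathers them into a set. Their union is an attempt on $TC(x)$ by uniqueness, and one more application of $h$ extends it to $x$.
\item Transitive closure itself is obtained in $\kp^-+\Sigma_1$ Foundation, as remarked in the proposition above.
\end{itemize}

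For $3\Rightarrow 1$, I would follow Rathjen's strategy. First I build, for every $\Sigma_1$ theorem $\forall\vec x\,\exists y\,\vp(\vec x,y)$ of $P^g+\Pi_1$ Foundation, a primitive recursive (in $g$) function bounding a witness. I would use an infinitary sequent calculus with $\Delta_0$ Collection and the foundation axioms, and do partial cut elimination down to $\Sigma_1$ cuts. Then an asymmetric interpretation shows that if $\vec x\in M$ the witness can be found in $F(M)$, where $F$ ranges over iterates of the closure operators $M\mapsto \mathrm{rud}_g(M)$ and $\mathcal{P}_{\text{fin}}$-like steps; these are primitive recursive in $g$.

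The foundation axioms are handled as follows. $\Sigma_1$ and $\Pi_1$ Foundation instances are derived by induction on rank of length bounded by $\rho(x)$, which yields bounding functions of the form $x\mapsto G^{\rho(x)}$, i.e.\ $\in$-recursions. These are primitive recursive in the function computing the stage function. Finally, once $y$ is bounded by a $Prim(g)$ function $B(\vec x)$, $f(\vec x)$ is obtained as the unique $y\in B(\vec x)$ satisfying $\vp$. Since $\vp$ is $\Delta_0$ relative to $B(\vec x)$ after bounding its inner quantifier, and $\Delta_0$ separation and selection of a unique element are primitive recursive, this gives $f\in Prim(g)$.

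The main obstacle is this last direction, specifically the asymmetric interpretation and cut-elimination bookkeeping, where the $\Pi_1$ Foundation instances must be controlled by operators that remain primitive recursive. I would first try Rathjen's ordinal-free formulation, using operators indexed by rank, and check that each inference increases the operator only by a composition or an $\in$-recursion.
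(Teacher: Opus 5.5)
Your overall plan is the one the paper takes from Rathjen. You get $1\Rightarrow 2$ from closure of the $\Sigma_1$-definable functions under $\Sigma$ recursion in $\kp^-+\Sigma_1$ Foundation, and your treatment of that direction is fine; uniqueness of attempts needs only Regularity and $\Delta_0$ Separation applied to the set of disagreement points. The implication $2\Rightarrow 3$ is trivial. For $3\Rightarrow 1$ you use embedding, partial cut elimination, an asymmetric interpretation with stabilizing functions in $Prim(g)$, and a final bounded search. However, the last direction does not go through as you describe it, for two reasons.

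\textbf{The calculus must be finitary.} It should be like the paper's $LP^g$, not infinitary. Partial cut elimination works there because all axioms and nonlogical rules have principal formulas in $\Sigma_1\cup\Pi_1$. The stabilizing function is then produced by an external induction over a \emph{finite} derivation. Each inference contributes a composition, finitely many further stages of the hierarchy, or one ordinal iteration, and this is exactly what keeps the result in $Prim(g)$. In an infinitary calculus the derivation has transfinite height. Your principle that each inference adds a composition or an $\in$-recursion then becomes a recursion along the derivation, which is not evidently primitive recursive in $g$. The stages should be the $\omega$-free hierarchy $L^g_\alpha(b)$ of Gödel operations plus $g$, or your $\mathrm{rud}_g$-iterates; no $\mathcal{P}_\text{fin}$ steps are needed.

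\textbf{$\Pi_1$ Foundation needs a descent argument, not induction on rank.} The forward rank induction works for the $\Sigma_1$ Foundation rule but not here. The paper's calculus contains only the $\Sigma_1$ rule and leaves item 3 to Rathjen, so this case is exactly what you must supply.

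Let $\psi(x)\equiv\forall y\,\theta(x,y)$. Write $L_\alpha$ for $L^g_\alpha(b)$, and $f$ for the premise's stabilizing function, assumed without loss of generality to be monotone and inflationary. Consider the rule from $\Gamma,\exists x\in a\,\exists y\,\neg\theta(x,y),\forall y\,\theta(a,y)$ to $\Gamma,\forall y\,\theta(c,y)$. Its premise is interpreted as
$$\Gamma^{\alpha,f(\alpha)}\vee\exists x\in a\,\exists y\in L_{f(\alpha)}\,\neg\theta(x,y)\vee\forall y\in L_\alpha\,\theta(a,y).$$
So to get $\psi(a)$ at level $\alpha$ you need $\psi$ on the members of $a$ at the \emph{higher} level $f(\alpha)$. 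Any assignment of levels by rank would therefore have to be strictly decreasing on $\rho(c)+1$, which is impossible once $\rho(c)\ge\omega$.

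What works is a descent argument in $V$. Suppose $\Gamma^{\alpha,\beta}$ and $\forall y\in L_\alpha\,\theta(c,y)$ both fail. Apply the premise at the levels $\alpha,f(\alpha),f^2(\alpha),\dots$: first to $a=c$, then to the member it produces, and so on. All these sets lie in $L_\alpha$ by transitivity. Each step yields a further member failing $\psi$ at the next level, as long as $\beta\ge f^{k+1}(\alpha)$. That condition makes the side sequent $\Gamma^{f^k(\alpha),f^{k+1}(\alpha)}$ imply $\Gamma^{\alpha,\beta}$ by monotonicity, so the side sequent is false. An $\in$-chain below $c$ has length at most $\rho(c)$. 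So taking $\beta$ to be $f$ iterated along an ordinal exceeding the ranks of elements of $L_\alpha$ gives a contradiction. The bound is thus an iterate of $f$ and lies in $Prim(g)$, as you expected, but it is justified by this descent, not by an induction on rank.
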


Since $x \mapsto \omega$ is $\Delta_0$-definable, we obtain the following corollary. 
\begin{thm}[Rathjen  \cite{R92}]\label{prim omega}
	Let $f:V^n\to V$ be a set function. The following are equivalent:
	\begin{enumerate}
		\item $f$ is primitive recursive in $x\mapsto \omega$;
		\item $f$ is $\Sigma_1$-definable in $\kp\omega^-+ \Sigma_1\text{ \upshape Foundation}$.
	\end{enumerate}
\end{thm}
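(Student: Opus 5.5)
The plan is to derive this as the special case of \Cref{prim} with $g$ the constant function $x\mapsto \omega$. First, fix a $\Delta_0$ formula $\vp^g(x,y)$ defining $g$, namely ``$y$ is the least limit ordinal'': $y$ is an ordinal, $y$ is a limit ($y\neq 0$ and $\forall z\in y\, z\cup\{z\}\in y$, expressed in a $\Delta_0$ way), and every element of $y$ is $0$ or a successor. Then $P^g=\kp^-+\Sigma_1\text{ Foundation}+\forall x\,\exists! y\,\vp^g(x,y)$. The main step is to show that $P^g$ and $\kp\omega^-+\Sigma_1\text{ Foundation}$ prove the same theorems. One direction is immediate: $\forall x\exists!y\,\vp^g(x,y)$ yields a limit ordinal, so Infinity holds in $P^g$.

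For the converse we work in $\kp\omega^-+\Sigma_1\text{ Foundation}$ and must produce $\omega$ as a set and prove its uniqueness. Infinity gives some limit ordinal $\lambda$. By $\Delta_0$ Separation we form $\{\xi\in\lambda:\xi=0\vee \xi \text{ is a successor, and so is every element of }\xi \text{ other than }0\}$, which is $\omega$. To check that it satisfies $\vp^g$ and is unique, we use that ordinals are well ordered under $\in$. This follows from $\Sigma_1$ (indeed $\Delta_0$) Foundation applied to $\Delta_0$ properties, for instance by taking an $\in$-least counterexample inside $\lambda$. This is the only delicate point: we need the standard facts about ordinals (trichotomy, and that an initial segment closed under successor and containing only successors/0 is exactly $\omega$). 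These come from $\Delta_0$ Foundation, which is available since $\Sigma_1$ Foundation includes it. Hence both theories have the same axioms up to provable equivalence, so they have the same theorems.

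Given this, $\Sigma_1$-definability in $P^g$ coincides with $\Sigma_1$-definability in $\kp\omega^-+\Sigma_1\text{ Foundation}$. In addition, $Prim(g)$ for the constant $g$ is by definition the class of functions primitive recursive in $x\mapsto\omega$. The equivalence $1\leftrightarrow 2$ of \Cref{prim} then yields the theorem. I expect no real obstacle beyond confirming that $\vp^g$ is genuinely $\Delta_0$, which holds because all quantifiers are bounded by $y$ or its elements.
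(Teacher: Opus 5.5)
Your proposal is correct and is essentially the paper's argument: the paper states this result as an immediate corollary of \Cref{prim}, instantiated with the $\Delta_0$-definable function $g(x)=\omega$. Your check that $P^g$ and $\kp\omega^-+\Sigma_1$ Foundation prove the same theorems is exactly the step the paper leaves implicit; there you obtain $\omega$ from a limit ordinal by $\Delta_0$ Separation and prove its uniqueness from the basic ordinal facts available via Regularity and $\Delta_0$ Foundation.
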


We aim to prove an analogue of \Cref{prim omega} for the theory $P\omega\beta$. It is enough to consider the following function.
\begin{defin}
	Let $B:V\times V\to V$ be defined by letting $B(a,r)=0$ if $r$ is not a well founded relation in $a$, and $B(a,r)=f$ otherwise, where $f$ is the unique collapsing function of $r$ on $a$. 
\end{defin}

Here is the desired characterization. 
\begin{thm}\label{prim in B}
	Let $f:V^n\to V$ be a set function. The following are equivalent:
	\begin{enumerate}
		\item $f\in Prim(x\mapsto\omega, B)$;
		\item $f$ is $\Sigma_1$-definable in $P\omega\beta$.
	\end{enumerate}
\end{thm}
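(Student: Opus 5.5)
The plan is to reduce both directions to Rathjen's characterization, \Cref{prim}, applied with a suitable $\Delta_0$-definable function $g$ that packages $x\mapsto\omega$ and $B$. The obstacle is that $B$ itself is not $\Delta_0$-definable: its graph involves ``$r$ is well founded on $a$'', which is $\Pi_1$. I will therefore replace $B$ by a $\Delta_0$ variant $g$. Take $g(\pair{a,r,c})$ to be $\pair{\omega,B(a,r)}$ when $c$ is either a collapse of $r$ on $a$ or a witness that $r$ is ill founded, that is, a nonempty subset of $a$ with no $r$-minimal element. Otherwise set $g(\pair{a,r,c})=\pair{\omega,0}$. Each of these clauses is $\Delta_0$ in the arguments and the output, so $\vp^g$ is $\Delta_0$. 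The identity $B(a,r)=$ ``the second coordinate of $g(a,r,c)$ for the right $c$'' is what will let us pass between $g$ and $B$.

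For (1)$\Rightarrow$(2), I first show that $P\omega\beta$ proves $\forall x\,\exists!y\,\vp^g(x,y)$. Existence comes from Axiom Beta, since a collapse exists whenever $r$ is well founded, and uniqueness of collapses is provable in $\B$. Hence $P\omega\beta$ extends $P^g$, and every $f\in Prim(g)$ is $\Sigma_1$-definable in $P\omega\beta$ by \Cref{prim}. It remains to see that $Prim(x\mapsto\omega,B)\subseteq Prim(g)$, or at least that $B$ is $\Sigma_1$-definable in $P\omega\beta$. Here the graph $B(a,r)=f$ is $\Sigma_1$: either $f$ is a collapsing function of $r$ on $a$, or $f=0$ and $\exists c$ witnessing ill-foundedness. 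Totality holds by excluded middle together with Axiom Beta, and uniqueness is immediate. Since $\Sigma_1$-definable functions of $P\omega\beta$ are closed under composition and primitive recursion (as in Rathjen's proof via $\Sigma$ Recursion with $\Sigma_1$ Foundation), I would rerun the direction $1\Rightarrow 2$ of \Cref{prim} with $B$ as an extra initial $\Sigma_1$ function. That argument only uses that the initial functions have $\Sigma_1$ graphs and are provably total.

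For (2)$\Rightarrow$(1), let $f$ be $\Sigma_1$-definable in $P\omega\beta$ by $\vp$. The plan is to mimic Rathjen's proof of $2\Rightarrow 1$ in \Cref{prim}: show that the universe closed under $Prim(x\mapsto\omega,B)$ (for instance, the models $M$ obtained as closures of arbitrary transitive sets under these functions) satisfies $P\omega\beta$ and is $\Sigma_1$-elementary enough that a witness to $\exists y\,\vp(\vec x,y)$ lies in the $Prim(\omega,B)$-closure of $\vec x$. Then, by the usual Herbrand/cut-elimination style argument from \cite{R92}, the witness is obtained by a fixed $Prim(\omega,B)$ term. Concretely, I would use Rathjen's ordinal-analysis bound: proofs in $\kp\omega^-+\Sigma_1$ Foundation $+$ (Axiom Beta as a $\Pi_2$ axiom $\forall a,r\,\exists y\,\psi$) yield witnessing functions primitive recursive in the Skolem functions of the $\Pi_2$ axioms. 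Axiom Beta, written as $\forall a,r\,(\text{WF}(a,r)\to\exists f\,\text{Coll}(f,a,r))$, is $\Pi_2$ once written as $\forall a,r\,\exists c\,(c$ witnesses ill-foundedness $\vee\,\text{Coll}(c,a,r))$, which has the $\Delta_0$ matrix used to define $g$. Thus $P\omega\beta$ is exactly $P^{g'}$ with Skolem function $g'$ for this matrix, and $g'$ is primitive recursive in $B$ and $\omega$: compute $B(a,r)$, and if it is $0$ while $r$ is ill founded, produce the witness $c$ as the ill-founded part $a\setminus\mathrm{dom}$ of the collapse of the well founded part. The well-founded part is obtained from $B$ via the rank or bisimulation machinery of earlier sections, all of which is rudimentary in $B$.

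The main obstacle is exactly this last step. Rathjen's theorem requires a single-valued $\Delta_0$ $g$, whereas the Skolem witness for ill-foundedness is not unique. I would resolve this by fixing the canonical witness, the complement of the well-founded part $W(a,r)$, and checking that both its graph and $W(a,r)$ itself are computable primitive recursively from $B$. The latter I would do by collapsing the restriction of $r$ to suitable sets and comparing. If this fails, the fallback is to define $g$ with its output ranging over the set of $\Delta_0$-characterized pairs $\pair{W(a,r),\text{collapse on }W}$, which is unique, and $\Delta_0$ in $(a,r,y)$ once $y$ provides the collapse.
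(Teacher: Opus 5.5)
Your direction $1\Rightarrow 2$, in its final form, is the paper's argument. $B$ has a $\Sigma_1$ graph that $\kp^-+\text{Axiom Beta}$ proves total and single valued, and the $\Sigma_1$-definable functions of any extension of $\kp^-+\Sigma_1$ Foundation are closed under $Prim$.

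Drop your first $g$ altogether. Because $c$ is an \emph{input}, $\forall x\,\exists! y\,\vp^g(x,y)$ follows from Infinity alone, and $g$ is primitive recursive in $x\mapsto\omega$. So the inclusion $Prim(x\mapsto\omega,B)\subseteq Prim(g)$ is false: choosing ``the right $c$'' is exactly the step that is not primitive recursive.

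For $2\Rightarrow 1$, your main plan fails where you suspected, but not for the reason you give. The trouble with the canonical witness $a\setminus W(a,r)$, as with $B$ itself, is not computability from $B$. It is that its graph is not $\Delta_0$: $c=a\setminus W(a,r)$ entails that $r$ is well founded on $a\setminus c$, which is a $\Pi_1$ condition. So \Cref{prim} cannot be applied, and ``$P\omega\beta$ is exactly $P^{g'}$'' is unjustified.

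Your fallback, however, works and gives a complete proof once made precise. Put $G(\pair{a,r})=\pair{\omega,\pair{W,f}}$, with graph given by three clauses: $W\subseteq a$ is $r$-downward closed in $a$; $a\setminus W$ has no $r$-minimal element; and $f$ is a collapsing function of $r$ on $W$.
\begin{itemize}
\item This graph is $\Delta_0$ and pins $W$ down as the well-founded part. The middle clause says $W$ is closed under $X\mapsto\{x\in a:\forall v\,(\pair{v,x}\in r\to v\in X)\}$, hence contains its least fixed point. Downward closure plus well-foundedness of $r$ on $W$ give the converse.
\item Downward closure cannot be dropped. On $a=\{x,y\}$ with $y\,r\,y$ and $y\,r\,x$, the set $W=\{x\}$ satisfies the other two clauses, although the well-founded part is empty.
\item $P^G$ proves Infinity and Axiom Beta. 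When $r$ is well founded on $a$, the set $a\setminus W$ must be empty, so $f$ is the collapse of $r$ on $a$.
\item $G\in Prim(x\mapsto\omega,B)$. Take $x+1=\{x\}\cup\{y\in a:\pair{y,x}\in r^+\}$, $W=\{x\in a: B(x+1,r\cap(x+1)^2)\neq 0\}$ and $f=B(W,r\cap W^2)$. Computing $r^+$ needs a recursion along $\omega$, so this is not ``rudimentary in $B$'' as you claim.
\end{itemize}
Consequently every function $\Sigma_1$-definable in $P\omega\beta$ is $\Sigma_1$-definable in $P^G$, and \Cref{prim} puts it in $Prim(G)\subseteq Prim(x\mapsto\omega,B)$. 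Only the inclusion $P\omega\beta\subseteq P^G$ is needed, so you never have to show that $P\omega\beta$ proves $W(a,r)$ exists.

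This is a genuinely different route from the paper's. The paper explicitly says \Cref{prim} cannot be applied directly because $B$ is not $\Delta_0$. It therefore re-proves Rathjen's Interpretation Theorem for $LP\omega\beta$, switching to the definability hierarchy $L_\alpha(b)$ and interpreting Axiom Beta via the explicit stabilizing function of \Cref{stabilizing B}, whose proof contains the same $x+1$ computation of the well-founded part.

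Your fallback sidesteps this by trading $B$ for a $\Delta_0$-definable surrogate. That surrogate is inter-primitive-recursive with $(x\mapsto\omega,B)$, and its totality implies Infinity and Axiom Beta over $\kp^-+\Sigma_1$ Foundation. \Cref{prim} is then a black box, and \Cref{prim in Bg} follows by bundling $g$ into $G$.

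What the paper's route buys is extra information. It gives an explicit $L$-stabilizing function in $Prim(x\mapsto\omega,B)$ for every $\Sigma_1$ theorem of $P\omega\beta$, relative to the definability hierarchy. The black-box reduction yields stabilization only along Rathjen's hierarchy $L^G$.
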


Note that $B$ is $\Sigma_1$-definable in $\kp^-+ \text{\upshape Axiom Beta}$ and that such theory is equivalent to $\kp^-+\forall a,r\, \exists y\, \vp^B(a,r,y)$, where $\vp^B(a,r,y)$ is the $\Sigma_1$ formula defining $B$, namely,

\[   \exists z\, ( (\vartheta(a,r,z) \land y=0) \lor y \text{ is the collapsing function of $r$ on $a$}),  \]  
where $\vartheta(a,r,z)$ is the $\Delta_0$ formula expressing that if $r$ is a relation on $a$ then $z$ is a subset of $a$ with no $r$-minimal element.  \smallskip  

Note that  we cannot obtain our result directly from \Cref{prim}, since $B$ is not $\Delta_0$-definable.   Nonetheless, an adaptation of the Interpretation Theorem (Rathjen  \cite[Theorem 5.2]{R92}), which is the main tool to prove \Cref{prim},  will do the trick. \smallskip

By the end of the section, we will indicate how  \Cref{prim in B} can be further extended by incorporating $g$:

\begin{thm}\label{prim in Bg}
	Let $f:V^n\to V$ be a set function. The following are equivalent:
	\begin{enumerate}
		\item $f\in Prim(x\mapsto\omega, B,g)$;
		\item $f$ is $\Sigma_1$-definable in $P\omega\beta^g$,
	\end{enumerate}
	where 
	
\[ P\omega\beta^g \, =\, P\omega\beta +  \forall x\, \exists !y\, \vp^g(x,y). \] 
\end{thm}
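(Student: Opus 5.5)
We prove the two directions separately. Throughout, Theorem~\ref{prim in B} is taken as given, together with Rathjen's characterization (Theorem~\ref{prim}).

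\emph{(1)$\Rightarrow$(2).} We show by induction on the generation of $Prim(x\mapsto\omega,B,g)$ that each such $f$ has a $\Sigma_1$ definition $\vp^f$ for which $P\omega\beta^g$ proves totality and uniqueness. The base functions are handled as follows:
\begin{itemize}
\item the initial functions and $x\mapsto\omega$ are treated exactly as in Theorem~\ref{prim omega};
\item $B$ is handled as in Theorem~\ref{prim in B}, via the $\Sigma_1$ formula $\vp^B$ together with Axiom Beta;
\item $g$ is given directly by the new axiom $\forall x\,\exists! y\,\vp^g(x,y)$, where $\vp^g$ is $\Delta_0$.
\end{itemize}
Closure under composition is immediate in $\kp^-$, since $\Sigma_1$ formulas are closed under bounded quantification via $\Delta_0$ Collection. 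Closure under primitive recursion is the usual $\Sigma$ Recursion argument. Totality of $f(x,\vec y)$ is proved by $\Sigma_1$ Foundation on $x$: collection gives a set of values for the elements of $x$, and uniqueness lets us form the union. This is the same argument as in Rathjen's proof, and the extra base functions do not interfere, because their definitions are $\Sigma_1$ and provably total.

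\emph{(2)$\Rightarrow$(1).} This is the substantial direction. We adapt the Interpretation Theorem in the same way as for Theorem~\ref{prim in B}, now relativizing to the class of functions $\mathcal F=Prim(x\mapsto\omega,B,g)$. The plan has three steps.

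\begin{enumerate}
\item Since $\vp^g$ is $\Delta_0$ and $g\in\mathcal F$, every set closed under the functions of $\mathcal F$ satisfies $\forall x\,\exists!y\,\vp^g(x,y)$ absolutely.
\item Following Rathjen, work with an infinitary (or asymmetric) sequent calculus for $P\omega\beta^g$. Treat the axiom $\forall x\,\exists y\,\vp^g(x,y)$ exactly as the Beta axiom $\forall a,r\,\exists y\,\vp^B(a,r,y)$ is treated: as a $\Pi_2$ axiom whose witness is supplied by the function $g$ (respectively $B$).
\item Prove the key lemma: if a $\Sigma_1$ formula $\exists y\,\vp$ is derivable (after cut-elimination down to $\Sigma_1$ cuts) from hypotheses whose parameters lie in a set $a$, then it is witnessed in $\mathcal{H}(a)$. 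Here $\mathcal{H}(a)$ is an $\mathcal F$-closed hierarchy obtained by iterating a primitive recursive (in $\mathcal F$) stage function along the derivation height. The $g$-instances of the $\Pi_2$ rule then only require closing each stage under $g$, which is allowed because $g\in\mathcal F$.
\end{enumerate}

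The remaining axioms need checking against this hierarchy. $\Sigma_1$ Foundation is handled by the usual ordinal bound on derivations, and $\Pi_1$ Foundation by part 3 of Theorem~\ref{prim}. The conclusion is that any $\Sigma_1$-definable $f$ is bounded by, and hence $\Delta_0$-definable from, a function in $\mathcal F$. Since $\mathcal F$ is closed under $\Delta_0$ separation over a bounding function, $f\in\mathcal F$.

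The main obstacle is step 3, specifically the non-$\Delta_0$ base function $B$. Whether a relation is well founded is not absolute between stages of the hierarchy, so the stage function must compute $B$ correctly: it must output the collapse when $r$ is well founded in $V$, and output $0$ otherwise. This subtlety is already resolved in the proof of Theorem~\ref{prim in B}. Adding $g$ introduces no new difficulty, because $g$ is $\Delta_0$ and therefore absolute between transitive sets. The proof therefore reduces to rerunning the argument for Theorem~\ref{prim in B} with the base function list extended by $g$.
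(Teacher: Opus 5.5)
Your outline is essentially the paper's argument. (1)$\Rightarrow$(2) is the closure of $\Sigma_1$-definability under primitive recursion in extensions of $\kp^-+\Sigma_1$ Foundation. (2)$\Rightarrow$(1) reruns Rathjen's Interpretation Theorem, with \Cref{stabilizing B} interpreting Axiom Beta, over the definability hierarchy with $g$-values adjoined at successor stages, $L^g_{\alpha+1}(b)=Def(L^g_\alpha(b)\cup\{L^g_\alpha(b)\}\cup TC(\{g(x)\mid x\in L^g_\alpha(b)\}))$; this hierarchy lies in $Prim(x\mapsto\omega,g)$ and makes $\alpha\mapsto\alpha+1$ stabilizing for $\exists y\,\vp^g(x,y)$.

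There are two harmless slips. First, $P\omega\beta^g$ has only $\Sigma_1$ Foundation, so the appeal to part 3 of \Cref{prim} is beside the point. Second, the stages need only contain the $g$-images of the previous stage; they could not be closed under all of $Prim(x\mapsto\omega,B,g)$ if the hierarchy itself is to belong to that class.
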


We start with a simple observation. 
\begin{prop}
	Let $T$ be any extension of $\kp^-+\Sigma_1\text{ Foundation}$. If $g$ is $\Sigma_1$-definable in $T$, then every function in $Prim(g)$ is $\Sigma_1$-definable in $T$. 
\end{prop}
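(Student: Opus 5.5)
The proof goes by induction on the generation of $f\in Prim(g)$, following the clauses of the definition of the primitive recursive set functions relativized to $g$ (the initial functions together with $g$, closed under composition and primitive recursion). For each $f$ we produce a $\Sigma_1$ formula $\vp^f(\vec x,y)$ that defines the graph of $f$ in $V$ and such that $T$ proves totality and uniqueness.

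\emph{Initial functions.} The projections, zero, adjunction and case distinction are $\Delta_0$-definable. Their totality and uniqueness follow from Pair, Union, Extensionality and $\Delta_0$ Separation, all available in $\kp^-$. The function $g$ is $\Sigma_1$-definable in $T$ by hypothesis.

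\emph{Composition.} If $f(\vec x)=g_0(g_1(\vec x),\dots,g_k(\vec x))$, take
\[
\vp^f(\vec x,y):\equiv \exists z_1\ldots\exists z_k\,\Big(\textstyle\bigwedge_{j}\vp^{g_j}(\vec x,z_j)\wedge \vp^{g_0}(z_1,\dots,z_k,y)\Big).
\]
This is $\Sigma_1$ after contracting quantifiers using pairing. Totality and uniqueness are inherited directly from the $g_j$.

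\emph{Primitive recursion.} This is the main case. Let $f(x,\vec y)=h(\bigcup\{f(z,\vec y):z\in x\},x,\vec y)$. Define $\vp^f(x,\vec y,w)$ as
\[
\exists F\,\big(F \text{ is a function with domain } TC(\{x\}) \wedge \forall u\in \mathrm{dom}F\ \vp^h(\textstyle\bigcup\{F(z):z\in u\},u,\vec y,F(u)) \wedge F(x)=w\big).
\]
Here "$\mathrm{dom}F=TC(\{x\})$" is $\Delta_0$ in $F$ (transitivity of the domain plus $x\in\mathrm{dom}F$ suffices; the domain may be any transitive set containing $x$). The inner $\vp^h$ occurs positively under a bounded quantifier, so by $\Sigma$ reflection in the form of $\Delta_0$ Collection the whole formula is equivalent in $\kp^-$ to a $\Sigma_1$ formula.

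We then prove two claims.

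\emph{Uniqueness.} Any two witnesses $F,F'$ agree on the intersection of their domains. This follows by foundation on $u$ using the uniqueness of $h$, and the statement $F(u)=F'(u)$ is $\Delta_0$, so only $\Delta_0$ Foundation is needed.

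\emph{Totality.} We show $\forall x\,\exists w\,\vp^f(x,\vec y,w)$ by $\Sigma_1$ Foundation on $x$; this is the step where $\Sigma_1$ rather than $\Delta_0$ Foundation is required. Assume that for every $z\in x$ there is a witness $F_z$. By $\Delta_0$ (hence $\Sigma_1$) Collection, collect these witnesses into a set. By uniqueness they are compatible, so their union $G$ is a function on $\bigcup_{z\in x}TC(\{z\})=TC(x)$ satisfying the recursion clause. Then $\bigcup\{G(z):z\in x\}$ exists by Union together with a $\Delta_0$-definable image. Totality of $h$ gives a value $w$, and $F=G\cup\{\pair{x,w}\}$ witnesses $\vp^f$.

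The obstacle I expect is the bookkeeping that keeps everything $\Sigma_1$ while using only $\Sigma_1$ Foundation and $\Delta_0$ Collection. The key point is that the statement being induced on is $\Sigma_1$. Forming the union of the collected witnesses needs only $\Delta_0$ Separation relative to the collected set, since uniqueness is already established. Finally, $V\models\vp^f(\vec x,f(\vec x))$ holds because the real $f$ restricted to $TC(\{x\})$ is such an $F$.
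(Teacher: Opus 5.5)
Your proof is correct and is essentially the paper's argument: the paper simply invokes the availability of $\Sigma$ Recursion in $\kp^-+\Sigma_1$ Foundation, and you have unpacked that theorem for the recursion scheme of $Prim(g)$, adding the routine initial-function and composition cases. The one spot to tighten is your claim that $\Delta_0$ Separation suffices to extract the genuine witnesses from the collected set, because the recursion clause on $F$ is $\Sigma$ rather than $\Delta_0$. Fix it in either of two ways: collect each $F$ together with a witness for the $\Delta_0$ matrix of its $\Sigma_1$ normal form and separate on that matrix, or use $\Delta_1$ Separation, which holds in $\kp^-$ and applies here because totality and uniqueness of $h$ make the clause $T$-provably $\Delta_1$.
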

\begin{proof}
As already observed, 	definitions by  $\Sigma$ recursion are available in $\kp^-+\Sigma_1\text{ Foundation}$, and this is sufficient to show  that the class of  $\Sigma_1$-definable set functions is closed under primitive recursion.
\end{proof}

Therefore, the only nontrivial implication of \Cref{prim in B} is  $2\Imp 1$.  In the remainder of this section, we  sketch the proof of the Interpretation Theorem for $P^g$, and then explain  how to adapt it to our case. The reader is invited to work out the details along the lines of \cite{R92}.\smallskip


\subsection{Stabilizing functions} 

\begin{defin}
A (relativized) set hierarchy  is a function $H: V\times Ord \to V$ such that:
	\begin{itemize}
		\item $H(b,\alpha)$ is transitive;
		\item $b\subseteq H(b,0)$ and $Ord\subseteq \bigcup_{\alpha} H(b,\alpha)$;
	\end{itemize}
	We write $H_\alpha(b)$ instead of $H(b,\alpha)$.
\end{defin}

\begin{defin}
Let $H: V\times Ord \to V$  be a  set hierarchy and $\vp(x_1,\ldots,x_n)$ be $\Sigma_1$. Say that a function $f: V\times Ord\to V$ is  $H$-stabilizing for $\vp$ if for all $x_1,\ldots,x_n\in H_\alpha(b)$ we have
\[ H_{f(b,\alpha)}(b)\models \vp(x_1,\ldots,x_n). \] 
\end{defin}

The  Interpretation Theorem has the following consequence, which is  interesting by itself and readily implies \Cref{prim}.

\begin{thm}[Rathjen  \cite{R92}]\label{interpretation consequence}
There is   a  set hierarchy  $H$ in $Prim(g)$ such that the following happens.   
If $P^g\vdash \vp(x_1,\ldots,x_n)$,  where $\vp$ is $\Sigma_1$, then there is a set function $f: V\times Ord\to Ord$ in $Prim(g)$ that is $H$-stabilizing for $\vp(x_1,\ldots,x_n)$. 
\end{thm}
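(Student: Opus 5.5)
We would prove this in the style of Rathjen's Interpretation Theorem: we interpret $P^g$ in a hierarchy built from $Prim(g)$ functions and use cut elimination to bound the stages at which $\Sigma_1$ theorems become true. For the hierarchy we take the relativized constructible hierarchy closed under $g$, with $H_0(b)=TC(b\cup\{b\})$, $H_{\alpha+1}(b)$ the closure of $H_\alpha(b)\cup\{H_\alpha(b)\}$ under the rudimentary functions and $g$ (a finite iteration, hence primitive recursive in $g$), and unions at limits. Since rudimentary closure and $g$ are in $Prim(g)$ and the recursion on $\alpha$ is a primitive recursion, $H\in Prim(g)$. Transitivity, $b\subseteq H_0(b)$ and $\alpha\in H_{\alpha+1}(b)$ give the set hierarchy conditions.

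Next we formalize $P^g$ as an infinitary sequent calculus $\mathbf{RS}^g(b)$ over the terms of this hierarchy. It has ramified quantifiers $\exists x\in H_\alpha(b)$, an $\omega$-rule replaced by the rule over all terms of level $<\alpha$, and an axiom asserting $\forall x\exists! y\,\vp^g(x,y)$, which is true at every successor stage by construction. Derivations are controlled by operators $\mathcal{H}$ generated by a primitive recursive closure under ordinal addition and $\varphi(i,\cdot)$ for finite $i$. Each axiom of $P^g$ is then embedded with finite ordinal length. The $\Delta_0$ axioms and Collection are handled by a boundedness and reflection argument in the style of Buchholz, and $\Sigma_1$ Foundation along ranks is embedded by induction on $\rho$, giving length about $\omega^{\rho(x)}$ bounded above $\alpha$. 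We would then run predicative cut elimination, $\vdash^{\omega^\beta}_{\Omega+n}\mapsto\vdash^{\varphi(n,\beta)}$-style bounds, to remove all cuts of rank above the current level. Because there is no impredicative collapse here (no $\Pi_2$ reflection beyond $\Delta_0$ Collection, and only $\Sigma_1$ Foundation), the cut-free derivation of $\vp(x_1,\ldots,x_n)$ with parameters in $H_\alpha(b)$ has height bounded by $\varphi(k,\alpha+m)$ for fixed $k,m$ depending on the proof.

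Finally, boundedness (persistence) gives that a cut-free derivation of a $\Sigma_1$ formula of height $\delta$ makes the formula true in $H_\delta(b)$. So $f(b,\alpha)=\varphi(k,\alpha+m)$, or a crude primitive recursive majorant such as an iterated $\omega$-exponential tower of $\alpha$, is $H$-stabilizing. Such an $f$ lies in $Prim(g)$, since bounded finite Veblen levels are primitive recursive ordinal functions.

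We expect the main obstacle to be embedding Collection and $\Sigma_1$ Foundation so that the resulting ordinal bounds stay primitive recursive. In particular we must ensure that $\Delta_0$ Collection only needs reflection to the next successor stage, which holds because $H_{\alpha+1}(b)$ is closed under rudimentary functions and $g$. We would first try to use Rathjen's trick of working with $\Sigma$-persistence over rudimentarily closed stages.
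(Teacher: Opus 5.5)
\textbf{There is a genuine gap: neither your hierarchy nor your stabilizing function lies in $Prim(g)$.}

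$P^g$ has no Infinity, and $g$ is an arbitrary $\Delta_0$-definable function. Take $g=\mathrm{id}$, so $Prim(g)=Prim$. By induction on the generation of a function (and on $\in$ for primitive recursion), every $Prim$ function maps hereditarily finite arguments to hereditarily finite values.

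\emph{Your hierarchy.} Closing $H_\alpha(b)\cup\{H_\alpha(b)\}$ under the rudimentary functions is not a finite iteration but an $\omega$-fold one. This is essentially Jensen's $J$-hierarchy, whose first successor stage is already $V_\omega$. So $(b,\alpha)\mapsto H_\alpha(b)$ sends $(\emptyset,1)$ to an infinite set and is not primitive recursive.

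\emph{Your bound.} For $k\geq 1$, $\varphi(k,\alpha+m)$ is infinite already at $\alpha=0$, and so is any tower of $\omega$-exponentials. Even $\alpha\mapsto\omega^\alpha$ lies only in $Prim(x\mapsto\omega)$. Your claim that finite Veblen levels are primitive recursive holds only relative to $x\mapsto\omega$, i.e.\ for a theory with Infinity such as $\mathbf{KP}\omega^-+\Sigma_1$ Foundation. That is not the setting here.

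This is not a cosmetic defect in the final estimate. The heights of infinitary $\mathbf{RS}$-derivations and the $\omega^\beta$ and $\varphi(n,\beta)$ bounds from predicative cut elimination are intrinsically infinite. So this machinery cannot produce bounds that stay finite on finite inputs, which is exactly what membership in $Prim(g)$ requires. It is also exactly what the paper needs when it uses this statement to show that the $\Sigma_1$-definable functions of $P^g$ lie in $Prim(g)$.

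\textbf{What the paper does instead.} Following Rathjen, it avoids ordinal heights entirely.
\begin{enumerate}
\item It takes $H=L^g$ with $L^g_{\alpha+1}(b)=D^g(L^g_\alpha(b)\cup\{L^g_\alpha(b)\})$. Here $D^g$ adds only a \emph{single} application of the G\"odel operations and of $g$, followed by transitive closure. Hence $H\in Prim(g)$, and finite stages over finite $b$ stay finite.
\item It embeds $P^g$ into the finitary calculus $LP^g$. Every axiom and nonlogical rule has a $\Sigma_1$ principal formula, so all cuts can be eliminated except those on $\Sigma_1\cup\Pi_1$ formulas. The resulting derivation consists entirely of $\Delta_0(\Sigma_1\cup\Pi_1)$ formulas.
\item It then invokes the Interpretation Theorem. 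By induction on this \emph{finite} derivation one obtains $f\in Prim(g)$ such that $\Gamma^{\alpha,f(b,\alpha)}$ holds for all parameters in $L^g_\alpha(b)$. Unbounded universal quantifiers range over $L^g_\alpha(b)$ and unbounded existential ones over $L^g_{f(b,\alpha)}(b)$.
\end{enumerate}

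In Rathjen's proof, which the paper cites, each rule turns $Prim(g)$ bounds into $Prim(g)$ bounds:
\begin{itemize}
\item a $\Sigma_1/\Pi_1$ cut is handled by composing the bounds of its premises;
\item Collection costs $+1$, taking $z=L^g_{f(b,\alpha)}(b)$;
\item the $g$-axiom costs $+1$, since $g(a)\in L^g_{\alpha+1}(b)$;
\item the $\Sigma_1$ Foundation rule is handled by iterating the premise's bound along ranks via a primitive recursion.
\end{itemize}
Applying this to a single $\Sigma_1$ formula gives the statement.
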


For the reader's convenience, we show how to prove  \Cref{prim} from \Cref{interpretation consequence}. Note that the only requirements for $H$ are  that $b\subseteq H_{k(b)}(b)$ for some set function $k\in Prim(g)$ and that every $H_\alpha(b)$ is transitive. 

\begin{thm}[Rathjen  \cite{R92}]
	If $f:V^n\to V$ is $\Sigma_1$-definable in $P^g$, then $f\in Prim(g)$. 
\end{thm}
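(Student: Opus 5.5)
Suppose $f$ is $\Sigma_1$-definable in $P^g$ via a $\Sigma_1$ formula $\vp(x_1,\ldots,x_n,y)$. The plan is to apply \Cref{interpretation consequence} to the $\Sigma_1$ statement $\exists y\,\vp(\vec x,y)$, which $P^g$ proves. This yields a stabilizing function $k\in Prim(g)$ such that for all $\vec x\in H_\alpha(b)$ we have $H_{k(b,\alpha)}(b)\models \exists y\,\vp(\vec x,y)$. To exploit this for arbitrary inputs we must first place $\vec x$ inside some level of the hierarchy. Set $b=TC(\{x_1,\ldots,x_n\})$; since $b\subseteq H_0(b)$ and $H_0(b)$ is transitive, each $x_i\in H_0(b)$, or if needed we take $b=\{x_1,\ldots,x_n\}$ so that $x_i\in b\subseteq H_0(b)$. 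The map $\vec x\mapsto b$ is in $Prim(g)$, since pairing and transitive closure are primitive recursive.

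Let $M(\vec x)=H_{k(b,0)}(b)$, which is in $Prim(g)$ by composition. Then $M(\vec x)\models\exists y\,\vp(\vec x,y)$. Because $M(\vec x)$ is transitive and $\Sigma_1$ formulas are upward absolute, any witness $y\in M(\vec x)$ with $M(\vec x)\models\vp(\vec x,y)$ satisfies $\vp(\vec x,y)$ in $V$. By the uniqueness clause for $f$, which holds in $V$ since $V$ satisfies the defining property, such a $y$ equals $f(\vec x)$. Hence $f(\vec x)\in M(\vec x)$, and $f(\vec x)$ is the unique $y\in M(\vec x)$ with $M(\vec x)\models\vp(\vec x,y)$.

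It remains to extract $y$ primitive recursively. We use that the satisfaction relation for a fixed formula on a set is in $Prim$: the function $(M,\vec x)\mapsto\{y\in M: M\models\vp(\vec x,y)\}$ is obtained from $\Delta_0$ separation over $M$, with quantifiers bounded by $M$. Such functions are rudimentary and hence in $Prim$. Call this set $Y(\vec x)$; it equals $\{f(\vec x)\}$, so $f(\vec x)=\bigcup Y(\vec x)$, and $f\in Prim(g)$.

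The main delicate point is the last step. The relativization $\vp^{M}$ must be a $\Delta_0$ formula in the parameter $M$, and the function $\vec x\mapsto\{y\in M:\vp^M\}$ must be genuinely primitive recursive, not merely $\Sigma$-definable. I would handle this by the standard fact that $\Delta_0$ separation with set parameters is realized by a rudimentary function, citing Jensen's characterization in \Cref{rud}. Everything else is composition.
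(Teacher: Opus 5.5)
Your proposal is correct and follows essentially the same route as the paper. Both arguments apply the stabilization consequence of the Interpretation Theorem with $b=\{x_1,\ldots,x_n\}$ and $\alpha=0$, then recover $f(\vec x)$ as the union of a $\Delta_0$-separation over the level $H_{k(b,0)}(b)$; that separation is rudimentary, so $f\in Prim(g)$. The only differences are minor: you spell out the upward absoluteness step and the appeal to uniqueness in $V$ (which tacitly uses $V\models P^g$), and the paper explicitly contracts $\exists y\,\exists z$ into the single bounded form $\exists v\,\exists y\in v\,\exists z\in v$ so that the stabilization theorem applies to a genuine $\Sigma_1$ formula.
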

\begin{proof}
	The proof is standard.
	Let $f$ be $\Sigma_1$-definable via $$\exists z\, \vartheta(x_1,\ldots,x_n,y,z),$$ where $\vartheta$ is $\Delta_0$. By \Cref{interpretation consequence}, there is a set function  $h: V\times Ord\to Ord$ in $Prim(g)$  such that for all $x_1,\ldots, x_n$
	\[  \exists v\in H_\beta(b)\, \exists y\in v\, \exists z\in v\, \vartheta(x_1,\ldots,x_n,y,z), \]
	where $u=\{x_1,\ldots,x_n\}$ and $\beta=h(u,0)$. Let $k(x_1,\ldots,x_n)= H_{h(\{x_1,\ldots,x_n\},0)}(\{x_1,\ldots,x_n\})$. Then $k$ is in $Prim(g)$ and 
	\[  f(x_1,\ldots,x_n)= \bigcup\{ y\in k(x_1,\ldots,x_n)\mid \exists z\in  k(x_1,\ldots,x_n)\, \vartheta(x_1,\ldots,x_n,y,z)\}.  \]
	This shows that $f$ is also in $Prim(g)$. 
\end{proof}

The $H$ in \Cref{interpretation consequence} is played by the following constructible (relativized) set hierarchy, which is based on G\"{o}del operations and does not require the existence of $\omega$: this is the approach taken in Barwise \cite{Barwise}, but with  $g$ added to the fundamental operations. 

\begin{defin}[Constructible hierarchy via fundamental operations]\label{L1}
Let 
\begin{align*}
	L^g_0(b)&= TC(b)\\
	L^g_{\alpha+1}(b)&= D^g(L^g_\alpha(b)\cup \{ L^g_\alpha(b)\})\\
	L^g_\lambda(b) &= \bigcup_{\alpha<\lambda} L^g_\alpha(b), \quad \lambda \text{ limit}
\end{align*}
where
\[  D^g(a)= a\cup TC(\{ f_i(x,y)\mid x,y\in a \land i\le  9\}\cup \{g(x)\mid x\in a\}). \]
Here, $f_0,\ldots,f_9$ are suitable primitive recursive set functions encompassing  the usual G\"{o}del operations ($f_0,\ldots,f_7$)  
\end{defin}

The set function $(b,\alpha)\mapsto L^g_\alpha(b)$ is seen to be primitive recursive in $g$. Note that by construction $g(x)\in L^g_{\alpha+1}(b)$ for every $x\in L^g_\alpha(b)$, and so the function $f(b,\alpha)=\alpha+1$ is stabilizing for $\exists y\, \vp^g(x,y)$. We keep stressing that    $P^g$ can prove the existence of $L^g_\alpha(b)$ for every $\alpha$, despite the lack of Infinity.

\medskip

To prove our  \Cref{prim in B}, we shall consider the usual relativized G\"{o}del's constructible hierarchy obtained by iterating definability.

\begin{defin}[Constructible hierarchy via definability]\label{L2}
Let 
\begin{align*}
	L_0(b)&= TC(b)\\
	L_{\alpha+1}(b)&= Def(L_\alpha(b))\\
	L_\lambda(b) &= \bigcup_{\alpha<\lambda} L_\alpha(b), \quad \lambda \text{ limit}
\end{align*}
where $Def(a)$  is  as usual the set of all  subsets of $a$ definable over $(a,\in)$ with parameters. 
\end{defin}

The set function $(b,\alpha)\mapsto L_\alpha(b)$ is in $Prim(x\mapsto \omega)$.  Now, \Cref{prim in B} follows from the following. 

\begin{thm}
If  $P\omega\beta\vdash \vp(x_1,\ldots,x_n)$,  where $\vp$ is $\Sigma_1$, then there is a set function $f: V\times Ord\to Ord$ in $Prim(x\mapsto\omega,B)$ that is $L$-stabilizing for $\vp(x_1,\ldots,x_n)$.
\end{thm}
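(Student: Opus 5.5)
We adapt Rathjen's Interpretation Theorem \cite[Theorem 5.2]{R92}. Its proof does not reason from $P\omega\beta$ directly. Instead it embeds a cut-free infinitary (or semi-formal) proof system into the $L$-hierarchy. Fix a derivation in $P\omega\beta$ and pass to a Tait-style sequent calculus in which $\Sigma_1$ Foundation and $\Delta_0$ Collection appear as rules and Axiom Beta as a $\Pi_2$ axiom $\forall a,r\,\exists y\,\vp^B(a,r,y)$. After partial cut elimination, every cut formula is $\Sigma_1$ or $\Pi_1$, so every sequent in the proof consists of $\Sigma_1$ and $\Pi_1$ formulas only.

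We then prove, by induction on the length of such a derivation, the following claim. For every sequent $\Gamma[\vec x]$ derivable with $\Pi_1$ side formulas $\neg A_1,\dots,\neg A_k$ and $\Sigma_1$ formulas $B_1,\dots,B_m$, there is $f\in Prim(x\mapsto\omega,B)$ with the property below. For all $b$, all $\alpha$ and all $\vec x\in L_\alpha(b)$, and every $\beta\ge\alpha$: if $L_\beta(b)\models A_1\wedge\dots\wedge A_k$, then $L_{f(b,\beta)}(b)\models B_1\vee\dots\vee B_m$. This is Rathjen's asymmetric interpretation, with his hierarchy $L^g$ replaced by the definability hierarchy of \Cref{L2}.

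The cases for logical rules, $\Delta_0$ axioms and cuts follow \cite{R92} verbatim, using upward persistence of $\Sigma_1$ and downward persistence of $\Pi_1$ formulas along $L_\beta(b)$. A $\Sigma_1$ cut composes the two stabilizing functions. $\Delta_0$ Collection is witnessed in $L_{f(b,\alpha)}(b)$ itself, since all witnesses lie in one level. Infinity is handled by $x\mapsto\omega$: since $\omega\in L_{\omega+1}(b)$, we may always assume $\alpha>\omega$. $\Sigma_1$ Foundation is handled by a primitive recursion on the rank of the set. We iterate the stabilizing function $f$ of the premise along $\rho(x)$, exactly as in the primitive recursion step of Section~\ref{sec pr}; the resulting function $F(b,\alpha)$ is defined by set recursion and lies in $Prim(x\mapsto\omega,B)$.

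The new case, and the main obstacle, is the axiom $\forall a,r\,\exists y\,\vp^B(a,r,y)$. For $a,r\in L_\alpha(b)$ we need a level of $L(b)$ containing $B(a,r)$ together with a witness $z$ for $\vartheta$ when $r$ is ill-founded. The first attempt is to take $f(b,\alpha)=\rho(B(c,e))+\alpha+2$, where $(c,e)$ are chosen so that the collapse of $(c,e)$ computes the collapses of all well founded pairs in $L_\alpha(b)$. The cleanest choice is $c=L_\alpha(b)$ with the disjoint sum of all such relations. However, the ill-founded ones must be excised, and the well founded part is not primitive recursive in $B$ alone. So instead we argue pointwise.

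By \Cref{L2}, once $y=B(a,r)\in L_\gamma(b)$ for some $\gamma$, the formula $\vp^B(a,r,y)$ holds in $L_{\gamma+1}(b)$, since the collapse equations are $\Delta_0$. It then suffices to bound the level at which $B(a,r)$ appears. For this we show $B(a,r)\in L_{\alpha+\rho(B(a,r))+\omega}(b)$: the collapse can be defined by transfinite recursion inside $L$, level by level on the rank. Likewise, in the ill-founded case, the set $a\setminus W(a,r)$ lies in $L_{\alpha+\delta+\omega}(b)$, where $\delta$ is the rank of the well founded part. We then set
\[
f(b,\alpha)=\alpha+\rho\bigl(\bigcup\{B(a,r)\cup B(a,r')\mid a,r\in L_\alpha(b)\}\bigr)+\omega,
\]
with $r'$ the restriction of $r$ to its well founded part. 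The remaining technical point is to compute $r'$ primitive recursively in $B$. I would try to obtain it by applying $B$ to the tree of finite $r$-descending sequences, truncated by rank, which yields the well founded part via a $\Delta_0$ comparison. This step is where I expect most of the work. Once the axiom case is settled, the $L$-stabilizing function for $\vp$ is read off from the claim applied to the end-sequent.
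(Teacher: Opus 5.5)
Your overall architecture is the paper's: Rathjen's asymmetric interpretation, with the definability hierarchy $L_\alpha(b)$ replacing $L^g_\alpha(b)$, the old cases taken over, and Axiom Beta as the only new case. There is, however, a genuine gap exactly where the new content lies. This is the stabilizing function for the axiom $\exists y\,\vp^B(a,r,y)$ when $r$ is ill-founded on $a$.

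In that case you must bound, primitive recursively in $x\mapsto\omega$ and $B$, a level containing a nonempty subset of $a$ with no $r$-minimal element. In general this means controlling the rank of the well founded part $W(a,r)$, which can lie far above $\alpha$. You reduce this to computing $r'$, the restriction of $r$ to $W(a,r)$, and leave it open.

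The route you suggest does not work as stated. $B$ returns $0$ on every ill-founded input, so one application of $B$ to the tree of $r$-descending sequences carries no information. Truncating that tree "by rank" presupposes the rank function on $W(a,r)$, which is what you are trying to compute. Truncating by length instead throws away infinite descent altogether. Your remark that $W(a,r)$ is not primitive recursive in $B$ is also misleading: once $x\mapsto\omega$ is available, it is.

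The missing idea is to apply $B$ locally, one node at a time. For $x\in a$ let $x+1=\{x\}\cup\{y\in a:\langle y,x\rangle\in r^+\}$, where $r^+$ is the transitive closure of $r$. Since $r^+$ is obtained from finite chains, $x\mapsto x+1$ is in $Prim(x\mapsto\omega)$.

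Then $x\in W(a,r)$ iff $B(x+1,r)\neq 0$. Hence both $W(a,r)$ and the ordinal $\gamma(a,r)=\bigcup\{\rho(B(x+1,r)(x))+1 : x\in a,\ B(x+1,r)\neq 0\}$ are in $Prim(x\mapsto\omega,B)$. This single ordinal handles both cases:
\begin{itemize}
\item If $r$ is well founded, then $B(a,r)\in L_{\alpha+\omega\cdot\gamma(a,r)+1}(b)$.
\item If not, every collapsing function of $r$ on $x+1$ with $x\in W(a,r)$ already lies in $L_{\alpha+\omega\cdot\gamma(a,r)}(b)$. So $W(a,r)$ is definable over that level as the set of $x$ admitting such a function there, and the witness $a\setminus W(a,r)$ appears at the next level.
\end{itemize}
Thus $h(b,\alpha)=\alpha+\omega\cdot\bigcup\{\gamma(a,r)+1 : a,r\in L_\alpha(b)\}$ stabilizes Axiom Beta, and $r'$ is never needed. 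Your descending-sequence idea can be salvaged in the same way: apply $B$ separately, for each $x$, to the tree of $r$-descending sequences starting at $x$, with no truncation.

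A minor point: after partial cut elimination the sequents consist of $\Delta_0(\Sigma_1\cup\Pi_1)$ formulas, not only $\Sigma_1$ and $\Pi_1$ formulas. The premises of the Collection and Foundation rules are examples. The interpretation therefore has to be stated for that class, as in the paper.
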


The first function to stabilize is $B$ itself. 

\begin{thm}\label{stabilizing B}
There is a function $h: V\times Ord \to Ord$ in $Prim(x\mapsto\omega, B)$ such that, if $a,r\in L_\alpha(b)$, then in  $L_{h(b,\alpha)}(b)$ we can already find the unique collapsing function of $r$ on $a$, if $r$ is well founded on $a$, or   a witness to the fact that  $r$ is not well founded on $a$.
\end{thm}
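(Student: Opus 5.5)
We will define $h$ so that $L_{h(b,\alpha)}(b)$ contains every object that could be the answer. For $a,r\in L_\alpha(b)$, the collapsing function $B(a,r)$ (when $r$ is well founded on $a$) has range a transitive set. Its rank is at most $\rho(a)^+$-ish, and more precisely is bounded by the rank of the well founded part. Similarly, a witness to non-well-foundedness, namely a nonempty $z\subseteq a$ with no $r$-minimal element, can be found definably from $a$ and $r$ once we know the well founded part.

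The plan is to let $h(b,\alpha)$ be an ordinal above everything needed. First, we would set $\gamma(b,\alpha)=\sup\{\rho(B(a,r))+1 : a,r\in L_\alpha(b)\}$. This is in $Prim(x\mapsto\omega,B)$ because $L_\alpha(b)$ is, and by the Union of Image scheme together with the rank function, which is primitive recursive. Then we take
$$h(b,\alpha)=\alpha+\gamma(b,\alpha)+\omega.$$

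Next we verify that this $h$ works, starting with the well founded case. Let $\pi=B(a,r)$ with $Y=\mathrm{rng}(\pi)$ transitive of rank $<\gamma$. The key claim is that $Y\in L_{\alpha+\gamma+\omega}(b)$ and then $\pi\in L_{h(b,\alpha)}(b)$. To prove this, we show by induction on $u$ along $r$ that $\pi(u)\in L_{\alpha+\rho(\pi(u))+1}(b)$. Indeed, $\pi(u)=\{\pi(v):v\,r\,u\}$ is definable over $L_{\alpha+\rho(\pi(u))}(b)$ from the parameters $a,r$ together with the set $\pi\restriction\{v: v\,r\,u\}$. That restricted function, however, must itself already be available. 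The cleaner route is the following: the relation $\{\langle u,\delta\rangle : \pi(u)\text{ exists in }L_{\alpha+\delta}(b)\}$ lets us recover $\pi$ as the unique function satisfying the $\Delta_0$ collapse condition inside $L_{\alpha+\gamma}(b)$. Since that condition is $\Delta_0$ and $\pi$ is unique, $\pi$ is definable over $L_{\alpha+\gamma+k}(b)$ for some finite $k$.

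In the non-well-founded case, $B(a,r)=0$. We take $W$ to be the well founded part of $r$ on $a$. Its collapse is $B(W,r\restriction W)$, with $W$ obtained as the union of the domains of all collapses of initial segments. The witness is then $z=a\setminus W$, which has no $r$-minimal element. To place $z$ inside $L_{h(b,\alpha)}(b)$, we should include in $\gamma$ the ranks of $B(W',r\restriction W')$ for the relevant $W'$. Concretely, we can use the primitive recursive function $(a,r)\mapsto$ the largest subset of $a$ that is closed downward and on which $r$ is well founded. This set is obtained by iterating the operation $X\mapsto\{u\in a: \{v: v\,r\,u\}\subseteq X\}$ along ordinals up to a primitive recursive bound, namely the Hartogs-type bound given by $B$ applied to finite-height trees. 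Once that set is available, it can be added to $h$.

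The main obstacle is exactly this: producing $W$ primitive recursively in $B$ and $x\mapsto\omega$ without the Collection needed to bound the iteration. We would first try to bound the iteration length by $\rho(B(a',r'))$, where $a'$ consists of the finite $r$-descending sequences and $r'$ is the extension relation. The difficulty is that $r'$ is not well founded, so we would instead iterate along the ordinal $\rho(\mathcal P_{\mathrm{fin}}(a))$ combined with the collapse of $r$ restricted to its well founded part. The definability step then carries over verbatim from the above.
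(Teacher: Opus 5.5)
\paragraph{Gap: the ill-founded case.} You flag this yourself as the main obstacle, and it is left unresolved.

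\emph{Why your $\gamma$ fails.} Your $\gamma(b,\alpha)=\sup\{\rho(B(a,r))+1: a,r\in L_\alpha(b)\}$ carries no information about an ill-founded pair, since then $B(a,r)=0$. But to put $a\setminus W$ into $L_{h(b,\alpha)}(b)$ you need $W$ to be definable over an earlier level. That requires the local collapses certifying membership in $W$ to be present already, so $h$ must dominate their ranks.

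\emph{Why the iteration has no length.} Your iteration of $X\mapsto\{u\in a:\{v: v\,r\,u\}\subseteq X\}$ has no available length. First, $\rho(\mathcal P_{\mathrm{fin}}(a))$ is not a bound: a recursive relation on $\omega$ can have a well founded part of rank $\omega^2$, while $\rho(\mathcal P_{\mathrm{fin}}(\omega))=\omega$. Second, ``combined with the collapse of $r$ restricted to its well founded part'' presupposes $W$.

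\emph{The missing idea: apply $B$ locally.} For $x\in a$ let $x+1=\{x\}\cup\{y\in a:\pair{y,x}\in r^+\}$. This is in $Prim(x\mapsto\omega)$, and forming $r^+$ is where $\omega$ is needed. Then $r$ is well founded on $x+1$ iff $x\in W$. So $W=\{x\in a: B(x+1,r)\neq 0\}$ (with $r$ restricted to $x+1$), obtained directly, with no iteration and no Collection. Put $k(x)=B(x+1,r)(x)$ for $x\in W$ and $k(x)=0$ otherwise, and set $\gamma(a,r,\alpha)=\bigcup\{\rho(k(x))+1: x\in a\}$. This is in $Prim(x\mapsto\omega,B)$ and bounds the ranks of all local collapses whether or not $r$ is well founded on $a$. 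The cones need not lie in $L_\alpha(b)$, so your sup over pairs in $L_\alpha(b)$ does not already cover them.

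\emph{How this finishes the proof.} Each local collapse then lies in $L_{\alpha+\omega\cdot\gamma}(b)$. Hence $W$ is definable there as the set of $x$ whose cone has a collapsing function in that level, and $a\setminus W\in L_{\alpha+\omega\cdot\gamma+1}(b)$. If $r$ is well founded, $\gamma$ is exactly the rank of the range of $B(a,r)$, giving $B(a,r)\in L_{\alpha+\omega\cdot\gamma+1}(b)$. Then set $h(b,\alpha)=\alpha+\omega\cdot\bigcup\{\gamma(a,r,\alpha)+1: a,r\in L_\alpha(b)\}$. Your phrase ``union of the domains of all collapses of initial segments'' was close. What it lacked was a canonical, primitive recursively given family of initial segments, indexed by $x\in a$, to which $B$ can be applied one at a time.

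\paragraph{Secondary issue: the well founded case.} This is the routine part, but it still needs a real argument. Your ``cleaner route'' is circular. That $\pi$ is the unique function satisfying the collapse condition makes $\pi$ definable over a level only once $\pi$ is already an element of that level, so it says nothing about when $\pi$ appears.

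What is needed is the induction you began and abandoned, run on partial collapses rather than single values:
\begin{itemize}
\item Suppose a function on an $r$-downward closed subset of $a$, satisfying the collapse equation, lies in $L_\beta(b)$. Then the values one layer up are definable over $L_\beta(b)$, and the extended partial collapse appears finitely many levels later.
\item At a limit stage, the union of all partial collapses belonging to the current level is again a partial collapse, by uniqueness, and is definable over that level.
\end{itemize}
This yields the paper's bound of $\alpha+\omega\cdot\gamma$ plus a constant. Your sharper $\alpha+\gamma+\omega$ can be reached with more careful bookkeeping, but any primitive recursive bound suffices.
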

\begin{proof}
We will prove it for $\alpha\mapsto L_\alpha$, as the same proof works in the relativized case. Suppose $a,r\in L_\alpha$ and $r$ is indeed a relation on $a$. Let $r^+$ denote the transitive closure of $r$: $\pair{x,y}\in r^+$ if there is a finite chain $x_0=x, x_1,\ldots, x_{n+1}=y$ with $\pair{x_i,x_{i+1}} \in r$.  For $x\in a$, we denote by $x+1$ the set $\{x\}\cup\{y\in a \mid \pair{y,x}\in r^+\}$. Note that $x+1\in L_{\alpha+\omega+1}$ since every chain witnessing $\pair{y,x}\in r^+$ can be found in $L_{\alpha+\omega}$. Let $k_0(x,a,r,\alpha)=x+1$ for $x\in a$. This function is in $Prim(x\mapsto\omega)$.  The well founded part of $r$ on $a$ is given by the set $w=\{x\in a \mid r \text{ is well founded on } x+1\}$.  For $x\in w$, let $f_{x+1}: x+1\to V$ be the collapsing function of $r$ on $x+1$. Recall that $\rho$ stands for the rank function on sets.

Now, a routine computation shows that for every $x\in w$ the function $f_{x+1}\in L_{\alpha+\omega\cdot \gamma+ 4}$, where $\gamma=\rho(f_{x+1}(x))$. It easily follows that if $r$ is well founded on $a$, then its collapsing function $f=B(a,r)$ lies in $L_{\alpha+\omega \cdot \rho(f``a)+1}$. Note that $f_{x+1}=B(x+1,r)$ for $x\in w$. Let $k(x,a,r,\alpha)= f_{x+1}(x)$ if $x\in w$ and $0$ otherwise. The function $k$ is in $Prim(k_0,B)$. Define
\[ \gamma(a,r,\alpha)=\bigcup \{ \rho(k(x,a,r,\alpha))+1\mid x\in a \}. \]
Also $\gamma$ is in $Prim(x\mapsto\omega,B)$. By construction,  if $r$ is well founded on $a$, then its collapsing function $f=B(a,r)$ is  in $L_{\alpha+\omega\cdot \gamma(a,r)+1}$, since in this case 
\[  \rho(f``a)=\bigcup\{ \rho(f_{x+1}(x))+1 \mid x\in a \}=\gamma(a,r,\alpha). \] 

On the other hand, if $r$ is not well founded on $a$, $a\setminus w$ is a nonempty subset of $a$ with no $r$-minimal element. We have 
\[ w= \{x\in a\mid \text{ there is a collapsing function of $r$ on $x+1$ in } L_{\alpha+\omega\cdot \gamma(a,r,\alpha)}\}. \]
Thus $a\setminus w$ also lies in $L_{\alpha+\omega\cdot \gamma(a,r,\alpha)+1}$. Define
\[  h(\alpha)=\alpha+\omega\cdot \bigcup\{ \gamma(a,r,\alpha)+1\mid a,r\in L_\alpha\}. \] 
The function $h$ is as desired. 
\end{proof}

We use the set hierarchy based on definability because the set function stabilizing Axiom Beta seems to require $\omega$, but   $x\mapsto \omega$ is not primitive recursive in  $B$.

\subsection{Interpretation theorem}

We first state the interpretation theorem for $P^g$  and finally show  how to extend it to $P\omega\beta$. The proof uses a Tait-style sequent calculus. Sequents are finite sets of formulas in negation normal form, denoted $\Gamma,\Delta,\ldots$. The intended meaning of $\Gamma$ is the disjunction of all formulas in $\Gamma$. By $\Gamma,\vp$ we mean $\Gamma\cup\{\vp\}$. We denote variables by $a,b,\ldots,x,y,z$. \smallskip

\begin{defin}[The calculus $LP$]
Axioms.
\begin{itemize}
	\item Logical axioms: $\Gamma,\vp,\neg\vp$, for  $\vp\in \Delta_0$
	\item Equality axioms: $\Gamma, a=b\land \vp(a)\to \vp(b)$, for $\vp\in\Delta_0$
	\item Extensionality: $\Gamma, \forall x\in a\, (x\in b)\land \forall x\in b\, (x\in a)\to a=b$
	\item Pair: $\Gamma, \exists y\, (y=\{a,b\})$
	\item Union: $\Gamma, \exists y\, (y=\bigcup a)$
	\item $\Delta_0$ Separation: $\Gamma, \exists y\, (y=\{x\in a\mid \vp(x)\}$, for  $\vp\in \Delta_0$
	\item Regularity: $\Gamma, a\neq\emptyset\to \exists x\in a\, (x\cap a=\emptyset)$ 
\end{itemize}

Normal logical rules.

\[  \begin{array}{lcr} 
	\ \prf{\Gamma,\vp \quad \Gamma,\psi \line \Gamma,\vp\land\psi} \quad \land && 
	\prf{\Gamma,\vp_i \line \Gamma,\vp_1\lor\vp_2} \quad \lor  \\[7mm]
	\prf{\Gamma, a\in b\to \vp(a)\line \Gamma,\forall x\in b\, \vp(x)} \quad b\forall &&  \prf{\Gamma, a\in b\land \vp(a)\line \Gamma,\exists x\in b\, \vp(x)}\quad b\exists  \\[7mm]
	\prf{\Gamma, \vp(a)\line \Gamma,\forall x\, \vp(x)} \quad \forall &&  \prf{\Gamma, \vp(a)\line \Gamma, \exists x\, \vp(x)}\quad \exists 
\end{array}  \] 

\medskip 
As usual, the variable $a$ is free for  $x$ in $\vp(x)$. In $b\forall$ and  $\forall$, the variable $a$ must further  satisfy the eigenvariable condition:   $a$ is not to occur free in the lower sequent.
\medskip

Cut rule.

\[ \prf{\Gamma, \vp\quad \Gamma,\neg\vp\line \Gamma} \quad Cut\] 

The formulas $\vp,\neg\vp$ are the cut formulas. 
\medskip

Nonlogical rules.  We have a rule to derive $\Delta_0$ Collection. 

\[ \prf{ \Gamma, \forall x\in a\, \exists y\, \vp(x,y)\line \Gamma, \exists z\, \forall x\in a\, \exists y\in z\, \vp(x,y) }\] 
\medskip 
where $\vp$ is $\Delta_0$. Finally, there is a rule to derive $\Sigma_1$ Foundation. 

\[ \prf{\Gamma, \neg \forall x\in a\, \exists y\, \vp(x,y), \exists y\, \vp(a,y)\line \Gamma, \exists y\, \vp(b,y)} \]
\medskip 
where $\vp(x,y)$ is $\Delta_0$, the variables $a$ and $b$ are free for $x$ in $\exists y\, \vp(x,y)$, and $a$ is not to occur free in the sequent $\Gamma, \forall x\, \exists y\, \vp(x,y)$. 
\end{defin}

\begin{defin}[The calculus $LP^g$]
We add the following axioms to $LP$.
\begin{itemize}
\item $\Gamma, \exists y\, \vp^g(a,y)$
\item $\Gamma, \vp^g(a,b)\land \vp^g(a,c)\to b=c$
\end{itemize} 
\end{defin}

\begin{defin}
Write $ LP^{g}\vdash \Gamma$ if there is a derivation of the sequent $\Gamma$ in $LP^{g}$. 
\end{defin}

In this calculus, we can prove every theorem of $P^g$.

\begin{lemma}[Embedding]\label{embedding}
	If $P^g\vdash \vp$, then $LP^g \vdash \vp$. 
\end{lemma}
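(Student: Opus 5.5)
The plan is the standard embedding argument, by induction on the length of a derivation of $\vp$ in $P^g$ formulated in a Hilbert-style calculus. It is convenient to prove the slightly stronger claim that if $\vp_1,\ldots,\vp_k\vdash\vp$ holds in $P^g$ with each $\vp_i$ an axiom, then the sequent consisting of $\vp$ is derivable in $LP^g$. This reduces to two tasks: showing that every axiom of $P^g$ is derivable in $LP^g$, and showing that $LP^g$ is closed under the logical rules of a Hilbert system. Modus ponens is handled by Cut. Generalization is handled by the $\forall$ rule, and the eigenvariable condition is harmless since $\vp$ is a single formula.

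\textbf{Step 1: identity axioms.} First establish that $\Gamma,\psi,\neg\psi$ is derivable for arbitrary $\psi$, not just $\Delta_0$. This is by induction on the complexity of $\psi$, using the $\land,\lor$ rules and the quantifier rules, both bounded and unbounded. For example, from $\Gamma,\neg\psi(a),\psi(a)$ derive $\Gamma,\exists x\neg\psi(x),\psi(a)$ and then $\Gamma,\exists x\neg\psi(x),\forall x\psi(x)$. The bounded quantifiers are treated similarly, via the sequents $a\in b\to\psi(a)$ and $a\in b\land\neg\psi(a)$. The equality axioms are extended to arbitrary formulas in the same way, by induction on complexity starting from the $\Delta_0$ case. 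From these, the usual propositional tautologies and quantifier axioms of the Hilbert calculus are derived routinely, using the fact that sequents are read disjunctively.

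\textbf{Step 2: nonlogical axioms.} Extensionality, Pair, Union, $\Delta_0$ Separation and Regularity are axioms of $LP$ outright. The two $g$-axioms of $P^g$, namely $\exists y\,\vp^g(a,y)$ together with uniqueness (which yields $\forall x\exists!y\,\vp^g$), are axioms of $LP^g$, followed by $\forall$ and $\land$ introductions. For $\Delta_0$ Collection, start from the axiom $\neg\forall x\in a\,\exists y\,\vp,\ \forall x\in a\,\exists y\,\vp$ given by Step 1. Apply the Collection rule to obtain $\neg\forall x\in a\exists y\vp,\ \exists z\forall x\in a\exists y\in z\vp$, and then use $\lor$ and $\forall$.

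\textbf{Step 3: $\Sigma_1$ Foundation.} This step is the main obstacle, since the Foundation rule has a restrictive side condition. The instance to be derived is $\forall x(\forall u\in x\,\exists y\,\vp(u,y)\to\exists y\,\vp(x,y))\to\forall x\exists y\,\vp(x,y)$. Put $\Gamma:=\{\exists x(\forall u\in x\exists y\vp(u,y)\land\neg\exists y\vp(x,y))\}$, which is the negation of the hypothesis in negation normal form. Using Step 1, derive
\[\Gamma,\neg\forall u\in a\,\exists y\,\vp(u,y),\exists y\,\vp(a,y)\]
for a fresh variable $a$. The derivation goes by applying $\land$ to the two identity sequents, then $\exists$-introduction on $x:=a$, and weakening.

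The variable $a$ does not occur in $\Gamma$, nor in $\forall x\exists y\vp$, so the Foundation rule applies and yields $\Gamma,\exists y\,\vp(b,y)$ for a fresh $b$. A $\forall$-introduction on $b$ then gives $\Gamma,\forall x\exists y\vp(x,y)$, which is the axiom. Care is needed with the parameters of $\vp$: they may occur in $\Gamma$, which the side condition allows, since only $a$ is restricted.

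Combining Steps 1 to 3 with the closure under modus ponens and generalization gives the Embedding lemma.
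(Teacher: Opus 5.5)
Your proposal is correct. It is the standard embedding argument: induction on a Hilbert-style derivation, with identity sequents lifted to arbitrary formulas, modus ponens via Cut, $\Delta_0$ Collection via the Collection rule, and $\Sigma_1$ Foundation via the Foundation rule with its side condition on $a$ correctly checked, which is exactly what the paper takes for granted when it states this lemma without proof and defers to Rathjen.
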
 

Note that the principal formula of every axiom and of  every nonlogical rule is $\Sigma_1$. This allows us to eliminate all cuts but the ones with cut formulas in $\Sigma_1\cup\Pi_1$. 

\begin{lemma}[Partial cut elimination]
	If $ LP^g \vdash \Gamma$, then there is a derivation 	where all cut formulas are in $\Sigma_1\cup\Pi_1$.
\end{lemma}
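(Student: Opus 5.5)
The plan is the standard Gentzen--Sch\"utte reduction of cut rank, carried out by induction on the height of the given $LP^g$ derivation together with a secondary induction on the logical complexity of cut formulas. The key observation, already noted before the statement, is that the principal formula of every axiom and of every nonlogical rule ($\Delta_0$ Collection, $\Sigma_1$ Foundation, and the two $g$-axioms) is $\Sigma_1$. Consequently, a formula that is neither $\Sigma_1$ nor $\Pi_1$ can only be introduced in a derivation by a normal logical rule ($\land,\lor,b\forall,b\exists,\forall,\exists$), or as a side formula in $\Gamma$. Here I treat $\Delta_0$ formulas as both $\Sigma_1$ and $\Pi_1$, so cuts on them are permitted.

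First I define the rank of a formula: $\Sigma_1\cup\Pi_1$ formulas get rank $0$, and any other formula gets rank one more than the maximum rank of its immediate subformulas. The cut rank of a derivation is the supremum of $\mathrm{rk}(\vp)+1$ over its cuts on non-$\Sigma_1\cup\Pi_1$ formulas. Next I prove the inversion lemmas, each by induction on derivation height, preserving height and cut rank:
\begin{itemize}
\item from $\Gamma,\vp\land\psi$ obtain $\Gamma,\vp$ and $\Gamma,\psi$;
\item from $\Gamma,\forall x\,\vp(x)$ obtain $\Gamma,\vp(a)$ for any free $a$.
\end{itemize}
These hold only when the displayed formula is not $\Sigma_1$; in that case it cannot be principal in an axiom or nonlogical rule, so it either sits in the side context of an axiom (where it can simply be replaced) or is introduced by the matching logical rule. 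Bounded universal formulas that are not $\Sigma_1$ are handled the same way via $b\forall$. I also need weakening and variable substitution, which are routine.

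The core is the reduction lemma. Suppose $\Gamma,\vp$ and $\Gamma,\neg\vp$ are derivable with cut rank $\le \mathrm{rk}(\vp)$, where $\vp\notin\Sigma_1\cup\Pi_1$. Then $\Gamma$ is derivable with cut rank $\le\mathrm{rk}(\vp)$. By duality we may assume $\vp$ is a disjunction or an existential. The argument goes by induction on the height of the derivation of $\Gamma,\vp$:
\begin{itemize}
\item if $\vp$ is a side formula in an axiom or rule, push the cut upward;
\item if $\vp$ is principal, then it was introduced by $\lor$ or $\exists$, so we have $\Gamma,\vp_i$ (respectively $\Gamma,\vp(a)$). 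Apply inversion to $\Gamma,\neg\vp$ and cut on the smaller formula $\vp_i$ (respectively $\vp(a)$).
\end{itemize}
The nonlogical rules have $\Sigma_1$ principal formulas, so $\vp$ can never be principal there. The one point to check is that the eigenvariable conditions of $\forall$, $b\forall$ and the Foundation rule survive the upward permutation. This is handled by renaming eigenvariables using the substitution lemma.

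Finally, induction on cut rank gives the statement. Given a derivation of cut rank $n+1$, I process it by induction on height, applying the reduction lemma at each topmost cut of maximal rank. The result is a derivation of rank $n$, with only finite (exponential) height growth, which is harmless since we only need existence of a derivation. I expect the main obstacle to be the inversion and reduction steps in the presence of the Foundation rule. Its premise contains the $\Pi_1$-side formula $\neg\forall x\in a\,\exists y\,\vp(x,y)$, and its conclusion substitutes $b$ for $a$. So I must verify that cuts on complex side formulas permute correctly with the substitution $a\mapsto b$ and with the condition that $a$ not occur in $\Gamma$. I would handle this first by ensuring, via the substitution lemma, that the eigenvariable $a$ is fresh for the whole cut context before permuting.
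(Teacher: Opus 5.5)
Your proposal is correct and follows the argument the paper intends. The paper gives no separate proof: it only remarks that every principal formula of an axiom or nonlogical rule of $LP^g$ is $\Sigma_1$ and defers to Rathjen's standard cut elimination, and your rank reduction via inversion, with eigenvariable renaming for the $\Sigma_1$ Foundation rule when permuting cuts upward, is that argument spelled out.
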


\begin{defin}
	The set of $\Delta_0(\Sigma_1\cup\Pi_1)$ formulas is the smallest containing all $\Delta_0\cup \Sigma_1\cup \Pi_1$ formulas and closed under $\land,\lor$ and bounded quantifiers. 
\end{defin}
Since the minor formula of every nonlogical axiom is $\Delta_0(\Sigma_1\cup\Pi_1)$,  one readily verifies the following. 
\begin{lemma}
	Suppose $\Gamma$ consists of $\Delta_0(\Sigma_1\cup\Pi_1)$ formulas. If $LP^g \vdash \Gamma$, then $ LP^g \vdash\Gamma$ with a derivation consisting only of $\Delta_0(\Sigma_1\cup\Pi_1)$.
\end{lemma}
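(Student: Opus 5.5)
We argue by induction on the height of a cut-reduced derivation, using the partial cut elimination lemma together with the observation before the statement: every axiom and nonlogical rule of $LP^g$ has a $\Sigma_1$ principal formula whose minor formulas are $\Delta_0(\Sigma_1\cup\Pi_1)$. Fix a derivation $d$ of $\Gamma$ in $LP^g$ in which all cut formulas are in $\Sigma_1\cup\Pi_1$. The claim is that every formula occurring in $d$ is then $\Delta_0(\Sigma_1\cup\Pi_1)$. We prove the contrapositive: if some sequent of $d$ contains a formula outside the class, then so does the end sequent $\Gamma$, contradicting the hypothesis.

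The key observation is that $\Delta_0(\Sigma_1\cup\Pi_1)$ is closed under subformulas, up to substitution of free variables for bound ones. Take $\vp\land\psi$, $\vp_1\lor\vp_2$, $\forall x\in b\,\vp(x)$ or $\exists x\in b\,\vp(x)$ in the class. By the inductive definition of the class, it is either itself $\Delta_0\cup\Sigma_1\cup\Pi_1$, in which case its immediate subformulas are again $\Delta_0$, $\Sigma_1$ or $\Pi_1$, or it is obtained by $\land$, $\lor$ or a bounded quantifier from members of the class. In both cases the premise formulas $\vp$, $\psi$, $a\in b\to\vp(a)$ and $a\in b\land\vp(a)$ lie in the class. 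Here $a\in b\to\vp(a)$ is read in negation normal form as $a\notin b\lor\vp(a)$. The class is also closed under substituting a variable for a variable. For the unbounded rules $\forall$ and $\exists$: if $\forall x\,\vp(x)$ belongs to the class, it must be $\Pi_1$ (or $\Delta_0$, degenerately), so $\vp(a)$ is $\Pi_1$, and dually for $\exists$.

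Next we go through $d$ from the bottom up, checking that each rule has the property that if its conclusion consists of $\Delta_0(\Sigma_1\cup\Pi_1)$ formulas then so do its premises. Side formulas $\Gamma$ are carried over unchanged. The logical rules are handled by the subformula closure above. For cut, the new formulas $\vp,\neg\vp$ are in $\Sigma_1\cup\Pi_1$ by choice of $d$, and $\neg$ of a $\Sigma_1$ formula is $\Pi_1$ in negation normal form. For the Collection rule, the premise formula $\forall x\in a\,\exists y\,\vp(x,y)$ is a bounded quantifier applied to a $\Sigma_1$ formula, hence in the class. For the $\Sigma_1$ Foundation rule, the premises are $\neg\forall x\in a\,\exists y\,\vp$, which is $\exists x\in a\,\forall y\,\neg\vp$ and so in the class, and $\exists y\,\vp(a,y)$, which is $\Sigma_1$. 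Axioms have no premises.

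A top-down induction on the distance from the root then shows that every sequent in $d$ consists of $\Delta_0(\Sigma_1\cup\Pi_1)$ formulas, so $d$ itself is the required derivation. The only delicate point is the bookkeeping around negation normal form. We must check that the formulas $a\in b\to\vp(a)$ in $b\forall$ and the negated premise of the Foundation rule, once rewritten in negation normal form, really stay inside the class rather than producing, say, a negated $\Sigma_1$ formula under an unbounded quantifier. This is a routine but careful check of the cases listed above.
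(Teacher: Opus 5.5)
Your proposal is correct and follows the paper's approach. The paper's proof is the one line ``by induction on a derivation where all cuts are in $\Sigma_1\cup\Pi_1$'', and your root-to-leaves, rule-by-rule check is exactly the content of that induction: inversion closure of $\Delta_0(\Sigma_1\cup\Pi_1)$ for the logical rules, the choice of cut formulas, and the explicit minor formulas of the Collection and $\Sigma_1$ Foundation rules.
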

\begin{proof}
	By induction on a derivation where all cuts are in $\Sigma_1\cup\Pi_1$. 
\end{proof}

\begin{thm}[{Interpretation Theorem for $P^g$;  \cite[Theorem 5.2]{R92}}]\label{interpretation Tg}
	Let $\Gamma(x_1,\ldots,x_n)$ consist of $\Delta_0(\Sigma_1\cup\Pi_1)$ formulas with free variables in $x_1,\ldots,x_n$. If $$ LP^g \vdash \Gamma(x_1,\ldots,x_n),$$ then there is a function $f: V\times Ord\to Ord$ in $Prim(g)$ such that for all  $\alpha, b$ and for all sets $x_1,\ldots,x_n\in L^g_\alpha(b)$, we have that
	\[   \Gamma(x_1,\ldots,x_n)^{\alpha,\beta} \] 
	holds true with $\beta=f(b,\alpha)$. Here, 
	\[ \vp^{\alpha,\beta} \] 
	denotes the formula obtained from $\vp$ by restricting the unbounded universal quantifiers to $L^g_\alpha(b)$ and the unbounded existential quantifiers to $L^g_\beta(b)$. Finally,
	\[ (\vp_1,\ldots, \vp_n)^{\alpha,\beta} \] 
	stands for the disjunction
	\[ (\vp_1)^{\alpha,\beta}\lor \ldots\lor (\vp_n)^{\alpha,\beta}. \]
\end{thm}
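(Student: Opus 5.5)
We argue by induction on the height of a derivation of $\Gamma(x_1,\ldots,x_n)$ in $LP^g$ in which, by the partial cut elimination lemma and the subsequent lemma, every formula is $\Delta_0(\Sigma_1\cup\Pi_1)$. The induction assigns to each node of the derivation a function $f\in Prim(g)$. Since the derivation is a fixed finite object, the induction is external. At each node we only need closure of $Prim(g)$ under composition and the basic monotonicity facts about $L^g$.

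Two persistence facts drive everything, both proved by induction on the formula. If $\alpha\le\alpha'$ and $\beta\le\beta'$, then for $\vp\in\Delta_0(\Sigma_1\cup\Pi_1)$ we have: $\vp^{\alpha',\beta}$ implies $\vp^{\alpha,\beta}$, and $\vp^{\alpha,\beta}$ implies $\vp^{\alpha,\beta'}$. The reason is that $\Sigma_1$ parts persist upward and $\Pi_1$ parts persist downward, using transitivity of $L^g_\alpha(b)$ and $L^g_\alpha(b)\subseteq L^g_\beta(b)$ for $\alpha\le\beta$. Throughout we may assume every $f$ satisfies $f(b,\alpha)\ge\alpha$ and is monotone in $\alpha$; we arrange this by replacing $f$ with $\alpha\mapsto\bigcup_{\xi\le\alpha}f(b,\xi)\cup\alpha$, which is still in $Prim(g)$.

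The cases are as follows.
\begin{itemize}
\item Axioms. Logical and equality axioms hold already at $\beta=\alpha$. Pair, Union, $\Delta_0$ Separation and the $g$-axioms need $\beta=\alpha+1$, because $D^g$ closes under the G\"odel functions and $g$. Regularity and Extensionality are $\Delta_0$ and hold outright.
\item Propositional and bounded-quantifier rules. Take the maximum of the premises' functions.
\item Unbounded $\exists$. Keep the premise's function; since the witness $a$ is free, it ranges over $L^g_\alpha(b)$, which is contained in $L^g_\beta(b)$.
\item Unbounded $\forall$. The eigenvariable ranges over $L^g_\alpha(b)$, so again keep the premise's function.
\item $\Delta_0$ Collection. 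From $f$ for the premise, take $\beta=f(b,\alpha)$ and witness $z=L^g_\beta(b)$, which lies in $L^g_{\beta+1}(b)$.
\item Cut on $\vp\in\Sigma_1$. Given $f_0$ for $\Gamma,\vp$ and $f_1$ for $\Gamma,\neg\vp$, set $f(b,\alpha)=f_1(b,f_0(b,\alpha))$. Either $\Gamma^{\alpha,f_0(\alpha)}$ holds, hence $\Gamma^{\alpha,f(\alpha)}$ by upward persistence in $\beta$. Or $\vp^{\alpha,f_0(\alpha)}$ holds, so $\vp$ is true inside $L^g_{f_0(\alpha)}(b)$. In that case apply the second premise with $\alpha':=f_0(\alpha)$: $\neg\vp^{\alpha'}$ fails, so $\Gamma^{\alpha',f_1(\alpha')}$ holds. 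The variables lie in $L^g_\alpha(b)\subseteq L^g_{\alpha'}(b)$, so downward persistence in the first index gives $\Gamma^{\alpha,f(\alpha)}$.
\end{itemize}

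The main obstacle is the $\Sigma_1$ Foundation rule, where the composition trick must be iterated along $\in$. Let $f_0$ serve the premise
\[\Gamma,\ \neg\forall x\in a\,\exists y\,\vp(x,y),\ \exists y\,\vp(a,y).\]
We define $F\in Prim(g)$ by primitive recursion, with the auxiliary argument $\alpha$ fixed:
\[ F(c,b,\alpha)=f_0\Bigl(b,\ \alpha\cup\rho(c)\cup\bigcup\{F(d,b,\alpha)+1:d\in c\}\Bigr). \]
By $\in$-induction on $c\in L^g_\alpha(b)$ we show that $\Gamma^{\alpha,F(c)}\lor(\exists y\,\vp(c,y))^{F(c)}$ holds. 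Instantiate the eigenvariable $a:=c$ at the level $\alpha'=\alpha\cup\bigcup_{d\in c}(F(d)+1)$. Suppose neither $\Gamma$ nor the $\Pi_1$ formula $\neg\forall x\in c\ldots$ is witnessed. Then the induction hypothesis for the elements $d\in c$, or $\Gamma$, supplies the needed bounded witnesses below $\alpha'$, and persistence lets the premise's disjuncts be evaluated at $\alpha'$. Finally take $f(b,\alpha)=\bigcup\{F(c,b,\alpha):c\in L^g_\alpha(b)\}$, which is again in $Prim(g)$ since $L^g_\alpha(b)$ is.

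The delicate points are that $\Gamma$ and the side formulas must be evaluated at the enlarged first index $\alpha'$ without losing $\Gamma^{\alpha,\cdot}$, which the persistence facts handle, and that this recursion is a legitimate instance of the primitive recursion scheme.
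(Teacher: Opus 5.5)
Your proposal is correct and is essentially the argument of Rathjen's Theorem 5.2, to which the paper defers: induction on a cut-reduced derivation under the asymmetric interpretation, using the two persistence properties, composition $f_1(b,f_0(b,\alpha))$ for $\Sigma_1$/$\Pi_1$ cuts, $z=L^g_{f(b,\alpha)}(b)$ for Collection, and a primitive recursion along $\in$ for the $\Sigma_1$ Foundation rule. One harmless imprecision is that $D^g$ applies the fundamental operations only once per level, so an instance of $\Delta_0$ Separation is in general realized at some $\alpha+k$ with $k$ depending on the formula, not necessarily at $\alpha+1$; this changes nothing since $\alpha\mapsto\alpha+k$ is primitive recursive.
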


\Cref{interpretation consequence} follows immediately from \Cref{embedding} and  \Cref{interpretation Tg}.

We can finally show how to adapt the Interpretation Theorem.  A sequent calculus $LP\omega\beta$ for $P\omega\beta$  is obtained from $LP$ by adding the following $\Sigma_1$ axioms.
\begin{itemize}
	\item Infinity: $\Gamma, \exists x\, Lim(x)$
	\item Axiom Beta: $\Gamma, \exists y\, \vp^B(a,r,y)$
\end{itemize}

\begin{thm}[Interpretation Theorem for $P\omega\beta$]
Let $\Gamma(x_1,\ldots,x_n)$ consist of $\Delta_0(\Sigma_1\cup\Pi_1)$ formulas with free variables in $x_1,\ldots,x_n$. If $$ LP\omega\beta \vdash \Gamma(x_1,\ldots,x_n),$$ then there is a function $f: V\times Ord\to Ord$ in $Prim(x\mapsto\omega,B)$ such that for all  $\alpha, b$ and for all sets $x_1,\ldots,x_n\in L_\alpha(b)$, we have that
\[   \Gamma(x_1,\ldots,x_n)^{\alpha,\beta} \] 
holds true with $\beta=f(b,\alpha)$. Here, 
\[ \vp^{\alpha,\beta} \] 
denotes the formula obtained from $\vp$ by restricting the unbounded universal quantifiers to $L_\alpha(b)$ and the unbounded existential quantifiers to $L_\beta(b)$.	
\end{thm}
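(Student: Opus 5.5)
We follow the proof of \Cref{interpretation Tg} in \cite{R92}: induction on the height of an $LP\omega\beta$-derivation of $\Gamma(x_1,\ldots,x_n)$. By partial cut elimination and the subsequent lemma, all formulas occurring in it are $\Delta_0(\Sigma_1\cup\Pi_1)$. For each sequent we produce a function $f\in Prim(x\mapsto\omega,B)$, which may depend on the free variables through the hierarchy only via $b$ and $\alpha$, such that for all $\alpha,b$ and all values of the free variables in $L_\alpha(b)$, the sequent holds under the $(\alpha,\beta)$-interpretation with $\beta=f(b,\alpha)$. Two monotonicity facts are used throughout. A $\Delta_0(\Sigma_1\cup\Pi_1)$ formula is preserved when $\alpha$ is decreased or $\beta$ is increased, because $\Sigma_1$ parts are upward persistent and $\Pi_1$ parts downward persistent. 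Also, each $L_\alpha(b)$ is transitive, so bounded quantifiers cause no difficulty. Replacing $L^g$ by the definability hierarchy $L$ is harmless: $(b,\alpha)\mapsto L_\alpha(b)$ is in $Prim(x\mapsto\omega)$, and every $L_\lambda(b)$ with $\lambda$ limit is closed under the rudimentary functions.

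The cases of the old calculus are essentially unchanged. For the basic axioms (Pair, Union, $\Delta_0$ Separation, Regularity, equality, Extensionality), take $f(b,\alpha)=\alpha+\omega$; this works because $\Delta_0$-separated subsets of members of $L_\alpha(b)$ appear at stage $\alpha+1$. The logical rules are straightforward, with eigenvariable instances ranging over $L_\alpha(b)$. For $\Delta_0$ Collection, take $f'(b,\alpha)=f(b,\alpha)+1$, since the witness $z=L_{f(b,\alpha)}(b)$ lies in $L_{f(b,\alpha)+1}(b)$.

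Cuts on $\Sigma_1/\Pi_1$ formulas are handled by composition, as in Rathjen. Suppose the premises have functions $f_0$ (for $\Gamma,\exists y\,\vp$) and $f_1$ (for $\Gamma,\forall y\,\neg\vp$). Take $f(b,\alpha)=f_0(b,f_1(b,\alpha))$, or symmetrically $f_1(b,f_0(b,\alpha))$, according to which side carries the $\Sigma_1$ formula. The point is to evaluate the $\Pi_1$ premise at the larger universal bound $f_0(b,\alpha)$, so that a witness from $L_{f_0(b,\alpha)}(b)$ contradicts it.

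The $\Sigma_1$ Foundation rule needs a primitive recursion on $\rho(b)$, again as in \cite{R92}. We define $F(b,\alpha,c)$ by recursion on $c\in TC(\{\text{free variables}\})$, iterating the premise's function along the rank of the instance. This stays inside $Prim(x\mapsto\omega,B)$ because $\rho$ is primitive recursive and the class is closed under primitive recursion.

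Two cases are new. For Infinity, $\exists x\,Lim(x)$, take $f(b,\alpha)=\alpha+\omega+1$, since $\omega\in L_{\omega+1}(b)$. The main case is Axiom Beta, $\Gamma,\exists y\,\vp^B(a,r,y)$ with $a,r\in L_\alpha(b)$. Here we invoke \Cref{stabilizing B} directly. The function $h\in Prim(x\mapsto\omega,B)$ given there guarantees that $L_{h(b,\alpha)}(b)$ contains either the collapsing function of $r$ on $a$ or a subset of $a$ with no $r$-minimal element. Either way, the $\Sigma_1$ formula $\vp^B(a,r,y)$, including its inner existential $\exists z\,\vartheta$, has witnesses in $L_{h(b,\alpha)+1}(b)$; the extra stage also accommodates $y=0$ together with the witness $z$. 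So we set $f(b,\alpha)=h(b,\alpha)+1$.

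The main obstacle is bookkeeping, not any new idea. We must check that every constructed $f$ is monotone in $\alpha$, which we enforce by replacing $f$ with $\sup_{\gamma\le\alpha}f(b,\gamma)+\alpha$, itself primitive recursive. We must also check that the definability-based hierarchy has the closure properties Rathjen's argument uses for the G\"odel-operation hierarchy. If monotonicity fails in \Cref{stabilizing B}, the same supremum trick fixes it.
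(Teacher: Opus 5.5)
Your proposal is correct and follows the same route as the paper. The paper's proof is just Rathjen's interpretation argument run with the definability hierarchy $L_\alpha(b)$ and with \Cref{stabilizing B} used to interpret Axiom Beta; two small slips need fixing: with $f_0$ attached to the $\Sigma_1$ premise the cut case needs $f(b,\alpha)=f_1(b,f_0(b,\alpha))$ (as your own explanation indicates), and in the Foundation case you should take a supremum of $F(b,\alpha,c)$ over $c\in L_\alpha(b)$ so that the final function depends only on $b$ and $\alpha$.
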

\begin{proof}
Except for the choice of the  set hierarchy, the proof follows the one in Rathjen  \cite[Theorem 5.2]{R92}, by using \Cref{stabilizing B} to interpret Axiom Beta. 
\end{proof}

One can generalize to $P\omega\beta^g$, and thereby obtain \Cref{prim in Bg},   by  considering the following relativized set hierarchy.

\begin{defin}
	Let
\begin{align*}
	L^g_0(b)&= TC(b)\\
	L^g_{\alpha+1}(b)&= Def^g(L^g_\alpha(b))\\
	L^g_\lambda(b) &= \bigcup_{\alpha<\lambda} L^g_\alpha(b), \quad \lambda \text{ limit}
\end{align*}
where 
\[  Def^g(a)=Def(a\cup \{a\}\cup TC(\{g(x)\mid x\in a\})). \]
\end{defin}
The set function $(b,\alpha)\mapsto L^g_\alpha(b)$ is now in $Prim(x\mapsto\omega,g)$.

\section{On the Necessity of the Finite Powerset Axiom}
In this section, we show that the Finite Powerset Axiom cannot be replaced by the Cartesian Product Axiom, nor an axiom that states the universe is closed under rudimentary functions. We also show that with the Axiom of  Countability, the formulation of $\atrset$ given in \cite{simpson82} is equivalent to the one given in \cite{Simpson} as well as our system $\C+\text{ Countability}$. \smallskip

The first formulation of $\atrset$ found in \cite[Section 2]{simpson82} is synonymous with the system $$\mathbf{PRS}\omega+\,\text{Axiom Beta},$$
which is an extension by definitions of $\C$. The Axiom of Countability is taken to be optional. The fact that $\mathbf{PRS}\omega+\text{Axiom Beta}$ is an extension by definitions of $\atrset$, in this formulation without Countability, was shown by Freund \cite[Corollary 1.4.6]{Freundthesis}.  \\
\\
The second formulation of $\atrset$ found in \cite[Definitions VII.3.3 \& VII.3.8]{Simpson} has as axioms:
\begin{enumerate}
    \item Extensionality;
    \item Infinity (Simpson's formulation is the following $\exists x\, (\emptyset\in x \wedge \forall u,v\in x(u\cup \{v\}\in x))$, which is equivalent, relative to the other axioms, to the statement $V_\omega$ exists.);
    \item The universe is closed under the basic rudimentary functions $F_0,\dots, F_8$ (see \Cref{rud}, Simpson also includes the inverse map $x\mapsto x^{-1}=\{\pair{u,v}: \pair{v,u}\in x\}$ which is redundant as the closure under $F_0,\dots,F_8$ implies the closure under the rudimentary functions. See \cite[Property 1.3 \& Lemma 1.8]{jensen}, \cite[Theorem 1.4.5]{gandy74}, and \cite[Remark 1.12]{mathias06});
    \item Regularity;
    \item Transitive containment\footnote{Simpson does not include this axiom as it is a direct consequence of the Axiom of  Countability.};
    \item Axiom Beta;
    \item Countability (every set is contained in a countable transitive set).
\end{enumerate}
We will show in this section that the two formulations with the Axiom of  Countability are equivalent while without the Axiom of  Countability, the first formulation is strictly stronger than the second formulation. For this purpose, we introduce the system $\C_\times$ which is obtained by replacing the Finite Powerset Axiom of $\C$ with  the Cartesian Product Axiom. We first show that $\C_\times$ is equivalent to the second formulation without Countability.
\noindent 
\begin{lemma}
$\C_\times$ proves the existence of the graphs of addition and multiplication on $\omega$, and that $V_\omega$ exists.
\end{lemma}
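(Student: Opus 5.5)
The plan is to use Axiom Beta twice: first to build the graphs of addition and multiplication, then to obtain $V_\omega$ as the collapse of a relation on $\omega$. The only set-forming tools of $\C_\times$ are $\Delta_0$ Separation, Pair, Union, Cartesian product and collapses of well founded relations. So every object will be cut out by a $\Delta_0$ formula from some product such as $\omega\times\omega$ or $\omega\times\omega\times\omega$, or obtained as a collapse.

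\textbf{Step 1 (addition).} The graph of addition is not $\Delta_0$ definable on $\omega\times\omega\times\omega$ without recursion, so we realize it as a collapse. Consider $X=\omega\times\omega$ with the relation $\pair{m,k}\,R\,\pair{m',n}$ iff $m=m'$ and $k\in n$. This is $\Delta_0$ in $\omega\times\omega\times\omega\times\omega$, which exists by Cartesian Product. It is well founded because $\in$ on $\omega$ is well founded; this uses Regularity and $\Delta_0$ Separation on a nonempty subset. That alone collapses $\pair{m,n}$ to $n$, which is useless. Instead:

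\begin{itemize}
\item Take the disjoint sum of $\omega$, carrying the relation $\in$, with $\omega\times\omega$.
\item Let the immediate predecessors of $\pair{m,n}$ be the elements $j\in m$ of the first summand together with the pairs $\pair{m,k}$ for $k\in n$.
\item Then the collapse satisfies $\pi\pair{m,n}=m\cup\{\pi\pair{m,k}:k<n\}$, and by $\Delta_0$ induction on $n$ one gets $\pi\pair{m,n}=m+n$.
\end{itemize}

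So $\{\pair{m,n,p}\in\omega^3:\pi\pair{m,n}=p\}$ exists by $\Delta_0$ Separation. Here we need the collapse to land inside $\omega$, which follows by induction since each value is a natural number, and the set is Separation relative to $\pi$.

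\textbf{Step 2 (multiplication).} Multiplication is handled the same way, using the addition graph as a parameter. Take $\pi\pair{m,n}=m\cdot n$ realized by the predecessors of $\pair{m,n}$: elements of a copy of $\omega$ below $m\cdot(n-1)+j$, $j<m$. Alternatively, and more simply, define the relation $\Delta_0$ in the addition graph $A$: the predecessors of $\pair{m,n}$ are the $i\in\omega$ with $i<p$ for some $k<n$ such that $p=\pi\pair{m,k}$ plus $m$, i.e. $\pair{\pi\pair{m,k},m,p'}\in A$. To stay $\Delta_0$, I would put nodes $\pair{m,k}$ below $\pair{m,n}$ for $k<n$, together with the first-summand elements $i$ such that some $k<n$ and $q$ satisfy $q\in$ the range bound and $\pair{q,j,i}\in A$ with $j<m$. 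The identification is again by $\Delta_0$ induction.

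\textbf{Step 3 ($V_\omega$).} For $V_\omega$, use the Ackermann relation $E=\{\pair{i,j}\in\omega^2:\text{the }i\text{th binary digit of }j\text{ is }1\}$. With the multiplication and addition graphs in hand, "the $i$th bit of $j$ is $1$" becomes $\Delta_0$ once we have the graph of $i\mapsto 2^i$. That graph is obtained by one more collapse, with $\pi(i)$ defined from $\pi(i-1)$ via the multiplication graph, or alternatively by $\Delta_0$ Separation on $\omega^2$ using a coded-sequence definition made $\Delta_0$ with the multiplication graph as parameter. $E$ is well founded since $i<j$ whenever $iEj$. Its collapse $\pi$ satisfies $\pi(j)=\{\pi(i):iEj\}$, and its image, which exists as a set (the transitive $Y$ of Axiom Beta, restricted by Separation), is $V_\omega$. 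Hereditary finiteness and the fact that every hereditarily finite set arises are checked by induction on $\omega$ and on rank.

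The main obstacle is the bit predicate, i.e. getting exponentiation with only $\Delta_0$ Separation. I expect to handle it by a third collapse, exactly parallel to Step 1, rather than by sequence coding.
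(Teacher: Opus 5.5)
Your overall plan is the paper's: realize addition and multiplication as collapses, then use arithmetical definability to get the Ackermann relation and collapse it to $V_\omega$. Step 1 is correct. It is in fact the core of the paper's relation: the nodes $\pair{7,n,m}$ there have predecessors $\pair{7,i,0}$ for $i\in n$ and $\pair{7,n,j}$ for $j\in m$, which is exactly your disjoint sum. The paper adds the layers $\pair{1,n,m},\dots,\pair{6,n,m}$ only so that the graph itself appears as $\pi\pair{0,0,0}$. You instead read the graph off the collapsing function by $\Delta_0$ Separation, which is equally legitimate and shorter.

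The genuine gap is Step 2. A relation to which you apply Axiom Beta must be given before its collapse exists, so a node's predecessor set cannot mention $\pi$. Your first two descriptions do exactly that: ``elements below $m\cdot(n-1)+j$'' and ``$i<p$ where $p=\pi\pair{m,k}+m$''. The ``$\Delta_0$'' version does not repair this. If $q$ is meant to be $m\cdot k$, it is still circular. If $q$ ranges freely over $\omega$, every $i$ qualifies (take $q=i$, $j=0$), so the collapse is not $m\cdot n$.

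Step 1 worked because the new predecessors of $\pair{m,n}$ are indexed by $k<n$, which is readable from the node. For multiplication the $m\cdot n$ elements are not indexed by anything you already have as numbers. The missing idea is to index them by pairs, i.e.\ a lexicographic order. Take nodes $\pair{1,m,k,j}$ with $j<m$, and put $\pair{1,m,k',j'}$ below $\pair{1,m,k,j}$ iff $k'<k$, or $k'=k$ and $j'<j$. Take top nodes $\pair{0,m,n}$, with every $\pair{1,m,k,j}$ such that $k<n$ below $\pair{0,m,n}$. This is $\Delta_0$ on a finite Cartesian power of $\omega$ and is well founded. Its collapse satisfies
$\pi\pair{1,m,k,j}=\pi\pair{0,m,k}\cup\{\pi\pair{1,m,k,j'}:j'<j\}$.
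So by $\Delta_0$ induction, with the addition graph as parameter, $\pi\pair{1,m,k,j}=\pi\pair{0,m,k}+j$ and $\pi\pair{0,m,k+1}=\pi\pair{0,m,k}+m$. This is presumably what the paper's ``similarly'' amounts to.

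In Step 3, the ``third collapse with $\pi(i)$ defined from $\pi(i-1)$ via the multiplication graph'' has the same circularity as described. Drop it and take the alternative you already list, which is the paper's route. Once $\omega$ and the graphs of $+$ and $\times$ are parameters, every arithmetical formula is $\Delta_0$, since number quantifiers become quantifiers bounded by the set $\omega$. So exponentiation (via G\"odel's $\beta$-function), the bit predicate, and hence the Ackermann relation $E$ all exist by $\Delta_0$ Separation. Exponentiation is therefore not an obstacle at all. The rest of Step 3 is fine: well-foundedness of $E$, and the $\Delta_0$ inductions identifying the image of its collapse with $V_\omega$.
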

\begin{proof}
We show that the existence of the graph of addition $+:\omega\times\omega\to \omega$. We define a relation $R\subseteq \omega^3\times \omega^3$ such that if $\pi$ is the collapsing function for $R$, then $\pi\pair{0,0,0}$ will be the graph of addition. We define $R$ by the following rules:
		\begin{itemize}[leftmargin=7mm]
			\item $\pair{1,n,m}\, R\, \pair{0,0,0}$. (we will have $\pi\pair{1,n,m}=\pair{\pair{n,m},n+m}$);
			\item $\pair{2,n,m}, \pair{3,n,m} \, R\, \pair{1,n,m}$;
			\item $\pair{4,n,m}\, R\, \pair{2,n,m}$;
			\item $\pair{4,n,m}, \pair{7,n,m}\, R\, \pair{3,n,m}$;
			\item $\pair{5,n,0}, \pair{6,n,m}\, R\, \pair{4,n,m}$;
			\item $\pair{7,n,0}\, R\, \pair{5,n,0}$;
			\item $\pair{7,n,0}, \pair{7,m,0}\, R\, \pair{6,n,m}$;
			\item $\pair{7,i,0}, \pair{7,n,j}\, R\, \pair{7,n,m}$ whenever $i\in n$ and $j\in m$.
		\end{itemize}
 One can verify that the relation $R$ is well founded and the collapse $\pi\pair{0,0,0}$ defines the graph of a function $\omega\times \omega\rightarrow \omega$ which satisfies the inductive rules of addition.  Similarly, one defines multiplication. Relative to $\omega$ and the graph of addition and multiplication, any arithmetic predicate will be $\Delta_0$, so in particular, $\C_\times$ proves that the reals satisfy $\mathbf{ACA}_0$. So $\C_\times$ can prove the existence of the Ackermann relation on $\omega$ and proves that its collapse is $V_\omega$.
\end{proof}
\begin{lemma}
    $\C_\times$ proves the functions $F_0,\dots, F_8$ are total.
\end{lemma}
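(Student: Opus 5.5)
Most of $F_0,\dots,F_8$ can be obtained directly from the axioms of $\C_\times$, since they are $\Delta_0$-definable subsets of sets the axioms already provide. Only $F_3$, $F_4$ and $F_8$ need Axiom Beta, and $F_8$ is the main obstacle. The set theoretic part of $\C_\times$ is Extensionality, Pair, Union, Infinity, $\Delta_0$ Separation, Regularity, Transitive Closure and Cartesian Product, and I will use these freely.

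The easy functions come first. $F_0(x,y)=\{x,y\}$ is Pair. $F_5(x,y)=\bigcup x$ is Union. $F_2(x,y)=x\times y$ is the Cartesian Product Axiom. $F_1(x,y)=x\setminus y$ is obtained by $\Delta_0$ Separation from $x$. $F_7(x,y)$ is separated from $x\times x$ by the $\Delta_0$ condition $u\in v$. $F_6(x,y)=\mathrm{dom}(x)$ is separated from $\bigcup\bigcup x$ by a $\Delta_0$ formula.

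For $F_3$ and $F_4$ I cannot obtain $\{\pair{u,w,v}:w\in x\wedge\pair{u,v}\in y\}$ by separation from some product directly. I would first form $P=x\times y$ and then build the target as the collapse of a well founded relation, in the spirit of the previous lemma.
\begin{enumerate}
\item Take the carrier set $A=(\{0\}\cup(\{1,\dots,k\}\times P))\cup TC(\{x,y\})$.
\item The points of $TC(\{x,y\})$ keep their $\in$-relation, so they collapse to themselves.
\item For each $\pair{w,\pair{u,v}}\in P$, finitely many tagged nodes code the Kuratowski pairs $\{\{u\},\{u,\pair{w,v}\}\}$, where $\pair{w,v}$ is itself built from nodes coding $\{w\}$ and $\{w,v\}$.
\item Every tagged node points downward to its components, which are either other tagged nodes or elements of $TC(\{x,y\})$. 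The pairs $\pair{u,v}\in y$ are separated out using $\Delta_0$ Separation.
\item The root $0$ receives exactly the top nodes coming from pairs $\pair{w,\pair{u,v}}\in P$ with $\pair{u,v}\in y$.
\end{enumerate}
This relation $R$ is $\Delta_0$ relative to $A$, so it exists. It is well founded because each tagged node sits at a bounded finite height above $TC(\{x,y\})$, and $TC(\{x,y\})$ is well founded under $\in$. Axiom Beta then gives a collapse $\pi$, and $\pi(0)=F_3(x,y)$. $F_4$ is handled the same way with the components permuted.

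For $F_8(x,y)=\{\{w:\pair{w,z}\in x\}:z\in y\}$ I use the same collapse idea. Take nodes $\pair{1,z}$ for $z\in y$, together with $TC(x)$ carrying its $\in$-relation.
\begin{enumerate}
\item Put $w\,R\,\pair{1,z}$ whenever $\pair{w,z}\in x$, with $w\in TC(x)$.
\item Put $\pair{1,z}\,R\,0$ for every $z\in y$.
\end{enumerate}
Well foundedness is clear, and $\pi(\pair{1,z})=\{w:\pair{w,z}\in x\}$, so $\pi(0)=F_8(x,y)$. This is the step I expect to be the genuine obstacle: without Finite Powerset or $\Delta_0$ Replacement, $F_8$ is a replacement instance, and only Beta supplies it. The points to check carefully are:
\begin{itemize}
\item the carrier set is built using only the Cartesian product, Union and $TC$;
\item the collapse restricted to $TC(x)$ is the identity, which holds by uniqueness of collapses since $TC(x)$ is transitive;
\item this uniqueness argument is available without Finite Powerset, using only $\Delta_0$ Separation and Regularity.
\end{itemize}
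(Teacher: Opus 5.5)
Your argument for $F_8$ is the paper's. It uses a copy of $TC(x)$ carrying $\in$, one node for each $z\in y$ whose predecessors are the $w$ with $\pair{w,z}\in x$, and a root above those nodes, so that the collapse of the root is $F_8(x,y)$. Two things need correcting.

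First, your claim that $F_3$ and $F_4$ cannot be obtained by separation from a product is wrong. This is exactly why the paper reduces the lemma to $F_8$ alone. If $\pair{u,v}\in y$ then $u,v\in D=\bigcup\bigcup y$. So $F_3(x,y)$ is the subset of $D\times(x\times D)$ defined by the $\Delta_0$ formula $\exists u\in D\,\exists v\in D\,\exists w\in x\,(t=\pair{u,\pair{w,v}}\wedge\exists p\in y\,(p=\pair{u,v}))$. Likewise $F_4(x,y)$ is separated from $D\times(D\times x)$. Your collapse construction for $F_3$ and $F_4$ would work once the tagging below is fixed, but it invokes Axiom Beta where Union, Cartesian Product and $\Delta_0$ Separation already suffice. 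Beta is needed only for $F_8$.

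Second, your carrier sets are plain unions such as $\{0\}\cup(\{1\}\times y)\cup TC(x)$, which need not be disjoint, and as written this breaks the construction. By Regularity, an $\in$-minimal element of the transitive set $TC(x)$ is empty, so $0=\emptyset\in TC(x)$ whenever $x\neq\emptyset$. Your root therefore coincides with the point $\emptyset$ of $TC(x)$, and it becomes an $R$-predecessor of every $w\in TC(x)$ with $\emptyset\in w$. Then $\pi\vert_{TC(x)}$ is no longer the identity, and $R$ can even have cycles: if $z\in y$ and $\pair{\{\emptyset\},z}\in x$, then $\{\emptyset\}\,R\,\pair{1,z}\,R\,0\,R\,\{\emptyset\}$. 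Nodes $\pair{1,z}$ can likewise already lie in $TC(x)$.

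The fix is the paper's: work on the disjoint copies $(TC(x)\times\{0\})\cup(y\times\{1\})\cup\{\pair{0,2}\}$. With that change, your well foundedness check goes through. So does your Regularity-plus-$\Delta_0$-Separation proof that $\pi\pair{w,0}=w$ for $w\in TC(x)$, and then $\pi\pair{0,2}=F_8(x,y)$.
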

\begin{proof}
    Since Pair, Union, Cartesian Product, and $\Delta_0$ Separation are axioms of $\C_\times$ it suffices to show that $F_8(x,y)=\{\{w:\pair{w,z}\in x\}:z\in y\}$ is total. Given sets $x$ and $y$ define on the set $$(TC(x)\times \{0\})\cup (y\times \{1\})\cup\{\pair{0,2}\}$$  the relation $R$ given by the following three rules
    \begin{enumerate}
        \item $\pair{v,1}\,R \,\pair{0,2}\;\leftrightarrow \;v\in y$
        \item $\pair{u,0}\,R\,\pair{v,1}\;\leftrightarrow\; v\in y\wedge \pair{u,v}\in x$.
        \item $\pair{u,0}\,R\,\pair{v,0}\;\leftrightarrow \;u\in v$.
    \end{enumerate}
    Note that $\bigcup \bigcup x\subseteq TC(x)$. Let $\pi$ be the collapse of $R$, $\pi\pair{0,2}=\{\{w:\pair{w,z}\in x\}:z\in y\}$. 
\end{proof}

    This shows that  $\C_\times+\text{ Countability}$  is precisely the same as the second formulation of $\atrset$. Simpson showed that $\atrset$ is the unique theory which is interpreted by $\atr$ via the tree interpretation \cite[Theorem VII.3.22]{Simpson}. By this result, the two formulations with Countability will coincide. 

We finally show that in the absence of Countability $\C_\times$ is a strictly weaker system than $\C$.  To do this, we first need to introduce the first Cohen model. 
\begin{defin}
    The first Cohen model, or the Cohen-Halpern-Levy model, is the symmetric extension obtained by the system $(\text{Add}(\omega,\omega),S_\omega,\mathcal{F})$ where 
    \begin{enumerate}
        \item $\text{Add}(\omega,\omega)$ is the collection of partial functions with finite domain $p:\omega\times \omega\rightarrow 2$ ordered under reverse inclusion.
        \item $S_\omega$ is the group of all bijections $\omega\rightarrow \omega$ that act on $\text{Add}(\omega,\omega)$ by
        $$(\pi, p)\mapsto( (n,m)\mapsto p(\pi(n),m)).$$
        \item $\mathcal{F}$ is the filter of subgroups generated by the collection
        $$\text{fix}(E)=\{\{\pi\in S_\omega: \pi\vert_E=id_E\}:E\in \mathcal{P}_\text{fin}(\omega)\}.$$
    \end{enumerate}
    The action of $S_\omega$ extends naturally to all $\text{Add}(\omega,\omega)$-names. Given an $\text{Add}(\omega,\omega)$-name $\dot x$ its subgroup of symmetries is the set
    $$\text{sym}(\dot x)=\{\pi \in \text{Add}(\omega,\omega):\pi\dot x=\dot x\}.$$
    We say that $\dot x$ is symmetric if $\text{sym}(\dot x)\in \mathcal{F}$. We define recursively the hereditary symmetric names as being all symmetric names $\dot x$ such that for all $\dot y$ and $p\in \text{Add}(\omega,\omega)$ if $\pair{p,\dot y}\in \dot x$ then $\dot y$ is hereditary symmetric. We say that a finite set $E\subseteq \omega$ is a support for a hereditary symmetric name $\dot x$ if $\text{fix}(E)\subseteq \text{sym}(\dot x)$ and it is a support for a condition $p$ if  $$\forall \pair{n,m}\in \text{dom}(p)\,( n\in E).$$
    Fix a countable transitive model $M\vDash \zfc$, let $G$ be an $M$-generic filter for $\text{Add}(\omega,\omega)$ and $N$ the respective symmetric extension, that is, the interpretation of the hereditary symmetric names by $G$ (see \cite[5.1.2]{jechac}). A standard forcing argument shows that $\bigcup G:\omega\times \omega\rightarrow 2$ is a total function. For each $n\in \omega$ let $\dot{a}_n=\{\pair{p,\check{m}}:p(n,m)=1\}$ and $\dot{A}=\{\pair{\emptyset,\dot{a}_n}:n\in \omega\}$ be the canonical names for $a_n=\{m\in \omega:\bigcup G(n,m)=1\}$ and $A=\{a_n:n\in \omega\}$ respectively. We will denote the generic extension by $G$ as $M[G]$; note that $M\subseteq N\subseteq M[G]$. One of the main properties of this model is that the set $A$ is a Dedekind finite ($A$ is not equipotent to any of its proper subsets) infinite set. For a more detailed exposition on the first Cohen model, see \cite[Section 5.3]{jechac} or \cite[pages 398-400]{Habelsein}.
\end{defin}

\begin{defin}
    For any pair of sets $A$ and $B$ we write $|A|\leq^*|B|$ to mean there exists a partial surjection $f:B\rightarrow A$, $|A|\nleq^*|B|$ if no such surjection exists and $|A|<^*|B|$ if $|A|\leq^*|B|\wedge |B|\nleq^*|A|$. A set $B$ is said to be dual Dedekind finite if $|B|+1\nleq^*|B|$.
\end{defin}
Truss \cite[Section 5]{Truss} showed that in the first Cohen model, the set $A$ is dual Dedekind finite. With a bit more work, we get the following generalization.
\begin{lemma}
    In the first Cohen model, for every $n\in \omega$, the set $A^{n}$ is dually Dedekind finite.
\end{lemma}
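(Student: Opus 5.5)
Suppose, towards a contradiction, that $N$ contains a surjection $f\colon A^n\to A^n\cup\{\ast\}$ for some fixed $\ast\notin A^n$; this is equivalent to the failure of dual Dedekind finiteness of $A^n$. I would fix a hereditarily symmetric name $\dot f$ with finite support $E\subseteq\omega$, and a condition $p$ forcing that $\dot f$ is such a surjection. The whole argument then rests on the usual symmetry lemma: if $\pi\in\text{fix}(E)$ and $\pi p$ is compatible with $p$, then $\pi$ fixes $\dot f$ and maps $\dot a_k$ to $\dot a_{\pi(k)}$. Consequently any statement $p\Vdash\dot f(\dot a_{k_1},\dots,\dot a_{k_n})=\dot a_{l}$ (coordinatewise) transfers under such $\pi$.

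The first step is a support lemma. Suppose $q\le p$ forces $\dot f(\vec a_{\vec k})=\vec a_{\vec l}$. I claim that every $l_j$ lies in $E\cup\{k_1,\dots,k_n\}$. To see this, suppose some $l_j$ is outside that set. Choose a permutation $\pi$ fixing $E\cup\{\vec k\}$ pointwise, moving $l_j$ to a fresh index $l'$ outside the support of $q$. Arrange also that $\pi q$ is compatible with $q$ (the standard trick of swapping with indices not mentioned in $q$). Then $q\cup\pi q$ forces $\dot f(\vec a_{\vec k})$ to equal two tuples differing in coordinate $j$, namely $a_{l_j}\ne a_{l'}$. Since the $a_n$ are forced pairwise distinct, this is a contradiction. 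Similarly, the preimage fibres are governed by $E$ together with the indices involved.

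The second step is counting, and I expect it to be the main obstacle. Let $F\supseteq E$ be a finite set. Consider the tuples in $A^n$ whose indices lie in $F$, and the map $f$ restricted to them. By the support lemma, $f$ sends $A_F^n=\{a_k:k\in F\}^n$ into $A_F^n\cup\{\ast\}$. I would aim to show that $f^{-1}[A_F^n\cup\{\ast\}]\subseteq A_F^n$ for suitable $F$. Take $\vec x$ with some index $k\notin F$ (in particular $k\notin E$) and $f(\vec x)\in A_F^n\cup\{\ast\}$. Permuting $k$ among the infinitely many indices outside $F$ and the support of a deciding condition produces infinitely many distinct preimages of the same value. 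That is harmless by itself, so the counting has to be organised differently.

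My plan for that organisation is to show that, in $N$, $f$ maps $A^n_F$ onto $A^n_F\cup\{\ast\}$ for $F$ large. The argument is this. The value $\ast$ and each tuple in $A_F^n$ have a preimage $\vec x$. If such an $\vec x$ uses indices outside $F$, I replace it with a preimage using only indices in $F\cup\{\text{those indices}\}$, via the support lemma. Then I choose $F$ closed enough: take $F=E\cup G$, where $G$ is large and the relevant preimage indices can be permuted into $F$ by elements of $\text{fix}(E)$ while the value is preserved. The value is preserved because the value's indices lie in $E\cup\vec k$, and elements of $\text{fix}(E)$ that fix the value's indices suffice. This needs a careful case analysis: values of the form $\vec a_{\vec l}$ whose indices are moved must be handled by counting orbits of $\text{fix}(E)$ on $n$-tuples. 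Once that is done, we get a surjection from the finite set $A_F^n$ (of size $|F|^n$) onto a set of size $|F|^n+1$, which is impossible. Truss's argument for $n=1$ is the template, and I would follow its orbit-counting structure, generalising from indices to tuples of indices.
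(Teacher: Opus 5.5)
\paragraph{Step 1 is fine; the gap is in Step 2.} Your first step is correct and is exactly the paper's second case. If $q\le p$ decides $\dot f(\dot a_{k_1},\dots,\dot a_{k_n})=(\dot a_{l_1},\dots,\dot a_{l_n})$ with some $l_j\notin E\cup\{k_1,\dots,k_n\}$, then swapping $l_j$ with an index outside $E\cup\{\vec k\}\cup\mathrm{supp}(q)$ gives a condition compatible with $q$ that forces a different value.

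The gap is in the counting step, which you flag and then leave as a plan. The mechanism you propose is not valid: move a preimage's indices into $F$ by some $\pi\in\mathrm{fix}(E)$ fixing the value's indices, ``while the value is preserved''. Symmetry of $\dot f$ only transfers forcing statements. From $q\Vdash\dot f(\vec{\dot a}_{\vec k})=y$ you get $\pi q\Vdash\dot f(\vec{\dot a}_{\pi\vec k})=y$. But $\pi q$ need not lie in $G$, so nothing follows about the actual value of $f$ at $\vec a_{\pi\vec k}$ in $N$.

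Here is a concrete counterexample. Let $f(x,y)=(x,x)$ if $0\in y$ and $f(x,y)=(y,y)$ otherwise; this $f$ is in $N$ with empty support. If $0\in a_{k_2}$ and $0\notin a_{k_3}$, then $f(a_{k_1},a_{k_2})=(a_{k_1},a_{k_1})$ but $f(a_{k_1},a_{k_3})=(a_{k_3},a_{k_3})$, although the transposition $(k_2\;k_3)$ fixes $k_1$. So the induced map $h\colon\omega^n\to\omega^n\cup\{\emptyset\}$ is not $\mathrm{fix}(E)$-equivariant. Consequently an argument that transports preimages along $\mathrm{fix}(E)$-orbits and counts those orbits does not go through.

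\paragraph{What is missing.} After Step 1 no further symmetry is needed. The contradiction is pure finite combinatorics about a surjection $h$ satisfying $\mathrm{rng}(h(\tau))\subseteq\mathrm{rng}(\tau)\cup E$ whenever $h(\tau)\neq\emptyset$.

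The paper does this by induction on the layers $R_j=\{\tau:|\mathrm{rng}(\tau)\setminus E|\le j\}$. It takes some $\tau\in R_{j+1}\setminus R_j$ that is sent into $R_j\cup\{\emptyset\}$. The finite set $(E\cup\mathrm{rng}(\tau))^n\setminus R_j$ contains its own $h$-preimage, but $\tau$ lies in it and its image does not, so pigeonhole gives non-surjectivity.

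An equivalent argument uses blocks. For $X\subseteq\omega\setminus E$ with $|X|\le n$, the block $B_X=\{\tau:\mathrm{rng}(\tau)\setminus E=X\}$ is finite, and $h^{-1}(B_X)\subseteq\bigcup_{Y\supseteq X}B_Y$. By downward induction on $|X|$, surjectivity forces $h$ to map each $B_X$ with $X\neq\emptyset$ bijectively onto itself. Then $E^n=B_\emptyset$ alone would have to cover $E^n\cup\{\emptyset\}$, which is impossible. This also shows that your target claim, that $f$ maps $A^n_F$ onto $A^n_F\cup\{\ast\}$, already holds for $F=E$. The reason is counting, not permuting preimages.
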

\begin{proof}
       We point out that, unlike the Dedekind finite cardinals, the dual Dedekind finite cardinals may not be closed under product as shown by Truss \cite[Section 3]{Truss}. The case $n=0$ is trivial, so we may assume $n>0$ and therefore $\emptyset\notin A^n$. Seeking a contradiction, assume that there exists a surjection $f:A^n\rightarrow A^n\cup\{\emptyset\}$ in $N$. Since $f\in N$, it will have a symmetric name $\dot{f}$. Since $f$ being a surjection from $A$ to $A\cup\{\emptyset\}$ can be expressed by a $\Delta_0$ formula with parameters $f$ and $A\cup\{\emptyset\}$, by absoluteness, $f$ will also be a surjection in $M[G]$. By the forcing theorem (see \cite[VI.2.44]{kunen}) there exists $p\in \text{Add}(\omega,\omega)$ such that 
       $$p\Vdash \dot{f}:\dot{A}^n\rightarrow \dot{A}^n\cup\{\emptyset\} \text{ is a surjective function}\,.$$
       Fix a finite set $E\subseteq \omega$ which is a support for $p$ and $\dot{f}$. The function $f\in M[G]$ defines a unique function $h:\omega^n\rightarrow \omega^n\cup\{\emptyset\}$, which is also in $M[G]$, where 
       $$f(a_{\sigma(0)},\dots a_{\sigma(n-1)})=\begin{cases}
       \emptyset \quad\quad\quad\quad \text{ if and only if\quad } h(\sigma)=\emptyset\\
           (a_{h(\sigma)(0)},\dots a_{h(\sigma)(n-1)})\quad \;\;\, \text{ otherwise}
       \end{cases}$$
       Since $f$ is surjective, $h$ will also be surjective. We will treat elements of $\omega^n$ as sequences of length $n$.
       Seeking a contradiction, assume that for every $\tau\in \omega^n$ we have
       $$\text{rng}(h(\tau))\subseteq \text{rng}(\tau)\cup E\vee h(\tau)=\emptyset.$$
       Let $R_j=\{ \tau\in \omega^n:|\text{rng}(\tau)\setminus E|\leq j\}$, by hypothesis we must have that $h(R_j)\subseteq R_j\cup\{\emptyset\}$.
       We prove by induction on $j\leq n$ that $h\vert_{R_j}:R_j\rightarrow R_j\cup\{\emptyset\}$ is not surjective, which would contradict our initial assumption. For the base case, note that $R_0= E^n$ is a finite set and therefore cannot surject onto $E^n\cup\{\emptyset\}$. Assume that $h\vert_{R_j}:R_j\rightarrow R_j\cup \{\emptyset\}$ is not a surjection for some $j<n$. If $h\vert_{R_{j+1}}:R_{j+1}\rightarrow R_{j+1}\cup\{\emptyset\}$ were a surjection then some element $\tau\in R_{j+1}\setminus R_{j}$ must be set to $R_j\cup\{\emptyset\}$. The set $(E\cup\text{rng}(\tau))^n\setminus R_j$ is finite. Let $\sigma\in R_{j+1}$ be such that $h(\sigma)\in (E\cup \text{rng}(\tau))^n\setminus R_j$, by assumption  
       $$\text{rng}(h(\sigma))\subseteq \text{rng}(\sigma)\cup E \quad \text{ and }\quad \sigma\in R_{j+1}\setminus R_j.$$  Since $\tau,\sigma,h(\sigma)\in R_{j+1}\setminus R_j$ we have that $$|\text{rng}(h(\sigma))\setminus E|=|\text{rng}(\sigma)\setminus E|=|\text{rng}(\tau)\setminus E|=j+1$$
       and since $h(\sigma)\in (\text{rng}(\sigma)\cup E)^n$ we have $\text{rng}(h(\sigma))\subseteq \text{rng}(\tau)\cup E$. So
       $$\text{rng}(\tau)\setminus E=\text{rng}(h(\sigma))\setminus E=\text{rng}(\sigma)\setminus E.$$
       Therefore, $\text{rng}(\sigma)\subseteq \text{rng}(\tau)\cup E$. This proves that
       $$h^{-1}((E\cup\text{rng}(\tau))^n\setminus R_j)\subseteq (E\cup \text{rng}(\tau))^n\setminus R_j$$
       Since $(E\cup \text{rng}(\tau))^n\setminus R_j$ is finite and $h(\tau)\notin (E\cup \text{rng}(\tau))^n\setminus R_j$ this implies $h\vert_{R_{j+1}}$ does not surject onto $R_{j+1}\cup\{\emptyset\}$. So $h:\omega^n\rightarrow \omega^n\cup\{\emptyset\}$ and $f:A^n\rightarrow A^n\cup\{\emptyset\}$ are not surjective which contradicts our initial assumption. 

       So assume there exists a $\tau\in \omega^n$ such that $\text{rng}(h(\tau))\not\subseteq \text{rng}(\tau)\cup E$ and $h(\tau)\neq \emptyset$. Fix an $i\in\omega $ such that $h(\tau)(i)\notin (\text{rng}(\tau)\cup E)$ and $q\leq p$ such that
       $$q\Vdash \dot{f}(\dot a_{\tau(0)},\dots, \dot a_{\tau(n-1)})=(\dot a_{h(\tau)(0)},\dots, \dot a_{h(\tau)(n-1)}).$$
       Fix a $j\in \omega$ which is not in the support of $q$ and in $E\cup \text{rng}(\tau)$ and let $\pi$ be the transposition which sends $j$ to $h(\tau)(i)$. By the choice of $j$, we have that $q$ and $\pi q$ are compatible. Furthermore we have
       $$\pi q\Vdash \dot{f}(\dot a_{\tau(0)},\dots, \dot a_{\tau(n-1)})=(\dot a_{h(\tau)(0)},\dots,\dot{a}_{h(\tau)(i-1)}, \dot{a}_{j},\dot{a}_{h(\tau)(i+1)}, \dot a_{h(\tau)(n-1)}).$$
       This implies, however $q\cup \pi q$ forces that $f(a_{\tau(0)},\dots, a_{\tau(n-1)})$ assumes two distinct values, contradicting the initial assumption that $$p\Vdash \dot f \text{ is a function}.$$
       This proves that $A^n$ is dual Dedekind finite.
\end{proof}

\begin{obs}
    In general, $\mathbf{ZF}$ proves that any infinite subset of $\mathcal{P}(\omega)$ surjects onto $\omega$. To see this, first note that for an infinite set $A\subseteq 2^\omega$ the map $\sigma\mapsto A\cap[\sigma]$ must have infinite range. This induces an injection $\omega\rightarrow \mathcal{P}(A)$ which, by a theorem of Kuratowski\footnote{Tarski \cite[Pages 94-95]{Tarski} attributes this result to Kuratowski.} (See \cite[Proposition 5.3]{Habelsein} or \cite[Lemma 4.11]{Horst}), implies that $\aleph_0\leq^*|A|$ and in our case $\aleph_0<^*|A|$. 
\end{obs}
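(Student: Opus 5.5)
The plan is to get an injection $\omega\to\mathcal{P}(A)$ from the basic open sets of $2^\omega$, apply Kuratowski's theorem to turn it into a surjection from $A$ onto $\omega$, and then use Dedekind finiteness of $A$ for strictness. Throughout I identify $\mathcal{P}(\omega)$ with $2^\omega$ and work in $\mathbf{ZF}$ with a fixed infinite $A\subseteq 2^\omega$.

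\emph{Step 1: the family $\mathcal{S}$ is infinite.} Fix the canonical enumeration $k\mapsto\sigma_k$ of $2^{<\omega}$ and put
$$\mathcal{S}=\{A\cap[\sigma]:\sigma\in 2^{<\omega}\}.$$
Since elements of $2^\omega$ are separated by basic clopen sets, for distinct $x,y\in A$ there is $\sigma$ with $x\in[\sigma]$ and $y\notin[\sigma]$. Hence the map
$$x\mapsto\{S\in\mathcal{S}:x\in S\}$$
is an injection $A\to\mathcal{P}(\mathcal{S})$. If $\mathcal{S}$ were finite, $\mathcal{P}(\mathcal{S})$ would be finite and so would $A$. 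Therefore $\mathcal{S}$ is infinite.

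\emph{Step 2: an injection $\omega\to\mathcal{P}(A)$.} Enumerate $\mathcal{S}$ without repetitions by sending $n$ to $A\cap[\sigma_{k_n}]$, where $k_n$ is the least index whose set differs from all earlier ones. This needs no choice, because the indexing by $2^{<\omega}$ is canonical. The result is an injection $\omega\to\mathcal{P}(A)$.

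\emph{Step 3: Kuratowski's theorem, the main obstacle.} It converts the injection $\omega\to\mathcal{P}(A)$ into a surjection of a subset of $A$ onto $\omega$, giving $\aleph_0\le^*|A|$; I would cite it as in \cite{Habelsein}. For a self-contained sketch, let $B_0,B_1,\dots$ be the injected subsets. The Boolean algebras generated by $B_0,\dots,B_{n-1}$ have atoms forming finite partitions $P_n$ of $A$, each refining the previous one. The number of atoms is unbounded, since otherwise the algebras would stabilise and only finitely many $B_i$ would be distinct. Each $P_n$ comes with a canonical order, lexicographic in the membership pattern of $(B_0,\dots,B_{n-1})$. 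From these canonically ordered partitions one extracts a definable map from $A$ onto $\omega$, indexing pieces by the stage at which they first split together with their position. Making this surjection choice-free is the one delicate point.

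\emph{Step 4: strictness in the first Cohen model.} Suppose there were a surjection from (a subset of) $\omega$ onto $A$. Sending each $a\in A$ to the least preimage gives an injection of $A$ into $\omega$, so $A$ is well-orderable. An infinite well-orderable set admits an injection of $\omega$, which contradicts the Dedekind finiteness of $A$. Hence $|A|\nleq^*\aleph_0$, and combined with Step 3 this gives $\aleph_0<^*|A|$.
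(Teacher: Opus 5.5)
Your proposal is correct and follows the paper's argument: the same map $\sigma\mapsto A\cap[\sigma]$ with infinite range (your Step 1 supplies the justification the paper leaves implicit), the induced injection $\omega\to\mathcal{P}(A)$, Kuratowski's theorem for $\aleph_0\leq^*|A|$, and Dedekind finiteness of $A$ for $|A|\nleq^*\aleph_0$. Your optional self-contained sketch of Kuratowski's theorem indexes pieces rather than points of $A$, so it does not yet specify a map on $A$; a choice-free fix is to walk the tree of nonempty atoms, at each splitting node continue into the leftmost child with infinitely many splittings above it, and let the other child's atom be the next of infinitely many pairwise disjoint nonempty sets, mapping each point to the index of the set containing it and $0$ otherwise. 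Since you cite the theorem exactly as the paper does, this does not affect correctness.
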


\begin{cor}
    In the first Cohen models the cardinality $\mathfrak{a}$, of the set $A\subseteq \mathcal{P}(\omega)$,  satisfies
$$\aleph_0<^*\mathfrak{a}<^*\mathfrak{a}^2<^*\mathfrak{a}^3<^*\dots <^* \mathfrak{a}^{<\omega}=|A^{<\omega}|$$
\end{cor}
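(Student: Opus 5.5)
We have to establish three facts about $\mathfrak a=|A|$ in the first Cohen model $N$: $\aleph_0<^*\mathfrak a$; each strict step $\mathfrak a^n<^*\mathfrak a^{n+1}$; and $\mathfrak a^n<^*\mathfrak a^{<\omega}$ for every $n$. The weak inequalities are immediate in each case. Since $A$ is infinite and $A\subseteq\mathcal P(\omega)$, the preceding observation gives $\aleph_0<^*\mathfrak a$. For $n\ge1$, the projection $A^{n+1}\to A^n$ onto the first $n$ coordinates is a surjection, so $\mathfrak a^n\le^*\mathfrak a^{n+1}$. Likewise, restricting a sequence in $A^{<\omega}$ of length $\geq n$ to its first $n$ entries is a partial surjection $A^{<\omega}\to A^n$. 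All these maps are definable from $A$, so they lie in $N$.

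The real work is the non-reversal $|A^{n+1}|\nleq^*|A^n|$ (and similarly for $A^{<\omega}$). I reduce this to the preceding lemma that each $A^n$ is dual Dedekind finite. Fix $a_0\in A$. The map $A^n\cup\{\emptyset\}\to A^{n+1}$ given by $\tau\mapsto \tau^\frown(a_0)$ and $\emptyset\mapsto (a_0,\dots,a_0)$ (a constant $(n+1)$-tuple; if needed, use any fixed tuple not of the form $\tau^\frown(a_0)$ with distinct entries) is injective. So a partial surjection $g:A^n\to A^{n+1}$ could be composed with the inverse of this injection, defined on its range, to give a partial surjection from $A^n$ onto $A^n\cup\{\emptyset\}$. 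Concretely, first pull back along $g$ to the set of $x$ with $g(x)$ in the image. A partial surjection onto $|A^n|+1$ contradicts dual Dedekind finiteness. Here one should note that the lemma's argument applies equally to partial surjections, since we can extend a partial map by sending the rest to $\emptyset$. For $n=0$ the claim $1<^*\mathfrak a$ is trivial.

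For $A^{<\omega}$: if $g:A^n\to A^{<\omega}$ were a partial surjection, then composing with the restriction to length-$(n+1)$ sequences would give a partial surjection $A^n\to A^{n+1}$, contradicting the previous step. Finally, $\mathfrak a^{<\omega}=|A^{<\omega}|$ is the notation itself.

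The main obstacle is bookkeeping the injection $A^n\cup\{\emptyset\}\hookrightarrow A^{n+1}$ inside $N$ without choice. Using the fixed element $a_0$ together with tuples, no choice is needed. One must also check that the injection is genuinely injective, which requires choosing the image of $\emptyset$ outside the set $\{\tau^\frown(a_0)\}$. For example, when $n\ge1$ take $(a_1,\dots,a_1,a_1)$ with $a_1\neq a_0$.
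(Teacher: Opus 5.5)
Your argument is correct and is essentially the derivation the paper intends: $\aleph_0<^*\mathfrak a$ comes from the preceding observation, and each strict step, including $\mathfrak a^n<^*\mathfrak a^{<\omega}$, reduces via an injection $A^n\cup\{\emptyset\}\hookrightarrow A^{n+1}$ to the dual Dedekind finiteness of $A^n$ given by the lemma. The only slip is your first suggested image of $\emptyset$: the constant tuple $(a_0,\dots,a_0)$ is itself of the form $\tau^\frown(a_0)$, so injectivity fails there, but your later choice $(a_1,\dots,a_1)$ with $a_1\neq a_0$ repairs it.
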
 
\noindent
 The existence of such a set of reals $A$ was proved consistent with $\mathbf{ZF}$ by Krivine \cite{krivine}, using realizability algebras, and by Karagila \cite{Karagila} using symmetric extensions. Their construction has the advantage of also satisfying $\mathbf{DC}$.

\begin{thm}\label{amphmod}
 $\C_\times+\text{Separation}\not\vdash \text{ Finite Powerset}$.

\end{thm}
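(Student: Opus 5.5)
We build a transitive submodel of the first Cohen model $N$, using the set $A$ and the corollary $\aleph_0<^*\mathfrak a<^*\mathfrak a^2<^*\dots<^*\mathfrak a^{<\omega}$. The idea is to let the finite powers $A^n$ exist as sets while $A^{<\omega}$ does not. By the lemma on dual Dedekind finiteness, no $A^n$ can surject onto $A^{n+1}$, so $A^{<\omega}$ should not be recoverable from sets of bounded arity. Since $\mathcal P_{\text{fin}}(A)$ exists in $\B$, and from it $A^{<\omega}$ is $\Delta_0$-definable with $\omega$, Finite Powerset must fail in the model.

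Concretely, set
$$M' = \bigcup_{n\in\omega}\{x\in N: |TC(x)|\le^* |A^n\times \mathcal P(\omega)^N|\ \text{(witnessed in $N$)}\},$$
or a variant that bounds the transitive closure by $L(\mathcal P(\omega))$-style sets together with $A^n$. We must choose the bound carefully so that $\omega$, $V_\omega$ and $A$ lie in $M'$, while $A^{<\omega}$ and $\mathcal P_{\text{fin}}(A)$ do not. The key invariant is that every $x\in M'$ has $TC(x)$ a surjective image of $A^n\times Y$ for some fixed $n$ and some well-orderable or ``small'' $Y$. The verifications are then as follows.

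\textbf{Transitivity and basic axioms.} Transitivity is immediate. Extensionality and Regularity are inherited. Pair, Union, Infinity and Transitive Closure hold because the bound is preserved under these operations: $n$ stays fixed or at most becomes $\max$.

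\textbf{Cartesian Product.} Products raise the exponent additively, since $A^n\times A^m=A^{n+m}$. This is the reason for allowing every finite $n$.

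\textbf{Full Separation.} Every subset in $N$ of a set in $M'$ has no larger transitive closure, so it lies in $M'$. Hence $M'$ satisfies Separation for all formulas; we only need that each subset in $N$ is in $M'$.

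\textbf{Axiom Beta.} Given a well-founded $R$ on $X\in M'$, the collapse in $N$ (which satisfies $\mathbf{ZF}$) has image a surjective image of $X$. Its transitive closure is a surjective image of $TC(X)$ together with a well-founded rank part, so it stays within the bound. Well-foundedness is absolute between $M'$ and $N$ because $M'$ contains all subsets of its sets.

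\textbf{Failure of Finite Powerset.} If $\mathcal P_{\text{fin}}(A)\in M'$, then $A^{<\omega}\in M'$ by $\Delta_0$ definitions. Then $|A^{<\omega}|\le^*|A^n\times Y|$ for some $n$. Arguing with supports as in the lemma, any symmetric surjection from $A^n\times Y$, with $Y$ a symmetric set fixed pointwise by a finite support $E$, can only involve $a_k$ with $k$ in $E$ together with $n$ further indices. This contradicts surjecting onto $A^{n+1}$, essentially extending Truss's argument and the lemma's proof.

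The main obstacle is choosing the small factor $Y$ so that the model is closed under Axiom Beta collapses, which may produce sets of large rank, while the support argument still works. I would first try letting $Y$ range over sets in $N$ that are well-orderable, or that are hereditarily definable from $M$ plus finitely many $a_k$. I would then prove the key support lemma: a surjection from $A^n\times Y$ onto $A^{n+1}$ would require some fixed $a_j$, with $j$ outside the support, to be expressible, which the transposition argument rules out.
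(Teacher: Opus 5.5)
There is a genuine gap, and it lies in the extra factor $Y$. With your concrete choice $Y=\mathcal P(\omega)^N$, the factor $A^n$ is absorbed completely. Since $A\subseteq\mathcal P(\omega)$, and $\mathbf{ZF}$ gives canonical injections $\mathcal P(\omega)^{<\omega}\hookrightarrow\mathcal P(\omega)$ and (listing a finite set of reals in lexicographic order) $\mathcal P_{\text{fin}}(\mathcal P(\omega))\hookrightarrow\mathcal P(\omega)$, you get $|A^n\times\mathcal P(\omega)|=|\mathcal P(\omega)|$. So your class is just $M'=\{x:|TC(x)|\le^*|\mathcal P(\omega)|\}$. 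Consequently $A^{<\omega}$ and $\mathcal P_{\text{fin}}(A)$ already lie in $M'$, and in fact $M'$ satisfies Finite Powerset outright: a partial surjection $f\colon\mathcal P(\omega)\rightharpoonup TC(x)$ yields $s\mapsto f[s]$ from $\mathcal P_{\text{fin}}(\mathcal P(\omega))$ onto $\mathcal P_{\text{fin}}(TC(x))$. Your support argument cannot rescue this. It assumes $Y$ is pointwise fixed by some $\mathrm{fix}(E)$, which $\mathcal P(\omega)^N$ is not, since it contains every $a_k$. The alternative choices of $Y$ are left unspecified, and some of them fail the same way (e.g.\ anything allowed to contain $A^{<\omega}$). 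So the decisive object, a model satisfying $\C_\times$ plus Separation in which $A^{<\omega}$ is missing, is never actually supplied.

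What you are missing is that no extra factor is needed. The paper simply takes $M=\{x:\exists n\in\omega\ |TC(x)|\le^*|A^n|\}$, and both worries that led you to $Y$ disappear.

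\begin{itemize}
\item \emph{Infinity.} $A$ surjects onto $\omega$ (the Kuratowski observation preceding the corollary), so $\omega\in M$.
\item \emph{Axiom Beta.} There is no rank problem. The collapse of a well founded $R$ on $X$ is the transitive set $\pi[X]$, so its transitive closure is a surjective image of $X$. The transitive closure of the collapsing function is likewise a surjective image of finitely many copies of $TC(X)$.
\item \emph{Failure of Finite Powerset.} No new support lemma is needed. $A\in M$, since $TC(A)=\omega\cup A$ is a surjective image of $A^2$. But $A^{<\omega}\notin M$: from $|A^{<\omega}|\le^*|A^n|$ you would get $|A^{n+1}|\le^*|A^n|$, contradicting $\mathfrak a^n<^*\mathfrak a^{n+1}$ from the corollary (itself a consequence of the dual Dedekind finiteness of $A^n$). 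If $M$ satisfied Finite Powerset it would be a transitive model of $\B$ containing all finite sequences from $A$, and $\B$ proves that $A^{<\omega}$ exists.
\end{itemize}

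Your idea could be repaired by letting $Y$ range only over ordinals. A transposition argument like yours does show $|A^{n+1}|\not\le^*|A^n\times\kappa|$ in the first Cohen model, but this only adds work compared with the paper's bare bound $|A^n|$.
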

\begin{proof}
    Working in a model of $\mathbf{ZF}$ where there exists a set $A\subseteq \mathcal{P}(\omega)$ such that for every $n\in \omega$ $\aleph_0<^*|A^n|<^*|A^{n+1}|<^*|A^{<\omega}|$ define
    $$M=\{x:\exists n\in \omega\, |TC(x)|\leq^* |A^n|\}.$$
    We show that $M$ is the desired model. Since $A$ is an infinite set of reals, $TC(A)=\omega \cup A$. Since $\aleph_0<^*|A|$ we have $|TC(A)|\leq^* 2|A|\leq^* |A^2|$ so $A\in M$ but $A^{<\omega}\notin M$. $M$ is a transitive class which is closed under subset, that is $y\subseteq x\in M\rightarrow y\in M$, so it satisfies Extensionality, Regularity, Separation, and by construction it also satisfies Transitive containment and Union. Since $A$ surjects on $\omega$, $M$ satisfies the Axiom of  Infinity. To see that $M$ satisfies Axiom Beta, first note that, since $M$ is closed under subsets, a relation $R$ is well founded if and only if it is well founded in $M$. Given a set with a well founded relation $\pair{X,R}\in M$, by definition of $M$, there must be an $A^n$ which surjects onto $X$ and will therefore also surject onto the collapse of $R$ and on its collapsing function. For Cartesian Product, note that $$TC(x\times y)= TC(x)\cup TC(y)\cup\{\{u,v\}:u\in x\wedge v\in y\}\cup \{\{v\}:v\in y\}\cup x\times y$$ so given an $n\in \omega$ such that $|TC(x)|\leq^*|A^n|\wedge |TC(y)|\leq^*|A^n|$, one has $|TC(x\times y)|\leq^*|A^{2n+1}|$. The Axiom of  Pairing is proved similarly.
\end{proof}

The last thing to note is that while $\C_\times$ cannot prove the existence of $L_\alpha(b)$ for all $a\in V$ and $\alpha\in Ord$, we do have that the finite powerset on ordinals is a rudimentary function. So the construction of the unrelativized constructible hierarchy should still be possible.
\begin{question}
    What is the strength of the Mostowski collapse lemma, that is, Axiom Beta restricted to extensional relations? In the presence of the Axiom of Countability it still ensures that the reals satisfy $\mathbf{ATR}_0$ by the comparability of well orders. In absence of Countability it is not clear whether it still implies $\Delta_0\text{-}\mathbf{TR}$.
\end{question}
\begin{question}
    The existence of maximal bisimulations on all trees, not just well founded, can be shown to be equivalent to $\Pi^1_1\text{-}\mathbf{CA}_0$, which is also equivalent to the existence of maximal fixed points for arithmetic operators. A natural question is whether the existence of maximal fixed points for $\Delta_0$ operators is equivalent to the existence of maximal bisimulations. We conjecture that this is possible. The importance of maximal fixed points is also related to the interpretation of what Aczel terms the Anti-Foundation Axiom  ($\mathbf{AFA}$).\footnote{The generalization of Axiom Beta to arbitrary relations appears to have been first considered by Scott and has become a central tool in computer science following the work of Forti and Honsell \cite{Fonsell}; the principle was  subsequently christened $\mathbf{AFA}$ (Anti-Foundation Axiom) by Aczel \cite{Aczel} and further investigated in the context of  set theory with non-well-founded sets.} Let $\mathbf{T}$ denote the theory $\B$ with the Axiom of Regularity replaced with Aczel's $\mathbf{AFA}$. It should be possible to interpret $\mathbf{T}$ in the system $\Pi^1_1\text{-}\mathbf{CA}_0$. On the other hand, modifying a proof of Rathjen \cite[Theorem 3.9]{Rathjen01}, one can show that $\mathbf{T}$ proves the reals satisfy $\Pi^1_1\text{-}\mathbf{CA}_0$.
\end{question}

\section*{Acknowledgements}
The  first author was  partially supported by the project PRIN 2022 “Models, Sets and Classifications” Code no.\ 2022TECZJA CUP G53D23001890006, funded by the European Union – NextGenerationEU – PNRR M4 C2 I1.1.  
The second author would like to thank Asaf Karagila, Alvaro Pintado, Paul Shafer, and Giovanni Soldà for helpful discussions. 
The authors would like to acknowledge the support of the Erwin Schr\"{o}dinger Institute in Vienna
as part of the thematic program “Reverse Mathematics” in 2025.
\bibliographystyle{plain}
\bibliography{biblio}

\end{document}